\documentclass[12pt]{article}
\usepackage[T1]{fontenc}
\usepackage{amssymb, amsmath, amsthm}
\usepackage[all]{xy}
\usepackage{newtxtext,stmaryrd}
\usepackage{hyperref}
\usepackage{accents}
\usepackage{tikz-cd}
\usepackage{hhline}
\usepackage{multicol}
\usepackage[scr=esstix]{mathalpha}

\usetikzlibrary{backgrounds,matrix}

\DeclareFontEncoding{OT2}{}{}

\addtolength{\textwidth}{.8in}
\addtolength{\evensidemargin}{-0.4in}
\addtolength{\oddsidemargin}{-0.4in}
\addtolength{\textheight}{1.0in} \addtolength{\topmargin}{-.5in}
 
\newcommand{\mr}[1]{\mathrm{#1}}
\newcommand{\mf}[1]{\mathfrak{#1}}
\newcommand{\mc}[1]{\mathcal{#1}}
\newcommand{\mb}[1]{\mathbb{#1}}

\newcommand{\ms}[1]{\mathsf{#1}}

\newcommand{\Z}{\mb{Z}}
\newcommand{\Q}{\mb{Q}}
\newcommand{\zp}{\mb{Z}_p}
\newcommand{\qp}{\mb{Q}_p}

\newcommand{\F}{\mb{F}}
\newcommand{\R}{\mb{R}}
\newcommand{\C}{\mb{C}}

\newcommand{\smatrix}[1]{\left(\begin{smallmatrix}#1\end{smallmatrix}\right)}

\DeclareMathOperator{\Hom}{Hom} \DeclareMathOperator{\Aut}{Aut}
 \DeclareMathOperator{\Gal}{Gal}
 \DeclareMathOperator{\res}{res}

\DeclareMathOperator{\Spec}{Spec}

\DeclareMathOperator{\diag}{diag}

\DeclareMathOperator{\cor}{cor}

\newcommand{\Cl}{\mr{Cl}}

\newcommand{\et}{\mr{\acute{e}t}}

\newcommand{\BS}{\mr{BS}}

\newcommand{\GL}{\mathrm{GL}}
\newcommand{\ab}{\mathrm{ab}}

\newcommand{\gm}{\mb{G}_m}

\newtheorem{theorem}{Theorem}[subsection]
\newtheorem{proposition}[theorem]{Proposition}
\newtheorem{lemma}[theorem]{Lemma}
\newtheorem{corollary}[theorem]{Corollary}
\newtheorem*{thm}{Theorem}

\theoremstyle{definition}
\newtheorem{definition}[theorem]{Definition}

\theoremstyle{remark}
\newtheorem{remark}[theorem]{Remark}
\newtheorem*{ack}{Acknowledgments}

\newcounter{countii}

\numberwithin{equation}{section}

\newcommand\extrafootertext[1]{%
    \bgroup
    \renewcommand\thefootnote{\fnsymbol{footnote}}%
    \renewcommand\thempfootnote{\fnsymbol{mpfootnote}}%
    \footnotetext[0]{#1}%
    \egroup
}

\begin{document}

\title{Eisenstein cocycles for imaginary quadratic fields}
\author{Emmanuel Lecouturier, Romyar Sharifi, Sheng-Chi Shih, Jun Wang}
\date{}
\maketitle

\begin{abstract}
We construct Eisenstein cocycles for arithmetic subgroups of $\GL_2$ of imaginary quadratic fields valued in second $K$-groups of products of two CM elliptic curves.  We use these to construct maps from the first homology groups of Bianchi spaces to corresponding second $K$-groups of ray class fields and to verify the Eisenstein property of these maps for prime-to-level Hecke operators.
\end{abstract}

\section{Introduction}

\subsection{Background and the main theorem}

The papers \cite{busuioc} and \cite{sharifi} defined explicit maps from first homology groups of modular curves to second $K$-groups of cyclotomic integer rings. For a positive integer $N$, the map has the form
$$
	\Pi_N^{\circ} \colon H_1(X_1(N),C,\Z[\tfrac{1}{2}])^+ \to (K_2(\Z[\mu_N,\tfrac{1}{N}]) \otimes \Z[\tfrac{1}{2}])^+,
$$
where $C$ is the set of cusps not lying over  $\infty \in X_0(N)$, and
a superscript ``$+$'' denotes fixed part under complex conjugation (or a projection to said part). 
It carries explicit generators of homology $[u:v]$ for $u, v \in \Z/N\Z - \{0\}$ with $(u,v) = 1$, known as Manin symbols \cite{manin},
to Steinberg symbols of cyclotomic $N$-units via the explicit recipe
$$
	\Pi_N^{\circ}([u:v]^+) = \{1-\zeta_N^u,1-\zeta_N^v\}^+,
$$
where $\zeta_N$ is a fixed primitive $N$th root of unity.
A map from homology to a formally defined and infinite but related group had been considered by Goncharov in \cite[Section 4]{gon-double}.

The second author conjectured that $\Pi_N^{\circ}$ should be Eisenstein in the sense that 
\begin{equation} \label{Eis_away_level}
	\Pi_N^{\circ} \circ (T_{\ell} - \ell - \langle \ell \rangle) = 0
\end{equation}
for all primes $\ell \nmid N$
and 
$$
	\Pi_N^{\circ} \circ (U^*_{\ell} - 1) = 0
$$
for all primes $\ell \mid N$, where $U^*_{\ell}$ denotes the dual Hecke operator. 
Fukaya and Kato proved this for the projection of 
$\Pi_N^{\circ}$ to the $p$-part of the second $K$-group for each $p \mid N$
by realizing $\Pi_N^{\circ}$ as the specialization at infinity of a Hecke-equivariant zeta map \cite[Theorem 5.3.3]{fk}. 

The restriction of $\Pi_N^{\circ}$ provides a map 
$$
	\Pi_N \colon H_1(X_1(N),\Z[\tfrac{1}{2}])^+ \to (K_2(\Z[\mu_N]) \otimes \Z[\tfrac{1}{2}])^+,
$$
which is then conjecturally also Eisenstein. Moreover, the second author has conjectured that the map induced
by $\Pi_N$ on its quotient by a corresponding Eisenstein ideal is actually an isomorphism (see \cite{sharifi,sv}).
On the other hand, $\Pi_N^{\circ}$ is not an isomorphism for certain $N$.
 
In \cite{sv}, the second author and Venkatesh proved that $\Pi_N$ is Eisenstein away from the level $N$, i.e., satisfies \eqref{Eis_away_level}
for $\ell \nmid N$. To do so, they realized $\Pi_N$ as a specialization at an $N$-torsion point
of the restriction to $\Gamma_1(N)$ of a cocycle 
$$
	\Theta_N \colon \GL_2(\Z) \to K_2(\Q(\gm^2))/\langle \{ -z_1,-z_2 \} \rangle,
$$
where $z_i$ is the $i$th coordinate function on $\gm^2$. 
The work of the first and fourth authors \cite{lw} combines the results of \cite{fk} and \cite{sv} together with
level compatibilities to obtain the full Eisenstein property of $\Pi_N$ upon inverting $3$.

In \cite[Section 4.2]{fks-survey}, Fukaya, Kato, and the second author directly raised the question that had been floating out there
as to whether an analogous Eisenstein map exists between the homology of a Bianchi space and the second $K$-group of a ray class field of an imaginary quadratic field $F$, with elliptic units replacing cyclotomic units. Here is a representative theorem from Section \ref{cohom_Bianchi}.

\begin{thm}
	Let $F$ be an imaginary quadratic field with integer ring $\mc{O}$, and let $\mc{N}$ be an ideal of $\mc{O}$
	such that the canonical map $\mc{O}^{\times} \to (\mc{O}/\mc{N})^{\times}$ is injective. Let $Y_1(\mc{N})$
	be the Bianchi space for $F$ with $\Gamma_1(\mc{N})$-level structure. Let $\mf{c}$ be 
	a proper ideal of $\mc{O}$ prime to $\mc{N}$. Let $\mc{O}'(\mc{N})$ denote the ring of integers $\mc{O}(\mc{N})$
	of the ray class field of $F$ of modulus $\mc{N}$ if $\mc{N}$ is not a prime
    power and $\mc{O}(\mc{N})[\tfrac{1}{\mc{N}}]$ otherwise.
	Then there exists a homomorphism
	$$
		{}_{\mf{c}} \Pi_{\mc{N}} \colon H_1(Y_1(\mc{N}),\Z[\tfrac{1}{30}]) \to K_2(\mc{O}'(\mc{N})) \otimes \Z[\tfrac{1}{30}]
	$$
	that is Eisenstein away from $\mc{N}$, which is a specialization at $\mc{N}$-torsion of an Eisenstein tuple of cocycles
	valued in second $K$-groups of products of two elliptic curves with CM by $\mc{O}$. 
	Given a second proper ideal $\mf{d}$ prime to $\mc{N}$, we have
	$$
		(N\mf{d}^2 - \mc{R}(\mf{d})) \circ {}_{\mf{c}} \Pi_{\mc{N}}  
		= (N\mf{c}^2 - \mc{R}(\mf{c})) \circ {}_{\mf{d}} \Pi_{\mc{N}} 
	$$
	for $N$ the absolute norm and $\mc{R}$ the Artin reciprocity
    map on the ray class group of modulus $\mc{N}$.
\end{thm}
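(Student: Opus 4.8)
\emph{Proof strategy.} The plan is to realize ${}_{\mf{c}}\Pi_{\mc{N}}$ as the value at an $\mc{N}$-torsion section of the Eisenstein tuple ${}_{\mf{c}}\Theta$ of $\GL_2$-cocycles constructed in the preceding sections, paired against the homology of the Bianchi space, and then to deduce each asserted property from the analogous property of ${}_{\mf{c}}\Theta$. Recall that for a CM elliptic curve $E$ with $\End(E)=\mc{O}$, Kato's $\mf{c}$-smoothed theta function ${}_{\mf{c}}\theta_E$ is a rational function on $E$ whose divisor, viewed on the N\'eron model, is supported on torsion sections --- this integrality being exactly what the $\mf{c}$-smoothing buys --- so that its value at a torsion point off its divisor is a unit. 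The tuple ${}_{\mf{c}}\Theta$, whose several members reflect the components of the Bianchi space when $F$ is non-principal, is assembled as in \cite{sv} from Steinberg symbols of such functions on products $E \times E'$ of CM elliptic curves, modulo a Steinberg symbol of functions pulled back from the two factors; it has been shown above to be a tuple of $1$-cocycles, Eisenstein for the prime-to-$\mc{N}$ Hecke operators.

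First I would restrict ${}_{\mf{c}}\Theta$ to $\Gamma_1(\mc{N})$ and evaluate at an $\mc{N}$-torsion point $t$ of $E$ of exact order $\mc{N}$. By the main theorem of complex multiplication the field generated by $t$ lies in the ray class field $\mc{O}(\mc{N})$ of modulus $\mc{N}$, and under the hypothesis $\mc{O}^{\times}\hookrightarrow(\mc{O}/\mc{N})^{\times}$ the resulting elliptic-unit values are units of $\mc{O}(\mc{N})$ --- except that, just as $1-\zeta_{p^k}$ fails to be a unit, when $\mc{N}$ is a prime power these values acquire nontrivial valuation at the unique prime above $\mc{N}$, which is precisely why the target is then weakened to $\mc{O}(\mc{N})[\tfrac{1}{\mc{N}}]$. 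Their Steinberg symbols therefore lie in $K_2(\mc{O}'(\mc{N}))$, and the indeterminacy by a symbol of pulled-back functions collapses once both coordinates are set equal to $t$. This produces a class in $H^1(\Gamma_1(\mc{N}),K_2(\mc{O}'(\mc{N}))\otimes\Z[\tfrac{1}{30}])$.

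Next I would pass to the Bianchi space $Y_1(\mc{N})=\Gamma_1(\mc{N})\backslash\mb{H}_3$, with $\mb{H}_3$ hyperbolic $3$-space. The comparison map $H^1(Y_1(\mc{N}),M)\to H^1(\Gamma_1(\mc{N}),M)$ is an isomorphism once the orders of the finite point stabilizers --- which divide $6$ for $\mathrm{PSL}_2$ of an imaginary quadratic field --- are inverted; tracking in addition the contributions of the boundary tori of the Borel--Serre compactification and the normalizations of the Hecke operators, every denominator that arises divides $30$. Composing the class above with the evaluation pairing $H^1(Y_1(\mc{N}),M)\otimes H_1(Y_1(\mc{N}),\Z[\tfrac{1}{30}])\to M$, $M=K_2(\mc{O}'(\mc{N}))\otimes\Z[\tfrac{1}{30}]$, defines ${}_{\mf{c}}\Pi_{\mc{N}}$; on the explicit homology generators it returns Steinberg symbols of elliptic units, in parallel with the cyclotomic recipe recalled in the introduction. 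The Eisenstein property then transfers: the Hecke correspondences on $Y_1(\mc{N})$ act on the specialized class exactly as the abstract prime-to-$\mc{N}$ Hecke operators act on ${}_{\mf{c}}\Theta$ after specialization at $t$, the evaluation pairing is Hecke-adjoint, and ${}_{\mf{c}}\Theta$ is Eisenstein, so ${}_{\mf{c}}\Pi_{\mc{N}}\circ(T_{\mf{q}}-N\mf{q}-\langle\mf{q}\rangle)=0$ for every prime $\mf{q}\nmid\mc{N}$.

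The last identity is the specialization of the corresponding identity for the cocycle tuples, which by bilinearity and Galois-equivariance of the Steinberg symbol reduces to the distribution relation among the functions ${}_{\mf{c}}\theta_E$ and ${}_{\mf{d}}\theta_E$, whose defect of norm-compatibility in $\mf{c}$ versus $\mf{d}$ is governed by the operators $N\mf{d}^2-\sigma_{\mf{d}}$ and $N\mf{c}^2-\sigma_{\mf{c}}$, where $\sigma_{\mf{a}}$ is the automorphism induced by the isogeny attached to $\mf{a}$. Under evaluation at the $\mc{N}$-torsion section $\sigma_{\mf{a}}$ becomes the Artin automorphism $\mc{R}(\mf{a})$ on $\mc{O}(\mc{N})$, and $(N\mf{d}^2-\mc{R}(\mf{d}))\circ{}_{\mf{c}}\Pi_{\mc{N}}=(N\mf{c}^2-\mc{R}(\mf{c}))\circ{}_{\mf{d}}\Pi_{\mc{N}}$ follows. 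The deepest ingredient overall is the Eisenstein property of ${}_{\mf{c}}\Theta$, proved separately by a constant-term computation; within the present argument I expect the main obstacle to be the third step --- checking that ``restrict to $\Gamma_1(\mc{N})$, specialize at $\mc{N}$-torsion, pair with $H_1$'' intertwines the geometric Hecke action on $H_1(Y_1(\mc{N}))$, including the dual Hecke operators and the diamond twist, with the Hecke action on ${}_{\mf{c}}\Theta$, together with the precise control of the boundary contributions that dictates both the inversion of $30$ and the prime-power modification of the target.
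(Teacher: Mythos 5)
Your overall strategy --- specialize the Eisenstein tuple ${}_{\mf{c}}\Theta$ at $\mc{N}$-torsion, transfer the Hecke equivariance to homology component by component, and derive the $\mf{c}$--$\mf{d}$ relation from the distribution behavior of theta functions --- is indeed the paper's strategy. But the specialization step as you describe it does not work, and this is the heart of the construction. You evaluate ``at an $\mc{N}$-torsion point $t$ of $E$ \dots once both coordinates are set equal to $t$.'' The diagonal point $(t,t)$ is not preserved, even up to the $(\mc{O}/\mc{N})^{\times}$-action, by $\Gamma_0(\mc{N})$ or $\Gamma_1(\mc{N})$: under $\smatrix{a&b\\c&d}$ one has $(t,t)\mapsto((a+c)t,(b+d)t)$, so pullback at $(t,t)$ does not intertwine the $\Gamma$-action on $K_2$ of the function field with the Artin action on $K_2(F(\mc{N}))$, and the specialized cochain is neither a cocycle for that action nor a homomorphism on $\Gamma_1(\mc{N})$. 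The correct point is $(0,Q)$ with $Q$ primitive $\mc{N}$-torsion on the \emph{second} factor only, whose $(\mc{O}/\mc{N})^{\times}$-orbit is $\Delta_0(\mc{N})$-stable (Proposition \ref{specialize_matrix_act}). Moreover, even at $(0,Q)$ the specialization is undefined for the natural representative with residue ${}_{\mf{c}}e$: its second term is a unit on $\{0\}\times(E'-E'[\mf{c}])$, which contains $(0,Q)$, so one must first translate that term by a matrix $\mu_i$ as in \eqref{vartheta} and verify (Lemma \ref{imageGamma0}) that all values on $\Gamma_0(\mc{N})$ then live on opens containing the orbit of $(0,Q)$. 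You address neither point.

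Two further overstatements. The cocycle values are classes in $K_2$ of the function field determined only by their residues, not literal Steinberg symbols of theta functions, so ``theta values at torsion points are units, hence the symbols lie in $K_2(\mc{O}'(\mc{N}))$'' is not a proof of integrality; the paper needs residue maps over the N\'eron model, their injectivity on trace-fixed parts, and Rubin's lemma that $Q+E'[\mf{c}]$ avoids the zero section modulo $\mf{l}$ (Lemmas \ref{triv_res} and \ref{reduce_in_complement}, Proposition \ref{integral_image}), plus a Galois descent over the auxiliary modulus $\mf{f}$ to land in $K_2(F(\mc{N}))$ at all (Proposition \ref{indep_of_f}), which you omit. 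The claim that ${}_{\mf{c}}\Pi_{\mc{N}}$ ``returns Steinberg symbols of elliptic units on explicit homology generators'' is precisely what the paper does not establish and explicitly defers. Finally, the $\mf{c}$--$\mf{d}$ identity is not a direct specialization of a one-curve distribution relation: it comes from combining $[\mf{n}]^*({}_{\mf{c}}\Theta_{j,\mc{N}})=\mc{R}_{\mc{N}}(\mf{n})\circ{}_{\mf{c}}\Theta_{i,\mc{N}}$ (Proposition \ref{prop_pull_galois}, whose proof requires a pushforward-by-isogeny argument moving between components of $\mc{E}$) with $[\mf{n}]^*({}_{\mf{c}}e_j)={}_{\mf{cn}}e_i-N\mf{c}^2\,{}_{\mf{n}}e_i$; your appeal to the classical relation captures the spirit but not the multi-component bookkeeping that makes $\mc{R}(\mf{d})$ appear.
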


Of course, were it not for the imprecise caveat about its definition as a specialization, the zero map would automatically satisfy the theorem, but we do not define ${}_{\mf{c}} \Pi_{\mc{N}}$ as such! Though we have not attempted it, to see that ${}_{\mf{c}} \Pi_{\mc{N}}$ is nonzero should be possible in certain cases by comparison with the maps $\Pi_N$ for modular curves, which can often be shown to be nonzero.
In Theorem \ref{Eis_map_homol}, we show under certain conditions that ${}_{\mf{c}} \Pi_{\mc{N}}$ factors through the homology of the Satake compactification $X_1(\mc{N})$ given by adjoining the cusps and that ${}_{\mf{c}} \Pi_{\mc{N}}$ arises from a map $\Pi_{\mc{N}}$ independent of $\mf{c}$, but without connecting it to any expected explicit formulas. 

The idea that a map $\Pi_{\mc{N}}$ should exist, sans its Eisenstein
property, was hinted at by Goncharov's construction of a related map to a $\Q$-vector space in \cite[Theorem 5.7]{gon-Euler} for prime level and
$F \in \{\Q(i),\Q(\mu_3)\}$. The work of Calegari and Venkatesh \cite[Theorem 4.5.1(ii)]{cv} addressed the Eisenstein property: for $F$ such that $H_1(Y_0(1),\Z)$ is torsion and prime $\mf{q}$ of $\mc{O}$, they constructed a surjective map from a subgroup of $H_1(Y_0(\mf{q}),\Z)$ to $K_2(\mc{O}) \otimes \Z[\frac{1}{6}]$ and showed it factors
through the Eisenstein quotient. Computations of homology groups of Bianchi spaces modulo Eisenstein ideals were performed in the Ph.D. thesis of Powell \cite{powell} as a test case for the speculations of \cite{fks-survey}. 

There have recently been a variety of constructions of what are commonly referred to as Eisenstein cocycles. Among them are the Sczech-style cocycles of Bergeron, Charollois, and Garcia \cite{bcg} and Fl\'orez, Karabulut, and Wong \cite{fkw} 
for $\GL_n$ over an imaginary quadratic field and the equivariant coherent  classes of Kings and Sprang \cite{ks} over CM fields. These fascinating works aim at formulas for critical values of $L$-functions of Hecke characters related to Eisenstein-Kronecker series and Dedekind sums. There exist tentative connections between these works and ours, but the settings, methods, and aims are markedly different.

\subsection{Outline of the construction}

To construct our maps, we refine the approach of \cite{sv}. Let us outline this, highlighting a variety of issues we must overcome.
Just as cyclotomic units are the specializations of the function $1-z$ at $z$ at roots of unity, elliptic units are
specializations of theta functions on a CM elliptic curve at a torsion point. With this in mind, and similarly to the case of a square of an elliptic curve considered in \cite{sv}, we replace $\gm^2$ by a product $E \times E'$ over $L$ of two CM elliptic curves over an abelian extension $L$ of an imaginary quadratic field $F$. 

Much as the case of $\gm^2$, our cocycle is constructed using a two-term Gersten-Kato complex $\ms{K} = [\ms{K}_2 \to \ms{K}_1 \to \ms{K}_0]$
of the form
$$ 
	K_2(L(E \times E')) \to \bigoplus_D L(D)^{\times} \to \bigoplus_x \Z
$$
with the sums taken over irreducible divisors $D$ and zero-cycles $x$ on $E \times E'$, respectively.
Specifically, we use the trace-fixed subcomplex $\ms{K}^{(0)}$ of $\ms{K}$, after inverting $5!$. This fits in an exact sequence
$$
	0 \to \ms{K}_2^{(0)} \to \ms{K}_1^{(0)} \to \ms{K}_0^{(0)} \to \Z[\tfrac{1}{30}] \to 0,
$$
the last map being the degree map on zero-cycles. Let $\mf{c}$ be a proper, nonzero ideal of the ring of integers $\mc{O}$ of $F$.
Analogously to Kato's construction of theta functions as norm invariant elements with a given degree zero divisor \cite{kato} (or $1-z$ as as the trace-fixed unit on $\mb{A}^1-\{1\}$ with divisor $1$), we start with a degree $0$ formal sum $e_{\mf{c}} \in \ms{K}_0^{(0)}$ of $\mf{c}$-torsion points on $E \times E'$ that is fixed by the action of the subgroup $\Gamma$ of $\GL_2(F)$ of automorphisms of $E \times E'$. Pulling back by the resulting map $\Z \to \ms{K}_0^{(0)}$ gives an extension of $\Z$ by $\ms{K}_2^{(0)}$ that is the class of a $1$-cocycle
$$
	{}_{\mf{c}} \Theta_{E \times E'} \colon \Gamma \to K_2(L(E \times E')) \otimes \Z[\tfrac{1}{30}].
$$
The particular choice of cocycle is given by a certain sum of theta functions 
on $E$ and $E'$ pulled back to divisors on $E \times E'$ with residue $e_{\mf{c}}$.

Now, unless the class number of $F$ is $1$, there
is no action of arbitrary Hecke operators on the first $\Gamma$-cohomology of $K_2(L(E \times E'))$. Rather, we define an action on the collection of such cohomology groups as $E$ and $E'$ run over the isomorphism classes of elliptic curves with CM by $\mc{O}$, noting here that $\Gamma$ varies with $E \times E'$. That is, in Section \ref{Hecke} we define 
the notion of a $\Delta$-module system and Hecke operators on the cohomology group of such a system, which we employ for this purpose. 
In Proposition \ref{big_cocycle_Eisenstein}, we show that the tuple ${}_{\mf{c}} \Theta$ of classes is Eisenstein with respect to Hecke operators attached to prime ideals. 

Though representative elliptic curves can be defined over the Hilbert class field of $F$, the torsion on such a curve does not always define an abelian extension of $F$. Fortunately, as in \cite{deshalit}, we may choose representative elliptic curves with this property over any ray class field $L$ of level $\mf{f}$ such that $\mc{O}^{\times}$ injects into $(\mc{O}/\mf{f})^{\times}$. We restrict a slight modification of ${}_{\mf{c}} \Theta_{E \times E'}$
to a congruence subgroup of $\Gamma$ that one might typically denote $\Gamma_0(\mc{N})$, for an $\mc{O}$-ideal $\mc{N}$ prime to $\mf{c}$.
This takes values in a direct limit of motivic cohomology groups $H^2(U,2)$, where $U$ is an open in $E \times E'$ containing all points $(0,Q)$ with $Q$ a primitive $\mc{N}$-torsion point on $E'$. We can then pull back using a good choice of $Q$ to obtain a specialized cocycle
$$
	{}_{\mf{c}} \Theta_{\mc{N}, E \times E'} \colon \Gamma_0(\mc{N}) \to K_2(F(\mc{N} \cap \mf{f})) \otimes \Z[\tfrac{1}{30}].
$$
In Proposition \ref{indep_of_f}, we show that by varying the $\mf{f}$ implicit in the definition, we obtain a cocycle valued in $K_2(F(\mc{N})) \otimes \Z[\tfrac{1}{30}]$. The tuple ${}_{\mf{c}} \Theta_{\mc{N}}$ of classes as we vary $E \times E'$ remains Eisenstein away from the level. In Proposition \ref{integral_image}, we show that this cocycle takes values in $K_2(\mc{O}'(\mc{N})) \otimes \Z[\tfrac{1}{30}]$ using motivic cohomology over Dedekind schemes.

We also treat the dependence of our tuples of classes on the auxiliary ideal $\mf{c}$ in Corollary \ref{compare_aux_ideal}, which requires a 
subtle analysis of pullbacks of our cocycles. As with elliptic units that are the pullbacks of theta functions at torsion points, one might hope that there exists a tuple $\Theta_{\mc{N}}$ with $(N\mf{c}^2 - \mc{R}(\mf{c})) \circ \Theta_{\mc{N}} = {}_{\mf{c}} \Theta_{\mc{N}}$ up to some controllable denominators. Here, of course, the consideration of denominators is complicated by the fact that we are working with the finite group $K_2(\mc{O}'(N))$. In Theorem \ref{unmodified_eigensp}, we show that
$\Theta_{\mc{N}}$ exists upon tensor product with $\zp$ and the taking of an eigenspace for a character $\chi$ of the prime-to-$p$ part of $(\mc{O}/\mc{N})^{\times}$, avoiding one particular choice of $\chi$.

Next, we note that the restrictions of our cocycles to $\Gamma_1(\mc{N})$-type subgroups are homomorphisms, as the action of $\Gamma_0(\mc{N})$ on $K_2(F(\mc{N}))$ is through the Artin reciprocity map applied to the lower right-hand corner of such a matrix. We note that there are 
$h = |\Cl(F)|$ isomorphism classes of elliptic curves with CM by $\mc{O}$, and our tuple ${}_{\mf{c}} \Theta$ is of $h^2$ cocycles corresponding
to the pairs of representatives of these classes. On the other hand, the Bianchi space $Y_1(\mc{N})$ of level $\mc{N}$ has $h$ connected components. To define a map on $H_1(Y_1(\mc{N}),\Z[\frac{1}{30}])$, we take $E$ to the the representative curve
satisfying $E(\C) \cong \C/\mc{O}$ and vary $E'$. We then obtain the Eisenstein map ${}_{\mf{c}} \Pi_{\mc{N}}$ in the theorem (see Section \ref{cohom_Bianchi}). In doing so, we account for the fact that the Hecke operators we defined, which are well-suited for describing the action on our tuple of $h^2$ cocycles, do not agree with the usual Hecke operators on cohomology.

Unlike the cocycles of \cite{sv}, we do not expect the cocycles defining ${}_{\mf{c}} \Theta$ to be parabolic, which is to say that they are not coboundaries on parabolic subgroups. However, we do expect parabolicity to hold for the specialization ${}_{\mf{c}} \Theta_{\mc{N}}$. In Proposition \ref{Eis_part_BS}, we prove a parabolicity result for the resulting maps on homology by comparison of the eigenvalues of our Hecke operators on ${}_{\mf{c}} \Pi_{\mc{N}}$ with the eigenvalues of Hecke operators on the homology of the boundary of the Borel-Serre compactification of $Y_1(\mc{N})$. In this setting, parabolicity means that the maps ${}_{\mf{c}} \Pi_{\mc{N}}$ on the homology of $Y_1(\mc{N})$ factor through the homology of the Satake compactification $X_1(\mc{N})$. Putting this together with our independence from $\mf{c}$ result, we obtain in Theorem \ref{Eis_map_homol} a map $\Pi_{\mc{N}}^{\chi}$ on the $\chi$-eigenspace of $H_1(X_1(\mc{N}),\zp)$, where $\mc{N}$ is divisible by at most one power of each prime over $p$ and $\chi$ is a character on $(\mc{O}/\mc{N})^{\times}$ not on a certain short list.

Ideally, we would have an Eisenstein map $\Pi_{\mc{N}}^{\circ}$ defined on the homology of $X_1(\mc{N})$ relative
to certain cusps, taking Manin-type symbols (cf.~Cremona's work \cite{cremona}) to Steinberg
symbols of elliptic units. There are several obstacles, not least that our proof that $\Pi_{\mc{N}}$ exists in some cases as
a map on the homology of the compactification $X_1(\mc{N})$ is indirect and does not follow from a statement about the tuple ${}_{\mf{c}} \Theta$. 
On the other hand, there are no evident Steinberg relations among general elliptic $\mc{N}$-units, so no obvious way in which to define such a map on the larger relative homology group directly. Moreover, Cremona's symbols are defined only in the Euclidean setting, and most generalizations don't seem ideally suited to such a treatment. Additionally, the elliptic units we would
want to consider are not in general true elements of a unit group, but rather roots thereof, and thus need in general to be modified by some auxiliary ideal $\mf{c}$ that complicates the derivation of formulas in our approach even in the Euclidean setting. Thus, we have omitted any treatment of such formulas in this paper, despite the fact that the connection with Steinberg symbols of elliptic units is almost implicit in our constructions. We have some novel ideas for overcoming many of the obstacles we have just described, but this is left for future work.

\begin{ack}
    The first author completed most of this project at the Yau Mathematical Sciences Center of Tsinghua University and was also funded by the Beijing Institute of Mathematical Sciences and Applications. The final details of this project were completed at the Institute for Theoretical Sciences of Westlake University. He was also supported by the Institute for Advanced Study in Princeton during Spring 2022 and funded by the National Natural Science Foundation of China under Grant No.~12050410242. 
    
    The second author thanks Akshay Venkatesh for his insights during their joint work and Takako Fukaya and Kazuya Kato for early conversations on this subject. He also thanks everyone who encouraged him to explore these maps over many years, including Adebisi Agboola and Cristian Popescu. This material was based in part upon work of the second author supported by the National Science Foundation (NSF) under Grant No.~DMS-2101889. Part of this research was performed while the second author was visiting the Simons Laufer Mathematical Sciences Institute, which is supported by NSF Grant No.~DMS-1928930.

    The third author would like to thank Adel Betina and Ming-Lun Hsieh for helpful discussions.
    
    The fourth author would like to express gratitude to Taiwang Deng, Yangyu Fan, and Yichao Zhang for helpful discussions and insights regarding Bianchi modular forms and Eisenstein cohomology. The fourth author is supported by the National Natural Science Foundation of China under Grant No.~12331004.
\end{ack}

\section{Hecke actions on cohomology} \label{Hecke}

\subsection{Module systems} \label{modsys}

Let $F$ be a number field and $\mc{O}$ be its ring of integers. For any nonzero ideal $\mc{N}$ of $\mc{O}$, we denote by $\Cl_{\mc{N}}(F)$ the ray class group of modulus $\mc{N}$ of $F$. 
Let $h$ be the order of the class group $\Cl(F)$ of $F$. Set $I = \{1,\ldots,h \}$, and
fix representative $\mc{O}$-ideals $\mf{a}_r$ for $r \in I$ of $\Cl(F)$ with $\mf{a}_1 = \mc{O}$. 
For each pair $(r,s) \in I^2$, set $\mf{a}_{r,s} = \mf{a}_s\mf{a}_r^{-1}$ for brevity. 
Fix $n \ge 1$.

Let $R$ denote the profinite completion of $\mc{O}$.
For each $r \in I$, fix a finite id\`{e}le $\alpha_r \in R$ representing $\mf{a}_r$. 
Set $\mc{G} = \GL_n(\mb{A}_F^f)$
and fix an open subgroup $U$ of the profinite group $\GL_n(R)$.
Let $\tilde{\Delta}$ be a submonoid of $\mc{G} \cap \mr{M}_n(R)$ containing $U$. For $i = (i_1, \ldots, i_n) \in I^n$, set 
$$
	x_i = \diag(\alpha_{i_1}, \ldots, \alpha_{i_n}).
$$
Given also $j \in I^n$, set $\tilde{\Delta}_{i,j} = x_i^{-1}\tilde{\Delta}x_j$.
We then set $\tilde{\Delta}_i = \tilde{\Delta}_{i,i}$ and $U_i = x_i^{-1}Ux_i \subset \tilde{\Delta}_i$.

Now set $G = \GL_n(F)$. 
For any $i = (i_1, \ldots, i_n)$ and $j = (j_1, \ldots, j_n) \in I^n$, let 
$\Delta_{i,j} = \tilde{\Delta}_{i,j} \cap G$. In the case that $\tilde{\Delta} = \mc{G} \cap \mr{M}_n(R)$, we have
$$
    \Delta_{i,j} =
	\{ (a_{u,v})_{u,v} \in G \mid a_{u,v} \in \mf{a}_{i_u,j_v} \text{ for all } 1 \le u,v \le n \},
$$
independent of our choice of id\`{e}les.
Note that for $k \in I^n$, we have $\Delta_{i,j}\Delta_{j,k} \subseteq \Delta_{i,k}$.
Set $\Delta_i = \Delta_{i,i}$ and $\Gamma_i = U_i \cap G$. 
The group $\Gamma_i$ is commensurable with $\GL_n(\mc{O})$, so its commensurator in $G$ is $G$ by \cite[Lemma 3.10]{shimura}. For $i = (1,\ldots,1)$, we have that $\Delta_i \subseteq \mr{M}_n(\mc{O}) \cap \GL_n(F)$ and $\Gamma_i = \GL_n(\mc{O}) \cap U$.

\begin{definition}\
\begin{enumerate}
	\item[a.] A \emph{$\Delta$-module system} $A$ indexed by $J \subseteq I^n$ is a collection of abelian groups $A_i$ for $i \in J$ such that each
	$g \in \Delta_{i,j}$ with $i,j \in J$ provides a homomorphism
	$g \colon A_j \to A_i$, which satisfy
    \begin{enumerate}
        \item[i.] if $g \in \Delta_{i,j}$ and $g' \in \Delta_{j,k}$ for 
        $i, j, k \in J$, then $g \circ g' = gg' \colon A_k \to A_i$, and
        \item[ii.] the identity matrix $1_n$ provides the identity 
        homomorphism on $A_i$ for each $i \in J$.
    \end{enumerate}
	\item[b.]A \emph{homomorphism $\phi \colon A \to B$ of $\Delta$-module systems} indexed by $J$ is a collection of homomorphisms
	$\phi_i \colon A_i \to B_i$ for $i \in J$ such that  $\phi_i \circ g = g \circ \phi_j \colon A_j \to B_i$ for all $g \in \Delta_{i,j}$.
\end{enumerate}
\end{definition}

In \cite{rw}, Rhie and Whaples defined a right action of an abstract Hecke ring on group cohomology with coefficients in a module. Fixing a $\Delta$-module system $A$ indexed by some $J$, this applies in particular to give a right action of the abstract Hecke algebra for the Hecke pair $(\Delta_i,\Gamma_i)$ on $H^q(\Gamma_i,A_i)$ for all $q \ge 0$ and $i \in J$. 
The Hecke operator that we define of an element of $\Delta_{i,j}$ for $i,j \in J$ agrees with Rhie and Whaples's operator for the inverse matrix in the setting $i = j$ where our constructions can be compared (i.e., they work with $G$-modules, not $\Delta$-module systems). Their action arises from an action on homogeneous cochains, while we describe the corresponding action on inhomogeneous cochains.

Let us work somewhat more generally. We fix a second open subgroup $U'$ of $\GL_n(R)$ contained in $\tilde{\Delta}$ and then set $U'_i = x_i^{-1}U'x_i$ and $\Gamma'_i = U'_i \cap G$ for $i \in J$.
Given any $g \in \Delta_{i,j}$ for $i,j \in J$, we may decompose
the double coset $\Gamma'_i g \Gamma_j$ as a finite union
\begin{equation} \label{cosetreps}
	\Gamma'_i g \Gamma_j = \coprod_{t=1}^v g_t \Gamma_j
\end{equation}
for some $g_t \in \Delta_{i,j}$ for $1 \le t \le v$ and some $v \ge 1$.

\begin{definition}
	Given a choice of double coset decomposition for $g \in \Delta_{i,j}$ as in \eqref{cosetreps},
	we define the \emph{Hecke operator} of $\Gamma'_i g \Gamma_j$ on $f \colon \Gamma_j^q \to A_j$ as $T(g)f \colon (\Gamma'_i)^q \to A_i$ given by
	$$
		(T(g)f)(\gamma) = \sum_{t=1}^v g_{\sigma(t)} f(\mu_t),
	$$
	where for $\gamma = (\gamma_1, \ldots, \gamma_q) \in (\Gamma'_i)^q$, the elements 
	$\sigma \in S_v$ and $\mu_t \in \Gamma_j^q$ for $1 \le t \le v$ are defined as follows: 
	recursively setting $h_t^{(q)} = g_t$ and 
	$$
		\gamma_w h_t^{(w)} = h_t^{(w-1)} \mu_{t,w}
	$$ 
	with $\mu_{t,w} \in \Gamma_j$ and $h^{(w)}_t \in \{g_1, \ldots, g_v\}$ for $1 \le w \le q$, we take 
	$\mu_t = (\mu_{t,1}, \ldots, \mu_{t,q})$ and let $\sigma \in S_v$ be the unique permutation such that $g_{\sigma(t)} = h_t^{(0)}$ for
	each $1 \le t \le v$. 
\end{definition}

In general, the cochain $T(g)f$ depends upon the set of representatives $\{g_1, \ldots, g_v\}$ of $\Gamma'_i g \Gamma_j$.
However, fixing such a choice, one sees easily that $T(g)$ defines a map of cochain complexes, which also follows from the proof of the next result. This tells us that $T(g)$ is compatible with the connecting maps arising from short exact sequences of $\Delta$-module systems.

\begin{proposition} \label{Hecke_action}
	The above operation on cochains induces a homomorphism 
	$$
		T(g) \colon H^q(\Gamma_j,A_j) \to H^q(\Gamma'_i,A_i)
	$$ 
	that is independent of the choice of representatives in \eqref{cosetreps}.
\end{proposition}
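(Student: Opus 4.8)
The plan is to break the statement into two independent assertions: (1) that the given operation on cochains descends to a well-defined map on cohomology, and (2) that the resulting map does not depend on the choice of coset representatives $\{g_1,\dots,g_v\}$ in \eqref{cosetreps}. For (1), I would verify directly that $T(g)$ commutes with the inhomogeneous differential, i.e.\ that $T(g)(df) = d(T(g)f)$ for a cochain $f\colon \Gamma_j^q \to A_j$. This is a bookkeeping computation: one expands $d(T(g)f)(\gamma_0,\dots,\gamma_q)$ using the formula for $T(g)$, tracks how the recursive relations $\gamma_w h_t^{(w)} = h_t^{(w-1)}\mu_{t,w}$ propagate through the simplicial faces, and checks that the permutation $\sigma$ and the tuples $\mu_t$ for the two sides match up term by term. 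The point is that the recursion is ``associative'' in the sense that decomposing $\gamma_v\cdots\gamma_1 h_t^{(q)}$ can be done one factor at a time, and this is exactly what makes $T(g)$ a chain map; in particular it then follows that $T(g)$ preserves coboundaries and cocycles, inducing a map on $H^q$.

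For (2), suppose $\{g_1,\dots,g_v\}$ and $\{g_1',\dots,g_v'\}$ are two sets of representatives for $\Gamma_i' g\Gamma_j = \coprod_t g_t\Gamma_j = \coprod_t g_t'\Gamma_j$. After reindexing we may assume $g_t' = g_t \delta_t$ for some $\delta_t \in \Gamma_j$, for each $t$. I would then exhibit an explicit cochain homotopy, or more simply argue that the two chain maps $T(g)$ and $T(g)'$ agree after passing to cohomology by the standard averaging trick: for a cocycle $f$, one shows $T(g)f$ and $T(g)'f$ differ by an explicit coboundary built from the $\delta_t$ and the values of $f$. Concretely, $g_t' f(\mu_t') = g_t\delta_t f(\mu_t')$ and one uses the cocycle relation for $f$ together with $\delta_t \in \Gamma_j$ (so $\delta_t$ acts on $A_j$ through the module-system structure, and the relevant products $g_t\delta_t$, $g_t$ land in $\Delta_{i,j}$ by the monoid/ideal conditions) to rewrite the difference as $d$ of a degree $q-1$ cochain. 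The compatibility axioms (i) and (ii) in the definition of a $\Delta$-module system are what license all the identifications $g\circ g' = gg'$ needed to carry the $\delta_t$ across.

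The main obstacle is the combinatorial core of step (1): correctly setting up the recursion so that the permutation $\sigma$ appearing in the Hecke operator and the face maps of the bar complex interact cleanly. One has to be careful that the index $h_t^{(w)}$ genuinely depends only on $(\gamma_w,\dots,\gamma_q)$ and $t$ (not on the earlier $\gamma$'s), so that restricting to a face and then applying the recursion gives the same representatives as applying the recursion and then restricting — this is the crux and is where a sloppy treatment would break. Once the chain-map property is established, independence of representatives and descent to cohomology are routine; I would organize the write-up so that the chain-map verification is done once and carefully, and then both halves of the proposition, as well as the compatibility with connecting homomorphisms mentioned before the statement, fall out immediately. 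I would also remark that, since $\Gamma_j$ has finite index issues only through commensurability (guaranteed here because $\Gamma_i$ is commensurable with $\GL_n(\mc{O})$ and the commensurator is all of $G$), the decomposition \eqref{cosetreps} into finitely many cosets is legitimate, so all the finite sums defining $T(g)$ make sense.
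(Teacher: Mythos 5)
Your plan is essentially the classical direct approach, and it would work, but it is genuinely different from the route the paper takes. You propose to (1) verify by hand that $T(g)$ commutes with the inhomogeneous differential, tracking the recursion $\gamma_w h_t^{(w)} = h_t^{(w-1)}\mu_{t,w}$ through the face maps, and (2) show that changing representatives $g_t \mapsto g_t\delta_t$ with $\delta_t \in \Gamma_j$ alters $T(g)f$ by a coboundary (equivalently, produces chain-homotopic maps). Both steps are standard and correct in principle; the cocycle identity $\tau_t(\gamma\gamma') = \tau_{\sigma_{\gamma'}(t)}(\gamma)\tau_t(\gamma')$ together with $\sigma_{\gamma\gamma'} = \sigma_{\gamma}\sigma_{\gamma'}$ is exactly what makes your ``associativity of the recursion'' go through, and the $\Gamma_j$-equivariance of homogeneous cochains lets you absorb the $\delta_t$ up to homotopy. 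The paper instead sidesteps both combinatorial verifications at once: it introduces the auxiliary operator $S(g)$ on the complex $\Hom_{\Z[\Gamma_j]}(\Z[G^{q+1}],A_j)$ of equivariant cochains on the full group $G$, where the formula $S(g)F'(\delta_1,\ldots,\delta_{q+1}) = \sum_t g_t F'(g_t^{-1}\delta_1,\ldots,g_t^{-1}\delta_{q+1})$ is manifestly a chain map and manifestly independent of the choice of the $g_t$ (only the double coset enters), and then transports this back to the standard homogeneous complex along an explicit quasi-isomorphism $\Pi^{\bullet}$ built from a section of $G \to \Gamma_j\backslash G$ containing the $g_t^{-1}$. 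The identity $S(g)\circ\Pi^q = \Pi^q\circ T(g)$ (on $\Gamma'_i$-tuples) then yields both the chain-map property and the choice-independence simultaneously, with the translation to inhomogeneous cochains done at the very end. What your approach buys is elementariness and an explicit homotopy; what the paper's buys is economy — no face-by-face bookkeeping and no separately constructed homotopy — which matters here because the same argument is reused to get compatibility with connecting maps for short exact sequences of $\Delta$-module systems. If you execute your plan, the crux you correctly identify (that $h_t^{(w)}$ depends only on $\gamma_{w+1},\ldots,\gamma_q$ and $t$, so restriction to a face commutes with the recursion) must be written out carefully, and the homotopy in step (2) must actually be exhibited rather than asserted.
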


\begin{proof}
	For all $\gamma \in \Gamma'_i$ and $1 \le t \le v$, write 
	$$
		\gamma g_t = g_{\sigma_{\gamma}(t)} \tau_t(\gamma)
	$$
	for some $\tau_t(\gamma) \in \Gamma_j$ and permutation $\sigma_{\gamma} \in S_v$. One can easily check that
	$\sigma_{\gamma \gamma'} = \sigma_{\gamma} \sigma_{\gamma'}$ and 
	$\tau_t(\gamma \gamma') = \tau_{\sigma_{\gamma'}(t)}(\gamma) \tau_t(\gamma')$ for $\gamma, \gamma' \in \Gamma'_i$. 

	For $F \in \Hom_{\Z[\Gamma_j]}(\Z[\Gamma_j^{q+1}],A_j)$, we define
	$T(g)F \in \Hom_{\Z[\Gamma'_i]}(\Z[(\Gamma'_i)^{q+1}],A_i)$ by
	\begin{equation} \label{Hecke_homog}
		T(g)F(\gamma_1, \ldots, \gamma_{q+1}) = \sum_{t=1}^v g_t F(\tau_{\sigma_{\gamma_1}^{-1}(t)}(\gamma_1), 
		\ldots, \tau_{\sigma_{\gamma_{q+1}}^{-1}(t)}(\gamma_{q+1}))
	\end{equation}
	for $g \in \Delta_{i,j}$ and $\gamma_1, \ldots, \gamma_{q+1} \in \Gamma'_i$. 
	
	We compare $T(g)$ with an operator
	$$
		S(g) \colon \Hom_{\Z[\Gamma_j]}(\Z[G^{q+1}],A_j) \to \Hom_{\Z[\Gamma'_i]}(\Z[G^{q+1}],A_i)
	$$ 
	given on $F' \in \Hom_{\Z[\Gamma_j]}(\Z[G^{q+1}],A_j)$ by
	$$
		S(g)F'(\delta_1, \ldots, \delta_{q+1}) = \sum_{t=1}^v g_t F'(g_t^{-1}\delta_1, \ldots, g_t^{-1}\delta_{q+1})
	$$
	for $\delta_1, \ldots, \delta_{q+1} \in G$, which clearly commutes with the standard differentials and 
	is independent of all choices.
	
	Define 
	$$
		\Pi^q \colon \Hom_{\Z[\Gamma_j]}(\Z[\Gamma_j^{q+1}],A_j) \to \Hom_{\Z[\Gamma_j]}(\Z[G^{q+1}],A_j)
	$$ 
	on $F \in \Hom_{\Z[\Gamma_j]}(\Z[\Gamma_j^{q+1}],A_j)$ by $\Pi^q(F) = F \circ \pi^{q+1}$, where
	$\pi \colon G \to \Gamma_j$ is given by 
	$$
		\pi(h) = h \cdot s(\Gamma_j h)^{-1}
	$$
	for a chosen section $s$ of the canonical surjection $G \to \Gamma_j \backslash G$ that contains
	$g_t^{-1}$ for $1 \le t \le v$ in its image. These (noncanonical) maps $\Z[G^{q+1}] \to \Z[\Gamma_j^{q+1}]$ give a map of augmented
	$\Z[\Gamma_j]$-projective resolutions of $\Z$ for the standard differentials, so $\Pi^{\cdot}$ is a quasi-isomorphism.
    
    Let $F\in \Hom_{\Z[\Gamma_j]}(\Z[\Gamma_j^{q+1}],A_j)$, and let $F' = \Pi^q(F)$.
	We have
	\begin{align*}
		S(g)F'(\gamma_1, \ldots, \gamma_{q+1}) 
		&= \sum_{t=1}^v g_t F'(\tau_{\sigma_{\gamma_1}^{-1}(t)}(\gamma_1)g_{\sigma_{\gamma_1}^{-1}(t)}^{-1},
		\ldots,\tau_{\sigma_{\gamma_{q+1}}^{-1}(t)}(\gamma_{q+1})g_{\sigma_{\gamma_{q+1}}^{-1}(t)}^{-1})\\
		&= \sum_{t=1}^v g_t F(\tau_{\sigma_{\gamma_1}^{-1}(t)}(\gamma_1), \ldots, \tau_{\sigma_{\gamma_{q+1}}^{-1}(t)}(\gamma_{q+1})),
	\end{align*}
	where the first equality follows by definition and the second since $\pi(\mu g_t^{-1}) = \mu$ for $\mu \in \Gamma_j$ and $1 \le t \le v$. 
	Since this restriction also induces a quasi-isomorphism of complexes in the opposite direction to $\Pi^{\cdot}$,
	it follows that $T(g)$ is a map of complexes. Since $S(g)$ is entirely independent 
	of choices, the maps on cohomology induced by $T(g)$ are independent of choices as well.
	
	Given $f \colon \Gamma_j^q \to A_j$, consider the unique homogeneous cochain $F$ as above such that
	$$
		f(\gamma_1, \ldots, \gamma_q) = F(1,\gamma_1, \ldots, \gamma_1 \cdots \gamma_q),
	$$ 
	and recall that this induces an isomorphism between the inhomogeneous and homogeneous cochain complexes.
	The proposition now follows from the computation
	\begin{align*}
		T(g)F(1,\gamma_1, \ldots, \gamma_1 \cdots \gamma_q)
		&= \sum_{t=1}^v g_t F(1,\tau_{\sigma_{\gamma_1}^{-1}(t)}(\gamma_1), 
		\ldots, \tau_{\sigma_{\gamma_1}^{-1}(t)}(\gamma_1) \cdots \tau_{\sigma_{\gamma_q}^{-1}(t)}(\gamma_q))\\
		&= \sum_{t=1}^v g_tf(\tau_{\sigma_{\gamma_1}^{-1}(t)}(\gamma_1), 
		\ldots, \tau_{\sigma_{\gamma_1 \cdots \gamma_q}^{-1}(t)}(\gamma_q))\\
		&= \sum_{t=1}^v g_{\sigma_{\gamma_1 \cdots \gamma_q}(t)} f(\tau_{\sigma_{\gamma_2 \ldots \gamma_q}(t)}(\gamma_1),
		\ldots, \tau_t(\gamma_q))\\
		&= T(g)f(\gamma_1, \ldots, \gamma_q),
	\end{align*}
	as $\sigma = \sigma_{\gamma_1 \cdots \gamma_q}$ and $\mu_{t,w} =  \tau_{\sigma_{\gamma_{w+1} \cdots \gamma_q}(t)}(\gamma_w)$
	for $1 \le t \le v$ and $1 \le w \le q$.
\end{proof}

We have an alternative description of $T(g)$, following along similar lines to \cite[9.4(c)]{hida}.
For $g \in \Delta_{i,j}$, consider the operator 
$$
	\phi_g \colon H^q(\Gamma_j \cap g^{-1}\Gamma'_i g, A_j) \to H^q(g\Gamma_jg^{-1} \cap \Gamma'_i,A_i)
$$
induced by the map taking a cochain $f \colon (\Gamma_j \cap g^{-1}\Gamma'_i g)^q \to A_j$ to the cochain
$\phi_g(f)$ satisfying 
$$
	\phi_g(f)(\gamma_1, \ldots, \gamma_q) = g f(g^{-1}\gamma_1g, \ldots, g^{-1}\gamma_qg)
$$
for $\gamma_1,\ldots,\gamma_q \in g\Gamma_jg^{-1} \cap \Gamma'_i$.

\begin{proposition} \label{Hecke_corr}
	The Hecke operator $T(g)$ for $g \in \Delta_{i,j}$ equals the composition
	$$
		H^q(\Gamma_j,A_j) \xrightarrow{\res} H^q(\Gamma_j \cap g^{-1}\Gamma'_i g, A_j) \xrightarrow{\phi_g}
		H^q(g\Gamma_jg^{-1} \cap \Gamma'_i,A_i) \xrightarrow{\cor} H^q(\Gamma'_i,A_i),
	$$
	where $\res$ and $\cor$ denote restriction and corestriction, respectively.
\end{proposition}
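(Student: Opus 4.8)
The plan is to realize the three maps $\res$, $\phi_g$, $\cor$ simultaneously on the resolutions $\Z[G^{\bullet+1}]$ of $\Z$ already used in the proof of Proposition~\ref{Hecke_action}, perform the composition there, and recognize the outcome as the operator $S(g)$ appearing in that proof, which was shown to induce $T(g)$ on cohomology.

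First I would fix the coset data compatibly. Since the decomposition \eqref{cosetreps} is finite, orbit--stabilizer applied to the transitive left $\Gamma'_i$-action on $\Gamma'_i g\Gamma_j/\Gamma_j$ identifies $v$ with the index $[\Gamma'_i : \Gamma'']$, where $\Gamma'' := g\Gamma_j g^{-1} \cap \Gamma'_i$ is precisely the middle group in the asserted composition; moreover one may, and I will, take $g_t = \delta_t g$ for $\{\delta_t\}_{t=1}^v$ a left transversal of $\Gamma''$ in $\Gamma'_i$, and conversely every choice in \eqref{cosetreps} has this shape. Note also that $\Gamma_j \cap g^{-1}\Gamma'_i g = g^{-1}\Gamma'' g$, so $\phi_g$ is the conjugation isomorphism from the $A_j$-cohomology of $g^{-1}\Gamma'' g$ to the $A_i$-cohomology of $\Gamma''$.

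Next I would write down the cochain-level avatars on $\Z[G^{\bullet+1}]$: restriction is the tautological inclusion $\Hom_{\Z[\Gamma_j]}(\Z[G^{\bullet+1}],A_j) \hookrightarrow \Hom_{\Z[g^{-1}\Gamma''g]}(\Z[G^{\bullet+1}],A_j)$; conjugation is $F \mapsto \big(x_\bullet \mapsto g\cdot F(g^{-1}x_\bullet)\big)$, landing in $\Hom_{\Z[\Gamma'']}(\Z[G^{\bullet+1}],A_i)$; and corestriction is the transfer $F \mapsto \big(x_\bullet \mapsto \sum_{t} \delta_t \cdot F(\delta_t^{-1}x_\bullet)\big)$, landing in $\Hom_{\Z[\Gamma'_i]}(\Z[G^{\bullet+1}],A_i)$. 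Composing these and using $g_t = \delta_t g$ together with $\delta_t \circ g = \delta_t g$ (module-system axiom (i)) gives
$$
(\cor \circ \phi_g \circ \res)(F)(x_\bullet) = \sum_{t} (\delta_t g)\cdot F\big((\delta_t g)^{-1}x_\bullet\big) = \sum_{t} g_t \cdot F(g_t^{-1}x_\bullet) = S(g)(F)(x_\bullet),
$$
and the proof of Proposition~\ref{Hecke_action} identifies the cohomology operator induced by $S(g)$ with $T(g)$, which is the claim.

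The routine but genuinely load-bearing steps are the verifications that these cochain formulas compute the intended cohomological operations. For restriction and corestriction this is classical (the transfer formula on any resolution free over both groups). For conjugation one must check two things: that $F \mapsto g\cdot F(g^{-1}(\cdot))$ really lands in $\Gamma''$-equivariant, $A_i$-valued cochains and is a chain map --- this is exactly where the $\Delta$-module-system compatibility $\gamma \circ g = g \circ (g^{-1}\gamma g)$ for $\gamma \in \Gamma''$ is needed --- and that it represents the same cohomology class as the inhomogeneous formula $f \mapsto g\cdot f(g^{-1}(\cdot)g)$ of the statement; the two differ on $\Z[G^{\bullet+1}]$ only by precomposition with right translation by $g$, which is $\Z[H]$-chain-homotopic to the identity for any subgroup $H \le G$ since both lift $\id_\Z$ on the free resolution $\Z[G^{\bullet+1}]$. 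This homotopy point is the only subtlety I expect to require real care; everything else is a substitution.
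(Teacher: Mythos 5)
Your argument is correct, and it reaches the same identity the paper proves, but by a noticeably different computational route. The paper stays on the homogeneous cochain complexes indexed by the subgroups themselves: it writes $\Gamma'_i = \coprod_t \nu_t(g\Gamma_jg^{-1}\cap\Gamma'_i)$, sets $g_t = \nu_t g$, tracks the permutations $\sigma_\gamma$ and the elements $\tau'_t(\gamma)$, writes out the transfer formula on homogeneous cochains, and matches the composite term by term with the homogeneous formula \eqref{Hecke_homog} for $T(g)$. You instead realize all three maps on the single resolution $\Z[G^{\bullet+1}]$, where restriction, conjugation, and transfer each have choice-free closed formulas, so the composite collapses in one line to the operator $S(g)$ from the proof of Proposition \ref{Hecke_action}, which that proof already identifies with $T(g)$. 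What your route buys is the elimination of the $\sigma_\gamma$/$\tau'_t$ bookkeeping entirely; what it costs is the need to certify that your formulas on $\Z[G^{\bullet+1}]$ really compute $\res$, $\phi_g$, and $\cor$ --- in particular the conjugation step, where your observation that $x_\bullet \mapsto g^{-1}x_\bullet$ and $x_\bullet \mapsto g^{-1}x_\bullet g$ are both $\alpha$-equivariant lifts of $\id_{\Z}$ (for $\alpha(\gamma)=g^{-1}\gamma g$) and hence chain homotopic is exactly the right justification, and the equivariance of $x_\bullet \mapsto g\cdot F(g^{-1}x_\bullet)$ uses the $\Delta$-module-system axiom $\gamma\circ g = g\circ(g^{-1}\gamma g)$ just as you say. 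Both proofs ultimately lean on the infrastructure of Proposition \ref{Hecke_action} (yours on the fact that $S(g)$ induces $T(g)$, the paper's on the formula \eqref{Hecke_homog}), so neither is more self-contained, but yours is arguably the cleaner of the two.
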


\begin{proof}
	Write
	$$
		\Gamma'_i = \coprod_{t=1}^v \nu_t(g\Gamma_jg^{-1} \cap \Gamma'_i).
	$$
	with $\nu_t \in \Gamma'_i$ for $1 \le t \le v$.
	Then
	$$
		\Gamma'_i g \Gamma_j = \coprod_{t=1}^v \nu_t (g\Gamma_jg^{-1} \cap \Gamma'_i)g \Gamma_j
		= \coprod_{t=1}^v \nu_t g  \Gamma_j,
	$$
	so setting $g_t = \nu_t g \in \Delta_{i,j}$, we have a coset decomposition as in \eqref{cosetreps}.
	For $\gamma \in \Gamma'_i$, let $\sigma_{\gamma} \in S_v$ and $\tau'_t(\gamma) \in 
	g\Gamma_jg^{-1} \cap \Gamma'_i$ for $1 \le t \le v$ be defined by $\gamma \nu_t = \nu_{\sigma_{\gamma}(t)} \tau'_t(\gamma)$.
	This implies that $\gamma g_t = g_{\sigma_{\gamma}(t)} \cdot  g^{-1} \tau'_t(\gamma) g$,
	which means that 
	$$
		\tau_t(\gamma) = g^{-1}\tau'_t(\gamma)g \in \Gamma_j \cap g^{-1}\Gamma'_ig,
	$$ 
	where $\tau_t(\gamma)$ is as in the proof of Proposition \ref{Hecke_action}.
	
	The corestriction map on homogeneous cochains sends
	$C \colon (g\Gamma_jg^{-1} \cap\Gamma'_i )^{q+1}\to A_i$ to
	$$
		\cor(C)(\gamma_1, \ldots, \gamma_{q+1}) = \sum_{t=1}^v \nu_t C(\tau'_{\sigma_{\gamma_1}^{-1}(t)}(\gamma_1), 
		\ldots, \tau'_{\sigma_{\gamma_{q+1}}^{-1}(t)}(\gamma_{q+1}))
	$$
	for $\gamma_1, \ldots, \gamma_{q+1} \in \Gamma'_i$
    (cf.~\cite[Section 1.5]{nsw}).
	Then, given a homogeneous cochain $F \colon \Gamma_j^{q+1} \to A_j$, we have
	\begin{align*}
		\cor(\phi_g(\res(F)))(\gamma_1, \ldots, \gamma_{q+1})
		&=  \sum_{t=1}^v \nu_t \phi_g(\res(F))(\tau'_{\sigma_{\gamma_1}^{-1}(t)}(\gamma_1), 
		\ldots, \tau'_{\sigma_{\gamma_{q+1}}^{-1}(t)}(\gamma_{q+1}))\\
		&= \sum_{t=1}^v g_t F(\tau_{\sigma_{\gamma_1}^{-1}(t)}(\gamma_1), 
		\ldots, \tau_{\sigma_{\gamma_{q+1}}^{-1}(t)}(\gamma_{q+1}))\\
		&= T(g)F(\gamma_1, \ldots, \gamma_{q+1}).
	\end{align*}
\end{proof}

We end this subsection by explaining how an element $y \in \tilde{\Delta}$ with a certain property gives rise to Hecke operators $T(g)$ on any $H^q(\Gamma_j,A_j)$.

\begin{proposition} \label{adele_gives_Hecke}
	Let $y \in \tilde{\Delta}$ be such that $\det(U' \cap yUy^{-1}) = R^{\times}$, and let $i, j \in J$ be such that $\det(x_i^{-1}yx_j)$ has trivial ideal class.
	Then there exists $g \in \Delta_{i,j}$ such that $U' x_igx_j^{-1}U = U'yU$, and the coset $\Gamma'_i g \Gamma_j$
	is independent of the choice of $g$. In particular, $T(g)$ provides a Hecke operator
	$$
		T_{\mb{A}}(y) \colon H^q(\Gamma_j,A_j) \to H^q(\Gamma'_i,A_i)
	$$
	depending only the double coset $U'yU$.
\end{proposition}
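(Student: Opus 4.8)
The plan is to reduce the statement, by an explicit change of variables, to a density argument resting on strong approximation for $\SL_n$; the hypothesis $\det(U' \cap yUy^{-1}) = R^\times$ is precisely what removes the ideal-class obstruction. Write $z = x_i^{-1}yx_j \in \tilde{\Delta}_{i,j}$. The map $a \mapsto x_i a x_j^{-1}$ is a bijection of $\mc{G} = \GL_n(\mb{A}_F^f)$ taking $U'_i a U_j$ to $U'(x_i a x_j^{-1})U$, and in particular $U'_i z U_j$ to $U'yU$. So producing $g \in \Delta_{i,j}$ with $U'x_igx_j^{-1}U = U'yU$ is the same as producing $g \in \GL_n(F) \cap \tilde{\Delta}_{i,j} = \Delta_{i,j}$ inside the open double coset $U'_i z U_j$, and the independence claim becomes the assertion that any two such $g$ lie in a single $\Gamma'_i$--$\Gamma_j$ double coset. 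First I would record the determinant consequences of the hypothesis: since $\det(gHg^{-1}) = \det(H)$ for any subgroup $H$, we get $\det(U' \cap yUy^{-1}) \subseteq \det(U') \cap \det(U) \subseteq R^\times$, so the hypothesis forces $\det(U) = \det(U') = R^\times$; consequently $\det(U_j) = \det(U'_i) = R^\times$ and $\det(U'_i \cap zU_jz^{-1}) = R^\times$ as well. (Note that $U'_i$, $U_j$, and their conjugates need not lie in $\GL_n(R)$, but they still have full determinant, which is what matters.)

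For existence: $\det$ is an open map, so $U'_i z U_j$ is open with $\det(U'_i z U_j) = \det(U'_i)\det(z)\det(U_j) = \det(z)R^\times$, and since $\det(z)$ has trivial ideal class we have $\det(z) \in F^\times R^\times$, so there is $h_0 \in U'_i z U_j$ with $\det h_0 = \lambda \in F^\times$. Choose $d \in \GL_n(F)$ with $\det d = \lambda$; then $d^{-1}(U'_i z U_j)$ is an open neighbourhood of $d^{-1}h_0 \in \SL_n(\mb{A}_F^f)$, which by strong approximation for $\SL_n$ over $F$ (trivial for $n=1$; for $n \ge 2$ using noncompactness of $\SL_n$ at an archimedean place) meets $\SL_n(F)$, say in $s$. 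Then $g := ds \in \GL_n(F) \cap U'_i z U_j$, and since $g \in U'_i z U_j \subseteq \tilde{\Delta}_i\tilde{\Delta}_{i,j}\tilde{\Delta}_j \subseteq \tilde{\Delta}_{i,j}$ we get $g \in \Delta_{i,j}$; transporting back, $x_igx_j^{-1} \in U'yU$, i.e.\ $U'x_igx_j^{-1}U = U'yU$.

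For independence, let $g'$ be a second such element, so $g' \in U'_i g U_j$, say $g' = agb$ with $a \in U'_i$ and $b \in U_j$. Put $h = g'g^{-1} = a(gbg^{-1}) \in U'_i \cdot gU_jg^{-1}$; it suffices to show $h \in (U'_i \cap G)(gU_jg^{-1}\cap G) = \Gamma'_i \cdot g\Gamma_jg^{-1}$, for then $g' = hg \in \Gamma'_i g \Gamma_j$, and symmetrically $g \in \Gamma'_i g' \Gamma_j$. One checks that the $\lambda_1 \in U'_i$ admitting a factorization $h = \lambda_1\lambda_2$ with $\lambda_2 \in gU_jg^{-1}$ are exactly those in the coset $a(U'_i \cap gU_jg^{-1})$; and since $g \in U'_i z U_j$ one has $gU_jg^{-1} = p(zU_jz^{-1})p^{-1}$ for some $p \in U'_i$, so that $\det(U'_i \cap gU_jg^{-1}) = \det(U'_i \cap zU_jz^{-1}) = R^\times$. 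Hence $\det$ of the coset $a(U'_i \cap gU_jg^{-1})$ is all of $R^\times$ and therefore meets $F^\times$, and the same strong-approximation argument as above produces $\lambda_1$ in this coset with $\lambda_1 \in G$, hence $\lambda_1 \in U'_i \cap G = \Gamma'_i$; then $\lambda_2 = \lambda_1^{-1}h \in gU_jg^{-1} \cap G = g\Gamma_jg^{-1}$. This proves $\Gamma'_i g\Gamma_j$ is independent of the choice of $g$, and then $T(g)$ on $H^q(\Gamma_j,A_j)$ depends only on this double coset (by Proposition \ref{Hecke_action} it is independent of the coset representatives in \eqref{cosetreps}, and replacing $g$ within $\Gamma'_i g \Gamma_j$ leaves \eqref{cosetreps} available), hence only on $U'yU$; this is the asserted operator $T_{\mb{A}}(y)$.

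I expect the crux to be the determinant bookkeeping above: the point is that $\det(U' \cap yUy^{-1}) = R^\times$ is rigid enough to force $\det(U) = \det(U') = R^\times$ and, along the entire double coset, $\det(U'_i \cap gU_jg^{-1}) = R^\times$, so that every coset whose rationality must be checked has full determinant $R^\times$ and therefore meets $F^\times$. Once that is in hand, strong approximation for $\SL_n$ settles both the existence and the independence steps with no further difficulty; the only technical nuisance is that one must phrase everything in terms of open compact subgroups of $\GL_n(\mb{A}_F^f)$ rather than of $\GL_n(R)$, since $U'_i$, $U_j$, and their conjugates generally do not lie in $\GL_n(R)$.
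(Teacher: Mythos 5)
Your proof is correct and takes essentially the same route as the paper's: both the existence and the independence steps reduce to the fact that an open subset of $\mc{G}$ whose determinant image is a coset of $R^{\times}$ with trivial ideal class must meet $\GL_n(F)$, which is strong approximation for $\SL_n$. The paper packages this as the bijectivity of $\det$ on $G \backslash \mc{G} / V$ for compact open $V$ with $\det(V) = R^{\times}$, and your factorization $h = \lambda_1\lambda_2$ is the paper's identity $U'_i = \Gamma'_i W$ for $W = U'_i \cap gU_jg^{-1}$ in a slightly different guise.
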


\begin{proof}
	By strong approximation, 
	$\det \colon G \backslash \mc{G} / V \to (\mb{A}_F^f)^{\times}/F^{\times}\det(V)$ is a bijection
	for any open compact subgroup $V$ of $\mc{G}$. Of course, we have $(\mb{A}_F^f)^{\times}/F^{\times}R^{\times} = \Cl(F)$, so
	if $\det(V) = R^{\times}$, then since $\det(x_i^{-1}yx_j)$ is trivial in $\Cl(F)$, there exist $g \in G$ and $v \in V$ 
	such that $g = x_i^{-1} y x_j v$. 
	In particular, taking such a $V$ contained in $U_j$, we have $g \in \tilde{\Delta}_{i,j} \cap G = \Delta_{i,j}$.
	Further taking $V = x_j^{-1}(y^{-1}U'y \cap U)x_j$ so that $v \in x_j^{-1}y^{-1}U'y x_j$, we then have
	$x_igx_j^{-1} \in U'y$, yielding the equality of double cosets. 
	
	Let $g \in \Delta_{i,j}$ be as in the statement, which tells us that it satisfies $U'_igU_j = U'_i x_i^{-1} y x_j U_j$.
	Set $W = U'_i \cap gU_jg^{-1}$, and note that $W = x_i^{-1}u^{-1}(U' \cap yUy^{-1})ux_i$ for $u \in U'$ such that $g \in x_i^{-1}u^{-1}yx_j U_j$, so
	$\det(W) = R^{\times}$ by assumption. 
	By strong approximation again, $G \backslash \mc{G} / U'_i \to G \backslash \mc{G}/ W$ is a bijection, so 
	$(G \cap U'_i)\backslash U_i' /W$ is a singleton, which is to say that $U'_i = \Gamma'_i W$.
	In particular, we have 
	$$
	 	U'_i gU_j = \Gamma'_i g (g^{-1}Wg) U_j = \Gamma'_i g U_j.
	$$ 
	It follows that $U'_i x_i^{-1} y x_j U'_j \cap G = \Gamma'_i g \Gamma_j$, and therefore the latter double coset is independent of $g$.
\end{proof}

\subsection{Hecke operators as correspondences} \label{Hecke_op_corr}

The description of Hecke operators given by Proposition \ref{Hecke_corr} allows for comparison with Hecke actions on locally symmetric spaces.  Let $A$ be a $\Delta$-module system indexed by a set $J = \{ f(r) \mid r \in I \}$ for a function $f \colon I \to I^n$ such that each $\mf{a}_r^{-1} \prod_{u=1}^n \mf{a}_{f(r)_u}$ is principal (e.g., $f(r) = (r,1,\ldots,1)$). In this subsection, we shall henceforth identify $J$ with $I$ so that $A_{f(r)}$ is denoted more simply by $A_r$, and similarly with other subscripts, such as on $\Gamma$.

Let $\mb{H}_{n,F}$ denote the symmetric space $\GL_n(F \otimes_{\Q} \R)/(F \otimes_{\Q} \R)^{\times}\mr{O}_n(F \otimes_{\Q} \R)$,
where $\mr{O}_n(F \otimes_{\Q} \R)$ agrees with the product of degree $n$ orthogonal and unitary groups of the completions of $F$ at its real and complex places, respectively.
Suppose that $\det(U) = R^{\times}$, in which case the locally symmetric space 
$$
	Y(U) = \GL_n(F) \backslash (\GL_n(\mb{A}_F^f) \times \mb{H}_{n,F}) / U,
$$ 
with $G = \GL_n(F)$ acting diagonally and $U$ acting on $\mc{G} = \GL_n(\mb{A}_F^f)$ on the right,
is a disjoint union of components homeomorphic to $Y_r = \Gamma_r \backslash \mb{H}_{n,F}$ for $r \in I$, with the inclusion of $Y_r$ in $Y = Y(U)$ given by $z \mapsto (x_r^{-1},z)$.

The coefficient system $\mc{A} = \mc{A}(U)$ on $Y$ given by
$\mc{A}_r = \mc{A}_r(U) = \Gamma_r \backslash (\mb{H}_{n,F} \times A_r)$ on $Y_r$ gives rise to a constructible sheaf on $Y$ 
(i.e., of its continuous sections) that we again give the notation $A$, with its restriction to $Y_r$ denoted $A_r$.

For each $r \in I$, we suppose that the orders of all torsion elements in $\Gamma_r$ act invertibly on $A_r$ and that the
scalar elements in $\Gamma_r$ act trivially on $A_r$.
We then have isomorphisms
\begin{equation} \label{isom_cohom}
	\bigoplus_{r \in I} H^q(\Gamma_r,A_r) \cong H^q(Y,A)
\end{equation}
for $q \ge 0$. These can be described as the sum over $r$ of the compositions
$$
	H^q(\Gamma_r,A_r) \xrightarrow{\sim} H^q(\Gamma_r,H^0(\mb{H}_{n,F},A_r)) \xrightarrow{\sim} H^q(Y_r,A_r)
$$
the first map coming from the canonical isomorphism $A_r \xrightarrow{\sim} H^0(\mb{H}_{n,F},A_r)$.
The inverse of the second map is induced by the chain map taking an $A_r$-valued simplicial $q$-cochain $F$ on $Y_r$ to the cochain $f \colon \Gamma_r^q \to A_r$ such that $f(\gamma_1,\ldots,\gamma_q)$ is the 
function taking $x \in \mb{H}_{n,F}$ to the 
value of $F$ on the image in $Y_r$ of the geodesic $q$-simplex on $\mb{H}_{n,F}$ with vertices $x, \gamma_1 x, \ldots, \gamma_1 \ldots \gamma_q x$.

We define actions of Hecke operators on the left of \eqref{isom_cohom}. We focus here on the case that $U = U'$ for simplicity of 
notation, but distinct $U$ and $U'$ can be treated by making the necessary changes of notation.
For $g \in \GL_n(F)$, set $Y_{r,s}^g = (\Gamma_r \cap g\Gamma_sg^{-1}) \backslash \mb{H}_{n,F}$,
and consider the coefficient system
$$
	\mc{A}_{r,s}^g =  (\Gamma_r \cap g\Gamma_s g^{-1}) \backslash (\mb{H}_{n,F} \times A_r).
$$
Any $g \in \Delta_{r,s}$ defines a homeomorphism $g \colon Y_{s,r}^{g^{-1}} \to Y_{r,s}^g$ 
via left multiplication on $\mb{H}_{n,F}$. 
Together with the map $g \colon A_s \to A_r$, this induces a map
$\phi_g \colon H^q(Y_{s,r}^{g^{-1}},A_s) \to H^q(Y_{r,s}^g,A_r)$ on sheaf cohomology. 

\begin{definition}
For $g \in \Delta_{r,s}$, 
we define the \emph{Hecke operator}
$T(g) \colon H^q(Y_s,A_s) \to H^q(Y_r,A_r)$ of $g$ by
$$
	H^q(Y_s,A_s) \xrightarrow{\res} H^q(Y_{s,r}^{g^{-1}},A_s) \xrightarrow{\phi_g} H^q(Y_{r,s}^g,A_r)
	\xrightarrow{\cor} H^q(Y_r,A_r),
$$
where here the maps $\res$ and $\cor$ are restriction and trace, respectively. 
\end{definition}

The following is then a consequence
of Proposition \ref{Hecke_corr}.

\begin{proposition} \label{Hecke_gp_top}
	For $g \in \Delta_{r,s}$, the diagram
	$$
	\begin{tikzcd}
		H^q(\Gamma_s,A_s) \arrow{r}{T(g)}  \arrow{d}{\wr} & H^q(\Gamma_r,A_r) \arrow{d}{\wr} \\
		H^q(Y_s,A_s) \arrow{r}{T(g)} & H^q(Y_r,A_r) 
	\end{tikzcd}
	$$
	commutes.
\end{proposition}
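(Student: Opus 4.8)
The plan is to reduce everything to Proposition \ref{Hecke_corr}. By that proposition the top horizontal arrow is the composite $\cor\circ\phi_g\circ\res$ factoring through $H^q(\Gamma_s\cap g^{-1}\Gamma_r g,A_s)$ and $H^q(g\Gamma_s g^{-1}\cap\Gamma_r,A_r)$, while by definition the bottom arrow is the composite $\cor\circ\phi_g\circ\res$ factoring through $H^q(Y_{s,r}^{g^{-1}},A_s)$ and $H^q(Y_{r,s}^g,A_r)$. So it suffices to produce vertical isomorphisms for these two intermediate cohomology groups — the outer two isomorphisms being instances of \eqref{isom_cohom} — fitting into a diagram of three commuting squares, one for each of $\res$, $\phi_g$, and $\cor$. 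The key preliminary observation is that the isomorphism \eqref{isom_cohom}, together with its explicit chain-level description, applies verbatim with $\Gamma_r$ (resp.\ $\Gamma_s$) replaced by any subgroup acting on the same coefficient group: the hypotheses that the orders of torsion elements act invertibly and that scalar elements act trivially are inherited by subgroups, and a subgroup still acts properly discontinuously on the contractible space $\mb{H}_{n,F}$. This furnishes the desired identifications $H^q(\Gamma_s\cap g^{-1}\Gamma_r g,A_s)\cong H^q(Y_{s,r}^{g^{-1}},A_s)$ and $H^q(g\Gamma_s g^{-1}\cap\Gamma_r,A_r)\cong H^q(Y_{r,s}^g,A_r)$, where $Y_{s,r}^{g^{-1}}$ and $Y_{r,s}^g$ are the quotients of $\mb{H}_{n,F}$ by those groups.

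Next I would verify the three squares directly on the simplicial cochain model recalled after \eqref{isom_cohom}, in which a class in $H^q(Y_r,A_r)$ is represented by a simplicial cochain $F$ whose associated group cochain sends $(\gamma_1,\dots,\gamma_q)$ to the function taking $x\in\mb{H}_{n,F}$ to the value of $F$ on the image in $Y_r$ of the geodesic $q$-simplex with vertices $x,\gamma_1 x,\dots,\gamma_1\cdots\gamma_q x$. For $\res$: the covering map $Y_{s,r}^{g^{-1}}\to Y_s$ lifts such a geodesic simplex to one with the same ordered vertex tuple, so pullback of simplicial cochains corresponds exactly to restriction of group cochains. For $\phi_g$: left multiplication by $g$ on $\mb{H}_{n,F}$ carries the simplex with vertices $x,\gamma_1 x,\dots$ to the one with vertices $gx,(g\gamma_1 g^{-1})(gx),\dots,(g\gamma_1\cdots\gamma_q g^{-1})(gx)$, which, combined with the coefficient map $g\colon A_s\to A_r$, is precisely the formula $\phi_g(f)(\gamma_1,\dots,\gamma_q)=g\,f(g^{-1}\gamma_1 g,\dots,g^{-1}\gamma_q g)$ used on group cochains. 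For $\cor$: fixing a decomposition $\Gamma_r=\coprod_t\nu_t(g\Gamma_s g^{-1}\cap\Gamma_r)$ as in the proof of Proposition \ref{Hecke_corr}, the finite covering $Y_{r,s}^g\to Y_r$ has its fibers indexed by the $\nu_t$, and its transfer sends a simplex downstairs to the sum of the simplices lying over it, whose vertex tuples are obtained by applying the $\nu_t$; unwinding the simplicial model turns this into the corestriction formula for homogeneous cochains written in that proof.

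The main obstacle is the square for $\cor$: one must carefully match the topological transfer of the finite-sheeted covering $Y_{r,s}^g\to Y_r$ with the algebraic corestriction, keeping precise track at the chain level of how the sheets and the coset representatives $\nu_t$ permute geodesic simplices and act on their vertex tuples and on coefficients, in a way consistent with the noncanonical section chosen in the proof of Proposition \ref{Hecke_action}. Once this is pinned down, the squares for $\res$ and $\phi_g$ are essentially immediate from the geodesic-simplex description, and assembling the three squares with Proposition \ref{Hecke_corr} gives the asserted commutativity.
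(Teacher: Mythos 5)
Your proposal is correct and follows exactly the route the paper intends: the paper states the proposition as "a consequence of Proposition \ref{Hecke_corr}," leaving to the reader precisely the verification you spell out, namely that both horizontal maps factor as $\cor\circ\phi_g\circ\res$ and that the three resulting squares commute under the chain-level identification of group cohomology with simplicial cohomology of the quotients. Your identification of the transfer/corestriction square as the only step requiring real care is apt, and the geodesic-simplex bookkeeping you describe is the standard way to close it.
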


In particular, it follows from Proposition \ref{Hecke_action} that the Hecke
operator $T(g) \colon H^q(Y_s,A_s) \to H^q(Y_r,A_r)$ is independent of the
choice of representative of $\Gamma_r g \Gamma_s$.

We can also view these $T(g)$ for $g \in \Delta_{r,s}$ as coming from a single adelic operator.
Being a bit unrigorous in our notation for motivational purposes, it is useful to note that $\phi_g(x_s^{-1},z,a_s) \in \mc{A}_{r,s}^g$ 
may be viewed in terms of representatives of double cosets as
$$
	 (x_r^{-1},gz,ga_s) = (g^{-1}x_r^{-1},z,a_s) = (x_s^{-1} (x_sg^{-1}x_r^{-1}),z,a_s) = (x_s^{-1},z,a_s) \cdot (x_rgx_s^{-1})^{-1},
$$
where the latter multiplication is that of $\GL_n(\mb{A}_F^f)$ on the right. We remark that $x_r g x_s^{-1} \in \tilde{\Delta}$
by definition of $\Delta_{r,s}$. 

For $y \in \mc{G}$, let $U^y = U \cap yUy^{-1}$. 
Right multiplication by $y^{-1}$ defines a homeomorphism $y^{-1} \colon Y(U^{y^{-1}}) \to Y(U^y)$.
It also gives rise to a map on cohomology, as we now explain. 

\begin{proposition} \label{compare_mult_maps}
	For $y \in \tilde{\Delta}$ such that $\det(U^y) = R^{\times}$, there exists a unique homomorphism
	$$
		\psi_y \colon H^q(Y(U^{y^{-1}}),A) \to H^q(Y(U^y),A)
	$$  
	that, for each pair $(r,s) \in I^2$ such that the ideal attached to $\det(x_r^{-1}yx_s)$ is principal, restricts to
	$\phi_g$ for any $g \in \Delta_{r,s}$ such that 
	\begin{equation} \label{left_coset}
		U^y x_r g x_s^{-1}  = U^y y. 
	\end{equation}
\end{proposition}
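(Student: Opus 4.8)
The plan is to build $\psi_y$ one connected component at a time, using the isomorphism \eqref{isom_cohom} at the two levels $U^{y^{-1}}$ and $U^y$, realizing each component of $\psi_y$ as a Hecke operator supplied by Proposition~\ref{adele_gives_Hecke}, and then observing that here the Hecke correspondence degenerates so that this operator is literally one of the maps $\phi_g$. First I would record the bookkeeping: since $U^{y^{-1}},U^y\subseteq U$, the groups $\Gamma_r^{(y^{-1})}:=x_r^{-1}U^{y^{-1}}x_r\cap G$ and $\Gamma_r^{(y)}:=x_r^{-1}U^yx_r\cap G$ are subgroups of $\Gamma_r$, hence still satisfy the torsion-invertibility and trivial-scalar hypotheses, while $U^{y^{-1}}=y^{-1}U^yy$ has $\det(U^{y^{-1}})=\det(U^y)=R^{\times}$. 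Thus \eqref{isom_cohom} applies at both levels and identifies the source and target of $\psi_y$ with $\bigoplus_s H^q(Y_s^{(y^{-1})},A_s)$ and $\bigoplus_r H^q(Y_r^{(y)},A_r)$, the sums over the components $Y_r^{(\bullet)}=\Gamma_r^{(\bullet)}\backslash\mb{H}_{n,F}$. The ideal class of $\det(x_r^{-1}yx_s)$ equals $[\mf{a}_r]^{-1}[\det(y)][\mf{a}_s]$, and since the $\mf{a}_s$ represent $\Cl(F)$, for each $s$ there is a unique $r=r(s)$ making it trivial, with $s\mapsto r(s)$ a bijection of $I$; these are exactly the pairs occurring in the proposition.

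Next, for each $s$ (and $r=r(s)$) I would apply Proposition~\ref{adele_gives_Hecke} with $U^{y^{-1}}$ in the role of the monoid's subgroup and $U^y$ in the role of $U'$. This is permitted since $y\in\tilde{\Delta}$, since $U^y\cap yU^{y^{-1}}y^{-1}=U^y$ has determinant $R^{\times}$, and since $\det(x_r^{-1}yx_s)$ has trivial class; it produces $g=g_s\in\Delta_{r,s}$ with $U^yx_rg_sx_s^{-1}=U^yy$ — the left-coset form \eqref{left_coset} being exactly what the proof of Proposition~\ref{adele_gives_Hecke} yields — and with $\Gamma_r^{(y)}g_s\Gamma_s^{(y^{-1})}$ independent of the choice of $g_s$. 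By Propositions~\ref{Hecke_action} and~\ref{Hecke_gp_top} the operator $T(g_s)\colon H^q(Y_s^{(y^{-1})},A_s)\to H^q(Y_r^{(y)},A_r)$ is then a well-defined homomorphism, independent of all choices, and I would let $\psi_y$ correspond under the two isomorphisms to $\bigoplus_s T(g_s)$.

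It remains to check that $T(g_s)=\phi_{g_s}$, which gives the asserted restriction property and, together with the spanning of the source by the summands $H^q(Y_s^{(y^{-1})},A_s)$ and the fact that each $s$ lies in exactly one pair $(r(s),s)$, also forces uniqueness. Writing $x_rg_sx_s^{-1}=vy$ with $v\in U^y$, one has $g_s(x_s^{-1}U^{y^{-1}}x_s)g_s^{-1}=x_r^{-1}v(yU^{y^{-1}}y^{-1})v^{-1}x_r=x_r^{-1}U^yx_r$, using $yU^{y^{-1}}y^{-1}=U^y$ and $v\in U^y$; intersecting with $G$ gives $g_s\Gamma_s^{(y^{-1})}g_s^{-1}=\Gamma_r^{(y)}$. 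Hence the intermediate spaces $Y^{g_s^{-1}}_{s,r}$ and $Y^{g_s}_{r,s}$ in the definition of $T(g_s)$ are the full components $Y_s^{(y^{-1})}$ and $Y_r^{(y)}$, so the restriction and corestriction there are identity maps and $T(g_s)=\phi_{g_s}$. I expect the point requiring the most care to be precisely this combination of index bookkeeping — matching the $U$/$U'$ roles, tracking the bijection $r(\cdot)$, and transferring the hypotheses of \eqref{isom_cohom} to the sublevel groups — with the degeneracy computation $g_s\Gamma_s^{(y^{-1})}g_s^{-1}=\Gamma_r^{(y)}$ that collapses $T(g_s)$ onto $\phi_{g_s}$; the rest is assembled from results already established.
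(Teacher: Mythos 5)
Your argument is correct, and its engine --- the conjugation identity $g\,(x_s^{-1}U^{y^{-1}}x_s)\,g^{-1} = x_r^{-1}U^{y}x_r$ extracted from \eqref{left_coset} --- is the same computation the paper runs; only the packaging differs. You detour through Proposition \ref{adele_gives_Hecke} applied to the pair $(U^{y^{-1}},U^{y})$ to produce $g_s$ and a well-defined $T(g_s)$, and then show the Hecke correspondence degenerates (restriction and corestriction become identities, so $T(g_s)=\phi_{g_s}$). The paper skips the Hecke formalism: from \eqref{left_coset} it derives the coset equalities $Uy^{-1}x_r = Ux_sg^{-1}$ and $Uyx_s=Ux_rg$, reads off the exact identifications $x_r^{-1}U^{y}x_r\cap G=\Gamma_r\cap g\Gamma_sg^{-1}$ and $x_s^{-1}U^{y^{-1}}x_s\cap G=\Gamma_s\cap g^{-1}\Gamma_rg$, so that the components of $Y(U^{y})$ and $Y(U^{y^{-1}})$ literally \emph{are} the correspondence spaces $Y^{g}_{r,s}$ and $Y^{g^{-1}}_{s,r}$, and defines $\psi_y$ as the direct sum of the $\phi_g$; uniqueness is the observation that two solutions $g,g'$ of \eqref{left_coset} satisfy $g'g^{-1}\in\Gamma_r\cap g\Gamma_sg^{-1}$, whence $\phi_g=\phi_{g'}$. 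Your route buys the independence statements for free from Propositions \ref{Hecke_action} and \ref{adele_gives_Hecke}, at the cost of invoking Proposition \ref{Hecke_gp_top} at the unequal levels $U^{y^{-1}}\neq U^{y}$ (which the paper only sketches) and the torsion/scalar hypotheses on the sublevel groups, which you rightly check. One point to tighten: the $\phi_g$ in the statement is defined on $H^q(Y^{g^{-1}}_{s,r},A_s)$ with $Y^{g^{-1}}_{s,r}=(\Gamma_s\cap g^{-1}\Gamma_rg)\backslash\mb{H}_{n,F}$ built from the level-$U$ groups, so to say $\psi_y$ restricts to $\phi_g$ you need the equalities $\Gamma_s^{(y^{-1})}=\Gamma_s\cap g^{-1}\Gamma_rg$ and $\Gamma_r^{(y)}=\Gamma_r\cap g\Gamma_sg^{-1}$, not merely the conjugacy $g_s\Gamma_s^{(y^{-1})}g_s^{-1}=\Gamma_r^{(y)}$ (which a priori gives only one inclusion in each). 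They do follow from your displayed manipulation: writing $x_rg_sx_s^{-1}=vy$ with $v\in U^{y}\subseteq U$ gives $x_s^{-1}y^{-1}Uy\,x_s=g_s^{-1}U_rg_s$, hence $x_s^{-1}U^{y^{-1}}x_s=U_s\cap g_s^{-1}U_rg_s$, and intersecting with $G$ finishes it --- but that line should be said.
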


\begin{proof}
	The existence of $g \in G$ satisfying \eqref{left_coset}
	is by strong approximation. Any such $g$ lies in $\Delta_{r,s}$, which we can
    see by rearranging \eqref{left_coset} as
    \begin{equation} \label{rearrange}
        g \in x_r^{-1}U^yx_r \cdot x_r^{-1}yx_s \subseteq U_r \cdot \tilde{\Delta}_{r,s}
        = \tilde{\Delta}_{r,s}.
    \end{equation}
    The condition \eqref{left_coset} implies the two equalities $
    Uy^{-1}x_r = U x_s g^{-1}$ and  $U yx_s = U x_rg$.
	The former gives
	\begin{equation} \label{U_Gamma}
    x_r^{-1} U^y x_r \cap G = \Gamma_r \cap g \Gamma_s g^{-1},
    \end{equation}
	and the latter tells us that 
    $x_s^{-1} U^{y^{-1}} x_s \cap G = \Gamma_s \cap g^{-1} \Gamma_r g$.
	Thus, $H^q(Y(U^{y^{-1}}),A)$ has a direct summand isomorphic to $H^q(Y_{s,r}^{g^{-1}},A_s)$, and $H^q(Y(U^y),A)$ has one isomorphic
	to $H^q(Y_{r,s}^g,A_r)$. The groups $H^q(Y(U^{y^{-1}}),A)$ and $H^q(Y(U^y),A)$
    are the direct sums of these summands over $r \in I$, so
    $\psi_y$ exists. As for uniqueness, \eqref{rearrange} and
    \eqref{U_Gamma} together show that
    if $g, g' \in \Delta_{r,s}$ both satisfy \eqref{left_coset}, then
	$g'g^{-1} \in \Gamma_r \cap g\Gamma_s g^{-1}$, 
    so $\phi_g = \phi_{g'}$.
\end{proof}

\begin{definition}
For $q \ge 0$, we define the \emph{adelic Hecke operator} 
$$
	T_{\mb{A}}(y) \colon H^q(Y,A) \to H^q(Y,A)
$$
of $y \in \tilde{\Delta}$ such that $\det(U^y) = R^{\times}$
as the composition of pullback to $Y(U^{y^{-1}})$, the map $\psi_y$, and the trace from $Y(U^y)$ to $Y$.
\end{definition}

The following is a corollary of Proposition \ref{compare_mult_maps} with the observation from Proposition \ref{adele_gives_Hecke} that the coset
$\Gamma_r g \Gamma_s$ depends only on $UyU$.

\begin{proposition} \label{Hecke_subsp}
	For $y \in \tilde{\Delta}$ for such that $\det(U^y) = R^{\times}$
	and $s \in I$, let $r \in I$ be unique such that the ideal attached to $\det(x_r^{-1}yx_s)$ is principal.
	Let $g \in \Delta_{r,s}$ be any element such that
	$$
		U x_r g x_s^{-1} U = UyU.
	$$
	Then the following diagram commutes 
	$$
	\begin{tikzcd}
		H^q(Y_s,A_s) \arrow{r}{T(g)} \arrow[hook]{d}{} & H^q(Y_r,A_r) \arrow[hook]{d}{} \\
		H^q(Y,A) \arrow{r}{T_{\mb{A}}(y)} & H^q(Y,A).
	\end{tikzcd}
	$$
\end{proposition}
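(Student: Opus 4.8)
The plan is to unwind the definition of $T_{\mb{A}}(y)$ as the composite of pullback along the natural covering $Y(U^{y^{-1}}) \to Y$, the map $\psi_y$ of Proposition \ref{compare_mult_maps}, and the trace along $Y(U^y) \to Y$, and to match these three maps, one at a time, with the maps $\res$, $\phi_g$, $\cor$ appearing in the definition of the topological Hecke operator $T(g)\colon H^q(Y_s,A_s)\to H^q(Y_r,A_r)$. As a preliminary I would note that, since $y^{-1}U^y y = U^{y^{-1}}$ and $\det$ is invariant under conjugation, $\det(U^{y^{-1}}) = \det(U^y) = R^{\times}$; hence both $Y(U^{y^{-1}})$ and $Y(U^y)$ have components indexed by $I$ exactly as $Y(U)$ does, the $s'$-component being the image of $z \mapsto (x_{s'}^{-1},z)$, and this component is $(x_{s'}^{-1}U^{y^{-1}}x_{s'}\cap G)\backslash \mb{H}_{n,F}$, resp. $(x_{s'}^{-1}U^{y}x_{s'}\cap G)\backslash \mb{H}_{n,F}$. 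Since $U^{\pm y^{\mp 1}}\subseteq U$, each covering $Y(U^{y^{-1}})\to Y$ and $Y(U^y)\to Y$ carries the $s'$-component to $Y_{s'}$, so pullback and trace along it decompose as direct sums over $I$ of the maps on individual components; under the identifications \eqref{isom_cohom} these componentwise maps are the restriction, resp.\ corestriction, maps for the relevant subgroup inclusions into $\Gamma_{s'}$.

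Next I would reduce to the case that $g$ satisfies the left-coset normalization \eqref{left_coset}, i.e.\ $U^y x_r g x_s^{-1} = U^y y$. Such a $g$ exists and lies in $\Delta_{r,s}$ by strong approximation as in the proof of Proposition \ref{compare_mult_maps}; it also satisfies $U x_r g x_s^{-1} U = UyU$, so by Proposition \ref{adele_gives_Hecke} its double coset $\Gamma_r g \Gamma_s$ agrees with that of the element in the statement, and hence $T(g)$ is unchanged by the remark following Proposition \ref{Hecke_gp_top}. For this choice of $g$, the proof of Proposition \ref{compare_mult_maps} gives $x_s^{-1}U^{y^{-1}}x_s\cap G = \Gamma_s\cap g^{-1}\Gamma_r g$ and $x_r^{-1}U^y x_r\cap G = \Gamma_r\cap g\Gamma_s g^{-1}$; that is, the $s$-component of $Y(U^{y^{-1}})$ is $Y_{s,r}^{g^{-1}}$ and the $r$-component of $Y(U^y)$ is $Y_{r,s}^g$, and, since $r$ is by hypothesis the unique index for which $\det(x_r^{-1}yx_s)$ has principal ideal class, Proposition \ref{compare_mult_maps} tells us that $\psi_y$ restricts on $H^q(Y_{s,r}^{g^{-1}},A_s)$ to $\phi_g$ with image in $H^q(Y_{r,s}^g,A_r)$.

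Finally I would assemble the composite. Starting from the summand $H^q(Y_s,A_s)\hookrightarrow H^q(Y,A)$: pullback along $Y(U^{y^{-1}})\to Y$ lands in the summand $H^q(Y_{s,r}^{g^{-1}},A_s)$ and, by the preliminary paragraph, is the restriction map $\res$; then $\psi_y$ is $\phi_g\colon H^q(Y_{s,r}^{g^{-1}},A_s)\to H^q(Y_{r,s}^g,A_r)$ into the $r$-summand; then the trace along $Y(U^y)\to Y$ is the corestriction $\cor\colon H^q(Y_{r,s}^g,A_r)\to H^q(Y_r,A_r)$ into the $r$-summand of $H^q(Y,A)$. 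The composite $\cor\circ\phi_g\circ\res$ is, by definition, $T(g)$, which is exactly the commutativity asserted. The step I expect to require the most care is the preliminary identification of pullback and trace along the two coverings with componentwise $\res$ and $\cor$ compatibly with the explicit (geodesic-simplex) description of \eqref{isom_cohom}; this is not deep, as the genuinely substantive constructions are already carried out in Propositions \ref{compare_mult_maps} and \ref{adele_gives_Hecke}, but it does demand bookkeeping of basepoints and use of the torsion- and scalar-triviality hypotheses that make the orbifold sheaf cohomology behave like group cohomology.
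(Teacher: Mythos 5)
Your proposal is correct and follows the same route as the paper, which states the result precisely as a corollary of Proposition \ref{compare_mult_maps} together with the observation from Proposition \ref{adele_gives_Hecke} that the coset $\Gamma_r g\Gamma_s$ depends only on $UyU$. You have simply written out the componentwise identification of pullback, $\psi_y$, and trace with $\res$, $\phi_g$, and $\cor$ that the paper leaves implicit.
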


Putting Propositions \ref{Hecke_gp_top} and \ref{Hecke_subsp} together with Proposition \ref{adele_gives_Hecke}, we see that the constructions of $T_{\mb{A}}(y)$ on the left and right-hand sides of \eqref{isom_cohom} coincide.

\subsection{Hecke operators attached to ideals} \label{Hecke_ideal}

Let us suppose now that $A = (A_i)_{i \in I^n}$ is a $\Delta$-module system for $I^n$. We impose two additional conditions on our open subgroup $U$ of $\GL_n(R)$: 
\begin{enumerate}
    \item[i.] $U$ is normalized by diagonal matrices in $\GL_n(R)$, and 
    \item[ii.] $U$ contains the subgroup of $G$ consisting of diagonal matrices in $\GL_n(\mc{O})$.
\end{enumerate}
Condition (i) implies that the group $\Gamma_i$ is independent of the choice of the $\alpha_r$ representing $\mf{a}_r$ for $r \in I$. Condition (ii)
allows us to make the following definition, independent of choice.

\begin{definition} \label{Hecke_ops_ideals}
	Let $\mf{n}_1, \ldots, \mf{n}_n$ be nonzero ideals of $\mc{O}$. 
	For each $i \in I^n$, let $j \in I^n$
	be unique such that $\mf{a}_{i_u,j_u}\mf{n}_u$ is principal for all $1 \le u \le n$. Let $\eta_u$ 
	be a generator of $\mf{a}_{i_u,j_u}\mf{n}_u$ for each $u$, and
    set $g = \diag(\eta_1, \ldots, \eta_n)$.
	If $g \in \Delta_{i,j}$, then we define the \emph{Hecke operator} 
	$$
		T(\mf{n}_1, \ldots, \mf{n}_n) \colon H^q(\Gamma_j,A_j) \to H^q(\Gamma_i,A_i)
	$$ 
	for $(\mf{n}_1, \ldots, \mf{n}_n)$ as  $T(g)$.
\end{definition}

We will be particularly interested in the following operators.

\begin{definition}
	For a nonzero ideal $\mf{n}$ of $\mc{O}$ and $1 \le u \le n$, 
    we set 
	$$
		T_{\mf{n}}^{(u)} = T(\mf{n},\ldots,\mf{n},1,\ldots,1)
	$$
    when the latter operator exists, the expression for which 
    contains $u$ copies of $\mf{n}$.
    We write $T_{\mf{n}} = T_{\mf{n}}^{(1)}$ and $[\mf{n}]^*
    = T_{\mf{n}}^{(n)}$.
\end{definition}

We have the following simple lemma regarding the operators $[\mf{n}]^*$.

\begin{lemma} 
	Let $i, j \in I^n$ be such that $\mf{a}_{i_u,j_u}\mf{n}$ is principal for all $1 \le u \le n$. If $[\mf{n}]^*$ exists, then its double coset decomposition is
	$$ 
		\Gamma_i \smatrix{ \eta_1 \\ &\ddots \\ && \eta_n } \Gamma_j
		= \smatrix{ \eta_1 \\ &\ddots \\ && \eta_n } \Gamma_j,
	$$ 
	where $\eta_u$ is a generator of $\mf{a}_{i_u,j_u}\mf{n}$ for each $u$.
\end{lemma}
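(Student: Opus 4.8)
The plan is to reduce the assertion to the single group-theoretic containment $g^{-1}\Gamma_i g \subseteq \Gamma_j$, where $g = \diag(\eta_1,\ldots,\eta_n)$. The inclusion $g\Gamma_j \subseteq \Gamma_i g\Gamma_j$ is automatic, and the reverse inclusion $\Gamma_i g\Gamma_j \subseteq g\Gamma_j$ holds precisely when $\Gamma_i g \subseteq g\Gamma_j$, i.e.\ when $g^{-1}\Gamma_i g \subseteq \Gamma_j$; granting this, \eqref{cosetreps} collapses to the single coset $g\Gamma_j$, which indeed lies in $\Delta_{i,j}$ since the hypothesis that $[\mf{n}]^*$ exists says exactly that $g \in \Delta_{i,j}$. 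So all the content is in the containment $g^{-1}\Gamma_i g \subseteq \Gamma_j$.

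To prove it, fix $\gamma \in \Gamma_i = U_i \cap G$. Since $\gamma, g \in G$ we have $g^{-1}\gamma g \in G$, so it suffices to show $g^{-1}\gamma g \in U_j = x_j^{-1}Ux_j$, equivalently $x_j g^{-1}\gamma g x_j^{-1} \in U$. Writing $u = x_i\gamma x_i^{-1} \in U$ and $d = x_i g x_j^{-1} \in \GL_n(\mb{A}_F^f)$, the element in question is $d^{-1}ud$, so the goal becomes $d^{-1}ud \in U$. The key observation is that $d = \diag(d_1,\ldots,d_n)$ is diagonal with $d_u = \alpha_{i_u}\eta_u\alpha_{j_u}^{-1}$ a finite idele representing the ideal $\mf{n}$ for every $u$: indeed $\alpha_{i_u}$ represents $\mf{a}_{i_u}$, the diagonally embedded $\eta_u$ generates $\mf{a}_{i_u,j_u}\mf{n} = \mf{a}_{j_u}\mf{a}_{i_u}^{-1}\mf{n}$, and $\alpha_{j_u}$ represents $\mf{a}_{j_u}$, so $d_u$ represents $\mf{a}_{i_u}\cdot(\mf{a}_{j_u}\mf{a}_{i_u}^{-1}\mf{n})\cdot\mf{a}_{j_u}^{-1} = \mf{n}$. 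Consequently $d_u d_1^{-1} \in R^{\times}$ for all $u$, so $d = (d_1 \cdot 1_n)\, e$ with $e = \diag(1, d_2 d_1^{-1}, \ldots, d_n d_1^{-1})$ a diagonal matrix in $\GL_n(R)$. The scalar matrix $d_1 \cdot 1_n$ is central in $\GL_n(\mb{A}_F^f)$, so $d^{-1}ud = e^{-1}ue$, and this lies in $U$ because $U$ is normalized by the diagonal matrices in $\GL_n(R)$ (condition (i)). This gives $g^{-1}\Gamma_i g \subseteq \Gamma_j$ and hence the claimed decomposition.

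There is no serious obstacle here. The only point requiring care is the idele-class bookkeeping that identifies the ideal attached to each diagonal entry $d_u$ as $\mf{n}$ (so that the ratios $d_u d_1^{-1}$ are units), together with the remark that conjugation by $d$ coincides with conjugation by its unit-diagonal part $e$ because the scalar part is central; it is at this one step that hypothesis (i) on $U$ is used, while hypothesis (ii) plays no role in this particular lemma.
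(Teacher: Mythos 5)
Your proposal is correct and follows essentially the same route as the paper: both arguments reduce to showing that conjugation by the relevant diagonal id\`ele matrix ($d = x_i g x_j^{-1}$ in your notation, equivalently $x_j^{-1}x_i h$ in the paper's) preserves $U$, because its diagonal entries all represent the same ideal $\mf{n}$ and hence it is a central element times a diagonal matrix in $\GL_n(R)$, at which point condition (i) on $U$ finishes the job. The only cosmetic difference is that the paper phrases this as the equality $h^{-1}U_i h = U_j$ (computing ratios of entries) while you verify the containment $g^{-1}\Gamma_i g \subseteq \Gamma_j$ directly.
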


\begin{proof}
    Let $h = \diag(\eta_1, \ldots, \eta_n)$.
    The id\`{e}le $\eta_u^{-1}\alpha_{i_u}^{-1}\alpha_{i_v}\eta_v$ has associated ideal generating $(\mf{a}_{i_u,j_u}\mf{n})^{-1}\mf{a}_{i_u,i_v}\mf{a}_{i_v,j_v}\mf{n} = \mf{a}_{j_u,j_v}$
    for $u, v \in \{1,\ldots,n\}$, so it is a multiple of
    $\alpha_{j_u}^{-1}\alpha_{j_v}$ by a unit in $R$. 
    Therefore, $h^{-1} U_i h = h^{-1}x_i^{-1} U x_ih$ 
    is conjugate to $U_j = x_j^{-1}Ux_j$ by a diagonal matrix in $\GL_n(R)$, so it is equal to $U_j$ by assumption on $U$.
    Then $h^{-1}\Gamma_i h = \Gamma_j$, so we are done.
\end{proof} 

For an ideal $\mf{a}$ of $\mc{O}$,
let $F_{\mf{a}}^{\times}$ denote the group of finite id\`{e}les
of $F$ that are $1$ at all primes not dividing $\mf{a}$.
For $r \in I$, we now choose the id\`ele $\alpha_r \in R$ with associated ideal $\mf{a}_r$ to lie in $F_{\mf{a}_r}^{\times}$.

Let $\mc{N}$ be a nonzero ideal of $\mc{O}$ that is prime to $\mf{a}_r$ for $r \in I$.
Let $U(\mc{N})$ denote the open subgroup 
of $\GL_n(R)$ consisting of matrices with image in $\GL_n(\mc{O}/\mc{N})$
contained in the image of the diagonal matrices in $\GL_n(\mc{O})$. 
If $U$ contains $U(\mc{N})$,
we refer to the largest ideal $\mc{M}$ such that $U$ contains $U(\mc{M})$ as the \emph{level} of $U$. 
We suppose that $U$ has level $\mc{N}$. 

Let us define an adelic analogue of Definition \ref{Hecke_ops_ideals} that works for more pairs of elements of
$I^n$ but provides slightly different
Hecke operators in cases where both are defined. 

\begin{definition} \label{more_gen_Hecke}
	Let $\mf{n}_1, \ldots, \mf{n}_n$ be nonzero ideals of $\mc{O}$. Let $\nu_u \in F_{\mf{n}_u}^{\times}$ have associated ideal $\mf{n}_u$.
	For all primes $\mf{p}$ of $\mc{O}$ dividing $\mf{n}_u+\mc{N}$, we assume that $U$ contains all diagonal matrices in 
	$\GL_n(R_{\mf{p}})$ with $v$th entry $1$ for all $v \neq u$.
	For $i, j \in I^n$ such that $\prod_{u=1}^n \mf{a}_{i_u,j_u}\mf{n}_u$ is principal,
	we define
	$$
		T_{i,j}(\mf{n}_1, \ldots, \mf{n}_n) \colon H^q(\Gamma_j,A_j) \to H^q(\Gamma_i,A_i)
	$$ 
	to be the restriction of $T_{\mb{A}}(\diag(\nu_1, \ldots, \nu_n))$.
\end{definition}

If we suppose that the orders of torsion elements in $\Gamma_i$ acts invertibly and scalar elements act trivially on $A_i$ for each $i \in I^n$, then we can also define $T_{i,j}(\mf{n}_1, \ldots, \mf{n}_n) \colon H^q(Y_j,A_j) \to H^q(Y_i,A_i)$ as the restriction of $T_{\mb{A}}(\diag(\nu_1, \ldots, \nu_n))$, and these operators are compatible in the sense of Proposition \ref{Hecke_gp_top}.

\begin{remark} \label{compare_Hecke}
    Suppose that the ideals $\mf{n}_1, \ldots, \mf{n}_n$ are prime to $\mc{N}$.
	Taking $i, j \in I^n$ such that $\mf{a}_{i_u,j_u}\mf{n}_u$ is principal
    for all $1 \le u \le n$, the operator
	$T_{i,j}(\mf{n}_1, \ldots, \mf{n}_n)$ is given by $T(g)$ with $g$ diagonal and congruent to the identity matrix modulo $\mc{N}$, whereas
	$T(\mf{n}_1, \ldots, \mf{n}_n)$ is $T(h)$ for $h$ the diagonal matrix with $u$th entry a generator of $\mf{a}_{i_u,j_u}\mf{n}_u$,
	which may not be $1$ modulo $\mc{N}$.
\end{remark}

We are concerned in this paper in a situation for which $n = 2$.
For the rest of this subsection, take $\tilde{\Delta}$ to be the submonoid $\tilde{\Delta}_0(\mc{N})$ of $\GL_2(\mb{A}_F^f) \cap M_2(R)$ consisting of elements
with bottom row $(0,z)$ modulo $\mc{N}$, where $z$ is prime to $\mc{N}$.
We shall need the following two open subgroups of $\GL_2(R)$ of level $\mc{N}$ contained in $\tilde{\Delta}_0(\mc{N})$.
The first is the group $U_0(\mc{N})$
consisting of matrices with second row congruent to $(0,a)$ modulo $\mc{N}$
for some $a \in R^{\times}$. The second is the subgroup 
$U_1(\mc{N})$ consisting of matrices in $U_0(\mc{N})$ with $(2,2)$-entry congruent to an element of $\mc{O}^{\times}$
modulo $\mc{N}$. We set $\Gamma_*(\mc{N})_i = U_*(\mc{N})_i \cap G$ for $* \in \{0,1\}$. For the remainder of this subsection, we take
$n = 2$ and suppose that $U$ contains $U_1(\mc{N})$.

\begin{definition} \label{diamondop}
	Let $\mf{d}$ be an ideal of $\mc{O}$
	prime to $\mc{N}$. Let $i, j \in I^2$ be such that $\mf{a}_{i_1,j_1}\mf{d}^{-1}$ and $\mf{a}_{i_2,j_2}\mf{d}$
	are principal. Let $\lambda$ be a generator of the latter ideal. 
	For any matrix $\delta \in \Delta_0(\mc{N})_{i,j}$ 
	with 
	$$
		\mf{a}_{i_1,j_1}\mf{a}_{i_2,j_2} = \det(\delta)\mc{O}
	$$
	and bottom right entry $\lambda$ modulo $\mc{N}$, we define the 
	(adjoint) \emph{diamond operator} $\langle \mf{d} \rangle^*$ attached to $\mf{d}$
	by 
	$$
		\langle \mf{d} \rangle^* = T(\delta) \colon H^q(\Gamma_j,A_j) \to H^q(\Gamma_i,A_i)
	$$ 
	for $q \ge 0$. Note that such a matrix $\delta$ exists by the strong approximation theorem.
\end{definition}

The reader may verify the following.

\begin{lemma} \label{diamond_lem}
	We maintain the notation of Definition \ref{diamondop}.
	\begin{enumerate}
		\item[a.] We have $\Gamma_i \delta \Gamma_j = \delta \Gamma_j$, and $\delta \Gamma_j$ depends only upon the image 
		of $\mf{d}$ in $\Cl_{\mc{N}}(F)$.
		\item[b.] We have $\delta^{-1} \in \Delta_0(\mc{N})_{j,i}$, and $T(\delta^{-1}) = \langle \mf{e} \rangle^*$ for any ideal $\mf{e}$ of 
		$\mc{O}$ inverse to $\mf{d}$ in $\Cl_{\mc{N}}(F)$.
	\end{enumerate}
\end{lemma}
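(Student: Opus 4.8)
The plan is to transport the problem into $\GL_2(R)$ via the conjugating id\`{e}les and then exploit the relationship between $U_0(\mc{N})$ and $U_1(\mc{N})$ together with hypothesis (i) on $U$. Set $\tilde{\delta} = x_i \delta x_j^{-1}$, which lies in $\tilde{\Delta}_0(\mc{N}) \subseteq M_2(R)$ by definition of $\Delta_0(\mc{N})_{i,j}$. First I would observe that the associated ideal of $\det(\tilde{\delta}) = \det(x_i)\det(\delta)\det(x_j)^{-1}$ is $\mf{a}_{i_1}\mf{a}_{i_2} \cdot \mf{a}_{i_1,j_1}\mf{a}_{i_2,j_2} \cdot (\mf{a}_{j_1}\mf{a}_{j_2})^{-1}$, which equals $\mc{O}$ by the hypothesis $\det(\delta)\mc{O} = \mf{a}_{i_1,j_1}\mf{a}_{i_2,j_2}$ and the identity $\mf{a}_{i_u,j_u} = \mf{a}_{j_u}\mf{a}_{i_u}^{-1}$. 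Since $\det(\tilde{\delta})$ lies in $R$, it is therefore a unit, so $\tilde{\delta} \in \GL_2(R) \cap \tilde{\Delta}_0(\mc{N}) = U_0(\mc{N})$. As $\alpha_{i_2}$ and $\alpha_{j_2}$ are units at the primes dividing $\mc{N}$ and $\delta_{22} \equiv \lambda \pmod{\mc{N}}$, the bottom-right entry of $\tilde{\delta}$ reduces to $\alpha_{i_2}\lambda\alpha_{j_2}^{-1}$ modulo $\mc{N}$; in particular the reduction of $\tilde{\delta}$ modulo $\mc{N}$ is an invertible upper triangular matrix, so both of its diagonal entries are units.

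For part (a), I would first record the elementary fact that $U_0(\mc{N}) = U_1(\mc{N}) \cdot D$ with $D = \{\diag(1,u) : u \in R^{\times}\}$: given $M \in U_0(\mc{N})$ whose bottom-right entry reduces to $a \in (\mc{O}/\mc{N})^{\times}$, lift $a$ to $u \in R^{\times}$ and note that $M\,\diag(1,u^{-1}) \in U_1(\mc{N})$. Applying this to $\tilde{\delta}$ yields $\tilde{\delta} = u_1 d$ with $u_1 \in U_1(\mc{N}) \subseteq U$ and $d \in D$; by hypothesis (i) the element $d$ normalizes $U$, so $\tilde{\delta}^{-1}U\tilde{\delta} = d^{-1}u_1^{-1}Uu_1 d = U$. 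Conjugating back by $x_i$ and $x_j$ gives $\delta^{-1}U_i\delta = U_j$, and intersecting with $G$ (using $\delta \in G$) gives $\delta^{-1}\Gamma_i\delta = \Gamma_j$, hence $\Gamma_i\delta\Gamma_j = \delta\Gamma_j$. For dependence only on the class of $\mf{d}$ in $\Cl_{\mc{N}}(F)$: the index $j$ is pinned down by $\mf{a}_{j_1} \sim \mf{a}_{i_1}\mf{d}$ and $\mf{a}_{j_2} \sim \mf{a}_{i_2}\mf{d}^{-1}$ in $\Cl(F)$, so it depends only on the ideal class of $\mf{d}$; and if $\delta'$ is any admissible matrix, built possibly from an ideal $\mf{d}'$ with $\mf{d}'\mf{d}^{-1} = (\beta)$, $\beta \equiv 1 \pmod{\mc{N}}$, and a generator $\lambda'$ of $\mf{a}_{i_2,j_2}\mf{d}'$, then $x_j(\delta^{-1}\delta')x_j^{-1} = \tilde{\delta}^{-1}\tilde{\delta}'$ (with $\tilde{\delta}' = x_i\delta'x_j^{-1}$) lies in $U_0(\mc{N})$ and has bottom-right entry reducing to $\lambda^{-1}\lambda'$ modulo $\mc{N}$, which lies in the image of $\mc{O}^{\times}$ since $\lambda'/\lambda$ differs from $\beta$ by a unit of $\mc{O}$. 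Thus $\tilde{\delta}^{-1}\tilde{\delta}' \in U_1(\mc{N}) \subseteq U$, so $\delta^{-1}\delta' \in U_j \cap G = \Gamma_j$ and $\delta\Gamma_j = \delta'\Gamma_j$.

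For part (b), the containment $\delta^{-1} \in \Delta_0(\mc{N})_{j,i}$ is immediate from $x_j\delta^{-1}x_i^{-1} = \tilde{\delta}^{-1} \in U_0(\mc{N}) \subseteq \tilde{\Delta}_0(\mc{N})$. To identify $T(\delta^{-1})$ with $\langle \mf{e} \rangle^*$, I would check that $\delta^{-1}$ is itself an admissible choice of matrix in Definition \ref{diamondop} for the ideal $\mf{e}$, with the roles of $i$ and $j$ interchanged. The index constraints hold because $\mf{e} \sim \mf{d}^{-1}$ in $\Cl(F)$; the determinant constraint holds because $\det(\delta^{-1})\mc{O} = (\mf{a}_{i_1,j_1}\mf{a}_{i_2,j_2})^{-1} = \mf{a}_{j_1,i_1}\mf{a}_{j_2,i_2}$; and for the bottom-right entry, reducing $\det(\delta) = \delta_{11}\delta_{22} - \delta_{12}\delta_{21}$ modulo $\mc{N}$ (with $\delta_{21} \equiv 0$) gives $\det(\delta) \equiv \delta_{11}\lambda \pmod{\mc{N}}$, so $(\delta^{-1})_{22} = \delta_{11}/\det(\delta) \equiv \lambda^{-1} \pmod{\mc{N}}$; writing $\mf{e} = (\beta)\mf{d}^{-1}$ with $\beta \equiv 1 \pmod{\mc{N}}$, the element $\beta\lambda^{-1}$ generates $\mf{a}_{j_2,i_2}\mf{e}$ and satisfies $\beta\lambda^{-1} \equiv \lambda^{-1} \pmod{\mc{N}}$, so $\delta^{-1}$ meets the requirement with this generator. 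Therefore $\langle \mf{e} \rangle^* = T(\delta^{-1})$ directly from Definition \ref{diamondop}, and independence of the particular $\mf{e}$ inverse to $\mf{d}$ in $\Cl_{\mc{N}}(F)$ is part (a) applied to $\mf{e}$.

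The only real work is the bookkeeping around the $(2,2)$-entries modulo $\mc{N}$: one must see that the determinant normalization in Definition \ref{diamondop} is precisely what forces $\tilde{\delta} \in \GL_2(R)$, and then track how the bottom-right entry transforms under $\delta \mapsto x_i\delta x_j^{-1}$, under products, and under inversion, so that the ray-class datum carried by $\lambda$ modulo $\mc{N}$ (well defined up to $\mc{O}^{\times}$) is propagated correctly. Everything else reduces to strong approximation — already invoked in Definition \ref{diamondop} for the existence of the matrices — and to hypothesis (i) on $U$ together with the inclusion $U_1(\mc{N}) \subseteq U$.
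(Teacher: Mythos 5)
Your proof is correct, and since the paper explicitly leaves this verification to the reader ("The reader may verify the following"), your argument supplies exactly the intended bookkeeping: the determinant normalization in Definition \ref{diamondop} forces $x_i\delta x_j^{-1}\in U_0(\mc{N})$, the factorization $U_0(\mc{N})=U_1(\mc{N})\cdot\{\diag(1,u)\}$ together with hypothesis (i) on $U$ gives $\delta^{-1}\Gamma_i\delta=\Gamma_j$, and tracking the $(2,2)$-entry modulo $\mc{N}$ under products and inversion yields the ray-class independence and the identification $T(\delta^{-1})=\langle\mf{e}\rangle^*$. No gaps.
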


The following amounts to a special case of Remark \ref{compare_Hecke}.

\begin{lemma} \label{different_Hecke}
	Let $\mf{n}$ be a nonzero ideal of $\mc{O}$. Let $q \ge 0$.
	\begin{enumerate}
		\item[a.] Suppose that $\mf{n}$ is prime to $\mc{N}$.
        For $i, j, k \in I^2$ such that $\mf{a}_{i_1,k_1}\mf{n}$, $\mf{a}_{i_2,k_2}\mf{n}$, and 
		$\mf{a}_{i_1,j_1}\mf{n}^{-1}$ are principal and $j_2 = k_2$, we have
		$$
			[\mf{n}]^* = \langle \mf{n} \rangle^* \circ T_{j,k}(\mf{n},\mf{n}) \colon H^q(\Gamma_k,A_k) \to H^q(\Gamma_i,A_i).
		$$
		\item[b.] For $i, j \in I^2$ such that $\mf{a}_{i_1,j_1}\mf{n}$ is principal and $i_2 = j_2$, we have
		$$
			T_{\mf{n}} = T_{i,j}(\mf{n},1) \colon H^q(\Gamma_j,A_j) \to H^q(\Gamma_i,A_i).
		$$
	\end{enumerate}
\end{lemma}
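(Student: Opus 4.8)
The guiding principle is that both parts ``amount to a special case of Remark \ref{compare_Hecke}'': one writes each operator as $T(g)$ (or a composition of such) for explicit matrices $g$, and then one checks that the matrices produce the same double cosets, the point being that the hypotheses $i_2 = j_2$ in (b), respectively $j_2 = k_2$ together with $\mf{n}$ prime to $\mc{N}$ in (a), are exactly what is needed to kill the ``diamond discrepancy'' between $T(\mf{n}_1,\ldots)$ and $T_{i,j}(\mf{n}_1,\ldots)$ described in that remark. Since Remark \ref{compare_Hecke} is itself only asserted, I would carry out the argument directly.

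For part (b): because $i_2 = j_2$ forces $\mf{a}_{i_2,j_2} = \mc{O}$, Definition \ref{Hecke_ops_ideals} lets me write $T_{\mf{n}} = T(\mf{n},1) = T(g)$ with $g = \diag(\eta_1,1)$ for $\eta_1$ a generator of $\mf{a}_{i_1,j_1}\mf{n}$. On the other hand, $T_{i,j}(\mf{n},1)$ is the restriction of $T_{\mb{A}}(\diag(\nu_1,1))$ for $\nu_1 \in F_{\mf{n}}^{\times}$ with associated ideal $\mf{n}$ (note $\nu_2 = 1$ since $F_{(1)}^{\times}$ is trivial), which by Proposition \ref{adele_gives_Hecke} equals $T(g'')$ for any $g'' \in \Delta_{i,j}$ with $U x_i g'' x_j^{-1} U = U\diag(\nu_1,1)U$. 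I would then note $x_i g x_j^{-1} = \diag(\alpha_{i_1}\eta_1\alpha_{j_1}^{-1},1)$ and that $\alpha_{i_1}\eta_1\alpha_{j_1}^{-1}$ has associated ideal $\mf{n}$ and valuation $0$ at every prime not dividing $\mf{n}$, hence equals $\nu_1 w$ with $w \in R^{\times}$; then $x_i g x_j^{-1} = \diag(\nu_1,1)\diag(w,1)$ with $\diag(w,1) \in U_1(\mc{N}) \subseteq U$, so $g$ is an admissible choice of $g''$ and $T_{\mf{n}} = T(g) = T_{i,j}(\mf{n},1)$. (The use of $i_2 = j_2$ is precisely that the lower–right entries of both matrices can be taken to be $1$, so no diamond arises.)

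For part (a): By the lemma just above describing the double-coset of $[\mf{n}]^*$, one has $[\mf{n}]^* = T(h)$ with $h = \diag(\eta_1,\eta_2)$ ($\eta_u$ a generator of $\mf{a}_{i_u,k_u}\mf{n}$), $\Gamma_i h\Gamma_k = h\Gamma_k$, and $h^{-1}\Gamma_i h = \Gamma_k$; likewise $\langle \mf{n}\rangle^* = T(\delta)$ for an admissible matrix $\delta \in \Delta_0(\mc{N})_{i,j}$ (it is admissible: $\mf{a}_{i_2,j_2}\mf{n} = \mf{a}_{i_2,k_2}\mf{n}$ is principal because $j_2 = k_2$) with $\Gamma_i\delta\Gamma_j = \delta\Gamma_j$ and, by Lemma \ref{diamond_lem}, $\delta^{-1}\Gamma_i\delta = \Gamma_j$. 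Since $\mf{n}$ is prime to $\mc{N}$ one may take $\nu_1 = \nu_2 = \nu$ in Definition \ref{more_gen_Hecke}, so $T_{j,k}(\mf{n},\mf{n})$ is the restriction of $T_{\mb{A}}(\nu I)$ for the central idèle $\nu I$ with $\nu \in F_{\mf{n}}^{\times}$ of associated ideal $\mf{n}$; by Proposition \ref{adele_gives_Hecke} this is $T(g')$ for $g' \in \Delta_{j,k}$ with $U x_j g' x_k^{-1}U = U\nu I U = \nu I U$, and writing $g' = \nu x_j^{-1} u x_k$ with $u \in U$ one checks, using that $\nu I$ is central, that $g'^{-1}\Gamma_j g' = \Gamma_k$, so $\Gamma_j g'\Gamma_k = g'\Gamma_k$ is again a single coset. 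Now all three operators are given by single-coset data, hence act on inhomogeneous cochains by conjugation; it follows formally that $\langle\mf{n}\rangle^* \circ T_{j,k}(\mf{n},\mf{n}) = T(\delta g')$ with $\Gamma_i(\delta g')\Gamma_k = \delta\,\Gamma_j g'\Gamma_k = \delta g'\Gamma_k$, and it remains to show $\delta g'\Gamma_k = h\Gamma_k$, i.e.\ $h^{-1}\delta g' \in \Gamma_k$.

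This last step is the heart of the matter and the only real obstacle. I would conjugate $h^{-1}\delta g'$ by $x_k$ into $(x_i h x_k^{-1})^{-1}(x_i\delta x_j^{-1})(x_j g' x_k^{-1})$. Here $x_j g' x_k^{-1} = \nu u$, while each entry $\alpha_{i_u}\eta_u\alpha_{k_u}^{-1}$ of $x_i h x_k^{-1}$ has associated ideal $\mf{n}$ with trivial valuation off $\mf{n}$, so $x_i h x_k^{-1} = \nu\diag(\epsilon_1,\epsilon_2)$ with $\epsilon_u \in R^{\times}$; the central factors $\nu^{\pm 1}$ cancel and the factor $u \in U$ can be absorbed, reducing the claim to $\diag(\epsilon_1^{-1},\epsilon_2^{-1})(x_i\delta x_j^{-1}) \in U$. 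This matrix lies in $\GL_2(R)$ (a direct content computation shows its determinant is a unit), and modulo $\mc{N}$ its bottom row is $\bigl(0,\ \epsilon_2^{-1}\alpha_{i_2}\lambda\alpha_{j_2}^{-1}\bigr)$ where $\lambda \equiv \delta_{22}$ is a generator of $\mf{a}_{i_2,j_2}\mf{n}$; using $j_2 = k_2$ (so $\alpha_{j_2} = \alpha_{k_2}$ and $\mf{a}_{i_2,j_2} = \mf{a}_{i_2,k_2}$, whence $\lambda/\eta_2 \in \mc{O}^{\times}$) together with $\nu \equiv 1 \pmod{\mc{N}}$ (as $\nu$ is supported at primes dividing $\mf{n}$, which are prime to $\mc{N}$), this entry reduces to the image of the unit $\lambda/\eta_2$. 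Hence the matrix lies in $U_1(\mc{N}) \subseteq U$, which finishes part (a). I expect everything outside this final congruence bookkeeping to be routine manipulation of double cosets and of the single-coset descriptions of $[\mf{n}]^*$, $\langle\mf{n}\rangle^*$, and $T_{j,k}(\mf{n},\mf{n})$.
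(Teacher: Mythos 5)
Your proof is correct and takes essentially the same approach as the paper's: both arguments rest on the single-left-coset descriptions of $[\mf{n}]^*$, $\langle\mf{n}\rangle^*$, and $T_{j,k}(\mf{n},\mf{n})$, the fact that the id\`eles $\alpha_r$ and $\nu$ are trivial modulo $\mc{N}$, and the absorption of the resulting unit discrepancy $\lambda/\eta_2 \in \mc{O}^{\times}$ into $U_1(\mc{N}) \subseteq U$. The only difference is organizational: the paper factors the representative of $[\mf{n}]^*$ adelically as $\tilde{\delta}\cdot(\tilde{\delta}^{-1}g)$ and identifies the two factors with the two operators, whereas you fix representatives $\delta$ and $g'$ of the right-hand operators and verify $h^{-1}\delta g' \in \Gamma_k$ directly — the same computation read in the opposite direction.
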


\begin{proof}
	Let $\eta_u$ generate $\mf{a}_{i_u,k_u}\mf{n}$ for $u \in \{1,2\}$, and let $g = \smatrix{\eta_1\\&\eta_2}$.
	Let $\nu \in R^{\times}$ with $\nu \equiv \eta_2 \bmod \mc{N}$.
	Let $\tilde{\delta} \in \tilde{\Delta}_{i,j}$ be diagonal with $(1,1)$-entry $\alpha_{j_1}\alpha_{i_1}^{-1}\nu^{-1}$ and $(2,2)$-entry
	$\alpha_{j_2}\alpha_{i_2}^{-1}\nu$. 
	Then $U_i \tilde{\delta} U_j = U_i \delta U_j$ for $\delta \in \Delta_{i,j}$ with $T(\delta) = \langle \mf{n} \rangle^*$,
	with both double cosets equal to a single left coset.
	 
	The matrix $\tilde{\delta}^{-1}g \in \tilde{\Delta}_{j,k}$ has first and second diagonal entries with associated ideals
	$\mf{a}_{j_1,k_1}\mf{n}$ and $\mf{n}$, respectively, with the second being $1$ modulo $\mc{N}$.
	Since $U$ contains $U_1(\mc{N})$, it contains all diagonal matrices with $(1,1)$-entry in $R^{\times}$
	and $(2,2)$-entry in $\mc{O}^{\times}$ modulo $\mc{N}$. It follows that
	the double coset $U_j \delta^{-1}g U_k = U_j \tilde{\delta}^{-1}g U_k$ is  equal to the double coset of
	an element of $\tilde{\Delta}_{j,k}$ that yields the operator $T_{j,k}(\mf{n},\mf{n}) = T(\delta^{-1}g)$ of 
	Definition \ref{more_gen_Hecke}. This yields part (a).
	
	Part (b) is essentially immediate, again using the fact that $U$ contains $U_1(\mc{N})$.
\end{proof}

\subsection{$\Delta$-module systems with pushforwards}

In our applications, $\Delta$-module systems are direct limits 
of motivic cohomology groups of open subschemes of commutative group schemes, with the elements of $\Delta$ (or more precisely, each $\Delta_{i,j}$) providing isogenies between these group schemes. The $\Delta$-action on $A$ is
then one of pullback. However, these motivic cohomology groups also come equipped with pushforwards by elements of $\Delta$, which we shall have occasion to employ. To fit this
into our abstract framework, let us define the notion of pushfoward maps on
a $\Delta$-module system. To distinguish these from the maps in a $\Delta$-module
system, we use the standard notation for pullbacks for the latter maps, writing $g \colon A_j \to A_i$ for $g \in \Delta_{i,j}$ as $g^*$ in this subsection.

\begin{definition}
    Let $A$ be a $\Delta$-module system for $J \subseteq I^n$, and 
    let $\tilde{\Delta}'$ be a submonoid of $\tilde{\Delta}$.
    A system of $\Delta'$-pushforwards on $A$
    is a collection of \emph{pushforward maps} 
    $g_* \colon A_i \to A_j$ with $g \in \Delta'_{i,j}$ for $i,j \in J$ such that
    \begin{enumerate}
        \item[i.] $h_* \circ g_* = (gh)_*$ if $h \in \Delta'_{j,k}$,
        \item[ii.] $(1_n)_*$ is the identity on $A_i$, and
        \item[iii.] $g_* \circ g^* = (\det(g)1_n)_*$.
    \end{enumerate}
\end{definition}

We have included (iii) as a standard compatibility of pushforwards and pullbacks, but we do not actually use it in this paper. In fact, we eschew presenting any semblance of a general theory and focus only on the single definition needed in this work.

Let us suppose that $U$ is an open subgroup of $\GL_n(R)$ satisfying conditions (i) and (ii) of Section \ref{Hecke_ideal}. 
Let $A$ be a $\Delta$-module system with a system of pushforwards for the submonoid of diagonal matrices in $\tilde{\Delta}$.

\begin{definition}
    Given $i,j \in I^n$ and a nonzero ideal $\mf{b}$ of $\mc{O}$, we write
    $i \sim_{\mf{b}} j$ to denote that $\mf{a}_{i_u,j_u}\mf{b}$
    is principal for all $1 \le u \le n$.
\end{definition}

\begin{definition} \label{isom_cplx}
    Let $\mf{b}$ be a nonzero ideal of $\mc{O}$. Let $i,j \in J$ be such
    that $i \sim_{\mf{b}} j$.
    Let $\rho \in \Delta_{i,j}$ be diagonal with $u$th entry generating
    $\mf{a}_{i_u,j_u}\mf{b}$. For 
    $f \colon \Gamma_i^q \to A_i$
	we define $[\mf{b}]_*(f) \colon \Gamma_j^q \to A_j$ on $\mu \in \Gamma_j^q$
	by
	$$
		[\mf{b}]_*(f)(\mu) = \rho_*f(\rho \mu \rho^{-1}).
	$$
\end{definition}

If $\mf{b} = (b)$ is principal, then we also write $[b]_*$ for $[\mf{b}]_*$.
As $\Gamma_j$ contains the diagonal matrices in $\GL_n(\mc{O})$, the following is easily verified.

\begin{lemma} \label{pushforward_ideal}
    Let $i,j \in J$ and $\mf{b}$ be a nonzero ideal of $\mc{O}$ such that
    $i \sim_{\mf{b}} j$.
	Suppose that every choice of $\rho_*$ as in Definition \ref{isom_cplx} has the property that 
	\begin{equation} \label{gamma_commute}
		\rho_*\gamma^* = (\rho^{-1}\gamma\rho)^*\rho_*
	\end{equation}
	for all $\gamma \in \Gamma_i$.
	Then $[\mf{b}]_*$ defines a homomorphism of chain complexes,
    and it depends upon the choice of $\rho$ only up to chain homotopy. Given a homomorphism $\phi \colon A \to B$ of
    $\Delta$-module systems, we have $[\mf{b}]_* \circ \phi_i = \phi_j \circ
    [\mf{b}]_*$ for any fixed choice of $\rho$.
\end{lemma}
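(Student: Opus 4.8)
The plan is to verify the three assertions of Lemma~\ref{pushforward_ideal} by direct computation, treating each in turn. First I would check that $[\mf{b}]_*$ is a chain map. The differential on inhomogeneous cochains is an alternating sum of face maps, of which all but the outermost and innermost are purely combinatorial (insert/delete coordinates, or multiply adjacent coordinates), and these commute trivially with the recipe $f \mapsto (\mu \mapsto \rho_* f(\rho \mu \rho^{-1}))$ since conjugation by $\rho$ is a group homomorphism on $\Gamma_j$ mapping into $\Gamma_i$ (using that $\Gamma_j$ contains the diagonal matrices in $\GL_n(\mc{O})$, so $\rho \Gamma_j \rho^{-1} \subseteq \Gamma_i$ makes sense, and in fact $\rho^{-1}\Gamma_i\rho \supseteq \Gamma_j$ is what is needed for the formula to land in $A_j$). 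The only faces requiring the hypothesis \eqref{gamma_commute} are the ones involving the module action: the first face map involves $\gamma_1^* f(\gamma_2,\ldots)$, and pushing $\rho_*$ past this uses exactly $\rho_* \gamma^* = (\rho^{-1}\gamma\rho)^* \rho_*$; the last face map involves no module action and is automatic. So $d \circ [\mf{b}]_* = [\mf{b}]_* \circ d$ follows by matching terms.

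Next I would show independence of $\rho$ up to chain homotopy. Two choices $\rho, \rho'$ of diagonal matrices with $u$th entries generating $\mf{a}_{i_u,j_u}\mf{b}$ differ by a diagonal unit matrix $\epsilon \in \GL_n(\mc{O})$, i.e. $\rho' = \rho\epsilon$ with $\epsilon$ diagonal in $\GL_n(\mc{O}) \subseteq \Gamma_i \cap \Gamma_j$. Using the cocycle/functoriality relations of the pushforward system — condition (i), $\epsilon_* \circ \rho_* = (\rho\epsilon)_* = \rho'_*$ — together with the observation that $[\epsilon]_*$ on cochains is (up to the standard homotopy relating inner automorphisms to the identity on group cohomology) chain homotopic to the identity, one concludes $[\mf{b}]_*$ for $\rho$ and for $\rho'$ agree on cohomology. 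Concretely I would exhibit the homotopy coming from the fact that conjugation by an element of the group acts trivially on its own cohomology; this is the standard $s_i$-type contracting homotopy, and the only subtlety is bookkeeping the $\epsilon_*$ versus $\epsilon^*$ maps, handled again by \eqref{gamma_commute} applied to $\gamma = \epsilon$.

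Finally, compatibility with a homomorphism $\phi \colon A \to B$ of $\Delta$-module systems: fix $\rho$, and compute $[\mf{b}]_* (\phi_i(f))(\mu) = \rho^B_* \phi_i(f)(\rho\mu\rho^{-1})$ versus $\phi_j([\mf{b}]_*(f))(\mu) = \phi_j(\rho^A_* f(\rho\mu\rho^{-1}))$. These agree provided $\rho^B_* \circ \phi_i = \phi_j \circ \rho^A_*$, i.e. $\phi$ intertwines pushforwards; but here one must be slightly careful, since a homomorphism of $\Delta$-module systems is only required to intertwine \emph{pullbacks}. The resolution is that in the situation of Definition~\ref{isom_cplx} the pushforward $\rho_*$ along the diagonal isogeny $\rho$ can be expressed via pullback: by condition (iii), $\rho_* = (\det(\rho)1_n)_* \circ (\rho^*)^{-1}$ on the relevant subgroup where $\rho^*$ is invertible, or more robustly one notes that $\rho_*$ is determined by $\rho^*$ and the central pushforward $(\det \rho \cdot 1_n)_*$, and the latter commutes with $\phi$ since $\phi$ is a map of module systems and $[\det\rho]_*$ is built from it — here one uses that $[\det\rho \cdot 1_n]_*$ is itself expressible through the module-system structure. \textbf{The main obstacle} I anticipate is precisely this last point: establishing that $\phi$ commutes with the diagonal pushforwards without assuming it a priori, which forces one to pin down how $\rho_*$ relates to the pullback structure that $\phi$ is guaranteed to respect; everything else is routine cochain-level bookkeeping with the combinatorial face maps and the standard inner-automorphism homotopy.
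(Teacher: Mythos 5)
The paper offers no argument for this lemma beyond the remark that $\Gamma_j$ contains the diagonal matrices of $\GL_n(\mc{O})$, so the only question is whether your verification is complete. Your first part is: the inner and final faces of the inhomogeneous differential commute with $f \mapsto \rho_*f(\rho(\cdot)\rho^{-1})$ for formal reasons, and the outermost face needs exactly \eqref{gamma_commute} applied to $\gamma = \rho\mu_1\rho^{-1} \in \Gamma_i$, which gives $\mu_1^*\rho_* = \rho_*(\rho\mu_1\rho^{-1})^*$. That is correct.

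Your resolutions of the two subtleties you flag in the remaining parts, however, both fail. For homotopy independence, write $\rho' = \epsilon'\rho$ with $\epsilon' = \rho\epsilon\rho^{-1}$ a diagonal matrix in $\GL_n(\mc{O})$, hence in $\Gamma_i$; then $[\mf{b}]_*^{\rho'} = [\mf{b}]_*^{\rho} \circ S$ with $(Sf)(\gamma) = \epsilon'_*f(\epsilon'\gamma\epsilon'^{-1})$. The standard inner-automorphism homotopy applies to the operator $f \mapsto (\epsilon'^{-1})^*f(\epsilon'\gamma\epsilon'^{-1})$, i.e.\ with the \emph{pullback} action of $\epsilon'^{-1}$ out front, so what you need is $\epsilon'_* = (\epsilon'^{-1})^*$, equivalently $\epsilon'_* \circ \epsilon'^* = \id$. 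This is not ``handled by \eqref{gamma_commute} applied to $\gamma = \epsilon$'': that relation only moves pullbacks past $\rho_*$ and says nothing about the pushforward of a unit diagonal matrix; and axiom (iii) (which the paper explicitly does not use) only yields $\epsilon'_*\epsilon'^* = (\det(\epsilon')1_n)_*$, which the axioms do not force to be the identity. Without this input, $S$ need not even be a chain map. The missing fact is supplied by the intended application: there $\epsilon'$ is an automorphism of $\mc{E}_i$, so $\epsilon'_* = (\epsilon'^{-1})^*$ holds on the nose (and pushforward by units is the identity on trace-fixed parts by their very definition). In an abstract proof this must be added as a hypothesis.

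The same pattern occurs in your last part. Your diagnosis is exactly right: a homomorphism of $\Delta$-module systems is only required to intertwine pullbacks, whereas $[\mf{b}]_* \circ \phi_i = \phi_j \circ [\mf{b}]_*$ for a fixed $\rho$ is literally the statement $\rho_*^B \circ \phi_i = \phi_j \circ \rho_*^A$ composed with conjugation. But your proposed repair does not work: $(\rho^*)^{-1}$ does not exist in general, and $(\det(\rho)1_n)_*$ is itself a pushforward by a non-unit scalar, not something built from the pullback structure that $\phi$ is guaranteed to respect. The compatibility of $\phi$ with the diagonal pushforwards has to be imposed (or verified in the application, where $\phi$ is the differential of the motivic Gersten complex and commutes with proper pushforward). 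So the overall architecture of your argument is the intended one, but as written parts two and three rest on claims that do not follow from the stated hypotheses.
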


\section{$\GL_2$-cocycles for CM elliptic curves}

\subsection{CM elliptic curves} \label{CM_EC}

Keeping the notation of the last section, we now let $F$ be an imaginary quadratic field. 
All number fields will be considered as subfields of the algebraic numbers in $\C$.
There are $h = |\Cl(F)|$ isomorphism classes of elliptic curves over $\C$ with CM by $\mc{O}$. 
Each has a representative defined over the Hilbert class field $H$ of $F$. Recall that $I = \{1, \ldots, h\}$. 

We give a quick proof of the following known lemma.

\begin{lemma} \label{isog_field_of_def}
    Suppose that $L$ is a finite extension of $F$.
    Let $E$ and $E'$ be $L$-isogenous elliptic curves with CM by $\mc{O}$ defined 
    over $L$. Then all isogenies from $E$ to $E'$ are defined
    over $L$.
\end{lemma}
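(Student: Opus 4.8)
The plan is to use the structure of the endomorphism rings of CM elliptic curves together with Galois descent. Let $\sigma \in \Gal(\bar{L}/L)$ be arbitrary, and let $\varphi \colon E \to E'$ be an isogeny over $\bar{L}$; I want to show $\varphi^{\sigma} = \varphi$. First I would recall that since $E$ and $E'$ have CM by $\mc{O}$, we have $\End_{\bar{L}}(E) \cong \mc{O} \cong \End_{\bar{L}}(E')$, and moreover these identifications are already defined over $L$ because $E$ and $E'$ are defined over $L$ with CM by the full ring $\mc{O}$ — here one uses that the CM is defined over $L$ (the complex multiplications by $\mc{O}$ are $L$-rational endomorphisms), which is part of the hypothesis that $E$ and $E'$ have CM by $\mc{O}$ over $L$.

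Next I would observe that $\Hom_{\bar{L}}(E,E')$ is a free $\mc{O}$-module of rank $1$: it is torsion-free, nonzero since $E$ and $E'$ are isogenous, and any two isogenies $\varphi, \psi \colon E \to E'$ satisfy $\hat{\psi} \circ \varphi \in \End(E) = \mc{O}$, so $\varphi$ and $\psi$ differ by an element of $\mc{O}\otimes\Q$; combined with torsion-freeness this forces $\Hom_{\bar{L}}(E,E')$ to be an invertible $\mc{O}$-module, in particular locally free of rank $1$, hence (since $F$ is imaginary quadratic and fractional ideals are generated by two elements, but more simply) finitely generated of rank $1$. Now fix a generator-level description: pick any nonzero $\varphi_0 \in \Hom_L(E,E')$ — such an $L$-rational isogeny exists because $E$ and $E'$ are assumed $L$-isogenous. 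Then for any $\bar{L}$-isogeny $\varphi$, we can write $\hat{\varphi_0}\circ \varphi = \alpha \in \mc{O}\otimes\Q$ inside $\End_{\bar{L}}(E)\otimes\Q = F$; since $\hat{\varphi_0}$ and $\alpha$ are multiplication-by-scalars-type maps, $\varphi$ is determined by $\alpha$, and $\deg(\varphi_0)\cdot \varphi = \varphi_0 \circ \alpha'$ for a suitable $\alpha' \in \mc{O}$ (clearing denominators via $\hat{\varphi_0}\varphi_0 = [\deg\varphi_0]$).

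With this in hand, the Galois descent is immediate: applying $\sigma$ and using that $\varphi_0$ and all of $\mc{O} = \End(E) = \End(E')$ are $L$-rational (so fixed by $\sigma$), we get $\deg(\varphi_0)\cdot\varphi^{\sigma} = \varphi_0^{\sigma}\circ (\alpha')^{\sigma} = \varphi_0 \circ \alpha' = \deg(\varphi_0)\cdot\varphi$, and since $\Hom_{\bar{L}}(E,E')$ is torsion-free we conclude $\varphi^{\sigma} = \varphi$. As $\sigma$ was arbitrary, $\varphi$ is defined over $L$.

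The main obstacle — really the only subtle point — is justifying that the ring $\End_{\bar{L}}(E) = \mc{O}$ is already realized by $L$-rational endomorphisms, i.e.\ that ``$E$ has CM by $\mc{O}$ over $L$'' should be read as saying the $\mc{O}$-action is $L$-rational; this is standard for CM elliptic curves over fields containing $F$ (the field of moduli issues do not arise once $F \subseteq L$), but it is worth stating cleanly, perhaps citing the theory of CM (e.g.\ Silverman's \emph{Advanced Topics}, Ch.~II, or the fact that an elliptic curve over a field $L \supseteq F$ with $j$-invariant a CM $j$-invariant has all its endomorphisms defined over $L$). Everything else is formal manipulation with the rank-one $\mc{O}$-module $\Hom(E,E')$ and torsion-freeness.
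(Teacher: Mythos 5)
Your proposal is correct and follows essentially the same route as the paper: both reduce to the fact that all endomorphisms of a CM elliptic curve over $L \supseteq F$ are $L$-rational (Silverman, \emph{Advanced Topics}, Theorem II.2.2), then produce an integer multiple of the given isogeny that is defined over $L$ (you take $m = \deg\varphi_0$ explicitly via the dual isogeny, the paper gets such an $m$ from the finite index of $\Hom_L(E,E')$ in $\Hom(E,E')$), and conclude by applying $\sigma$ and using torsion-freeness of $\Hom(E,E')$. The only difference is this cosmetic choice of how the integer $m$ is produced.
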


\begin{proof}
    By \cite[Theorem II.2.2]{silverman},
    this is true for $E=E'$. 
    Thus $\Hom_L(E,E')$ is a nonzero
    $\mc{O}$-submodule of $\Hom(E,E')$, hence of finite index.
    For $f \in \Hom(E,E')$, let $m \ge 1$ be such that $m f = f \circ m$ is 
    defined over $L$. Since multiplication by $m$
    is also defined over $L$, 
    we have $f \circ m = f^{\sigma} \circ m$ for any automorphism of $\C$ fixing $L$.
    Then $m(f-f^{\sigma}) = 0$,
    so $f = f^{\sigma}$.
\end{proof}

Let us fix a nonzero ideal $\mf{f}$ of $\mc{O}$ prime to all $\mf{a}_r$ for $r \in I$ 
such that $\mc{O}^{\times} \to (\mc{O}/\mf{f})^{\times}$ is injective. 
That is, we suppose that no nontrivial root of unity in $\mc{O}$ is $1$ modulo $\mf{f}$.

As described in the discussion of \cite[1.4]{deshalit}, there exists an elliptic curve over $L = F(\mf{f})$ with CM by $\mc{O}$ such
that its torsion points are all defined over an abelian extension of $F$. In fact, we may choose such a curve $E_1$
so that $E_1(\C) \cong \C/\mc{O}$ and fix such an analytic isomorphism.

Let $\mc{N}$ be a nonzero ideal of $\mc{O}$ also prime to $\mf{a}_r$ for all $r \in I$.
Let $\mc{R}$ denote the Artin map for $F(\mf{f} \cap \mc{N})/F$, and let $\sigma_r = \mc{R}(\mf{a}_r^{-1})$ for 
$r \in I$. Set $E_r = E_1^{\sigma_r}$.  By Lemma \ref{isog_field_of_def}, all isogenies between the curves $E_r$ are defined over $L$ as well. In fact, by CM theory (cf. \cite[1.5]{deshalit}), there is a unique $L$-isogeny $\psi_r \colon E_r \to E_1$ with kernel $E_r[\mf{a}_r]$ that agrees with $\sigma_r^{-1}$ on prime-to-$\mf{a}_r$-torsion. The identification 
of $E_1$ with $E_r/E_r[\mf{a}_r]$ gives rise to an analytic isomorphism $E_r(\C) \cong \C/\mf{a}_r$.
In turn, this supplies an isomorphism $\Hom_L(E_r,E_1) \cong \mf{a}_r^{-1}$.
For $r, s \in I$, since every analytic map $\C/\mf{a}_r \to \C/\mf{a}_s$ preserving $0$ corresponds to an isogeny which is necessarily defined over $L$, we then have identifications
\begin{equation} \label{isogeny_gp}
	\Hom_L(E_r,E_s) \cong \mf{a}_{r,s}.
\end{equation}

Now, let us turn to the $\mc{N}$-torsion on these elliptic curves.
For $\alpha \in F^{\times}$ prime to $\mathcal{N}$ and $r \in I$, let $[\alpha]_r \colon E_r[\mathcal{N}] \rightarrow E_r[\mathcal{N}]$ be the isomorphism given by multiplication by any element in $\mc{O}$ congruent to $\alpha$ modulo $\mathcal{N}$.
Let us set $\sigma_{r,s} = \sigma_s\sigma_r^{-1} = \mc{R}(\mf{a}_{s,r})$ for brevity.

\begin{proposition} \label{alpha_sigma}
	Let $d \in \mf{a}_{r,s}$ be prime to $\mc{N}$. We have
	\[
    	\sigma_{r,s} = d \circ [d]_r^{-1} = [d]_s^{-1} \circ d
	\]
	as group homomorphisms $E_r[\mathcal{N}] \rightarrow E_s[\mathcal{N}]$.
\end{proposition}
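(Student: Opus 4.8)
The plan is to verify the identity after composing with $\psi_s\colon E_s\to E_1$. First, some bookkeeping. Since the relevant Galois group is abelian, $E_r^{\sigma_{r,s}}=E_s$, so $\sigma_{r,s}$ restricts to an isomorphism $E_r[\mc{N}]\xrightarrow{\sim}E_s[\mc{N}]$; as $\mf{a}_s$ is prime to $\mc{N}$, the isogeny $\psi_s$, whose kernel is $E_s[\mf{a}_s]$, likewise restricts to an isomorphism $E_s[\mc{N}]\xrightarrow{\sim}E_1[\mc{N}]$; and since $d$ is prime to $\mc{N}$, the maps $[d]_r$, $[d]_s$ are automorphisms of the $\mc{N}$-torsion while $d\colon E_r\to E_s$ restricts to an isomorphism on $\mc{N}$-torsion. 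Thus both sides of the claimed identity are isomorphisms $E_r[\mc{N}]\xrightarrow{\sim}E_s[\mc{N}]$, and since $\psi_s$ is injective on $E_s[\mc{N}]$ it suffices to show $\psi_s\circ\sigma_{r,s}=\psi_s\circ d\circ[d]_r^{-1}$ as maps $E_r[\mc{N}]\to E_1[\mc{N}]$.

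For the left-hand side, fix $P\in E_r[\mc{N}]$. Then $\sigma_{r,s}(P)$ is a prime-to-$\mf{a}_s$ torsion point of $E_s$, so $\psi_s(\sigma_{r,s}(P))=\sigma_s^{-1}(\sigma_{r,s}(P))$ by the defining property of $\psi_s$. As $\sigma_s^{-1}\sigma_{r,s}=\sigma_r^{-1}$ and $P$ is a prime-to-$\mf{a}_r$ torsion point of $E_r$, the defining property of $\psi_r$ gives $\sigma_s^{-1}(\sigma_{r,s}(P))=\sigma_r^{-1}(P)=\psi_r(P)$. Hence $\psi_s\circ\sigma_{r,s}=\psi_r$ on $E_r[\mc{N}]$.

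For the right-hand side, use the analytic isomorphisms $E_t(\C)\cong\C/\mf{a}_t$, under which $\psi_r$ and $\psi_s$ are induced by the identity map on $\C$ and $d\colon E_r\to E_s$ is induced by multiplication by $d\in\mf{a}_{r,s}$. Consequently $\psi_s\circ d$ is induced by multiplication by $d$; fixing $c\in\mc{O}$ with $c\equiv d\pmod{\mc{N}}$, a prime-by-prime check shows that multiplication by $d$ and by $c$ agree on $E_1[\mc{N}]$, so $\psi_s\circ d=[d]_1\circ\psi_r$ on $E_r[\mc{N}]$. Combining with the $\mc{O}$-linearity of $\psi_r$, which gives $\psi_r\circ[d]_r=[d]_1\circ\psi_r$, we get $\psi_s\circ d\circ[d]_r^{-1}=[d]_1\circ\psi_r\circ[d]_r^{-1}=[d]_1\circ[d]_1^{-1}\circ\psi_r=\psi_r$ on $E_r[\mc{N}]$. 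This matches the left-hand side, proving $\sigma_{r,s}=d\circ[d]_r^{-1}$; the remaining equality $d\circ[d]_r^{-1}=[d]_s^{-1}\circ d$ is immediate from the $\mc{O}$-linearity of $d$, which gives $d\circ[d]_r=[d]_s\circ d$.

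The only genuine subtlety is keeping the Galois twists straight and invoking the complex multiplication reciprocity law precisely in the packaged form recalled above (the agreement of $\psi_r$ with $\sigma_r^{-1}$ on prime-to-$\mf{a}_r$ torsion); the congruence computations on $\mc{N}$-torsion are routine.
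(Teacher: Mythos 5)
Your proof is correct. It uses the same essential inputs as the paper's: the analytic identifications $E_t(\C)\cong\C/\mf{a}_t$, under which $\Hom_L(E_r,E_s)\cong\mf{a}_{r,s}$ and $\psi_r$ is induced by the identity on $\C$, together with the CM reciprocity fact that $\psi_r$ agrees with $\sigma_r^{-1}$ on prime-to-$\mf{a}_r$ torsion. The organization differs, though: the paper first reduces to the case $s=1$, then observes that $d\circ[d]_r^{-1}=[d]_1^{-1}\circ d$ is independent of the choice of $d\in\mf{a}_r^{-1}$ prime to $\mc{N}$, and concludes by evaluating at $d=1$, where the map is literally $\psi_r=\sigma_r^{-1}$. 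You instead post-compose with $\psi_s$ (injective on $\mc{N}$-torsion since $\mf{a}_s$ is prime to $\mc{N}$) and check directly that both sides become $\psi_r$ on $E_r[\mc{N}]$. This buys you two things: you never need to justify the reduction from general $(r,s)$ to $(r,1)$, which the paper leaves implicit, and you make explicit the congruence computation (that $(d-c)\cdot\mc{N}^{-1}\mf{a}_r\subseteq\mc{O}$ for $c\in\mc{O}$ with $c\equiv d\bmod\mc{N}$) that underlies the paper's independence-of-$d$ claim. The two arguments are of essentially the same depth once the paper's implicit steps are filled in; yours is the more self-contained write-up, the paper's the more economical one.
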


\begin{proof}
	It suffices to prove this in the case $s = 1$. Observe that the quantity $d \circ [d]_r^{-1} = [d]_1^{-1} \circ d$ 
	is independent of the choice of $d \in \mf{a}_r^{-1}$.
	The isogeny $\psi_r \colon E_r \to E_1$ is identified with $1 \in \mf{a}_r^{-1}$,
	and clearly $1 = 1 \circ [1]_r^{-1} \colon E_r[\mc{N}] \to E_1[\mc{N}]$. Since $\psi_r$ equals $\sigma_r^{-1}$
	on $E_r[\mc{N}]$, we are done.
\end{proof}

Let $t \in I$ be such that $E_t(\C) \cong \C/\mc{N}$. A fixed generator of $\mc{N}\mf{a}_t^{-1}$ (unique up to unit)
provides an isomorphism $\C/\mf{a}_t \to \C/\mc{N}$ of elliptic curves, and this gives an isomorphism $E_t(\C) \cong
\C/\mc{N}$. We let $Q$ be the primitive $\mc{N}$-torsion point of $E_t$ corresponding to $1 \in \C/\mc{N}$. Finally, we set $P_r = 
\sigma_{t,r}(Q)
 \in E_r[\mc{N}]$ for all $r \in I$. 
Proposition \ref{alpha_sigma} may then be rephrased as follows.

\begin{corollary} \label{action_on_pts}
	Let $d \in \mf{a}_{r,s}$ be prime to $\mc{N}$.
	Then
	$$
 		P_s = [d]_s^{-1} (d \cdot P_r).
	$$
\end{corollary}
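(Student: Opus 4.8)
The plan is to deduce the corollary formally from Proposition \ref{alpha_sigma} by tracking the Galois action through the definition of the points $P_r$. The first step is to record the ``cocycle'' identity $\sigma_{t,s} = \sigma_{r,s} \circ \sigma_{t,r}$ among the Artin images. This is immediate from $\sigma_{r,s} = \sigma_s\sigma_r^{-1}$ together with the fact that the automorphisms $\sigma_r = \mc{R}(\mf{a}_r^{-1})$ all lie in the abelian group $\Gal(F(\mf{f} \cap \mc{N})/F)$: indeed $\sigma_{r,s}\sigma_{t,r} = \sigma_s\sigma_r^{-1}\sigma_r\sigma_t^{-1} = \sigma_s\sigma_t^{-1} = \sigma_{t,s}$. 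Applying this to the fixed primitive $\mc{N}$-torsion point $Q$ of $E_t$ and using the definition $P_r = \sigma_{t,r}(Q)$, $P_s = \sigma_{t,s}(Q)$, I obtain $P_s = \sigma_{r,s}(\sigma_{t,r}(Q)) = \sigma_{r,s}(P_r)$.

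The second step is to apply Proposition \ref{alpha_sigma}. Since $P_r$ lies in $E_r[\mc{N}]$ --- it is the image of the $\mc{N}$-torsion point $Q$ under the isomorphism of elliptic curves associated to $\sigma_{t,r}$, which carries $\mc{N}$-torsion to $\mc{N}$-torsion --- and $d \in \mf{a}_{r,s}$ is prime to $\mc{N}$ by hypothesis, we may evaluate the identity $\sigma_{r,s} = [d]_s^{-1} \circ d$ of group homomorphisms $E_r[\mc{N}] \to E_s[\mc{N}]$ from Proposition \ref{alpha_sigma} at the point $P_r$. This gives $P_s = \sigma_{r,s}(P_r) = [d]_s^{-1}(d \cdot P_r)$, which is exactly the asserted formula.

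I do not anticipate any genuine obstacle here: the statement is a direct corollary, and the argument is a short chase through the definitions once Proposition \ref{alpha_sigma} is available. The only points deserving a word of justification are that $P_r$ really does land in $E_r[\mc{N}]$ (so that the restricted-to-torsion identity of Proposition \ref{alpha_sigma} can be invoked at $P_r$) and that the identity $\sigma_{t,s} = \sigma_{r,s}\circ\sigma_{t,r}$ is compatible with the identifications $E_r(\C) \cong \C/\mf{a}_r$ used throughout; both follow at once from the setup in this subsection. Alternatively one could avoid citing Proposition \ref{alpha_sigma} and argue directly that $d \circ [d]_r^{-1}$ computes the action of $\sigma_{r,s}$ on $\mc{N}$-torsion, but reusing the proposition just proved is cleaner.
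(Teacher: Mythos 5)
Your argument is correct and is exactly the rephrasing the paper intends: the cocycle identity $\sigma_{t,s} = \sigma_{r,s}\circ\sigma_{t,r}$ gives $P_s = \sigma_{r,s}(P_r)$, and then Proposition \ref{alpha_sigma} applied at $P_r \in E_r[\mc{N}]$ yields the stated formula. The paper offers no separate proof (it presents the corollary as a direct restatement of Proposition \ref{alpha_sigma}), so your write-up simply makes that implicit chase explicit.
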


We will also have use of the following.

\begin{lemma} \label{action_on_pts_single_curve}
	Let $d \in \mc{O}$ be prime to $\mc{N}$ with $d \equiv 1 \bmod \mf{f}$. For $r \in I$, we have
	$$
		\mc{R}(d)(P_r) = [d]_r(P_r).
	$$
\end{lemma}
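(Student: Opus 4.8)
The plan is to reduce to the single case $r=1$ and there to read off the statement from the reciprocity law built into de Shalit's curve $E_1$; the reduction is close in spirit to the proof of Proposition~\ref{alpha_sigma}.

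First I would record that $\mc{R}(d)$ acts trivially on $L = F(\mf{f})$: as $d \equiv 1 \bmod \mf{f}$, the principal ideal $(d)$ is trivial in the ray class group of modulus $\mf{f}$, so $\mc{R}(d)$ restricts to the identity on $F(\mf{f})$. Since $F(\mf{f})/F$ is Galois, $\mc{R}(d)$ therefore fixes every curve $E_r$ together with every $L$-rational isogeny among them, in particular $\psi_r \colon E_r \to E_1$, which is moreover $\mc{O}$-linear (it lies in $\Hom_L(E_r,E_1) \cong \mf{a}_{r,1}$). The relevant compatibility is $\psi_r(P_r) = P_1$: since $\mc{N}$ is prime to $\mf{a}_r = \ker\psi_r$, the isogeny $\psi_r$ agrees with $\sigma_r^{-1}$ on $E_r[\mc{N}]$, so $\psi_r(P_r) = \sigma_r^{-1}(\sigma_{t,r}(Q)) = \sigma_t^{-1}(Q) = P_1$, using $\sigma_1 = \mr{id}$. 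Granting the case $r = 1$, that is $\mc{R}(d)(P_1) = [d]_1(P_1) = d \cdot P_1$, one then computes
\[
	\psi_r(\mc{R}(d)(P_r)) = \mc{R}(d)(\psi_r(P_r)) = \mc{R}(d)(P_1) = d \cdot P_1 = d \cdot \psi_r(P_r) = \psi_r(d \cdot P_r) = \psi_r([d]_r(P_r)),
\]
the first equality because $\psi_r$ is defined over $L$ and $\mc{R}(d)|_L = \mr{id}$, the fifth because $\psi_r$ is $\mc{O}$-linear. As $\mc{N} + \mf{a}_r = \mc{O}$, the map $\psi_r$ is injective on $E_r[\mc{N}]$, so cancelling it gives $\mc{R}(d)(P_r) = [d]_r(P_r)$. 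This reduces the lemma to $r = 1$.

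For $r = 1$, the curve $E_1/L$ together with its fixed analytic isomorphism $E_1(\C) \cong \C/\mc{O}$ is de Shalit's, and the claim is the attached reciprocity law [cf.\ the discussion around \cite[1.4,~1.5]{deshalit}]. In detail: $\mc{R}(d)$ fixes $L$, so $E_1^{\mc{R}(d)} = E_1$, and the same CM-theoretic recipe that produces the $\psi_r$ furnishes an $L$-isogeny $E_1 \to E_1^{\mc{R}(d)} = E_1$ with kernel $E_1[(d)]$ that agrees with $\mc{R}(d)$ on prime-to-$(d)$ torsion; the normalization built into the choice of uniformization identifies this isogeny with multiplication by $d$. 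Since $P_1 \in E_1[\mc{N}]$ and $\mc{N}$ is prime to $(d)$, this gives $\mc{R}(d)(P_1) = d \cdot P_1 = [d]_1(P_1)$.

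I expect the case $r = 1$ to be the only real obstacle: one must extract from de Shalit's construction that $\mc{R}(d)$ acts on prime-to-$(d)$ torsion of $E_1$ by multiplication by $d$ \emph{exactly}, with neither a unit twist nor an inversion of $d$. This rests on the precise normalization of the analytic uniformization of $E_1$ (equivalently, on the attached Hecke character sending $(d) \mapsto d$ for $d \equiv 1 \bmod \mf{f}$) and on the convention in force for the Artin map $\mc{R}$; one should check that this convention is the same one under which the recorded property of $\psi_r$ (agreement with $\sigma_r^{-1}$ on prime-to-$\mf{a}_r$ torsion) holds.
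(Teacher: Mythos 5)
Your reduction to $r=1$ is correct as far as it goes ($\psi_r(P_r)=P_1$, $\mc{O}$-linearity and $L$-rationality of $\psi_r$, injectivity on $E_r[\mc{N}]$ since $\mf{a}_r$ is prime to $\mc{N}$), but it buys nothing: the argument needed for $r=1$ works verbatim for every $r$, so the paper does not pass through $\psi_r$ at all. More importantly, the one step you defer --- ruling out a unit twist in the identification of the CM isogeny with multiplication by $d$ --- is precisely the content of the lemma, and ``the normalization built into the choice of uniformization'' does not by itself close it. What CM theory (de Shalit, Proposition 1.5) hands you is an $L$-endomorphism $\lambda$ of $E_r$ with kernel $E_r[d]$ agreeing with $\mc{R}(d)$ on prime-to-$d$ torsion; a priori $\lambda = ud$ for some $u \in \mc{O}^{\times}$, and nothing about the analytic uniformization of $E_1$ alone forces $u=1$.

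The paper pins down $u$ by evaluating on $\mf{f}$-torsion: $\mc{R}(d)$ fixes $E_r[\mf{f}]$ (this is part of the de Shalit normalization, i.e.\ the attached Hecke character sends $(d)\mapsto d$ for $d \equiv 1 \bmod \mf{f}$), so $\lambda = ud$ fixes $E_r[\mf{f}]$; since $d \equiv 1 \bmod \mf{f}$, the endomorphism $ud$ acts on $E_r[\mf{f}]$ as $u$, whence $u \equiv 1 \bmod \mf{f}$, and the standing hypothesis that $\mc{O}^{\times} \to (\mc{O}/\mf{f})^{\times}$ is injective gives $u=1$. This is exactly the check you flag in your last paragraph as outstanding; without it the proof is not complete. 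If you supply this two-line argument in place of the appeal to normalization, you can also discard the reduction to $r=1$ entirely.
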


\begin{proof}
	By CM theory (see \cite[Proposition 1.5]{deshalit}), we know that there exists an isogeny $\lambda \colon E_r \to E_r$
	with kernel $E_r[d]$ that agrees with $\mc{R}(d)$ on prime-to-$d$-torsion in $E_r$, and $\mc{R}(d)$ fixes $E_r[\mf{f}]$.
	The only endomorphism of $E_r$ with kernel $E_r[d]$ and fixing $E_r[\mf{f}]$ is multiplication by $d$, since there
	are no nontrivial units in $\mc{O}^{\times}$ that are $1$ modulo $\mf{f}$.
\end{proof}

\subsection{Motivic complexes of products of elliptic curves}

Let $\mc{R}$ be $\Z$ or an order in the ring of integers of an imaginary quadratic field.
For $j \in \{1,2\}$, let $A_j$ be an elliptic curve with endomorphism ring $\mc{R}$ over a characteristic $0$ field $L$, and set $\mc{A} = A_1 \times_L A_2$. As in \cite[(2.5)]{sv}, we have a complex $\ms{K}(\mc{A})$ in homological degrees $2$ to $0$:
$$
	K_2(L(\mc{A})) \xrightarrow{\partial} \bigoplus_D K_1(L(D)) \xrightarrow{\partial} \bigoplus_x K_0(L(x)),
$$
where $K_d$ denotes the $d$th $K$-group,
with the sums in degree $d$ taken over the irreducible $L$-cycles of dimension $d$ in $\mc{A}$. 
The homology of this complex is given by 
$$
	H_d(\ms{K}(\mc{A})) \cong H^{4-d}(\mc{A},\Z(2)).
$$

We have trace (i.e., pushforward) maps $[\alpha]_*$ for multiplication by elements
$\alpha \in \mc{R} - \{0\}$ on these groups. As such, we introduce the following notation.
Let $\Z' = \Z[\frac{1}{30}]$ throughout. For an abelian group $B$, we set $B_{\Z'} = B \otimes \Z'$.
For an abelian group $M$ with a multiplicative $(\mc{R} - \{0\})$-action, we use $M^{(0)}$ to denote the subgroup of elements of $M_{\Z'}$ 
that are fixed under all elements of $\mc{R}$ prime to some nonzero element. We refer to $M^{(0)}$ as the \emph{trace-fixed part} of $M$.

\begin{lemma} \label{K0torsion}
The trace-fixed part of $\ms{K}_0(\mc{A})$ satisfies
$$
	\ms{K}_0(\mc{A})^{(0)} \cong \varinjlim_{\mf{n}} H^0(\mc{A}[\mf{n}],\Z)^{(0)},
$$ 
where $\mf{n}$ runs over the nonzero ideals of $\mc{R}$.
In fact, the class of $\mc{A}[\mf{n}]$ is trace-fixed and is a sum of distinct trace-fixed classes that generate
$H^0(\mc{A}[\mf{n}],\Z)^{(0)}$.
\end{lemma}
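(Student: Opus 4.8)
The plan is to identify $\ms{K}_0(\mc{A})$ with the group of zero-cycles on $\mc{A}$, describe the $(\mc{R}-\{0\})$-action by pushforward, and then directly compute the trace-fixed part. Concretely, $\ms{K}_0(\mc{A}) = \bigoplus_x K_0(L(x)) = \bigoplus_x \Z$ is the free abelian group on closed points of $\mc{A}$, and multiplication by $\alpha \in \mc{R}-\{0\}$ acts by the pushforward $[\alpha]_*$, which sends the class of a point $P$ to $\sum_{Q \in [\alpha]^{-1}(P)} e_Q \cdot [L(Q):L(P)]\,[Q]$ accounting for the degree of the isogeny; in particular, if $P$ is $\mf{m}$-torsion for some $\mf{m}$ prime to $\alpha$, then $[\alpha]_*$ permutes the $\mf{m}$-torsion points (acting as multiplication by the degree on each orbit sum). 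First I would check that for a nonzero ideal $\mf{n}$ of $\mc{R}$, the class $[\mc{A}[\mf{n}]] := \sum_{P \in \mc{A}[\mf{n}]}[P] \in \ms{K}_0(\mc{A})_{\Z'}$ is fixed by every $\alpha$ prime to $\mf{n}$: such an $\alpha$ induces a bijection of $\mc{A}[\mf{n}]$ with itself, and the pushforward of a sum over a full $[\alpha]$-stable finite set of points all having the same residue field (here $L$, since all $\mf{n}$-torsion is rational over a fixed extension — or one works over $L$ after base change and uses that Galois and $[\alpha]_*$ commute) returns the same sum. That shows $[\mc{A}[\mf{n}]]$ is trace-fixed.

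Next I would address the structural claim. The torsion subgroup $\mc{A}_{\tors} = \varinjlim_{\mf{n}} \mc{A}[\mf{n}]$ carries the action of $(\mc{R}-\{0\})$ by multiplication, and I would argue that $\ms{K}_0(\mc{A})^{(0)}$ sees only the torsion points: a closed point $x$ that is not torsion has the property that the cyclic subgroup (or $\mc{R}$-submodule) it generates is infinite, so for a suitable $\alpha$ the orbit of $[x]$ under $[\alpha]_*$ is infinite, and no nonzero $\Z'$-multiple combination supported off torsion can be fixed by all $\alpha$ prime to a fixed element — the point being that being trace-fixed forces the support to consist of torsion points, since otherwise one can translate/multiply to produce infinitely many distinct terms in a single push-forward. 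Having reduced to the torsion, $H^0(\mc{A}[\mf{n}],\Z)$ is the free $\Z$-module on the points of $\mc{A}[\mf{n}]$, and the direct limit over $\mf{n}$ (transition maps being the inclusions of torsion-point sets, compatibly with traces) recovers exactly the subgroup of $\ms{K}_0(\mc{A})_{\Z'}$ supported on torsion; intersecting with the trace-fixed part gives the displayed isomorphism $\ms{K}_0(\mc{A})^{(0)} \cong \varinjlim_{\mf{n}} H^0(\mc{A}[\mf{n}],\Z)^{(0)}$.

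For the final sentence, I would decompose $H^0(\mc{A}[\mf{n}],\Z)^{(0)}$ explicitly. The group $\mc{A}[\mf{n}] \cong (\mc{R}/\mf{n})^2$ (as $A_1[\mf{n}] \cong A_2[\mf{n}] \cong \mc{R}/\mf{n}$ via the CM structure), and the multiplicative $(\mc{R}-\{0\})$-action on the free module on these points factors, on the relevant prime-to-$\mf{n}$ part, through $(\mc{R}/\mf{n})^{\times}$ acting by multiplication. Averaging over orbits, $H^0(\mc{A}[\mf{n}],\Z)^{(0)}$ is free on the orbit sums $\sum_{P \in \mathcal{O}}[P]$ as $\mathcal{O}$ ranges over $(\mc{R}/\mf{n})^{\times}$-orbits in $(\mc{R}/\mf{n})^2$; equivalently, over divisors $\mf{m}$ of $\mf{n}$ one gets the sum of all points of exact order $\mf{m}$ in each "slot." These orbit sums are manifestly distinct and trace-fixed, and $[\mc{A}[\mf{n}]] = \sum_{\mathcal{O}} \big(\sum_{P\in\mathcal{O}}[P]\big)$ is their sum, giving the claim. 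The main obstacle I anticipate is the careful bookkeeping of residue fields and the degree weights in the pushforward $[\alpha]_*$ on $\ms{K}_0$, and in particular verifying that after inverting $30$ (i.e., $5!$, killing the contributions of roots of unity of order dividing $4$ or $6$ and small denominators) the trace-fixed part genuinely isolates the torsion and the orbit-sum basis is clean; the non-torsion-vanishing argument needs to be phrased so that it is uniform over all $\alpha$ rather than one at a time, which is the one spot requiring genuine care rather than routine verification.
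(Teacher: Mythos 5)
Your proposal follows essentially the same route as the paper: exclude non-torsion points from the support of any trace-fixed cycle, observe that $\mc{A}[\mf{n}]$ is carried to itself by $[\alpha]_*$ for $\alpha$ prime to $\mf{n}$, and exhibit $H^0(\mc{A}[\mf{n}],\Z)^{(0)}$ as generated by orbit sums of irreducible cycles under the prime-to-$\mf{n}$ multiplication action, of which the class of $\mc{A}[\mf{n}]$ is the total sum. One correction: your opening formula for $[\alpha]_*$, sending $[P]$ to a weighted sum over $[\alpha]^{-1}(P)$, is the pullback $[\alpha]^*$ rather than the pushforward; taken literally it would falsify the claim that $\mc{A}[\mf{n}]$ is fixed for $\alpha$ prime to $\mf{n}$ (the preimage of $\mc{A}[\mf{n}]$ is $\mc{A}[\alpha\mf{n}]$), though the argument you actually run afterwards correctly uses the forward-image description, under which $[\alpha]$ permutes $\mc{A}[\mf{n}]$ and residue degrees are $1$. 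Also, the exclusion of non-torsion support needs only a single nonunit $\alpha$ fixing the cycle (finiteness of the support $S$ together with $S=\alpha S$ forces periodicity, hence torsion), so the uniformity worry you raise at the end is not an obstacle.
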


\begin{proof}
	Any irreducible zero-cycle containing a non-torsion point clearly cannot be fixed under multiplication by any nonunit in $\mc{R}-\{0\}$. On the other hand, $\mc{A}[\mf{n}]$ is fixed by all elements of $\mc{R}-\{0\}$ prime to $\mf{n}$. It is a sum of irreducible cycles that freely generate $H^0(\mc{A}[\mf{n}],\Z)$. The sum of all $(\mc{R}-\{0\})$-multiples of such an irreducible cycle over elements prime to $\mf{n}$ provides a trace-fixed class, from which the result is clear (and
	did not require working with $\Z'$-coefficients).
\end{proof}

It follows from \cite[Proposition 6.1.2]{sv} that $H^i(\mc{A},\Z(2))^{(0)} = 0$ for all $i \neq 4$, and
$$
	H^4(\mc{A},\Z(2))^{(0)} \cong \Z'.
$$ 
In fact, this is already true for $(\Z-\{0\})$-fixed parts. The resulting surjection $\ms{K}_0(\mc{A})^{(0)} \to \Z'$ takes the class of a trace-fixed cycle to the order of its group of $\C$-points. We call this map the \emph{degree map}.

\begin{lemma} \label{exactK0}
	The image of the residue map 
	$\ms{K}_1(\mc{A})^{(0)} \to \ms{K}_0(\mc{A})^{(0)}$ 
	is the kernel of the degree map.
\end{lemma}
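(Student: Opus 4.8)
The plan is to prove the two inclusions separately. For $\im(\partial^{(0)}) \subseteq \ker(\deg)$, the point is just that a residue divisor has degree zero: for an irreducible curve $D \subseteq \mc{A}$ and $f \in L(D)^{\times}$, the divisor of $f$ on the proper smooth model of $D$ has degree $0$, and pushforward of zero-cycles along a proper morphism preserves degree, so $\partial f$ has degree $0$ as a zero-cycle on $\mc{A}$. Since $\partial$ and the degree map commute with the pushforwards $[\alpha]_*$ and with $-\otimes\Z'$, this descends to $\im(\partial|_{\ms{K}_1(\mc{A})^{(0)}}) \subseteq \ker(\deg\colon \ms{K}_0(\mc{A})^{(0)} \to \Z')$.

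For the reverse inclusion, let $z \in \ms{K}_0(\mc{A})^{(0)}$ have degree $0$. First I would use the identification $H_0(\ms{K}(\mc{A})) \cong H^4(\mc{A},\Z(2))$ together with the fact that $H^4(\mc{A},\Z(2))^{(0)} \cong \Z'$ via degree (\cite[Proposition 6.1.2]{sv}) to produce an a priori preimage: the image of $z$ in $H^4(\mc{A},\Z(2))\otimes\Z'$ is trace-fixed, hence lies in $H^4(\mc{A},\Z(2))^{(0)} = \Z'$, and it has degree $0$, hence is $0$; so $z = \partial w'$ for some $w' \in \ms{K}_1(\mc{A})\otimes\Z'$. (Equivalently, $z = \partial w'$ forces the Albanese class of $z$ in $\mathrm{Alb}(\mc{A}) = A_1\times A_2$ to vanish, and the trace-fixed part of the Albanese kernel vanishes by \cite[Proposition 6.1.2]{sv}.) The remaining, and genuinely substantive, task is the \emph{upgrade}: to replace $w'$ by a preimage $w$ that is itself trace-fixed, i.e.\ to show that $\ms{K}_0(\mc{A})^{(0)} \cap \partial(\ms{K}_1(\mc{A})\otimes\Z') = \partial(\ms{K}_1(\mc{A})^{(0)})$.

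To carry out the upgrade, I would invoke Lemma \ref{K0torsion}: $z$ is supported on $\mc{A}[\mf{n}]$ for some ideal $\mf{n}$ and is a $\Z'$-combination of orbit sums of $\mf{n}$-torsion points; enlarging $\mf{n}$, we may assume $z$ is fixed by all $[\alpha]_*$ with $\alpha$ prime to $\mf{n}$. The trace-fixed preimage is then built out of norm-coherent families of theta functions on the curves in $\mc{A}$ that carry such families — the fibers $\{P\}\times A_2$ and $A_1\times\{Q\}$ and the translates by torsion points of graphs of isogenies $A_1\to A_2$ (which exist since $A_1$ and $A_2$ are isogenous), each of which is moved to finitely many of its siblings under multiplication maps. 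On each such curve the prescribed divisor becomes principal using that $z$ has degree $0$ and trivial Albanese class (and, at the torsion level, that $-1$ always acts) after inverting $2,3,5$ — these being inverted precisely to absorb the roots of unity in $\mc{O}$ and the $5!$ of \cite{sv} — while the distribution relations of the theta functions (as in \cite{kato}, \cite{deshalit}) make the resulting combination of functions fixed by the $[\alpha]_*$. Checking that the divisors add up to exactly $z$, and that the choices can be made Galois-equivariantly so that the functions are $L$-rational, gives the desired $w \in \ms{K}_1(\mc{A})^{(0)}$.

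The main obstacle is exactly this last step: arranging the family of theta functions on the trace-invariant curves so that (i) their divisors assemble to $z$, (ii) the combination is fixed by the pushforwards $[\alpha]_*$ on the nose rather than only modulo $\ker\partial$ (the naive averaging argument only reduces this to the vanishing of an $H^1$ of the relevant monoid on $\ker\partial$, which is not obviously zero), and (iii) everything descends to $L$. This is the $E\times E'$ analogue of the corresponding verification over $\gm^2$ (cyclotomic units) and over $E\times E$ in \cite{sv}; the bookkeeping — in particular keeping the non-principality of divisors $(T)-(0)$ under control by working with full orbits and $\Z'$-coefficients — is where the real work lies, while the two inclusions and the reduction above are routine given the inputs from \cite{sv}.
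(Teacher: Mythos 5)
Your easy inclusion and your reduction to cycles supported on $\mc{A}[\mf{n}]$ (via Lemma \ref{K0torsion}) are fine, but the proposal has a genuine gap at exactly the point you flag as ``the main obstacle,'' and it is steered toward the wrong machinery. The step you leave open --- producing a preimage that is trace-fixed \emph{on the nose}, rather than only modulo $\ker\partial$ --- is not something one has to re-derive by averaging theta functions and controlling an $H^1$ of the multiplication monoid. It is already packaged in the exact sequence of \cite[1.10]{kato} and \cite[(6.5)]{sv}, which the paper quotes as \eqref{ex_seq_rank_one}: for $C$ the span under prime-to-$\mf{n}$ multiplication of a connected component of $A_2[\mf{n}]$ (a trace-invariant one-dimensional subscheme), the sequence
$$
0 \to H^1((A_1 - A_1[\mf{n}]) \times C,1)^{(0)} \to H^0(A_1[\mf{n}] \times C,0)^{(0)} \to \Z'
$$
is exact. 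In the trace-fixed part, degree is the \emph{only} obstruction to being a residue on such a fiber; there is no residual Albanese condition and no monoid-cohomology issue to check. Your worries (i)--(iii) are precisely what this citation disposes of, so the argument you describe as ``the real work'' should instead be an appeal to this input.

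Two further problems. First, you invoke translates of graphs of isogenies $A_1 \to A_2$, ``which exist since $A_1$ and $A_2$ are isogenous'' --- but the lemma is stated for any two elliptic curves with endomorphism ring $\mc{R}$, including $\mc{R} = \Z$, where $A_1$ and $A_2$ need not be isogenous; and in any case graphs are unnecessary. Second, and more importantly, what your proposal is actually missing is the combinatorial step that makes horizontal and vertical fibers suffice: one must show that every trace-fixed degree-zero class on $\mc{A}[\mf{n}]$ is a sum of classes each of degree zero and supported on some $C \times A_2[\mf{n}]$ or $A_1[\mf{n}] \times D$, where $C,D$ are components as above. The paper does this by an elementary weighted-matrix decomposition: index rows and columns by the components of $A_1[\mf{n}]$ and $A_2[\mf{n}]$, weight them by their degrees (the components of $\{0\}$ having weight one), and split any matrix of total weighted sum zero into one whose rows each have weighted sum zero and one, supported on the first column, whose columns do. Without this step the reduction to fibers does not go through, and with it the lemma follows directly from \eqref{ex_seq_rank_one} with no theta-function bookkeeping at all.
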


\begin{proof}
	Let $C$ be the span under prime-to-$\mf{n}$ multiplication maps of a connected component of $A_2[\mf{n}]$ for positive integer $\mf{n}$,
	 which we refer
	to as a component in this proof. (We eschew any analysis of these, as it is not
	required for our purposes.)
	As in \cite[1.10]{kato} and \cite[(6.5)]{sv}, if we consider the scheme
	$A_1 \times C$ (omitting the subscript $L$ on the product), then since the cycle 
	$C$ is fixed by prime-to-$\mf{n}$ multiplication,
	we have an exact sequence 
	\begin{equation} \label{ex_seq_rank_one}
		0 \to H^1((A_1 - A_1[\mf{n}]) \times C,1)^{(0)}
		\to H^0(A_1[\mf{n}] \times C,0)^{(0)}
		\to \Z',
	\end{equation}
	with the final map the degree map. This forms a subcomplex
	of $\ms{K}(\mc{A})^{(0)} \to \Z'$.

	The components of $\mc{A}[\mf{n}]$
	have the form $C \times D$, where $C$ is a component of $A_1[\mf{n}]$
	and the $D$ is a component of $A_2[\mf{n}]$. The degree of 
	$C \times D$ is the product of the degrees of $C$ and $D$. It suffices
	to see that every element of degree zero in $H^0(\mc{A}[\mf{n}],0)^{(0)}$ is a sum of elements of degree zero in 
	$H^0(C \times A_2[\mf{n}],0)^{(0)}$ and $H^0(A_1[\mf{n}] \times D,0)^{(0)}$ for some $C$ and $D$.
    For this, note that the classes of $\{0\}$ in $A_1[\mf{n}]$ and $A_2[\mf{n}]$ have degree one.

    Suppose we give each row and column of a matrix of a certain size a fixed positive integral weight, 
	with the first row and column having weight $1$, and we define the weight of an entry as the product of these.
	The result then amounts to the fact any such integral matrix with weighted sum of its entries equal to zero is a 
	sum of two matrices, one in which each row has weighted sum zero and one in which each column does. 
	In fact, one can choose the latter matrix to be zero outside of the first column.
\end{proof}

We also have the following.

\begin{proposition} \label{Kexact}
	The sequence
	$$
		0 \to \ms{K}_2(\mc{A})^{(0)} \to \ms{K}_1(\mc{A})^{(0)} \to \ms{K}_0(\mc{A})^{(0)} \to \Z' \to 0
	$$
	is exact.
\end{proposition}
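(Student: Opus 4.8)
The plan is to extract the exactness at each of the four spots from facts already in place. Exactness at $\Z'$ is just the surjectivity of the degree map noted above. Exactness at $\ms{K}_0(\mc{A})^{(0)}$ is precisely Lemma \ref{exactK0}, which identifies the image of the residue map with the kernel of the degree map. Exactness at $\ms{K}_2(\mc{A})^{(0)}$ is nearly as cheap: since $\ms{K}_2(\mc{A})$ is the top term of the complex, any $x \in \ms{K}_2(\mc{A})^{(0)}$ with $\partial x = 0$ lies in $\ker\bigl(\partial\colon \ms{K}_2(\mc{A})_{\Z'} \to \ms{K}_1(\mc{A})_{\Z'}\bigr) = H_2(\ms{K}(\mc{A}))_{\Z'} \cong H^2(\mc{A},\Z(2))_{\Z'}$, and it is trace-fixed there, hence lies in $H^2(\mc{A},\Z(2))^{(0)}$, which vanishes by \cite[Proposition 6.1.2]{sv}. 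So the only spot needing real work is $\ms{K}_1(\mc{A})^{(0)}$.

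To handle it — and, more cleanly, all three inner spots at once — I would argue that $\ms{K}(\mc{A})^{(0)}$ is not just a subcomplex of $\ms{K}(\mc{A})_{\Z'}$ (it is one, since the residue maps of the Gersten--Kato complex commute with pushforward along the multiplication isogenies $[\alpha]\colon\mc{A}\to\mc{A}$, so trace-fixed elements form a subcomplex) but is a direct summand of it. Indeed, the Deninger--Murre idempotents of the abelian surface $\mc{A}$ are already defined with $\Z[1/5!] = \Z'$-coefficients, and they split $\ms{K}(\mc{A})_{\Z'}$ into subcomplexes on which the operators $[\alpha]_*$ act with prescribed weight; $\ms{K}(\mc{A})^{(0)}$ is exactly the weight-zero summand, the one on which every $[\alpha]_*$ acts as the identity. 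Since passing to a direct summand is exact, $H_d(\ms{K}(\mc{A})^{(0)})$ is the trace-fixed part of $H_d(\ms{K}(\mc{A}))_{\Z'} \cong H^{4-d}(\mc{A},\Z(2))_{\Z'}$, namely $H^{4-d}(\mc{A},\Z(2))^{(0)}$.

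Now \cite[Proposition 6.1.2]{sv} gives $H_1(\ms{K}(\mc{A})^{(0)}) = H^3(\mc{A},\Z(2))^{(0)} = 0$ and $H_2(\ms{K}(\mc{A})^{(0)}) = H^2(\mc{A},\Z(2))^{(0)} = 0$, which are exactness at $\ms{K}_1(\mc{A})^{(0)}$ and $\ms{K}_2(\mc{A})^{(0)}$, while $H_0(\ms{K}(\mc{A})^{(0)}) = H^4(\mc{A},\Z(2))^{(0)} \cong \Z'$ via the degree map — consistent with Lemma \ref{exactK0} and the surjectivity already used.

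The main obstacle is the claim in the middle paragraph, i.e.\ that forming the trace-fixed part is exact here (equivalently, that $\ms{K}(\mc{A})^{(0)}$ is cut out by integral idempotents). If one would rather avoid invoking an integral Deninger--Murre decomposition and argue by hand at $\ms{K}_1(\mc{A})^{(0)}$: a residue-closed $c \in \ms{K}_1(\mc{A})^{(0)}$ has vanishing class in $H^3(\mc{A},\Z(2))^{(0)} = 0$, so $c = \partial b$ with $b \in \ms{K}_2(\mc{A})_{\Z'}$, and the task is to replace $b$ by a trace-fixed preimage; the obstruction $\alpha \mapsto [\alpha]_*b - b$ is a $1$-cocycle valued in $H^2(\mc{A},\Z(2))_{\Z'}$, and trivializing it requires knowing in a quantitatively usable way that this group has no nonzero trace-fixed vectors — which is exactly what the weight decomposition over $\Z[1/5!]$ supplies, and exactly why $5!$ is inverted.
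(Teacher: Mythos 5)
Your reduction of the problem is sound: exactness at $\Z'$ (the class of $0$ has degree one), at $\ms{K}_0(\mc{A})^{(0)}$ (Lemma \ref{exactK0}), and at $\ms{K}_2(\mc{A})^{(0)}$ (left exactness of the trace-fixed-part functor together with $H^2(\mc{A},\Z(2))^{(0)}=0$ from \cite[Proposition 6.1.2]{sv}) all come out as you say, and you correctly isolate exactness at $\ms{K}_1(\mc{A})^{(0)}$ as the only point of substance. The paper disposes of that point, together with injectivity, simply by citing \cite[Lemma 6.2.1]{sv}; you attempt to prove it, and that is where there is a genuine gap.

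The gap is the claim that the Deninger--Murre idempotents split $\ms{K}(\mc{A})_{\Z'}$ \emph{termwise} into weight subcomplexes with $\ms{K}(\mc{A})^{(0)}$ as a direct summand. Those projectors are correspondences acting on the Chow groups and motivic cohomology of $\mc{A}$ itself; they do not act on the individual terms $\bigoplus_D K_1(L(D))$ and $\bigoplus_x K_0(L(x))$ of the Gersten--Kato complex, whose index sets (all irreducible divisors, all closed points) are not stable under general correspondences. Concretely, $\ms{K}_0(\mc{A})_{\Z'}$ contains the free $\Z'$-module on the forward orbit of a non-torsion point, on which the pushforwards $[\alpha]_*$ act as shifts with no eigenvectors at all, so this term admits no weight decomposition and its trace-fixed part (spanned by torsion orbit sums, by Lemma \ref{K0torsion}) is not cut out by an equivariant idempotent. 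Your fallback argument has the right shape: a trace-fixed $c$ with $\partial c=0$ has vanishing class in $H^3(\mc{A},\Z(2))^{(0)}=0$, hence $c=\partial b$, and one must correct $b$ by solving $[\alpha]_*z-z=[\alpha]_*b-b$ in $H^2(\mc{A},\Z(2))_{\Z'}$. But the solvability of that equation is exactly the content still owed, and you defer it back to the same unestablished splitting. What is actually needed --- and what the Beauville/weight decomposition legitimately supplies only on $H^2(\mc{A},\Z(2))$ after inverting $5!$, not on the complex --- is that $[\alpha_0]_*-1$ is invertible on $H^2(\mc{A},\Z(2))_{\Z'}$ for some suitable $\alpha_0$ prime to the relevant ideal; one then sets $z=([\alpha_0]_*-1)^{-1}([\alpha_0]_*b-b)$ and checks by commutativity that this $z$ works for all $\alpha$. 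As written, neither of your routes closes this step, which is precisely the one the paper outsources to \cite[Lemma 6.2.1]{sv}.
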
	

\begin{proof}
	Left exactness is \cite[Lemma 6.2.1]{sv}, surjectivity of the degree map holds as the class of $0 \in \mc{A}$ 
	has degree one, and exactness at $\ms{K}_0(\mc{A})^{(0)}$ is Lemma \ref{exactK0}.
\end{proof}

\subsection{Eisenstein cocycles for products of CM curves}

Let us return to our situation of interest, using the notation of Section \ref{CM_EC}. For $i = (i_1,i_2) \in I^2$, set
$$
	\mc{E}_i = E_{i_1} \times_L E_{i_2}.
$$
Elements $(a_{u,v})_{u,v}$ of the monoid $\Delta_{i,j}$ give rise to morphisms $\mc{E}_i \to \mc{E}_j$ of abelian $L$-schemes for $i, j \in I^2$ via the maps $a_{u,v} \colon E_{i_u} \to E_{j_v}$.
We set $\mc{E} = \coprod_{i \in I^2} \mc{E}_i$ for convenience.

Let us set $\ms{K}(i) = \ms{K}(\mc{E}_i)$. Pullback by elements of $\Delta_{i,j}$ provides morphisms of complexes $\ms{K}(j)$ to $\ms{K}(i)$, compatible with composition, which is to say that the $\ms{K}(i)$ form a complex of $\Delta$-module systems. 
The complexes $\ms{K}(i)^{(0)}$ are still of $\Delta$-module systems, as the diagram giving the two compositions $\mc{E}_i \to \mc{E}_j$ of $g \in \Delta_{i,j}$ with multiplication by an $\alpha \in \mc{O}$ that is prime to the ideal attached to $\det (x_igx_j^{-1})$ is cartesian, as in the proof of \cite[Lemma 6.3.1]{sv}.

With the notation of Section \ref{modsys}, we take $U = \GL_2(R)$ and $\tilde{\Delta} = M_2(R) \cap \GL_2(\mb{A}_F^f)$, so $\Gamma_i$ consists
of the elements of $\Delta_i$ with determinant in $\mc{O}^{\times}$. As a consequence of Proposition \ref{Kexact}, we have the connecting map
$$
	\ms{d}(i) \colon \ker(\ms{K}_0(i)^{(0)} \to \Z')^{\Gamma_i} \to 
	H^1(\Gamma_i,\ms{K}_2(i)^{(0)})
$$
in $\Gamma_i$-cohomology.
Since the differentials in the complex $\ms{K}(i)$ are $\Delta_{i,j}$-compatible, $\ms{d}(i)$ and $\ms{d}(j)$ are compatible with the Hecke operator $T(g)$, as noted in Section \ref{modsys}.

Let $(0) \in H^0(\mc{E},0)^{(0)}$ denote the sum of the classes of $0 \in \mc{E}_i$ over $i \in I^2$.
For a nonzero ideal $\mf{c}$ of $\mc{O}$, we set 
$$
	{}_{\mf{c}} e = ({}_{\mf{c}} e_i)_{i \in I^2} = 
	N\mf{c}^2(0)-\mc{E}[\mf{c}]  \in \bigoplus_{i\in I^2} \ker(\ms{K}_0(i)^{(0)} \to \Z')^{\Gamma_i}.
$$
Then the direct sum $\ms{d}$ of the $\ms{d}(i)$ applied to ${}_{\mf{c}} e$ is a class
$$
	{}_{\mf{c}} \Theta = ({}_{\mf{c}} \Theta_i)_{i \in I^2} \in \bigoplus_{i \in I^2} H^1(\Gamma_i,\ms{K}_2(i)^{(0)}).
$$ 
If $\mf{c} = \mc{O}$, then this class is zero.

\begin{lemma} \label{push_fixed}
    Let $i,j \in I^2$ and $\mf{n}$ be an ideal of $\mc{O}$ 
    prime to $\mf{c}$ 
    such that $i \sim_{\mf{n}} j$.
    Then
    \begin{equation} 
        [\mf{n}]_* ({}_{\mf{c}} \Theta_i) = {}_{\mf{c}} \Theta_j
    \end{equation}
    in $H^1(\Gamma_j, \ms{K}_2(j)^{(0)})$
\end{lemma}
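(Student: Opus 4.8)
The plan is to write both ${}_{\mf{c}}\Theta_i=\ms{d}(i)({}_{\mf{c}}e_i)$ and ${}_{\mf{c}}\Theta_j=\ms{d}(j)({}_{\mf{c}}e_j)$ in terms of the connecting maps from Proposition \ref{Kexact}, to show that $[\mf{n}]_*$ intertwines $\ms{d}(i)$ and $\ms{d}(j)$, and finally to check the elementary fact that the relevant geometric pushforward sends ${}_{\mf{c}}e_i$ to ${}_{\mf{c}}e_j$. Throughout, fix the diagonal matrix $\rho \in \Delta_{i,j}$ with $u$th entry a generator $\eta_u$ of $\mf{a}_{i_u,j_u}\mf{n}$ that is used to define $[\mf{n}]_*$ in Definition \ref{isom_cplx}, and write $\rho \colon \mc{E}_i \to \mc{E}_j$ for the associated isogeny of abelian $L$-schemes.

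The first task is to verify that the hypothesis \eqref{gamma_commute} of Lemma \ref{pushforward_ideal} holds in our situation. For $\gamma \in \Gamma_i$, one checks that conjugation by $\rho$ sends $\Gamma_j$ into $\Gamma_i$ and that $\rho^{-1}\gamma\rho \in \Gamma_j$, and the matrix identity $\gamma\rho = \rho\cdot(\rho^{-1}\gamma\rho)$ gives a commutative square of isogenies $\mc{E}_i \to \mc{E}_j$ whose two horizontal arrows $\gamma$ and $\rho^{-1}\gamma\rho$ are automorphisms; such a square is cartesian, so base change for the finite flat map $\rho$ yields $(\rho^{-1}\gamma\rho)^*\rho_* = \rho_*\gamma^*$ on each $\ms{K}_d(\cdot)^{(0)}$, exactly as in the proof of \cite[Lemma 6.3.1]{sv}. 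By Lemma \ref{pushforward_ideal}, $[\mf{n}]_*$ is then a genuine chain map on the $\Gamma$-cochain complexes with $\ms{K}_d(\cdot)^{(0)}$-coefficients and is compatible with homomorphisms of $\Delta$-module systems.

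Now recall that $\ms{d}(i)$ is the connecting homomorphism on $\Gamma_i$-invariants of the short exact sequence of $\Delta$-module systems $0 \to \ms{K}_2(i)^{(0)} \to \ms{K}_1(i)^{(0)} \to B_i \to 0$, where $B_i = \ker(\ms{K}_0(i)^{(0)} \to \Z') = \im(\ms{K}_1(i)^{(0)} \to \ms{K}_0(i)^{(0)})$. Since pullback and pushforward along $\rho$ both commute with the residue differentials of the Gersten--Kato complex, this is a short exact sequence of $\Delta$-module systems with pushforwards, the maps in it are morphisms of $\Delta$-module systems, and hence $[\mf{n}]_*$ is a morphism of the corresponding short exact sequences of $\Gamma$-cochain complexes. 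Naturality of the connecting homomorphism gives
\[
	[\mf{n}]_* \circ \ms{d}(i) = \ms{d}(j) \circ \rho_* \colon B_i^{\Gamma_i} \to H^1(\Gamma_j,\ms{K}_2(j)^{(0)}),
\]
where on the left $[\mf{n}]_*$ on invariants of $B$ is the degree-zero instance of Definition \ref{isom_cplx}, namely $\rho_*$. It thus suffices to prove $\rho_*({}_{\mf{c}}e_i) = {}_{\mf{c}}e_j$ in $\ms{K}_0(j)^{(0)}$, for then
\[
	[\mf{n}]_*({}_{\mf{c}}\Theta_i) = [\mf{n}]_*\big(\ms{d}(i)({}_{\mf{c}}e_i)\big) = \ms{d}(j)\big(\rho_*({}_{\mf{c}}e_i)\big) = \ms{d}(j)({}_{\mf{c}}e_j) = {}_{\mf{c}}\Theta_j.
\]

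For the remaining identity I would argue geometrically. As $\rho(0)=0$, the pushforward carries the class of $0\in\mc{E}_i$ to that of $0\in\mc{E}_j$. The kernel of $\rho$ is $\mf{n}$-torsion, so since $\mf{n}$ is prime to $\mf{c}$ the restriction $\rho|_{\mc{E}_i[\mf{c}]}$ is injective, and by $\mc{O}$-linearity its image lies in $\mc{E}_j[\mf{c}]$; comparing orders ($N\mf{c}^2$ on each side) shows $\rho$ restricts to an isomorphism $\mc{E}_i[\mf{c}] \xrightarrow{\sim} \mc{E}_j[\mf{c}]$, which is Galois-equivariant as $\rho$ is defined over $L$, so $\rho_*$ sends the cycle $\mc{E}_i[\mf{c}]$ to the cycle $\mc{E}_j[\mf{c}]$ with all multiplicities $1$; moreover $\rho_*$ commutes with the multiplication-by-$\alpha$ pushforwards, hence preserves trace-fixed classes, giving $\rho_*({}_{\mf{c}}e_i) = N\mf{c}^2\,(0)_j - \mc{E}_j[\mf{c}] = {}_{\mf{c}}e_j$. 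The main obstacle is the middle step, i.e.\ confirming that $[\mf{n}]_*$ — a priori defined only degreewise — really commutes with $\ms{d}$; this rests on \eqref{gamma_commute} and on the compatibility of the Gersten--Kato pushforwards with the complex differentials, both base-change statements for cartesian (resp.\ finite flat) squares already present in \cite{sv}, after which the computation $\rho_*({}_{\mf{c}}e_i)={}_{\mf{c}}e_j$ is routine.
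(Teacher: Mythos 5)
Your proof is correct and follows essentially the same route as the paper's: verify the base-change condition \eqref{gamma_commute} via the cartesian square $\gamma\circ\rho = \rho\circ(\rho^{-1}\gamma\rho)$ with $\rho$ proper and $\gamma$ an isomorphism, check that $\rho_*({}_{\mf{c}}e_i) = {}_{\mf{c}}e_j$ using that $\mf{n}$ is prime to $\mf{c}$, and conclude by Lemma \ref{pushforward_ideal}. The only difference is one of exposition: you unwind the naturality of the connecting homomorphism explicitly, where the paper simply cites Lemma \ref{pushforward_ideal}.
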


\begin{proof}
    Since $\mf{n}$ is prime to $\mf{c}$,  we have $[\mf{n}]_*({}_{\mf{c}} e_i) = {}_{\mf{c}} e_j$. The base change condition in \eqref{gamma_commute} holds 
    on motivic cohomology as $\rho$ is proper and $\gamma$ is flat (in fact, an isomorphism), and the commutative square given by the two compositions 
    $\gamma \circ \rho = \rho \circ (\rho^{-1}\gamma\rho)$ is cartesian.
    The lemma then follows by Lemma \ref{pushforward_ideal}.
\end{proof}

For any nonzero ideal $\mf{n}$ of $\mc{O}$, we can view $T_{\mf{n}}$ and $[\mf{n}]^*$ as acting on  $\bigoplus_{i \in I^2} H^q(\Gamma_i,\ms{K}_p(i)^{(0)})$ for any $q \ge 0$ and $0 \le p \le 2$, and these operators act compatibly with
residues.

\begin{lemma} \label{Hecketors}
    Let $\mf{c}$ be a nonzero ideal of $\mc{O}$, and let
	$\mf{p}$ be a prime ideal of $\mc{O}$. Then $[\mf{p}]^* \mc{E}[\mf{c}] = \mc{E}[\mf{pc}]$, and $T_{\mf{p}} - (N\mf{p} + [\mf{p}]^*)$
	annihilates $\mc{E}[\mf{c}]$.
\end{lemma}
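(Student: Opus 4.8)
The plan is to compute directly the effect of $[\mf{p}]^*$ and $T_\mf{p}$ on the zero-cycles $\mc{E}_i[\mf{c}]$. For the first assertion, fix $i\in I^2$ and let $k\in I^2$ and $g=\diag(\eta_1,\eta_2)\colon\mc{E}_i\to\mc{E}_k$ be as in the definition of $[\mf{p}]^*=T_\mf{p}^{(2)}$, so that $(\eta_u)=\mf{a}_{i_u,k_u}\mf{p}$; the double coset of $g$ reduces to the single left coset $g\Gamma_k$, so on $H^0$ the operator $[\mf{p}]^*$ restricts on the $k$-component to the pullback $g^*$. From the analytic uniformizations $E_{i_u}(\C)\cong\C/\mf{a}_{i_u}$ one reads off that $\ker(\eta_u)=E_{i_u}[\mf{p}]$, hence $\ker g=\mc{E}_i[\mf{p}]$; since $g$ is \'etale (we are in characteristic zero), $g^*(\mc{E}_k[\mf{c}])$ is the reduced fiber $g^{-1}(\mc{E}_k[\mf{c}])=\{P\in\mc{E}_i:\mf{c}P\subseteq\ker g\}=\mc{E}_i[\mf{p}\mf{c}]$, where $\mf{c}P$ denotes $\{cP:c\in\mf{c}\}$. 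Summing over $i$ gives $[\mf{p}]^*\mc{E}[\mf{c}]=\mc{E}[\mf{p}\mf{c}]$.

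Granting the first assertion, the second reduces to showing $T_\mf{p}(\mc{E}_j[\mf{c}])=N\mf{p}\cdot\mc{E}_i[\mf{c}]+\mc{E}_i[\mf{p}\mf{c}]$ in $\ms{K}_0(i)^{(0)}$, where for $i\in I^2$ the index $j$ is as in the definition of $T_\mf{p}=T_\mf{p}^{(1)}$, so that $T_\mf{p}$ restricts to $T(g)$ for $g=\diag(\eta_1,1)\colon\mc{E}_i\to\mc{E}_j$ with $(\eta_1)=\mf{a}_{i_1,j_1}\mf{p}$ and $\ker g=E_{i_1}[\mf{p}]\times\{0\}$. The essential point is that in a decomposition $\Gamma_i g\Gamma_j=\coprod_t g_t\Gamma_j$ the $N\mf{p}+1$ cosets correspond bijectively to the $N\mf{p}+1$ order‑$N\mf{p}$ $\mc{O}$-submodules (``lines'') of $\mc{E}_i[\mf{p}]\cong(\mc{O}/\mf{p})^2$, a coset corresponding to the kernel of any of its representatives. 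To see this I would take $g_t=\gamma_t g$ with $\gamma_t$ running over representatives of $\Gamma_i/(\Gamma_i\cap g\Gamma_j g^{-1})$: each $\gamma_t$ is an automorphism of $\mc{E}_i$ and $\ker g\subseteq\mc{E}_i[\mf{p}]$, so $\ker g_t$ is an $\mc{O}$-submodule of $\mc{E}_i[\mf{p}]$ of order $\deg g_t=N\mf{p}$, i.e.\ a line; the kernel depends only on the coset $g_t\Gamma_j$, since right multiplication by $\Gamma_j$ postcomposes $g_t$ with an automorphism of $\mc{E}_j$; and representatives of distinct cosets have distinct kernels, because isogenies $\mc{E}_i\to\mc{E}_j$ with a common kernel differ by an element of $\Aut_L(\mc{E}_j)=\Gamma_j$. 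As there are exactly $N\mf{p}+1$ lines, this is a bijection.

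With this in hand, on $H^0$ we have $T(g)(\mc{E}_j[\mf{c}])=\sum_t g_t^*(\mc{E}_j[\mf{c}])=\sum_L\{P\in\mc{E}_i:\mf{c}P\subseteq L\}$, the sum over lines $L\subseteq\mc{E}_i[\mf{p}]$, each term again a reduced fiber. Interchanging the two sums, the multiplicity of a point $P$ in $T(g)(\mc{E}_j[\mf{c}])$ is $\#\{L:\mf{c}P\subseteq L\}$. Since $P$ generates a cyclic $\mc{O}$-module, $\mf{c}P$ is a quotient of $\mf{c}$; and if $\mf{c}P\subseteq\mc{E}_i[\mf{p}]$ --- equivalently $P\in\mc{E}_i[\mf{p}\mf{c}]$ --- then $\mf{c}P$ is killed by $\mf{p}$, hence a quotient of $\mf{c}/\mf{p}\mf{c}\cong\mc{O}/\mf{p}$, so it is $0$ or a line. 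Thus $\#\{L:\mf{c}P\subseteq L\}$ equals $N\mf{p}+1$ if $P\in\mc{E}_i[\mf{c}]$, equals $1$ if $P\in\mc{E}_i[\mf{p}\mf{c}]\setminus\mc{E}_i[\mf{c}]$, and is $0$ otherwise, whence $T(g)(\mc{E}_j[\mf{c}])=(N\mf{p}+1)\mc{E}_i[\mf{c}]+(\mc{E}_i[\mf{p}\mf{c}]-\mc{E}_i[\mf{c}])=N\mf{p}\cdot\mc{E}_i[\mf{c}]+\mc{E}_i[\mf{p}\mf{c}]$. The main obstacle is the bijection of the second paragraph: everything else is bookkeeping, but identifying the double‑coset decomposition with the set of $\mc{O}$-stable lines in $\mc{E}_i[\mf{p}]$ requires care with the dictionary between $\Delta_{i,j}$ and isogenies $\mc{E}_i\to\mc{E}_j$ and with the identification $\Aut_L(\mc{E}_j)=\Gamma_j$.
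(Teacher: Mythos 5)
Your argument is correct, and for the first assertion it coincides with the paper's; for the second it takes a genuinely different route. The paper first reduces to $\mf{c}=\mc{O}$ by commuting $g=\smatrix{\eta_1&0\\0&1}$ past the diagonal matrix defining $[\mf{c}]^*$, and then computes $T_{\mf{p}}(0)$ by writing $T_{\mf{p}}=T_{\mb{A}}\bigl(\smatrix{\pi&0\\0&1}\bigr)$ and exhibiting explicit local coset representatives $\smatrix{\pi&a\\0&1}$ and $\smatrix{1&0\\0&\pi}$ whose kernels are visibly the $N\mf{p}+1$ distinct $\mc{O}$-lines of $\mc{E}_i[\mf{p}]$. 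You instead keep $\mf{c}$ general and replace the explicit representatives by an abstract correspondence between left cosets and lines, followed by a multiplicity count; that count (multiplicity $N\mf{p}+1$ on $\mc{E}_i[\mf{c}]$, multiplicity $1$ on $\mc{E}_i[\mf{p}\mf{c}]\setminus\mc{E}_i[\mf{c}]$, $0$ elsewhere) is correct and buys you a cleaner treatment that avoids both the reduction step and the adelic bookkeeping. The one point you assert without proof is that $\Gamma_i g\Gamma_j$ decomposes into exactly $N\mf{p}+1$ left cosets: your kernel map from cosets to lines is shown to be well defined and injective, so the bijection needs either this count or, equivalently, surjectivity, i.e.\ that $\Gamma_i$ acts transitively on the lines of $\mc{E}_i[\mf{p}]\cong(\mc{O}/\mf{p})^2$. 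In the paper's class-group formalism this is not purely formal; it is exactly what the explicit local representatives, transported to $\Delta_{i,j}$ via Proposition \ref{adele_gives_Hecke}, deliver for free. To close this, either invoke that proposition to reduce the coset count to the classical local one at $\mf{p}$, or note that strong approximation makes $\Gamma_i\cap\SL_2(F)$ surject onto $\SL_2(\mc{O}/\mf{p})$, which acts transitively on lines.
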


\begin{proof}
	Let $i,j \in I^2$ be such that $\mf{p}\mf{a}_{i_u,j_u}$ is principal, and let $\eta_u \in F$ be a generator,
    for $u \in \{1,2\}$.
	We can then view $\eta_u$ as an element of $\Hom_L(E_{i_u},E_{j_u})$ under its identification with $\mf{a}_{i_u,j_u}$
	from \eqref{isogeny_gp}. The pullback of $\mc{E}_j[\mf{c}]$ by $\smatrix{\eta_1 \\ & \eta_2}$ is then $\mc{E}_i[\mf{pc}]$, so
	$[\mf{p}]^*\mc{E}[\mf{c}] = \mc{E}[\mf{pc}]$. 

    Now let $i,j \in I^2$ with $i_2=j_2$ and $\mf{p}\mf{a}_{i_1,j_1} = (\eta_1)$.
    Since $T_{\mf{p}} = T(g)$ for $g = \smatrix{\eta_1 \\ & 1}$, and $g$ commutes with the diagonal matrix defining
    $[\mf{c}]^*$, it suffices to consider the case $\mf{c} = \mc{O}$.
    The right action of $\Gamma_i$ on the pullback $E_{i_1}[\mf{p}] \times \{0\}$ 
    of $0 \in \mc{E}_i$ by $g$
    factors through the quotient of $\Gamma_i$ by $\Gamma_i \cap (1+\mf{p}\Delta_i)$. This gives a compatible action of $U_i$ through the
    isomorphic quotient of $x_i^{-1}\GL_2(R_{\mf{p}})x_i$.
    Under the resulting pullback action, the matrices 
    $x_i^{-1} \smatrix{1 & a \\ 0 & 1} x_i$ with 
    $a$ running through representatives of $\mc{O}/\mf{p}$, together with $x_i^{-1} \smatrix{0 & 1 \\ 1 & 0} x_i$, carry $E_{i_1}[\mf{p}] \times \{0\}$ to the $N\mf{p}+1$ distinct $\mc{O}$-submodule schemes of $\mc{E}_i[\mf{p}]$ isomorphic to $\mc{O}/\mf{p}$. Since $T_{\mf{p}} = T_{\mb{A}}(\smatrix{\pi & 0 \\ 0 & 1})$
    for $\pi$ a uniformizer of $R_{\mf{p}}$ and
    $\smatrix{\pi&a\\0&1} = \smatrix{1&a\\0&1}\smatrix{\pi&0\\0&1}$
    for $a$ as above, while $\smatrix{1&0\\0&\pi} = \smatrix{0&1\\1&0}\smatrix{\pi&0\\0&1}\smatrix{0&1\\1&0}$, the sum of the
    classes of these subschemes gives exactly
    the result of the Hecke action on the class of $0$ in $\mc{E}_j$.
    We conclude that  $T_{\mf{p}}(0) = N\mf{p}(0) + \mc{E}[\mf{p}]$, 
	as desired.
\end{proof}

\begin{proposition} \label{big_cocycle_Eisenstein}
	For any prime ideal $\mf{p}$ and nonzero 
	ideal $\mf{c}$ of $\mc{O}$, we have $T_{\mf{p}}({}_{\mf{c}} \Theta) = (N\mf{p} + [\mf{p}]^*){}_{\mf{c}} \Theta$.
\end{proposition}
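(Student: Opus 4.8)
The plan is to push the whole computation down to the zero-cycle ${}_{\mf{c}} e$ and then use the compatibility of the connecting maps with Hecke operators. Recall that ${}_{\mf{c}} \Theta = \ms{d}({}_{\mf{c}} e)$, where $\ms{d}$ is the direct sum over $i \in I^2$ of the maps $\ms{d}(i)$. Extracting the two short exact sequences from Proposition \ref{Kexact}, the subgroups $B(i) = \ker(\ms{K}_0(i)^{(0)} \to \Z') = \im(\ms{K}_1(i)^{(0)} \to \ms{K}_0(i)^{(0)})$ form a sub-$\Delta$-module system $B$ of the $\ms{K}_0(i)^{(0)}$ that sits in a short exact sequence of $\Delta$-module systems together with the $\ms{K}_2(i)^{(0)}$ and $\ms{K}_1(i)^{(0)}$, and whose connecting maps $H^0(\Gamma_i,B(i)) \to H^1(\Gamma_i,\ms{K}_2(i)^{(0)})$ are the $\ms{d}(i)$. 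By the discussion after the definition of $T(g)$ on cochains in Section \ref{modsys}, each $T(g)$ is a map of cochain complexes, hence commutes with the connecting maps of such a short exact sequence; consequently, since $T_{\mf{p}}$ and $[\mf{p}]^* = T(\mf{p},\mf{p})$ are built index-by-index out of the operators $T(g)$ for the diagonal matrices $g$ defining them, they too commute with $\ms{d}$ on $\bigoplus_i H^q(\Gamma_i,\ms{K}_p(i)^{(0)})$. I would therefore first reduce the proposition to the identity
\[
	T_{\mf{p}}({}_{\mf{c}} e) = (N\mf{p} + [\mf{p}]^*)({}_{\mf{c}} e)
\]
in $\bigoplus_i B(i)^{\Gamma_i} \subseteq \bigoplus_i \ms{K}_0(i)^{(0)}$, since applying $\ms{d}$ then gives the claim.

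To prove this identity, I would write ${}_{\mf{c}} e = N\mf{c}^2(0) - \mc{E}[\mf{c}]$ and apply Lemma \ref{Hecketors} with two different ideals: with $\mf{c}$ it gives that $T_{\mf{p}} - N\mf{p} - [\mf{p}]^*$ annihilates $\mc{E}[\mf{c}]$, and with $\mc{O}$ (using $\mc{E}[\mc{O}] = (0)$) it gives that the same operator annihilates $(0)$. The linear combination $N\mf{c}^2(0) - \mc{E}[\mf{c}]$ then produces the displayed equality, and hence
\[
	T_{\mf{p}}({}_{\mf{c}}\Theta) = \ms{d}\big((N\mf{p} + [\mf{p}]^*)({}_{\mf{c}} e)\big) = (N\mf{p} + [\mf{p}]^*)\,\ms{d}({}_{\mf{c}} e) = (N\mf{p} + [\mf{p}]^*)\,{}_{\mf{c}}\Theta.
\]

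The step needing the most care is the bookkeeping in the first paragraph: I must check that the per-index compatibilities $T(g) \circ \ms{d}(j) = \ms{d}(i) \circ T(g)$ really assemble into commutation of the total operators $T_{\mf{p}}$, $[\mf{p}]^*$ with the total map $\ms{d}$. The two ingredients are that $B = \ker(\ms{K}_0(\cdot)^{(0)} \to \Z')$ is a genuine $\Delta$-module system --- true since it is the image of a morphism of $\Delta$-module systems, so the results of Section \ref{modsys} apply to the short exact sequence above --- and that $T_{\mf{p}}$ and $[\mf{p}]^*$ exist, i.e.\ that for each $i$ the relevant diagonal $g$ lies in $\Delta_{i,j}$ for the index $j$ it determines; this is immediate for $U = \GL_2(R)$, where $\Delta_{i,j}$ consists of the matrices with $(u,v)$-entry in $\mf{a}_{i_u,j_v}$. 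Everything else is linearity together with the already-established Lemma \ref{Hecketors}.
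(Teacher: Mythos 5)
Your proposal is correct and follows essentially the same route as the paper: reduce to the identity $T_{\mf{p}}({}_{\mf{c}} e) = (N\mf{p} + [\mf{p}]^*)({}_{\mf{c}} e)$ via Lemma \ref{Hecketors} applied to both $\mc{E}[\mf{c}]$ and $(0) = \mc{E}[\mc{O}]$, then invoke the Hecke-equivariance of the connecting maps $\ms{d}(i)$. The extra bookkeeping you supply in the first and last paragraphs is exactly the justification the paper defers to its earlier remark that the differentials of $\ms{K}(i)$ are $\Delta_{i,j}$-compatible, so $\ms{d}$ commutes with $T(g)$.
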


\begin{proof}
	By Lemma \ref{Hecketors}, we have 
	$T_{\mf{p}}({}_{\mf{c}} e) = (N\mf{p} + [\mf{p}]^*){}_{\mf{c}} e$.
	The result then follows from the Hecke-equivariance of the sum $\ms{d}$ of differentials.
\end{proof}

We can also describe the action of diamond operators.

\begin{lemma} \label{diamond_triv}
	Let $\mf{d}$ be an ideal of $\mc{O}$ prime to $\mc{N}$. Then the diamond operator $\langle \mf{d} \rangle^*$ 
	fixes $(0) \in H^0(\mc{E},0)$.
\end{lemma}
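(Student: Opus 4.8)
The plan is to show, pair by pair of indices, that the diamond operator is pullback along an \emph{isomorphism} of abelian schemes, which necessarily preserves the origin; summing over the indices then gives the claim.

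Fix an ideal $\mf{d}$ prime to $\mc{N}$, and recall that $\langle \mf{d} \rangle^*$ acts on $\bigoplus_{i \in I^2} H^0(\Gamma_i, \ms{K}_0(i)^{(0)})$, with $(0)$ the element $((0)_i)_{i \in I^2}$, where $(0)_i$ is the $\Gamma_i$-invariant class of the point $0 \in \mc{E}_i$. For each $j \in I^2$ there is a unique $i = i(j) \in I^2$ with $\mf{a}_{i_1,j_1}\mf{d}^{-1}$ and $\mf{a}_{i_2,j_2}\mf{d}$ principal, namely the one with $\mf{a}_{i_1} \sim \mf{a}_{j_1}\mf{d}^{-1}$ and $\mf{a}_{i_2} \sim \mf{a}_{j_2}\mf{d}$ in $\Cl(F)$; since $\{\mf{a}_r\}_{r \in I}$ is a full set of class representatives, $j \mapsto i(j)$ is a permutation of $I^2$. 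The operator $\langle \mf{d} \rangle^*$ carries the $j$-component into the $i(j)$-component, so it suffices to prove $\langle \mf{d} \rangle^*((0)_j) = (0)_{i(j)}$ for each $j$; summing over $j$ along this permutation then yields $\langle \mf{d} \rangle^*((0)) = (0)$.

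So fix $j$ and write $i = i(j)$. By Definition \ref{diamondop} and Lemma \ref{diamond_lem}(a), $\langle \mf{d} \rangle^* = T(\delta)$ for a matrix $\delta \in \Delta_0(\mc{N})_{i,j}$ with $\Gamma_i \delta \Gamma_j = \delta\Gamma_j$ a single left coset; hence on $H^0$ the operator $T(\delta)$ is just the restriction to $\Gamma_j$-invariants of the module-system map $\delta^* \colon \ms{K}_0(j)^{(0)} \to \ms{K}_0(i)^{(0)}$, which is pullback along the morphism of abelian $L$-schemes $\delta \colon \mc{E}_i \to \mc{E}_j$ obtained from $\delta$ via the entrywise identifications \eqref{isogeny_gp}. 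Now I claim $\delta$ is an \emph{isomorphism} of abelian $L$-schemes. Indeed, by Lemma \ref{diamond_lem}(b) the inverse matrix $\delta^{-1}$ lies in $\Delta_0(\mc{N})_{j,i}$, hence also defines a morphism $\mc{E}_j \to \mc{E}_i$; the two compositions $\delta \delta^{-1}$ and $\delta^{-1}\delta$ equal the identity matrix, which provides the identity morphism, so $\delta$ and $\delta^{-1}$ are mutually inverse morphisms of $L$-schemes. (Equivalently, the normalization $\det(\delta)\mc{O} = \mf{a}_{i_1,j_1}\mf{a}_{i_2,j_2}$ imposed in Definition \ref{diamondop} is precisely the condition making the isogeny $\delta$ have degree $1$.) An isomorphism of abelian schemes is in particular a group homomorphism, so it sends $0 \in \mc{E}_i$ to $0 \in \mc{E}_j$; therefore its pullback on zero-cycles, and hence the induced map $\delta^*$ on $\ms{K}_0^{(0)}$, carries the class $(0)_j$ of $\{0_{\mc{E}_j}\}$ to the class $(0)_i$ of $\{0_{\mc{E}_i}\}$, as needed.

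The content of the argument is concentrated in the observation that $\delta$ is a genuine isomorphism of schemes rather than merely an isogeny, which is where the determinant normalization in Definition \ref{diamondop} (equivalently, the statement $\delta^{-1} \in \Delta_0(\mc{N})_{j,i}$ of Lemma \ref{diamond_lem}(b)) gets used; granting that, everything else is formal. The only other point to keep straight is that $\langle \mf{d} \rangle^*$ acts on the direct sum componentwise along the permutation $j \mapsto i(j)$, so that summing the identities $\langle \mf{d} \rangle^*((0)_j) = (0)_{i(j)}$ over $j$ genuinely reconstitutes $(0)$ with no cross terms.
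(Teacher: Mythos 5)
Your proof is correct and takes essentially the same approach as the paper: both arguments come down to the fact that $\delta^{-1} \in \Delta_{j,i}$ (Lemma \ref{diamond_lem}(b), forced by the determinant normalization) makes $\delta \colon \mc{E}_i \to \mc{E}_j$ an isomorphism, so that pullback carries the class of $0$ to the class of $0$. The paper phrases this via lattices in $\C \times \C$ rather than via mutually inverse scheme morphisms, but the content is identical.
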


\begin{proof}
	Recall that $\delta \in \Delta_{i,j}$ with $\langle \mf{d} \rangle^* = T(\delta)$ has determinant generating $\mf{a}_{i_1,j_1}\mf{a}_{i_2,j_2}$,
	and $\delta^{-1} \in \Delta_{j,i}$ by Lemma \ref{diamond_lem}(b). 
	In particular, if $(x,y) \in \C \times \C$ is such that $(x,y)\delta \in \mf{a}_{j_1} \times \mf{a}_{j_2}$,
	then $(x,y) \in \mf{a}_{i_1} \times \mf{a}_{i_2}$. Therefore, pullback by $\delta$ on $H^0(\mc{E}_j,0)$ takes the
	class of $0$ to the class of $0$ in $H^0(\mc{E}_i,0)$.
\end{proof}

\begin{corollary} \label{cocyc_fixed_by_diamond}
	Let $\mf{d}$ be an ideal of $\mc{O}$ prime to $\mc{N}$, and let
	$\mf{c}$ be a nonzero ideal of $\mc{O}$. 
	Let $i, j \in I^2$ be such that $\mf{a}_{i_1,j_1}\mf{d}^{-1}$ and $\mf{a}_{i_2,j_2}\mf{d}$ are principal.
	Then $\langle \mf{d} \rangle^* {}_{\mf{c}} \Theta_j = {}_{\mf{c}} \Theta_i$.
\end{corollary}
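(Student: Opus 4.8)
The plan is to reduce to the corresponding identity at the level of zero-cycles and then transport it through the connecting maps. By Definition \ref{diamondop}, $\langle \mf{d}\rangle^* = T(\delta)$ for a matrix $\delta \in \Delta_0(\mc{N})_{i,j}$ with $\det(\delta)\mc{O} = \mf{a}_{i_1,j_1}\mf{a}_{i_2,j_2}$, and by Lemma \ref{diamond_lem}(a) the double coset $\Gamma_i\delta\Gamma_j$ is the single left coset $\delta\Gamma_j$. Consequently $T(\delta)$ is just the map induced on cohomology by the pullback morphism of complexes $\delta^*\colon \ms{K}(j) \to \ms{K}(i)$, twisted by the conjugation $\gamma \mapsto \delta^{-1}\gamma\delta$ carrying $\Gamma_i$ onto $\Gamma_j$; in particular it commutes with the connecting maps $\ms{d}(i)$ and $\ms{d}(j)$ of Proposition \ref{Kexact}, which are Hecke-equivariant as noted in Section \ref{modsys}. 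Since ${}_{\mf{c}}\Theta = \ms{d}({}_{\mf{c}} e)$, it therefore suffices to prove that $\delta^*\,{}_{\mf{c}} e_j = {}_{\mf{c}} e_i$ in $\ker(\ms{K}_0(i)^{(0)} \to \Z')^{\Gamma_i}$.

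The key preliminary observation is that the morphism of abelian $L$-schemes $\mc{E}_i \to \mc{E}_j$ attached to $\delta$ is an isomorphism. Using the uniformizations $E_r(\C) \cong \C/\mf{a}_r$ and the identification \eqref{isogeny_gp} of Section \ref{CM_EC}, this morphism is induced by $v \mapsto v\delta$; the computation in the proof of Lemma \ref{diamond_triv} shows it is injective on complex points, hence an isomorphism of complex tori of equal dimension, and the inverse is defined over $L$ (e.g.\ by Lemma \ref{isog_field_of_def}, or directly because $\delta^{-1} \in \Delta_0(\mc{N})_{j,i} \subseteq \Delta_{j,i}$ defines the inverse morphism, the matrix products $\delta\delta^{-1}$ and $\delta^{-1}\delta$ being identity matrices). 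Alternatively, a covolume comparison shows the degree of $\delta$ equals $N(\det(\delta)\mc{O}) \cdot N(\mf{a}_{i_1}\mf{a}_{i_2})/N(\mf{a}_{j_1}\mf{a}_{j_2}) = 1$, using $\det(\delta)\mc{O} = \mf{a}_{i_1,j_1}\mf{a}_{i_2,j_2}$.

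With $\delta$ an isomorphism, $\delta^*$ acts on zero-cycles by $Z \mapsto \delta^{-1}(Z)$. That $\delta^*$ sends the class of the origin of $\mc{E}_j$ to that of $\mc{E}_i$ is established in the proof of Lemma \ref{diamond_triv}. Moreover $\delta$ is $\mc{O}$-linear, since its entries act by multiplication by elements of $F$ and so commute with the $\mc{O}$-action; hence a point of $\mc{E}_i$ is $\mf{c}$-torsion if and only if its $\delta$-image is, so $\delta^{-1}(\mc{E}_j[\mf{c}]) = \mc{E}_i[\mf{c}]$. Writing out ${}_{\mf{c}} e_j = N\mf{c}^2 \cdot [0_{\mc{E}_j}] - \mc{E}_j[\mf{c}]$ and applying $\delta^*$ therefore yields $N\mf{c}^2 \cdot [0_{\mc{E}_i}] - \mc{E}_i[\mf{c}] = {}_{\mf{c}} e_i$, and applying $\ms{d}$ completes the proof.

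I expect the one substantive point to be the isomorphism claim: if $\delta$ were a nontrivial isogeny, then $\delta^*$ of the $\mf{c}$-torsion cycle would be strictly larger than $\mc{E}_i[\mf{c}]$ and the identity would fail. The determinant normalization in Definition \ref{diamondop} is precisely what forces degree one. The rest — the reduction of $T(\delta)$ to a pullback on a single coset and the Hecke-equivariance of the connecting map — is routine.
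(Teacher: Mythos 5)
Your proof is correct and follows exactly the route the paper intends: the paper leaves this corollary unproved as an immediate consequence of Lemma \ref{diamond_triv}, the point being that $\langle\mf{d}\rangle^*$ is a single-coset pullback by a degree-one, $\mc{O}$-linear morphism, so it fixes ${}_{\mf{c}}e$ and the claim follows from the Hecke-equivariance of the connecting maps. Your explicit verification that $\delta^*\mc{E}_j[\mf{c}] = \mc{E}_i[\mf{c}]$ (the half not covered by Lemma \ref{diamond_triv}) and the degree-one computation are exactly the details the paper suppresses.
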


\begin{lemma} \label{compare_classes}
	Let $i, j \in I^2$ and $\mf{n}$ be a nonzero ideal of $\mc{O}$ such that
	$i \sim_{\mf{n}} j$. Let $\mf{c}$ be a nonzero ideal of $\mc{O}$. Then we have
	$$
		[\mf{n}]^* ({}_{\mf{c}} \Theta_j) = {}_{\mf{cn}} \Theta_i - N\mf{c}^2 \cdot {}_{\mf{n}} \Theta_i
	$$
	in $H^1(\Gamma_i,\ms{K}_2(i))$.
\end{lemma}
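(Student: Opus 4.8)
The plan is to reduce the claimed identity to an identity among zero-cycles and then push it through the connecting map $\ms{d}(i)$. Recall from the discussion preceding Lemma~\ref{Hecketors} that $[\mf{n}]^*$ acts on $\bigoplus_i H^q(\Gamma_i,\ms{K}_p(i)^{(0)})$ compatibly with the residue maps, so $[\mf{n}]^*\,\ms{d}(j) = \ms{d}(i)\,[\mf{n}]^*$; since $i \sim_{\mf{n}} j$ the operator $[\mf{n}]^*\colon H^q(\Gamma_j,\ms{K}_2(j)^{(0)}) \to H^q(\Gamma_i,\ms{K}_2(i)^{(0)})$ is defined, and as $\ms{d}(i)$ is additive it will suffice to prove
$$
[\mf{n}]^*({}_{\mf{c}} e_j) = {}_{\mf{cn}} e_i - N\mf{c}^2 \cdot {}_{\mf{n}} e_i
$$
in $\ms{K}_0(i)^{(0)}$ and then apply $\ms{d}(i)$ to both sides.

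For the left-hand side I would invoke the lemma computing the double coset of $[\mf{n}]^*$ as a single left coset, which shows that on $\ms{K}_0(j)^{(0)}$ this operator is simply pullback along the diagonal isogeny $\phi = \diag(\eta_1,\eta_2)\colon \mc{E}_i \to \mc{E}_j$, where $\eta_u$ generates $\mf{a}_{i_u,j_u}\mf{n}$ and is viewed in $\Hom_L(E_{i_u},E_{j_u})$ via \eqref{isogeny_gp}. Exactly as in the proof of Lemma~\ref{Hecketors}, the scheme-theoretic preimage under $\phi$ of $\mc{E}_j[\mf{b}]$ is $\mc{E}_i[\mf{nb}]$ for every nonzero ideal $\mf{b}$ of $\mc{O}$ — the case $\mf{b} = \mc{O}$ being the computation $\ker\phi = \mc{E}_i[\mf{n}]$ — and since $\phi$ is \'etale, being an isogeny in characteristic zero, this is an equality of zero-cycles $\phi^*\mc{E}_j[\mf{b}] = \mc{E}_i[\mf{nb}]$. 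Writing ${}_{\mf{c}} e_j = N\mf{c}^2(0)_j - \mc{E}_j[\mf{c}]$ with $(0)_j = \mc{E}_j[\mc{O}]$ then yields $[\mf{n}]^*({}_{\mf{c}} e_j) = N\mf{c}^2\,\mc{E}_i[\mf{n}] - \mc{E}_i[\mf{nc}]$.

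On the other side, I would expand ${}_{\mf{cn}} e_i - N\mf{c}^2 \cdot {}_{\mf{n}} e_i = (N(\mf{cn})^2(0)_i - \mc{E}_i[\mf{cn}]) - N\mf{c}^2(N\mf{n}^2(0)_i - \mc{E}_i[\mf{n}])$ and use multiplicativity of the absolute norm, $N(\mf{cn}) = N\mf{c}\cdot N\mf{n}$, so that the two terms involving $(0)_i$ cancel and one is left with precisely $N\mf{c}^2\,\mc{E}_i[\mf{n}] - \mc{E}_i[\mf{cn}]$, matching the previous paragraph. Hence the two sides agree, and applying $\ms{d}(i)$ finishes the proof. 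I do not expect any serious obstacle: the only steps that require care are the identification of $[\mf{n}]^*$ on zero-cycles with pullback along $\phi$ and the computation of $\phi^*\mc{E}_j[\mf{b}]$, and both are essentially already contained in the coset lemma for $[\mf{n}]^*$ and in the proof of Lemma~\ref{Hecketors}.
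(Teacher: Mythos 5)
Your proposal is correct and follows the paper's argument exactly: the paper's proof consists of asserting the zero-cycle identity $[\mf{n}]^*({}_{\mf{c}} e_j) = {}_{\mf{cn}} e_i - N\mf{c}^2\, {}_{\mf{n}} e_i$ and invoking the Hecke-equivariance of the connecting map $\ms{d}$, and you have simply written out the verification of that identity (via $[\mf{n}]^*\mc{E}_j[\mf{b}] = \mc{E}_i[\mf{nb}]$ and multiplicativity of the norm) in full detail.
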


\begin{proof}
	This is immediate from the fact that $[\mf{n}]^* ({}_{\mf{c}} e_j) = {}_{\mf{cn}} e_i - N\mf{c}^2 {}_{\mf{n}} e_i$.
\end{proof}

\subsection{A representative cocycle}

For $i \in I^2$, let us define $\ms{K}_{2,\mc{N}}(i)$ as the trace-fixed part of the
direct limit of groups $H^2(V,2)$ running over open subschemes $V$ of $\mc{E}_i$ which contain all $(\mc{O}/\mc{N})^{\times}$-multiples of $(0,P_{i_2})$. As $\Delta_0(\mc{N})_i$ preserves the latter set, $(\ms{K}_{2,\mc{N}}(i))_{i \in I^2}$ is a $\Delta_0(\mc{N})$-module system, and so is the collection of trace-fixed parts. To pull back by $(0,P_{i_2})$, we need a representative of the class ${}_{\mf{c}} \Theta_i$ that takes values in $\ms{K}_{2,\mc{N}}(i)$.

Let $\mf{c}$ be a ideal of $\mc{O}$ prime to $\mc{N}$.
Recall that for $r \in I$, there exists a unique trace-fixed element ${}_{\mf{c}} \theta_r \in H^1(E_r-E_r[\mf{c}],\Z'(1))$ with boundary
$N\mf{c} (0) - E_r[\mf{c}]$ (again, see \cite[1.10]{kato} and \cite[(6.5)]{sv}). For $i \in I^2$, the element
$$
	 {}_{\mf{c}} \vartheta_i = ({}_{\mf{c}} \theta_{i_1} \boxtimes E_{i_2}[\mf{c}]) + (N\mf{c} (0) \boxtimes {}_{\mf{c}} \theta_{i_2}),
$$
where $\boxtimes$ denotes the exterior product on motivic cohomology,
has boundary $e_{\mf{c}}$. However, 
its second term is problematic, as it is a unit on $\{0\} \times (E_{i_2} - E_{i_2}[\mf{c}])$, which contains $(0,P_{i_2})$.
We can avoid this issue by a minor adjustment.  
We instead consider
\begin{equation} \label{vartheta}
	\mu_i^* {}_{\mf{c}} \vartheta_i = ({}_{\mf{c}} \theta_{i_1} \boxtimes E_{i_2}[\mf{c}]) + \mu_i^*(N\mf{c} (0) \boxtimes {}_{\mf{c}} \theta_{i_2}),
\end{equation}
for a choice of matrix $\mu_i = \smatrix{ 1& 0 \\ x & 1 } $ with $x \in \mf{a}_{i_2,i_1}\mf{c}$ prime to $\mc{N}$, as the second term is then supported on $(\{0\} \times (E_{i_2} - E_{i_2}[\mf{c}]))\mu_i^{-1}$, while $\mu_i^*$ leaves its residue unchanged.

We may then view ${}_{\mf{c}} \Theta_i$ as the class of the cocycle
$$
	{}_{\mf{c}} \Theta_i \colon \Gamma_i \to \ms{K}_2(i)^{(0)}
$$
uniquely determined by 
$$
	\partial({}_{\mf{c}} \Theta_i(\gamma)) = (\gamma^*-1)\mu_i^*({}_{\mf{c}} \vartheta_i)
$$
for $\gamma \in \Gamma_i$. 
Whether ${}_{\mf{c}} \Theta$ is being used to denote a collection of cocycles or their classes
should be gleaned from context: for instance, when studying the action of Hecke operators, we are referring to the class,
whereas when speaking of the values of ${}_{\mf{c}} \Theta_i$, we are referring to the cocycle.

Moreover, we have the following.

\begin{lemma} \label{imageGamma0}
    The restriction of ${}_{\mf{c}} \Theta_i$ to $\Gamma_0(\mc{N})_i$ takes values in
    $\ms{K}_{2,\mc{N}}(i)$.
\end{lemma}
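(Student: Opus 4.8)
The plan is to show that the representative cocycle ${}_{\mf{c}}\Theta_i$, when restricted to the congruence subgroup $\Gamma_0(\mc{N})_i$, lands in the subgroup $\ms{K}_{2,\mc{N}}(i)$ of $\ms{K}_2(i)^{(0)}$, i.e., that its values are represented by classes in $H^2(V,2)$ for some open $V \subseteq \mc{E}_i$ containing all $(\mc{O}/\mc{N})^{\times}$-multiples of $(0,P_{i_2})$. First I would unwind the definition: ${}_{\mf{c}}\Theta_i(\gamma)$ is the unique element of $\ms{K}_2(i)^{(0)}$ with $\partial({}_{\mf{c}}\Theta_i(\gamma)) = (\gamma^*-1)\mu_i^*({}_{\mf{c}}\vartheta_i)$. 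Since $\mu_i^*({}_{\mf{c}}\vartheta_i)$ is a $1$-cocycle valued in $\ms{K}_1(i)^{(0)}$ supported away from $(0,P_{i_2})$ and its $(\mathcal{O}/\mc{N})^{\times}$-multiples — by the choice of $\mu_i$, the first term ${}_{\mf{c}}\theta_{i_1}\boxtimes E_{i_2}[\mf{c}]$ is supported on $(E_{i_1}-E_{i_1}[\mf{c}])\times E_{i_2}[\mf{c}]$, which misses $(0,P_{i_2})$ as $P_{i_2}$ has order prime to $\mf{c}$, and the adjusted second term is supported on $(\{0\}\times(E_{i_2}-E_{i_2}[\mf{c}]))\mu_i^{-1}$, also missing it — the key point is to check that for $\gamma \in \Gamma_0(\mc{N})_i$, the element $(\gamma^*-1)\mu_i^*({}_{\mf{c}}\vartheta_i)$ is the boundary of an element of the trace-fixed part of a motivic $H^2$ of an open $V$ containing the relevant points.

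The key step: I would argue via the exact sequence of Proposition \ref{Kexact}, but in the "$\mc{N}$-truncated" form. Concretely, for an open $V \subseteq \mc{E}_i$ containing all $(\mathcal{O}/\mc{N})^{\times}(0,P_{i_2})$, one has a complex $K_2(L(\mc{E}_i)) \to \bigoplus_{D\subseteq V} L(D)^\times \to \bigoplus_{x\in V}\Z$ computing $H^2(V,2)$ in the appropriate degree, and a compatible surjection from $\ms{K}_1(i)$ onto the degree-one part. The element $\mu_i^*({}_{\mf{c}}\vartheta_i)$ already lies in the subgroup of $\ms{K}_1(i)^{(0)}$ of divisors supported on $\mc{E}_i$ away from these torsion points; the same is true of $\gamma^* \mu_i^*({}_{\mf{c}}\vartheta_i)$ precisely because $\gamma \in \Gamma_0(\mc{N})_i$ fixes (up to the diamond/Artin action) the set of $(\mathcal{O}/\mc{N})^{\times}$-multiples of $(0,P_{i_2})$ — this is where the $\Gamma_0(\mc{N})$-hypothesis, rather than merely $\Gamma_i$, is essential, since $\Delta_0(\mc{N})_i$ is exactly the monoid preserving that set (as recorded just before the lemma). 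Hence $(\gamma^*-1)\mu_i^*({}_{\mf{c}}\vartheta_i)$ is a divisor supported on an open $V$ not meeting the distinguished points, and with residue zero (it is a boundary in $\ms{K}_1(i)^{(0)}$, so it has trivial image in $\bigoplus_x \Z$, hence trivial residue in the $V$-truncated complex as well). Therefore it is the boundary of some element of the $V$-analogue of $\ms{K}_2$, i.e., of a class in $H^2(V,2)$; taking the trace-fixed part and passing to the limit over such $V$ gives an element of $\ms{K}_{2,\mc{N}}(i)$ mapping to $(\gamma^*-1)\mu_i^*({}_{\mf{c}}\vartheta_i)$, and by uniqueness of ${}_{\mf{c}}\Theta_i(\gamma)$ (injectivity of $\ms{K}_2(i)^{(0)} \hookrightarrow \ms{K}_1(i)^{(0)}$ from Proposition \ref{Kexact}, which also applies compatibly to the $V$-truncated version and to $\ms{K}_{2,\mc{N}}(i) \to \ms{K}_2(i)^{(0)}$) this element is ${}_{\mf{c}}\Theta_i(\gamma)$.

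The main obstacle I anticipate is the bookkeeping around the two truncations at once: working with the trace-fixed part $(0)$-decoration and simultaneously restricting the support of divisors to an open $V$, while keeping the localization sequence exact. One must verify that $\ms{K}_{2,\mc{N}}(i)$, being a direct limit of $H^2(V,2)$, is genuinely the kernel of the residue map $\varinjlim_V(\bigoplus_{D\subseteq V}L(D)^\times)^{(0)} \to \varinjlim_V(\bigoplus_{x\in V}\Z)^{(0)}$, so that surjectivity onto residue-zero divisors holds — this is the $V$-truncated analogue of Lemma \ref{exactK0} and Proposition \ref{Kexact}, and should follow from the same localization-sequence input (e.g.\ \cite[Lemma 6.2.1]{sv}) applied with $\mc{E}_i$ replaced by $V$, using that removing finitely many closed points from a smooth surface only affects the bottom of the Gersten complex. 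Once that is in place, the argument is a clean diagram chase and the $\Gamma_0(\mc{N})$-invariance of the support is the only genuinely new ingredient.
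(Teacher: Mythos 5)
Your proposal is correct and follows essentially the same route as the paper: one identifies the divisors supporting the residue $(\gamma^*-1)\mu_i^*({}_{\mf{c}}\vartheta_i)$, checks that for $\gamma \in \Gamma_0(\mc{N})_i$ they avoid the $(\mc{O}/\mc{N})^{\times}$-multiples of $(0,P_{i_2})$ (the paper does this by noting $1$, $d$, $x$, $c+dx$ are nonzero modulo $\mc{N}$; you invoke the equivalent fact that $\Delta_0(\mc{N})_i$ preserves that point set), and concludes that the value lies in $H^2(V_\gamma,2)^{(0)} \subseteq \ms{K}_{2,\mc{N}}(i)$. Your worry about surjectivity of the truncated complex is not really needed, since $H^2(V_\gamma,2)$ is by the Gersten complex exactly the subgroup of $K_2(L(\mc{E}_i))$ with trivial residues along divisors meeting $V_\gamma$, so the containment is immediate once the support of the residue is controlled.
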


\begin{proof}
    For $\gamma = \smatrix{a&b\\c&d} \in \Gamma_i$, set $S_{\gamma} = 
    (E_{i_1} \times E_{i_2}[\mf{c}]) \gamma^{-1}$ and $S'_{\gamma} = (\{0\} \times E_{i_2})\gamma^{-1}$. 
    Our choice of cocycle ${}_{\mf{c}} \Theta_i$ is determined uniquely
    by the fact that its values are trace fixed and 
    the residue of its value on $\gamma \in \Gamma_i$
    is $(\gamma^*-1)\mu_i^*{}_{\mf{c}} \vartheta_i$. This value
    lies in $H^2(V_{\gamma},\Z(2))^{(0)}$ for $V_{\gamma}$ the complement in $\mc{E}_i$ of the four codimension one subvarieties
    $S_1$, $S_{\gamma}$, $S'_{\mu_i}$, and $S'_{\gamma\mu_i}$. For $\gamma = \smatrix{a&b\\c&d} \in 
    \Gamma_0(\mc{N})_i$, each of $1$, $d$, $x$ and $c + dx$ 
    is nonzero modulo $\mc{N}$, so this cohomology group
    is a subgroup of $\ms{K}_{2,\mc{N}}(i)^{(0)}$.
\end{proof} 

\begin{remark} \label{Eisenstein_restrict}
	The group $\ms{K}_{2,\mc{N}}(i)$ fits in a $\Delta_0(\mc{N})_i$-equivariant subcomplex $\ms{K}_{\mc{N}}(i)$ 
	of $\ms{K}(i)$ which in degrees $p \in \{1,0\}$ consists of the trace-fixed part of the
    sums of $K$-groups of the irreducible
	$p$-cycles not intersecting the $(\mc{O}/\mc{N})^{\times}$-orbit of $(0,P_i)$ in $\mc{E}_i$. 
	Together for all $i$, these give a $\Delta_0(\mc{N})$-module system. Since ${}_{\mf{c}} e_i \in \ms{K}_{0,\mc{N}}(i)$ and 
	$\mu_i^*{}_{\mf{c}} \vartheta_i \in \ms{K}_{1,\mc{N}}(i)$, using the complex $\ms{K}_{\mc{N}}(i)$,  we get that 
	the class ${}_{\mf{c}} \Theta_i|_{\Gamma_0(\mc{N})_i}$ and its explicit 
	representative have canonical lifts valued in $\ms{K}_{2,\mc{N}}(i)$. 
	In particular, since the double coset decompositions of $T_{\mf{p}}$
	are unchanged upon passage from the groups $\Gamma_i$ to their subgroups $\Gamma_0(\mc{N})_i$ for 
	primes $\mf{p} \nmid \mc{N}$, we still have the Eisenstein property of ${}_{\mf{c}} \Theta_i|_{\Gamma_0(\mc{N})_i}$ 
	viewed as a class in $H^1(\Gamma_0(\mc{N})_i,\ms{K}_{2,\mc{N}}(i))$ for such operators.
\end{remark}

The above construction of ${}_{\mf{c}}\Theta_i$ can be carried out for any product $\mc{A} = A_1 \times A_2$ of elliptic curves with CM by $\mc{O}$, defined over some extension of the Hilbert class field of $F$. As such, the following lemma holds.

\begin{lemma} \label{cocycle_upon_isom}
	Let $A_1$ and $A_2$ be elliptic curves with CM by $\mc{O}$ defined over
    a field $L$ containing $F(\mf{f})$, and suppose that there exist $i \in I^2$ 
	and $L$-isomorphisms $\psi_u \colon A_u \xrightarrow{\sim} E_{i_u}$ for $u \in \{1,2\}$.  Let $\psi = (\psi_1, \psi_2)$, which
	induces an action of $\Gamma_i$ on $\mc{A} = A_1 \times A_2$. Then the cocycle ${}_{\mf{c}} \Theta_{\mc{A}} \colon \Gamma_i \to K_2(L(\mc{A}))^{(0)}$ attached to $N\mf{c}^2(0)-\mc{A}[\mf{c}]$,
    with residue the analogue of \eqref{vartheta} for the same $\mu_i$,
    satisfies 
    ${}_{\mf{c}} \Theta_{\mc{A}} = \psi^* {}_{\mf{c}} \Theta_i$.
\end{lemma}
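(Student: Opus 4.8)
The plan is to verify that pullback along the isomorphism $\psi \colon \mc{A} \xrightarrow{\sim} \mc{E}_i$ respects every ingredient of the construction of ${}_{\mf{c}}\Theta_i$, so that the asserted identity becomes a formal consequence of the uniqueness clauses built into the definitions.

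First I would record that, $\psi$ being an $L$-isomorphism of abelian surfaces, pullback $\psi^*$ gives an isomorphism of Gersten--Kato complexes $\ms{K}(\mc{E}_i) \xrightarrow{\sim} \ms{K}(\mc{A})$ carrying irreducible $L$-cycles to irreducible $L$-cycles and commuting with the residue differentials $\partial$. Since the CM structures on $A_u$ are those transported from $E_{i_u}$ along $\psi_u$ (equivalently, $\psi_u$ is $\mc{O}$-linear), $\psi^*$ commutes with the trace maps $[\alpha]_*$ for $\alpha \in \mc{O}-\{0\}$, hence restricts to an isomorphism on trace-fixed parts $\ms{K}(\mc{E}_i)^{(0)} \xrightarrow{\sim} \ms{K}(\mc{A})^{(0)}$ compatible with the degree maps to $\Z'$. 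By definition the action of $\Gamma_i$ on $\mc{A}$ is the one transported from $\mc{E}_i$ via $\psi$, so $\psi^*$ is $\Gamma_i$-equivariant; applying $\Gamma_i$-cohomology to the exact sequence of Proposition \ref{Kexact}, $\psi^*$ then identifies the connecting map $\ms{d}(i)$ for $\mc{E}_i$ with the analogous connecting map for $\mc{A}$.

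Next I would track the distinguished element and its chosen representative. Because $\psi$ is an isomorphism it carries $0 \mapsto 0$ and $\mc{E}_i[\mf{c}] \mapsto \mc{A}[\mf{c}]$, so $\psi^*({}_{\mf{c}} e_i) = N\mf{c}^2(0)-\mc{A}[\mf{c}]$, whence $\psi^*({}_{\mf{c}}\Theta_i) = {}_{\mf{c}}\Theta_{\mc{A}}$ as cohomology classes. For the equality of the selected cocycles I would check that $\psi^*$ sends the residue element $\mu_i^*{}_{\mf{c}}\vartheta_i$ of \eqref{vartheta} to its analogue for $\mc{A}$: the theta class ${}_{\mf{c}}\theta_{i_u} \in H^1(E_{i_u}-E_{i_u}[\mf{c}],\Z'(1))$ is the unique trace-fixed class with boundary $N\mf{c}(0)-E_{i_u}[\mf{c}]$, so $\psi_u^*({}_{\mf{c}}\theta_{i_u})$ is the corresponding theta class for $A_u$; pullback commutes with the exterior product $\boxtimes$; and $\psi$ intertwines the action of $\mu_i$ on the two products. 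Therefore $\psi^*(\mu_i^*{}_{\mf{c}}\vartheta_i)$ is precisely the residue element defining ${}_{\mf{c}}\Theta_{\mc{A}}$. Since for each $\gamma \in \Gamma_i$ the value ${}_{\mf{c}}\Theta_i(\gamma)$ is the unique trace-fixed class in $\ms{K}_2(\mc{E}_i)^{(0)}$ with residue $(\gamma^*-1)\mu_i^*{}_{\mf{c}}\vartheta_i$, and $\psi^*$ is an isomorphism intertwining $\gamma^*$, the image $\psi^*({}_{\mf{c}}\Theta_i(\gamma))$ is the unique trace-fixed class with the corresponding residue on $\mc{A}$, i.e.\ equals ${}_{\mf{c}}\Theta_{\mc{A}}(\gamma)$.

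The only step needing a word of care is the $\mc{O}$-linearity of $\psi$, used to ensure that trace-fixed parts, trace maps, and the torsion subschemes $\mc{A}[\mf{c}]$ are all matched under $\psi^*$; this holds because the CM structure and the $\Gamma_i$-action on $\mc{A}$ are defined so as to correspond, under $\psi$, to those on $\mc{E}_i$. Otherwise the lemma is a bookkeeping statement to the effect that pullback along an isomorphism is compatible with each piece of the construction, and I expect no genuine obstacle.
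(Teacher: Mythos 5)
Your proof is correct and follows the same route the paper intends: the paper offers no explicit argument, simply noting that the construction of ${}_{\mf{c}}\Theta_i$ can be carried out for any such product $\mc{A}$ and treating the lemma as immediate from the compatibility of every step with pullback along an isomorphism. Your write-up is just a careful spelling-out of that functoriality (trace-fixed parts, the element ${}_{\mf{c}} e$, the theta classes, $\mu_i^*$, and the uniqueness of the trace-fixed representative), and it is accurate.
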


\section{Specialized cocycles for ray class fields}

\subsection{Pullback by $\mc{N}$-torsion}

Let us now choose $\mc{N}$ so that $\mc{O}^{\times} \to (\mc{O}/\mc{N})^{\times}$ is injective.
For $\alpha \in F^{\times}$ prime to $\mathcal{N}$ and $i \in I^2$, let $[\alpha]_i \colon \mc{E}_i[\mathcal{N}] \rightarrow \mc{E}_i[\mathcal{N}]$ be multiplication by any element in $\mathcal{O}$ congruent to $\alpha$ modulo $\mathcal{N}$.
Let us view $(0,P_{i_2})$ as a morphism 
$\lambda_i \colon \Spec F(\mc{N}\cap \mf{f}) \to \mc{E}_i[\mc{N}]$. 
It then defines a pullback map
$$
    \lambda_i^* \colon \ms{K}_{2,\mc{N}}(i)
    \to K_2(F(\mc{N} \cap \mf{f}))_{\Z'}.
$$

\begin{lemma} \label{Galois_fixed_image}
	The image of $\lambda_i^* \circ {}_{\mf{c}} \Theta_i|_{\Gamma_0(\mc{N})_i}$ is $\Gal(F(\mc{N} \cap \mf{f})/F(\mc{N}))$-fixed.
\end{lemma}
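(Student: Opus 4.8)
The plan is to reduce to showing that for each $\sigma \in \Gal(F(\mc{N}\cap\mf{f})/F(\mc{N}))$ and each $\gamma \in \Gamma_0(\mc{N})_i$ (noting that ${}_{\mf{c}}\Theta_i|_{\Gamma_0(\mc{N})_i}$ lands in $\ms{K}_{2,\mc{N}}(i)$ by Lemma~\ref{imageGamma0}, so $\lambda_i^*$ applies), the element $\lambda_i^*({}_{\mf{c}}\Theta_i(\gamma))$ of $K_2(F(\mc{N}\cap\mf{f}))_{\Z'}$ is fixed by $\sigma$. Write $\sigma = \mc{R}(\mf{d})$ for an ideal $\mf{d}$ of $\mc{O}$ prime to $\mc{N}\cap\mf{f}$. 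Since $\sigma$ fixes $F(\mc{N})$, the class of $\mf{d}$ in $\Cl_{\mc{N}}(F)$ is trivial, so $\mf{d} = (d)$ for the unique $d \in \mc{O}$ with $d \equiv 1 \bmod \mc{N}$, which we take prime to $\mf{f}$. The point of the lemma is that $L = F(\mf{f})$ is in general not fixed by $\sigma$ (the statement being vacuous when $\mf{f} \mid \mc{N}$), so one cannot simply observe that $\sigma$ fixes both the class ${}_{\mf{c}}\Theta_i(\gamma) \in \ms{K}_{2,\mc{N}}(i)$ and the point $(0,P_{i_2})$. However, $L/F$ is abelian, so $\sigma$ preserves $L$ and hence acts on all of the $L$-schemes and their motivic cohomology groups in play.

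By functoriality of motivic cohomology under the $F$-automorphism $\sigma$ of $F(\mc{N}\cap\mf{f})$, applied to pullback along points, we have
$$
	\sigma\bigl(\lambda_i^*({}_{\mf{c}}\Theta_i(\gamma))\bigr) = (\lambda_i^{\sigma})^*\bigl({}_{\mf{c}}\Theta_i(\gamma)^{\sigma}\bigr),
$$
where ${}_{\mf{c}}\Theta_i(\gamma)^{\sigma}$ is the conjugate class on the conjugate abelian surface $\mc{E}_i^{\sigma} = E_{i_1}^{\sigma} \times_L E_{i_2}^{\sigma}$ and $\lambda_i^{\sigma} = (0,\sigma(P_{i_2})) \colon \Spec F(\mc{N}\cap\mf{f}) \to \mc{E}_i^{\sigma}$ is the conjugate point. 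Since $\mf{d}$ is principal, each $E_{i_u}^{\sigma}$ lies in the same ideal class as $E_{i_u}$, and by the main theorem of complex multiplication applied to the curves of \cite[1.4--1.5]{deshalit} there is an $L$-isomorphism $\phi = (\phi_1,\phi_2) \colon \mc{E}_i^{\sigma} \xrightarrow{\sim} \mc{E}_i$ that, in the analytic uniformizations $E_{i_u}^{\sigma} = \C/d^{-1}\mf{a}_{i_u}$ and $E_{i_u} = \C/\mf{a}_{i_u}$, is componentwise multiplication by $d$. As $\phi$ is multiplication by an element of $F^{\times}$, it commutes with the right actions on $\mc{E}_i$ and $\mc{E}_i^\sigma$ of the matrices $\gamma$ and $\mu_i$, whose entries lie in $F$; and by uniqueness of the trace-fixed element of $H^1(E_{i_u}^\sigma - E_{i_u}^\sigma[\mf{c}],\Z'(1))$ with prescribed residue, the conjugate ${}_{\mf{c}}\theta_{i_u}^{\sigma}$ coincides with the theta class of $E_{i_u}^{\sigma}$, which equals $\phi_u^*({}_{\mf{c}}\theta_{i_u})$. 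Hence Galois conjugation is compatible with the construction of the representative cocycle, so ${}_{\mf{c}}\Theta_i(\gamma)^{\sigma} = {}_{\mf{c}}\Theta_{\mc{E}_i^{\sigma}}(\gamma)$, which Lemma~\ref{cocycle_upon_isom} identifies with $\phi^*({}_{\mf{c}}\Theta_i(\gamma))$. Therefore
$$
	\sigma\bigl(\lambda_i^*({}_{\mf{c}}\Theta_i(\gamma))\bigr) = (\phi \circ \lambda_i^{\sigma})^*({}_{\mf{c}}\Theta_i(\gamma)).
$$

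It remains to show $\phi \circ \lambda_i^{\sigma} = \lambda_i$, i.e., $\phi_2(\sigma(P_{i_2})) = P_{i_2}$. Now $\phi_2 \circ \sigma$ is an $\mc{O}$-linear automorphism of the cyclic $\mc{O}/\mc{N}$-module $E_{i_2}[\mc{N}]$, hence is multiplication by a unit of $\mc{O}/\mc{N}$; by the main theorem of complex multiplication, and since $\mf{d} = (d)$ is prime to $\mc{N}$, this unit is $d \bmod \mc{N}$, which is trivial because $d \equiv 1 \bmod \mc{N}$. (For $F \in \{\Q(i),\Q(\mu_3)\}$ there is an additional root of unity in the choice of $\phi$; it is pinned down via the injectivity of $\mc{O}^{\times} \to (\mc{O}/\mc{N})^{\times}$, exactly as in the proof of Lemma~\ref{action_on_pts_single_curve}.) Combining the two displays gives $\sigma\bigl(\lambda_i^*({}_{\mf{c}}\Theta_i(\gamma))\bigr) = \lambda_i^*({}_{\mf{c}}\Theta_i(\gamma))$, as desired.

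The step I expect to be the main obstacle is the careful use of the main theorem of complex multiplication underlying the last two paragraphs: producing the $L$-isomorphism $\phi$ and making a single consistent choice of it that both identifies ${}_{\mf{c}}\Theta_i(\gamma)^{\sigma}$ with $\phi^*({}_{\mf{c}}\Theta_i(\gamma))$ and yields $\phi_2(\sigma(P_{i_2})) = P_{i_2}$, together with the verification that Galois conjugation commutes with the theta-function recipe defining the representative cocycle.
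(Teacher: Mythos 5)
Your proof is correct, but it takes a genuinely different route from the paper's. The paper descends the entire construction to the smaller field: using de Shalit's lemma in \cite[1.4]{deshalit}, it chooses elliptic curves $A_1, A_2$ with CM by $\mc{O}$ defined over $F(\mc{N})$ with torsion in $F^{\ab}$, checks via their Hecke characters (and Lemma \ref{isog_field_of_def}) that they are $F(\mc{N}\cap\mf{f})$-isomorphic to the $E_{i_j}$, and then invokes Lemma \ref{cocycle_upon_isom} to identify $\lambda_i^*\circ{}_{\mf{c}}\Theta_i$ with the specialization of the cocycle for $\mc{A}=A_1\times A_2$ at the $F(\mc{N})$-rational point $(0,P)$, which visibly lands in the image of $K_2(F(\mc{N}))_{\Z'}$. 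You instead fix each $\sigma=\mc{R}((d))$ with $d\equiv 1\bmod\mc{N}$ individually and push it through the construction via the canonical CM isogeny $\phi_u^{(\mf{d})}\colon E_{i_u}\to E_{i_u}^{\sigma}$ with kernel $E_{i_u}[(d)]$; normalizing the isomorphism $\phi_u\colon E_{i_u}^{\sigma}\to E_{i_u}$ by $\phi_u\circ\phi_u^{(\mf{d})}=[d]$ kills the unit ambiguity uniformly (note this ambiguity is present for every $F$, not only $\Q(i)$ and $\Q(\mu_3)$) and gives $\phi_2(\sigma(P_{i_2}))=[d]P_{i_2}=P_{i_2}$. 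The compatibilities you flag --- conjugation versus the trace-fixed theta classes, the residues, and the $\Gamma_i$- and $\mu_i$-actions --- do all hold, and indeed this is exactly the bookkeeping the paper carries out later for non-principal ideals in the proof of Lemma \ref{key_lemma_Galois_push} (see \eqref{eq_isogeny_Galois}), so your approach is viable with the paper's own tools. The trade-off is that the paper's descent argument is shorter and hides the conjugation analysis inside the existence of a good model over $F(\mc{N})$, while yours is more explicit about which Galois elements occur and how each one acts.
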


\begin{proof}
	For $j \in \{1,2\}$, choose isomorphisms 
    $\psi_j \colon A_j \to E_{i_j}$ 
	with elliptic curves $A_j$ with CM by $\mc{O}$ that are 
    defined over $F(\mc{N})$ and have torsion contained in $F^{\ab}$.
	These are necessarily $F(\mc{N} \cap \mf{f})$-isomorphisms 
    (noting Lemma \ref{isog_field_of_def})
    as the compositions of the Hecke characters of $A_j$ and $E_{i_j}$ with norms from $F(\mc{N} \cap \mf{f})$ agree by part (i) of the lemma of \cite[1.4]{deshalit}. Note in particular that the $\mc{N}$-torsion in each $A_j$ is defined over $F(\mc{N})$.
	If $P = \psi_2^{-1}(P_{i_2})$, then by and in the notation of Lemma \ref{cocycle_upon_isom}, we have
	$$
		(0,P)^* \circ {}_{\mf{c}} \Theta_{\mc{A}} = (0,P)^* \circ\psi^* \circ {}_{\mf{c}} \Theta_i = \lambda_i^* \circ {}_{\mf{c}} \Theta_i
	$$
	in the sense that $\lambda_i^* \circ {}_{\mf{c}} \Theta_i \colon \Gamma_0(\mc{N})_i \to K_2(F(\mc{N} \cap \mf{f}))_{\Z'}$ takes
	values in the image of the codomain of $(0,P)^* \circ {}_{\mf{c}} \Theta_{\mc{A}}$, which is $K_2(F(\mc{N}))_{\Z'}$.
\end{proof}

We now take $\mf{f}$ to be relatively prime to $\mc{N}$.

\begin{proposition} \label{specialize_matrix_act}
	For $i,j \in I^2$ and $\smatrix{ a&b\\c&d } \in \Delta_0(\mc{N})_{i,j}$, we have the equality
	$$
		\lambda_i^* \circ \smatrix{a&b\\c&d}^*= \mc{R}(d') \circ \lambda_j^*
	$$
	of specialization maps on $\ms{K}_{2,\mc{N}}(j)$, where
	$d' \in \mc{O}$ is such that $d' \equiv d \bmod \mc{N}$ and $d' \equiv 1 \bmod \mf{f}$.
\end{proposition}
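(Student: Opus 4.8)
The plan is to rewrite both sides as pullbacks of motivic cohomology classes along $K$-points of $\mc{E}_j$, where $K := F(\mc{N}\cap\mf{f})$, and then to identify those two points. Write $g = \smatrix{a&b\\c&d} \in \Delta_0(\mc{N})_{i,j}$ and $L := F(\mf{f})$; by Lemma \ref{isog_field_of_def}, $g$ is a morphism of $L$-schemes $\mc{E}_i \to \mc{E}_j$, and by the remark preceding Lemma \ref{imageGamma0} the pullback $g^*$ respects the direct systems defining the groups $\ms{K}_{2,\mc{N}}(\cdot)$. Functoriality of pullback then gives $\lambda_i^* \circ g^* = (g \circ \lambda_i)^*$ on $\ms{K}_{2,\mc{N}}(j)$, where $g \circ \lambda_i \colon \Spec K \to \mc{E}_j$ is the $K$-point $g(0, P_{i_2})$. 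So it suffices to identify this point and to invoke Galois-equivariance of the specialization map.

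First I would compute $g(0, P_{i_2})$. The morphism $\mc{E}_i \to \mc{E}_j$ attached to $g$ sends $(0, P_{i_2})$ to $(c(P_{i_2}), d(P_{i_2}))$, where $c$ and $d$ are viewed via \eqref{isogeny_gp} as elements of $\mf{a}_{i_2,j_1}$ and $\mf{a}_{i_2,j_2}$ respectively. Reading off the definition of $\tilde{\Delta}_0(\mc{N})$ after conjugating by $x_i,x_j$ (and using that the $\mf{a}_r$ are prime to $\mc{N}$), the entry $c$ lies in $\mc{N}\mf{a}_{i_2,j_1}$, so the isogeny it defines annihilates $E_{i_2}[\mc{N}]$ and $c(P_{i_2}) = 0$; and $d$ is prime to $\mc{N}$ with $d \equiv d' \bmod \mc{N}$. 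Corollary \ref{action_on_pts}, applied with $r = i_2$, $s = j_2$, then gives $d(P_{i_2}) = [d]_{j_2}(P_{j_2}) = [d']_{j_2}(P_{j_2}) = d' \cdot P_{j_2}$. Hence $g \circ \lambda_i$ is the $K$-point $(0, d' \cdot P_{j_2})$.

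Next I would recognize $(0, d' \cdot P_{j_2})$ as a Galois conjugate of $\lambda_j = (0, P_{j_2})$. Since $d' \in \mc{O}$ is prime to $\mc{N}$ and $\equiv 1 \bmod \mf{f}$, Lemma \ref{action_on_pts_single_curve} gives $d' \cdot P_{j_2} = [d']_{j_2}(P_{j_2}) = \mc{R}(d')(P_{j_2})$, the image of $P_{j_2}$ under the Galois action of $\sigma := \mc{R}(d')$. Because $d' \equiv 1 \bmod \mf{f}$, the element $\sigma$ restricts to the identity on $F(\mf{f}) = L$, hence lies in $\Gal(K/L)$ and acts on the $K$-points of the $L$-scheme $\mc{E}_j$; under this action $(0, d' \cdot P_{j_2}) = \sigma \cdot \lambda_j$, so $g \circ \lambda_i = \lambda_j \circ \Spec(\sigma)$ as morphisms $\Spec K \to \mc{E}_j$. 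Then for any $\alpha \in \ms{K}_{2,\mc{N}}(j)$, functoriality of pullback along $\Spec K \xrightarrow{\Spec(\sigma)} \Spec K \xrightarrow{\lambda_j} \mc{E}_j$ gives $\lambda_i^*(g^*\alpha) = \Spec(\sigma)^*(\lambda_j^*\alpha)$, and $\Spec(\sigma)^*$ acts on $\lambda_j^*\alpha \in H^2(\Spec K,\Z(2)) = K_2(K)_{\Z'}$ as the Galois automorphism $\mc{R}(d')$; this is the desired identity.

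Nothing here is computationally heavy — the proof is a chain of functorialities together with the torsion-point identities of Section \ref{CM_EC}. The one conceptually load-bearing point, and the place I would be most careful, is the role of $\mf{f}$: the hypothesis $d' \equiv 1 \bmod \mf{f}$ is exactly what makes Lemma \ref{action_on_pts_single_curve} applicable — converting multiplication by $d'$ on $\mc{N}$-torsion into a Galois automorphism — and simultaneously what places that automorphism in $\Gal(K/L)$, with $L = F(\mf{f})$ the field over which the $\mc{E}_i$ are defined, so that "specialize at the Galois-conjugate point" coincides with "conjugate, then specialize". The only mildly fiddly step is the translation of the adelic definition of $\tilde{\Delta}_0(\mc{N})$ into the congruence conditions $c \in \mc{N}\mf{a}_{i_2,j_1}$ and $d$ prime to $\mc{N}$ used in Step 1, but this is elementary.
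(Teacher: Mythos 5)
Your proof is correct and follows essentially the same route as the paper's: identify $\lambda_i^*\circ g^*$ with pullback by the point $g(0,P_{i_2})=(0,d\cdot P_{i_2})$, compute $d\cdot P_{i_2}=[d]_{j_2}P_{j_2}=[d']_{j_2}P_{j_2}=\mc{R}(d')(P_{j_2})$ via Corollary \ref{action_on_pts} and Lemma \ref{action_on_pts_single_curve}, and then convert pullback at the Galois-conjugate point into the Galois action on $K_2$ using that the classes in $\ms{K}_{2,\mc{N}}(j)$ live on schemes defined over $F(\mf{f})$, which $\mc{R}(d')$ fixes. You merely spell out two details the paper leaves implicit (that $c\in\mc{N}\mf{a}_{i_2,j_1}$ kills $\mc{N}$-torsion, and the $\Spec(\sigma)$ functoriality underlying the final equality), so nothing further is needed.
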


\begin{proof}
	First, note that $\lambda_i^* \circ \smatrix{a&b\\c&d}^* = (\smatrix{a&b\\c&d} \circ \lambda_i)^*$ 
	agrees with pullback by $(0, d \cdot P_{i_2})$.
	Corollary \ref{action_on_pts} and Lemma \ref{action_on_pts_single_curve} tell us that 
	$$
		d \cdot P_{i_2} = [d]_{j_2} P_{j_2} = [d']_{j_2}P_{j_2} = \mc{R}(d')(P_{j_2}).
	$$
	We therefore have the first equality in
	$$
		\lambda_i^* \circ \smatrix{a&b\\c&d}^* = (0,\mc{R}(d')(P_{j_2}))^* = \mc{R}(d') \circ \lambda_j^*,
	$$
	where for the second equality, we have used the fact that 
	$\ms{K}_{2,\mc{N}}(i)$ is generated by classes of cycles defined over $F(\mf{f})$, 
    which are fixed under the action of $\mc{R}(d')$.
\end{proof}

Let $\mc{R}_{\mc{N}}$ denote the Artin map from the $\mc{N}$-ray class group $\Cl_{\mc{N}}(F)$ of $F$ to $\Gal(F(\mc{N})/F)$.
Lemma \ref{Galois_fixed_image} and Proposition \ref{specialize_matrix_act} combine to provide the following.

\begin{corollary} \label{act_special_cocycle}
	For $i,j \in I^2$ and $\smatrix{ a&b\\c&d } \in \Delta_0(\mc{N})_{i,j}$, we have
	$$
		\lambda_i^* \circ \smatrix{a&b\\c&d}^* \circ {}_{\mf{c}} \Theta_j = \mc{R}_{\mc{N}}(d) \circ \lambda_j^*  \circ 
		{}_{\mf{c}} \Theta_j
	$$
	on $\Gamma_0(\mc{N})_j$.
\end{corollary}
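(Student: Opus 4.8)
The plan is to obtain the identity simply by combining Proposition \ref{specialize_matrix_act} with Lemma \ref{Galois_fixed_image}, the only point requiring care being the bookkeeping of which Artin map acts on which field. First I would fix $\gamma \in \Gamma_0(\mc{N})_j$; by Lemma \ref{imageGamma0} the value ${}_{\mf{c}} \Theta_j(\gamma)$ lies in $\ms{K}_{2,\mc{N}}(j)$, so both sides of the asserted equation are defined at $\gamma$. (One should also note in passing, using that $c \equiv 0$ and $d$ is a unit modulo $\mc{N}$ together with Corollary \ref{action_on_pts}, that $\smatrix{a&b\\c&d}^*$ carries $\ms{K}_{2,\mc{N}}(j)$ into $\ms{K}_{2,\mc{N}}(i)$, since $\smatrix{a&b\\c&d}$ sends $(0,P_{i_2})$ into the $(\mc{O}/\mc{N})^{\times}$-orbit of $(0,P_{j_2})$.)

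Next I would apply Proposition \ref{specialize_matrix_act} to the class ${}_{\mf{c}} \Theta_j(\gamma)$, giving
$$
	\lambda_i^*\big( \smatrix{a&b\\c&d}^*\, {}_{\mf{c}} \Theta_j(\gamma) \big) = \mc{R}(d')\big( \lambda_j^*\, {}_{\mf{c}} \Theta_j(\gamma) \big),
$$
where $\mc{R}$ is the Artin map for $F(\mf{f} \cap \mc{N})/F$ and $d' \in \mc{O}$ satisfies $d' \equiv d \bmod \mc{N}$ and $d' \equiv 1 \bmod \mf{f}$. By Lemma \ref{Galois_fixed_image}, the element $\lambda_j^*\, {}_{\mf{c}} \Theta_j(\gamma)$ lies in the image of $K_2(F(\mc{N}))_{\Z'}$ inside $K_2(F(\mc{N}\cap\mf{f}))_{\Z'}$; on this subgroup the automorphism $\mc{R}(d')$ acts through its restriction to $\Gal(F(\mc{N})/F)$, which by functoriality of the Artin map for the tower $F \subseteq F(\mc{N}) \subseteq F(\mc{N}\cap\mf{f})$ equals $\mc{R}_{\mc{N}}$ of the class of the ideal $(d')$ in $\Cl_{\mc{N}}(F)$. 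Finally, since $d$ and $d'$ are both prime to $\mc{N}$ with $d'/d \equiv 1 \bmod \mc{N}$, the ideals $(d)$ and $(d')$ define the same ray class modulo $\mc{N}$, so $\mc{R}_{\mc{N}}(d') = \mc{R}_{\mc{N}}(d)$. Chaining these equalities over all $\gamma \in \Gamma_0(\mc{N})_j$ yields the claim.

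I do not expect any real difficulty beyond the two cited results; the step most deserving of care is the middle one, where one must check that restricting $\mc{R}(d')$ from $F(\mc{N}\cap\mf{f})$ to $F(\mc{N})$ — which is legitimate precisely because Lemma \ref{Galois_fixed_image} places the relevant $K_2$-values in that subfield — both turns the Artin symbol for modulus $\mf{f}\cap\mc{N}$ into the one for modulus $\mc{N}$ and licenses the replacement of $d'$ by $d$. Everything else is formal manipulation of the compositions of $\lambda_i^*$, the matrix pullback, and ${}_{\mf{c}} \Theta$.
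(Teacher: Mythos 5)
Your argument is correct and is exactly the paper's: the corollary is stated there as the direct combination of Lemma \ref{Galois_fixed_image} and Proposition \ref{specialize_matrix_act}, with the same replacement of $\mc{R}(d')$ by $\mc{R}_{\mc{N}}(d)$ on the $\Gal(F(\mc{N}\mf{f})/F(\mc{N}))$-fixed image. Your preliminary check via Lemma \ref{imageGamma0} that ${}_{\mf{c}}\Theta_j(\gamma)$ lands in $\ms{K}_{2,\mc{N}}(j)$ is the right point of care and is likewise implicit in the paper.
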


We then have a specialized cocycle that is independent of $\mf{f}$.

\begin{proposition} \label{indep_of_f}
	There exists a cocycle
	$$
		{}_{\mf{c}} \Theta_{i,\mc{N}} \colon \Gamma_0(\mc{N})_i \to K_2(F(\mc{N}))_{\Z'}.
	$$
	such that for every choice of $\mf{f}$, the cocycles ${}_{\mf{c}} \Theta_{i,\mc{N}}$ and $\lambda_i^* \circ {}_{\mf{c}} \Theta_i$
	agree as maps to $K_2(F(\mc{N}\mf{f})) \otimes \Z'[\tfrac{1}{f}]$, where $f$ is the order of $(\mc{O}/\mf{f})^{\times}$.
\end{proposition}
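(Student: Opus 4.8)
The plan is to produce ${}_{\mf{c}}\Theta_{i,\mc{N}}$ directly over $F(\mc{N})$, with no reference to $\mf{f}$, and then for each admissible $\mf{f}$ to recover $\lambda_i^*\circ{}_{\mf{c}}\Theta_i$ from it by the comparison already carried out in the proof of Lemma \ref{Galois_fixed_image}. I keep fixed throughout the analytic conventions of Section \ref{CM_EC} — an isomorphism onto $\C/\mc{O}$ for the relevant $\C$-isogeny class, and the induced isomorphisms onto the $\C/\mf{a}_r$ — together with a choice of each $\mu_i$.

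For the construction, the point is that $\mc{O}^\times\hookrightarrow(\mc{O}/\mc{N})^\times$ lets us run de Shalit's construction (the discussion of \cite[1.4]{deshalit}) with $\mc{N}$ itself playing the role of $\mf{f}$: there is an elliptic curve $B_1$ over $F(\mc{N})$ with CM by $\mc{O}$, with $B_1(\C)\cong\C/\mc{O}$ for the fixed uniformization and all torsion abelian over $F$; setting $B_r=B_1^{\mc{R}_{\mc{N}}(\mf{a}_r^{-1})}$ gives curves over $F(\mc{N})$ with $B_r(\C)\cong\C/\mf{a}_r$, and by Lemma \ref{isog_field_of_def} all isogenies among them are defined over $F(\mc{N})$, so $\Hom_{F(\mc{N})}(B_r,B_s)\cong\mf{a}_{r,s}$ as in \eqref{isogeny_gp}. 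Writing $\mc{B}_i=B_{i_1}\times_{F(\mc{N})}B_{i_2}$ with its induced $\Gamma_i$-action over $F(\mc{N})$, I build ${}_{\mf{c}}\Theta_{\mc{B}_i}\colon\Gamma_0(\mc{N})_i\to\ms{K}_{2,\mc{N}}(\mc{B}_i)^{(0)}$ exactly as in \eqref{vartheta} — boundary $N\mf{c}^2(0)-\mc{B}_i[\mf{c}]$ and the same $\mu_i$ — Lemma \ref{imageGamma0} applying verbatim. Let $P\in B_{i_2}[\mc{N}](F(\mc{N}))$ be the point corresponding under $B_{i_2}(\C)\cong\C/\mf{a}_{i_2}$ to the element of $\mc{N}^{-1}\mf{a}_{i_2}/\mf{a}_{i_2}$ representing $P_{i_2}$; by Corollary \ref{action_on_pts} this element depends only on the choices of $t$ and of the generator of $\mc{N}\mf{a}_t^{-1}$, not on $\mf{f}$. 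I then set ${}_{\mf{c}}\Theta_{i,\mc{N}}=(0,P)^*\circ{}_{\mf{c}}\Theta_{\mc{B}_i}$, which lands in $K_2(F(\mc{N}))_{\Z'}$ because $\mc{B}_i$ and $P$ are defined over $F(\mc{N})$.

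For the comparison, fix an admissible $\mf{f}$ and set $L=F(\mc{N}\mf{f})=F(\mc{N}\cap\mf{f})$. Over $L$ both $B_{i_j}\otimes L$ and $E_{i_j}$ are good CM curves with the same analytic uniformization in the same isogeny class, so by the lemma of \cite[1.4]{deshalit} (agreement of Hecke characters after composition with the norm from $L$) together with Lemma \ref{isog_field_of_def} there are $L$-isomorphisms $\psi_j\colon B_{i_j}\otimes L\xrightarrow{\sim}E_{i_j}$; composing with an automorphism $[u]$, $u\in\mc{O}^\times$, I take each $\psi_j$ to be the identity on $\C$-points, so that $\psi_2(P)=P_{i_2}$ and the $\psi$-induced $\Gamma_i$-action agrees with the intrinsic one. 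Lemma \ref{cocycle_upon_isom} then gives ${}_{\mf{c}}\Theta_{\mc{B}_i}\otimes L=\psi^*{}_{\mf{c}}\Theta_i$, and applying $(0,P)^*$ shows that the image of ${}_{\mf{c}}\Theta_{i,\mc{N}}$ in $K_2(L)_{\Z'}=K_2(F(\mc{N}\mf{f}))_{\Z'}$ equals $(0,\psi_2(P))^*{}_{\mf{c}}\Theta_i=\lambda_i^*\circ{}_{\mf{c}}\Theta_i$, which is the asserted agreement (already over $\Z'$, hence a fortiori after inverting $f$). If one further wants ${}_{\mf{c}}\Theta_{i,\mc{N}}$ to be intrinsic, i.e. independent of the chosen curve $B_1$, one notes that any two choices of the $B_j$ over $F(\mc{N})$ are $F(\mc{N})$-isomorphic by the same Hecke-character criterion and Lemma \ref{isog_field_of_def} (using $\mc{O}^\times\hookrightarrow(\mc{O}/\mc{N})^\times$), the isomorphism — normalized identity-on-$\C$-points — matching the two points $P$; Lemma \ref{cocycle_upon_isom}, applied after base change to an $F(\mf{f})$-containing field, then forces the two cocycles to coincide over that field, and since $K_2(F(\mc{N}))\to K_2$ of that field becomes injective after inverting the degree (which divides $f$), they coincide over $\Z'[1/f]$.

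I expect the only real difficulty to lie in the invocations of de Shalit's uniqueness of good CM curves: one must verify that these curves, over $F(\mf{f})$, over $F(\mc{N})$, and over their compositum, are isomorphic in a fashion compatible with the prescribed analytic uniformizations and with the $\mc{N}$-torsion points $P$ and $P_{i_2}$, and one must keep track of the relevant extension degrees — all dividing $f$ — so that the restriction and transfer maps on $K_2$ behave as needed (this is the single place where $f$ must be inverted). By contrast, the motivic-cohomology input is confined to Lemmas \ref{imageGamma0} and \ref{cocycle_upon_isom}, which are already in hand.
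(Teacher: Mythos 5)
Your proposal is correct, but it takes a genuinely different route from the paper's. The paper's own proof is purely a descent argument: by Lemma \ref{Galois_fixed_image} the values of $\lambda_i^* \circ {}_{\mf{c}}\Theta_i$ are Galois-fixed, so after inverting $f$ the isomorphism $K_2(F(\mc{N}\mf{f}))^{\Gal(F(\mc{N}\mf{f})/F(\mc{N}))} \otimes \Z'[\tfrac{1}{f}] \cong K_2(F(\mc{N})) \otimes \Z'[\tfrac{1}{f}]$ produces a cocycle valued in $K_2(F(\mc{N})) \otimes \Z'[\tfrac{1}{f}]$, independent of $\mf{f}$ up to that inversion; the $\Z'$-valued cocycle is then assembled by varying $\mf{f}$ so that the relevant $f$'s have no common prime factor $\ge 7$. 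You instead promote the auxiliary device already used in the proof of Lemma \ref{Galois_fixed_image} — a ``good'' CM curve defined over $F(\mc{N})$ itself, which exists because $\mc{O}^{\times} \hookrightarrow (\mc{O}/\mc{N})^{\times}$ lets de Shalit's construction run with $\mc{N}$ in the role of $\mf{f}$ — into the actual definition of ${}_{\mf{c}}\Theta_{i,\mc{N}}$, and then compare with each $\lambda_i^*\circ{}_{\mf{c}}\Theta_i$ over the compositum via Lemma \ref{cocycle_upon_isom}. This buys you a canonical lift with no inversion of $f$ and exact (not merely up-to-$f$-torsion) agreement in $K_2(F(\mc{N}\mf{f}))_{\Z'}$, and it dispenses with the variation-of-$\mf{f}$ step; the price is the bookkeeping you yourself flag — normalizing the $L$-isomorphisms to be the identity on $\C$-points (possible since any discrepancy is multiplication by a unit of $\mc{O}$, which is defined over $H$), matching the torsion points analytically via Corollary \ref{action_on_pts}, and the base-change compatibility of the trace-fixed representatives (which holds by their uniqueness, and which the paper also uses implicitly in Lemma \ref{Galois_fixed_image}). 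One small remark: for the proposition as stated, your closing paragraph on independence of the choice of $B_1$ is not needed, since only the existence of one cocycle agreeing with every $\lambda_i^*\circ{}_{\mf{c}}\Theta_i$ is claimed; but the argument you sketch for it (two good curves over $F(\mc{N})$ have Hecke characters differing by a finite-order character of conductor dividing $\mc{N}$, hence are $F(\mc{N})$-isogenous and, being both uniformized by $\C/\mc{O}$, $F(\mc{N})$-isomorphic) does work, and in fact gives the isomorphism directly over $F(\mc{N})$ without the detour through a larger field and the extra inversion of the degree.
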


\begin{proof}
	Since $[F(\mc{N}\mf{f}):F(\mc{N})]$ divides $f$, we have
	$$
		K_2(F(\mc{N}\mf{f}))^{\Gal(F(\mc{N}\mf{f})/F(\mc{N}))} \otimes \Z'[\tfrac{1}{f}] \cong
		K_2(F(\mc{N})) \otimes \Z'[\tfrac{1}{f}].
	$$
	By Lemma \ref{Galois_fixed_image}, we may therefore speak of the pullback 
	$\lambda_i^* \circ {}_{\mf{c}} \Theta_i$ as taking values in $K_2(F(\mc{N})) \otimes \Z'[\tfrac{1}{f}]$
	for $i \in I^2$. It is then a cocycle for the action of $\smatrix{a&b\\c&d} \in \Gamma_i$ on $K_2(F(\mc{N}))$ 
	by $\mc{R}_{\mc{N}}(d)$ by Corollary \ref{act_special_cocycle}.
	By construction and Lemma \ref{cocycle_upon_isom}, this cocycle is, up to the inversion of
	$f$, independent of the choice of $\mf{f}$. Varying $\mf{f}$ (or even just taking it to be a sufficiently large
	power of a prime over $2$ prime to $\mc{N}$ if such a prime exists), we obtain the claimed well-defined cocycle.
\end{proof}

The following corollary is then immediate from the definitions.

\begin{corollary} \label{Hecke_specialization}
	For $i,j \in I^2$ and $g \in \Delta_0(\mc{N})_{i,j}$, we have
	$$ 
		T(g)({}_{\mf{c}} \Theta_{j,\mc{N}}) = \lambda_i^* \circ T(g)({}_{\mf{c}}\Theta_j|_{\Gamma_0(\mc{N})_j})
	$$
	in $H^1(\Gamma_0(\mc{N})_i, K_2(F(\mc{N}))_{\Z'})$.
\end{corollary}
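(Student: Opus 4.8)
The plan is to establish the identity at the level of cochains, using a single fixed choice of coset representatives, so that the two sides literally coincide as $1$-cochains and hence define the same class in $H^1(\Gamma_0(\mc{N})_i, K_2(F(\mc{N}))_{\Z'})$. Two preliminary observations set this up: first, the collection $(K_2(F(\mc{N}))_{\Z'})_{i \in I^2}$, with $g \in \Delta_0(\mc{N})_{i,j}$ acting through $\mc{R}_{\mc{N}}$ of the $(2,2)$-entry of $g$, is a $\Delta_0(\mc{N})$-module system (the cocycle relation holds because lower rows are $(0,\ast)$ modulo $\mc{N}$, so $(2,2)$-entries are multiplicative modulo $\mc{N}$ under composition), which is what makes $T(g)({}_{\mf{c}}\Theta_{j,\mc{N}})$ meaningful; second, by Proposition \ref{indep_of_f} the cochain ${}_{\mf{c}}\Theta_{j,\mc{N}}$ is exactly $\lambda_j^* \circ {}_{\mf{c}}\Theta_j|_{\Gamma_0(\mc{N})_j}$ once $\lambda_j^*$ is regarded as valued in $K_2(F(\mc{N}))_{\Z'}$.

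Concretely, I would fix a decomposition $\Gamma_0(\mc{N})_i g \Gamma_0(\mc{N})_j = \coprod_{t=1}^v g_t \Gamma_0(\mc{N})_j$ with $g_t \in \Delta_0(\mc{N})_{i,j}$, and for $\gamma \in \Gamma_0(\mc{N})_i$ recall the permutation $\sigma = \sigma_\gamma \in S_v$ and the elements $\mu_t = \mu_t(\gamma) \in \Gamma_0(\mc{N})_j$ from the definition of $T(g)$, noting that these depend only on $\gamma$ and the $g_t$, not on the particular $\Delta_0(\mc{N})$-module system. Writing $d_{\sigma(t)}$ for the $(2,2)$-entry of $g_{\sigma(t)}$, the left-hand side evaluates (using the second observation) to $(T(g)({}_{\mf{c}}\Theta_{j,\mc{N}}))(\gamma) = \sum_{t=1}^v \mc{R}_{\mc{N}}(d_{\sigma(t)})\bigl(\lambda_j^*({}_{\mf{c}}\Theta_j(\mu_t))\bigr)$. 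On the other hand, applying Corollary \ref{act_special_cocycle} with the matrix $g_{\sigma(t)} \in \Delta_0(\mc{N})_{i,j}$ and evaluating the resulting identity of maps on $\Gamma_0(\mc{N})_j$ at $\mu_t$ gives $\lambda_i^*\bigl(g_{\sigma(t)}^*({}_{\mf{c}}\Theta_j(\mu_t))\bigr) = \mc{R}_{\mc{N}}(d_{\sigma(t)})\bigl(\lambda_j^*({}_{\mf{c}}\Theta_j(\mu_t))\bigr)$; summing over $t$ and comparing with the definition of $T(g)$ applied to ${}_{\mf{c}}\Theta_j|_{\Gamma_0(\mc{N})_j}$ yields $\lambda_i^*\bigl((T(g)({}_{\mf{c}}\Theta_j|_{\Gamma_0(\mc{N})_j}))(\gamma)\bigr)$ equal to the same sum. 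This computation also exhibits the right-hand cochain as $K_2(F(\mc{N}))_{\Z'}$-valued, legitimizing the composition with $\lambda_i^*$; since the two cochains now agree, so do their classes, which is the corollary.

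I expect no real obstacle: the substantive input has already been packaged into Corollary \ref{act_special_cocycle} (which rests on Proposition \ref{specialize_matrix_act} and Lemma \ref{Galois_fixed_image}), and what remains is just unwinding the explicit cochain formula for $T(g)$, exploiting that the auxiliary data $\sigma_\gamma$ and $\mu_t(\gamma)$ are independent of the module system. The only point requiring any care is the bookkeeping with fields: $\lambda_i^*$ a priori lands in $K_2(F(\mc{N}\cap\mf{f}))_{\Z'}$ with $\mc{R}$ acting through $\Gal(F(\mc{N}\cap\mf{f})/F)$, so one must know the relevant values are $\Gal(F(\mc{N}\cap\mf{f})/F(\mc{N}))$-fixed and independent of $\mf{f}$ before speaking of an $\mc{R}_{\mc{N}}$-action on $K_2(F(\mc{N}))_{\Z'}$; but this is exactly what Lemma \ref{Galois_fixed_image} and Proposition \ref{indep_of_f} supply, and it is already encoded in the statement of Corollary \ref{act_special_cocycle}.
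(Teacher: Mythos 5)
Your argument is correct and is precisely the verification the paper leaves implicit: the paper offers no proof beyond declaring the corollary ``immediate from the definitions,'' and your cochain-level unwinding of $T(g)$ combined with Corollary \ref{act_special_cocycle} and Proposition \ref{indep_of_f} is exactly what that declaration presupposes. The care you take with the $\Delta_0(\mc{N})$-module structure on $K_2(F(\mc{N}))_{\Z'}$ and with the descent from $F(\mc{N}\mf{f})$ via Lemma \ref{Galois_fixed_image} is appropriate and consistent with how the paper handles the same bookkeeping elsewhere (e.g., in the proof of Proposition \ref{prop_pull_galois}).
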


Proposition \ref{big_cocycle_Eisenstein}, noting Remark \ref{Eisenstein_restrict}, translates to give the Eisenstein property of the specialized cocycles. For this, note that if $\mf{p}$ is prime to $\mc{N}$ and $\mf{a}_{i_1,j_1}\mf{p} = (\eta)$ and $i_2 = j_2$, then the double cosets $\Gamma_i \smatrix{\eta & 0 \\ 0 & 1} \Gamma_j$ and $\Gamma_0(\mc{N})_i\smatrix{\eta & 0\\ 0 & 1} \Gamma_0(\mc{N})_j$ have sets of left coset representatives that are equal.

\begin{corollary} \label{spec_cocyc_Eis}
	For any prime ideal $\mf{p}$ of $\mc{O}$ not dividing $\mc{N}$, the operator $T_{\mf{p}}-(N\mf{p}+[\mf{p}]^*)$
	annihilates ${}_{\mf{c}} \Theta_{\mc{N}} = ({}_{\mf{c}} \Theta_{i,\mc{N}})_{i \in I^2}$, 
	and every diamond operator $\langle \mf{d} \rangle^*$ acts trivially on ${}_{\mf{c}} \Theta_{\mc{N}}$.
\end{corollary}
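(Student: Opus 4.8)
The plan is to deduce both assertions by applying the specialization maps $\lambda_i^*$ to the identities already established for the unspecialized tuple ${}_{\mf{c}} \Theta$, using Corollary \ref{Hecke_specialization} to move $\lambda_i^*$ past the relevant Hecke operators.

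First I would put the inputs in the form needed. Restricting Proposition \ref{big_cocycle_Eisenstein} to the subgroups $\Gamma_0(\mc{N})_i$, Remark \ref{Eisenstein_restrict} gives $T_{\mf{p}}({}_{\mf{c}} \Theta|_{\Gamma_0(\mc{N})}) = (N\mf{p}+[\mf{p}]^*)({}_{\mf{c}} \Theta|_{\Gamma_0(\mc{N})})$ as an identity of classes valued in the subsystem $\ms{K}_{2,\mc{N}}$; this uses that the left coset representatives of $T_{\mf{p}}$ for $\mf{p}\nmid\mc{N}$ are the same for $\Gamma_i$ as for $\Gamma_0(\mc{N})_i$ (the remark preceding the statement) and that $[\mf{p}]^*$ is a single-coset operator attached to a diagonal element of $\Delta_0(\mc{N})_{i,j}$, which preserves the $(\mc{O}/\mc{N})^{\times}$-orbit of $(0,P_{i_2})$. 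The same reasoning, applied to Corollary \ref{cocyc_fixed_by_diamond} together with Lemma \ref{diamond_lem}(a) and the membership $\delta\in\Delta_0(\mc{N})_{i,j}$, lifts the diamond identity to $\langle\mf{d}\rangle^*({}_{\mf{c}}\Theta_j|_{\Gamma_0(\mc{N})_j}) = {}_{\mf{c}}\Theta_i|_{\Gamma_0(\mc{N})_i}$ in $\ms{K}_{2,\mc{N}}$-valued cohomology.

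Then I would apply $\lambda_i^*$ componentwise. Each of $T_{\mf{p}}$, $[\mf{p}]^*$, and $\langle\mf{d}\rangle^*$ has the form $T(g)$ for a suitable $g\in\Delta_0(\mc{N})_{i,j}$ --- respectively $g=\smatrix{\eta&0\\0&1}$ with $(\eta)=\mf{a}_{i_1,j_1}\mf{p}$, the diagonal matrix whose $u$th entry generates $\mf{a}_{i_u,j_u}\mf{p}$, and $g=\delta$ of Definition \ref{diamondop} --- so Corollary \ref{Hecke_specialization} gives $\lambda_i^*\circ T(g) = T(g)\circ\lambda_j^*$ on the classes at hand, with $T(g)$ on the right the corresponding operator on $\bigoplus_i H^1(\Gamma_0(\mc{N})_i, K_2(F(\mc{N}))_{\Z'})$. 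Since $\lambda_i^*$ carries ${}_{\mf{c}}\Theta_i|_{\Gamma_0(\mc{N})_i}$ to ${}_{\mf{c}}\Theta_{i,\mc{N}}$ by Proposition \ref{indep_of_f}, applying $\lambda_i^*$ to the two identities above yields $T_{\mf{p}}({}_{\mf{c}}\Theta_{\mc{N}}) = (N\mf{p}+[\mf{p}]^*)({}_{\mf{c}}\Theta_{\mc{N}})$ and $\langle\mf{d}\rangle^*({}_{\mf{c}}\Theta_{j,\mc{N}}) = {}_{\mf{c}}\Theta_{i,\mc{N}}$, which are the two claims. There is no serious obstacle here, as the corollary is essentially a translation of earlier results; the only point requiring a moment's care is identifying $[\mf{p}]^*$ on the specialized side with $T(g)$ for the diagonal $g$ above, so that Corollary \ref{Hecke_specialization} applies to it verbatim, which rests on the description of the $\Delta_0(\mc{N})$-action on $K_2(F(\mc{N}))$ through the Artin map on the lower-right corner (Corollary \ref{act_special_cocycle}).
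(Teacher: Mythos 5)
Your argument is correct and coincides with the paper's own (largely implicit) proof: the paper likewise derives the corollary by combining Proposition \ref{big_cocycle_Eisenstein} with Remark \ref{Eisenstein_restrict} (using the agreement of left coset representatives for $\Gamma_i$ and $\Gamma_0(\mc{N})_i$) and then passing through $\lambda_i^*$ via Corollary \ref{Hecke_specialization}, with Corollary \ref{cocyc_fixed_by_diamond} supplying the diamond-operator statement. No gaps.
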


The following lemma will be useful for us.

\begin{lemma}\label{key_lemma_Galois_push}
    Let $i,j \in I^2$ and an ideal $\mf{n}$ of $\mc{O}$ prime to $\mc{N}\mf{c}$ 
    be such that $i \sim_{\mf{n}} j$. For $u \in \{1,2\}$, let $\eta_j \in F^{\times}$
    be a generator of $\mf{a}_{i_u,j_u}\mf{n}$.
    Then the identity
    $$
		\lambda_j^* \circ \smatrix{\eta_1 & 0 \\ 0 & \eta_2}_* 
        \circ {}_{\mf{c}} \Theta_i = \mc{R}_{\mc{N}}(\mf{a}_{i_2,j_2})^{-1} 
        \circ \lambda_i^* \circ {}_{\mf{c}} \Theta_i
	$$
    is satisfied on $\Gamma_0(\mc{N})_i$.
\end{lemma}

\begin{proof}
For $u \in \{1,2\}$, the theory of complex multiplication as in \cite[1.5]{deshalit} provides unique $F(\mf{f})$-isogenies $E_{i_u} \to E_{i_u}^{\mc{R}(\mf{n})}$ with kernel $E_{i_u}[\mf{n}]$ that agree with $\mc{R}(\mf{n})$ on prime-to-$\mf{n}$ torsion. Let $\phi_i^{(\mf{n})} \colon \mc{E}_i \to \mc{E}_i^{\mc{R}(\mf{n})}$ denote the resulting isogeny. We then have an isomorphism $\psi \colon \mc{E}_i^{\mc{R}(\mf{n})}
\to \mc{E}_j$ such that $\psi \circ \phi_i^{(\mf{n})} = \rho \colon \mc{E}_i \to \mc{E}_j$.
Setting $\lambda_i^{(\mf{n})} = \phi_i^{(\mf{n})} \circ \lambda_i$, Corollary \ref{action_on_pts} and Lemma \ref{action_on_pts_single_curve} imply that $\psi \circ \lambda_i^{(\mf{n})} = \mc{R}(\eta_2') \circ\lambda_j$ for any $\eta_2'\in \mc{O}$ with $\eta_2' \equiv 1 \bmod \mf{f}$ and $\eta_2' \equiv \eta_2 \bmod \mc{N}$.
We then have
\begin{equation}\label{eq_rho_phi}
        (\mc{R}(\eta_2')\lambda_j)^* \circ \rho_* 
        = (\mc{R}(\eta_2')\lambda_j)^* \circ \psi_*
        \circ (\phi_i^{(\mf{n})})_* 
        = (\lambda_i^{(\mf{n})})^*\circ (\phi_i^{(\mf{n})})_*  
\end{equation}
on $\ms{K}_{2,\mc{N}}(i)$.

We have a canonical identification of $\Aut(\mc{E}_i^{\mc{R}(\mf{n})})$ with $\Gamma_i$ given by applying the Galois element $\mc{R}(\mf{n})$ to the automorphism of $\mc{E}_i$ defined by an element of $\Gamma_i$. This gives the motivic complex $\ms{K}'(i)
= \mc{K}(\mc{E}_i^{\mc{R}(\mf{n})})$ a left pullback action of $\Gamma_i$. Pushforward by the isogeny $\phi_i^{(\mf{n})}$ induces a morphism of complexes of $\Gamma_i$-modules between $\ms{K}(i)$ and $\ms{K}'(i)$, as does application of the
Galois element $\mc{R}(\mf{n})$ by the choice of the $\Gamma_i$-action on $\mc{E}_i^{\mc{R}(\mf{n})}$ we have taken. We let 
$${}_{\mf{c}} \Theta'_i = (\phi_i^{(\mf{n})})_* {}_{\mf{c}} \Theta_i \colon \Gamma_i \rightarrow \ms{K}'_2(i) ,$$ 
which is the the unique cocycle satisfying
$$\partial({}_{\mf{c}} \Theta_i'(\gamma)) = (\gamma^*-1)(\phi_i^{(\mf{n})})_*\mu_i^*({}_{\mf{c}} \vartheta_i) .$$
We claim that we have the following equality of cocycles on $\Gamma_i$:
\begin{equation}\label{eq_isogeny_Galois}
{}_{\mf{c}} \Theta_i' = \mc{R}(\mf{n}) \circ {}_{\mf{c}} \Theta_i .
\end{equation}
This reduces quickly to the equality $(\phi_i^{(\mf{n})})_*{}_{\mf{c}} \vartheta_i = \mc{R}(\mf{n}){}_{\mf{c}} \vartheta_i$
in $\ms{K}'_1(i)$. Note that ${}_{\mf{c}} \vartheta_i$ is a sum
of elements of
$H^1(E_{i_1}[\mf{c}] \times (E_{i_2}-E_{i_2}[\mf{c}]),1)^{(0)}$
and $H^1((E_{i_1}-E_{i_1}[\mf{c}]) \times E_{i_2}[\mf{c}],1)^{(0)}$,
and the residue map from each of these groups to $H^0(\mc{E}_i[\mf{c}],0)^{(0)}$ is injective. Thus, we have the
equality from the agreement of $\phi_i^{(\mf{n})}$ and $\mc{R}(\mf{n})$ on prime-to-$\mf{n}$ torsion.

Restricting the equality \eqref{eq_isogeny_Galois} to $\Gamma_0(\mc{N})_i$ 
and noting that $\mc{R}(\mf{n})(\lambda_i)
= \lambda_i^{(\mf{n})}$, we see noting Lemma \ref{Galois_fixed_image} that
\begin{equation} \label{compare_special}
(\lambda_i^{(\mf{n})})^* \circ {}_{\mf{c}} \Theta_i' = \mc{R}_{\mc{N}}(\mf{n}) \circ \lambda_i^* \circ {}_{\mf{c}} \Theta_i.
\end{equation}
 As $\mc{R}_{\mc{N}}(\eta_2) = \mc{R}_{\mc{N}}(\eta'_2)$ and $\mc{R}_{\mc{N}}(\mf{n})\mc{R}_{\mc{N}}(\eta_2)^{-1} = \mc{R}_{\mc{N}}(\mf{a}_{i_2,j_2})^{-1}$, combining \eqref{compare_special} with \eqref{eq_rho_phi} yields the desired identity.
\end{proof}

The following provides a direct connection between the classes ${}_{\mf{c}} \Theta_{i,\mc{N}}$ for equivalent $i \in I^2$.

\begin{proposition} \label{prop_pull_galois}
	For $i, j \in I^2$ and $\mf{n}$ an ideal of $\mc{O}$ prime to $\mc{N}$ 
	such that $i \sim_{\mf{n}} j$, 
	we have
	$$
		[\mf{n}]^* ({}_{\mf{c}} \Theta_{j,\mc{N}}) = \mc{R}_{\mc{N}}(\mf{n}) \circ {}_{\mf{c}} \Theta_{i,\mc{N}}
	$$
    in $H^1(\Gamma_0(\mc{N})_i,K_2(F(\mc{N}))_{\Z'})$.
\end{proposition}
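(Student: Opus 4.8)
The plan is to reduce to the case $\gcd(\mf{n},\mf{c})=1$, where Lemmas \ref{push_fixed} and \ref{key_lemma_Galois_push} both apply, and then to run the computation those lemmas are built for. The key preliminary observation is that, on $H^1(\Gamma_0(\mc{N})_j,K_2(F(\mc{N}))_{\Z'})$, the operator $[\mf{n}]^*$ depends on $\mf{n}$ only through its class in $\Cl_{\mc{N}}(F)$. Indeed, as in the discussion around Definition \ref{Hecke_ops_ideals}, $[\mf{n}]^* = T(g)$ for the single left coset $g\Gamma_0(\mc{N})_j$ with $g = \diag(\eta_1,\eta_2)$ and $\eta_u$ a generator of $\mf{a}_{i_u,j_u}\mf{n}$, so on inhomogeneous $1$-cochains $([\mf{n}]^*f)(\gamma) = g^*\,f(g^{-1}\gamma g)$; since $K_2(F(\mc{N}))_{\Z'}$ carries the $\Delta_0(\mc{N})$-action through $\mc{R}_{\mc{N}}$ of the lower-right entry (Proposition \ref{specialize_matrix_act}), this reads $\mc{R}_{\mc{N}}(\eta_2)\cdot f(g^{-1}\gamma g)$. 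If $\mf{n}' = \mf{n}(\beta)$ is another integral ideal with $\beta \equiv 1 \bmod \mc{N}$, one may take $\eta_u' = \beta\eta_u$, so that $g' = \beta g$ is a scalar multiple of $g$; then $g'^{-1}\gamma g' = g^{-1}\gamma g$ and $\mc{R}_{\mc{N}}(\eta_2') = \mc{R}_{\mc{N}}(\eta_2)$, whence $[\mf{n}']^* = [\mf{n}]^*$. By weak approximation we may choose such a $\beta$ with $\mf{n}'$ prime to $\mf{c}\mc{N}$; as $\mf{n}'$ then lies in the ray class of $\mf{n}$ we also get $i \sim_{\mf{n}'}j$ and $\mc{R}_{\mc{N}}(\mf{n}') = \mc{R}_{\mc{N}}(\mf{n})$. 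Replacing $\mf{n}$ by $\mf{n}'$, it suffices to treat the case $\gcd(\mf{n},\mf{c})=1$.

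So assume $\gcd(\mf{n},\mf{c}) = 1$. By Lemma \ref{push_fixed} (applied with the subcomplex $\ms{K}_{\mc{N}}$, cf.\ Remark \ref{Eisenstein_restrict}), the class ${}_{\mf{c}}\Theta_j|_{\Gamma_0(\mc{N})_j}$ in $H^1(\Gamma_0(\mc{N})_j,\ms{K}_{2,\mc{N}}(j))$ equals $[\mf{n}]_*({}_{\mf{c}}\Theta_i|_{\Gamma_0(\mc{N})_i})$, which by Definition \ref{isom_cplx} is represented by the cocycle $\mu \mapsto g_*({}_{\mf{c}}\Theta_i(g\mu g^{-1}))$, with $g = \diag(\eta_1,\eta_2)$. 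Applying $\lambda_j^*$, and recalling that ${}_{\mf{c}}\Theta_{j,\mc{N}}$ is the class of $\lambda_j^*\circ{}_{\mf{c}}\Theta_j|_{\Gamma_0(\mc{N})_j}$ (Proposition \ref{indep_of_f}), we find that ${}_{\mf{c}}\Theta_{j,\mc{N}}$ is represented by $\mu \mapsto \lambda_j^*\bigl(g_*({}_{\mf{c}}\Theta_i(g\mu g^{-1}))\bigr)$. Now apply $[\mf{n}]^* = T(g)$ to this representative: the single-coset formula, together with the cancellation of the two conjugations by $g$, yields the cocycle
$$
	\gamma \longmapsto \mc{R}_{\mc{N}}(\eta_2)\cdot\lambda_j^*\bigl(g_*({}_{\mf{c}}\Theta_i(\gamma))\bigr), \qquad \gamma \in \Gamma_0(\mc{N})_i.
$$
By Lemma \ref{key_lemma_Galois_push}, $\lambda_j^*\bigl(g_*({}_{\mf{c}}\Theta_i(\gamma))\bigr) = \mc{R}_{\mc{N}}(\mf{a}_{i_2,j_2})^{-1}\bigl(\lambda_i^*({}_{\mf{c}}\Theta_i(\gamma))\bigr)$, and $\lambda_i^*\circ{}_{\mf{c}}\Theta_i = {}_{\mf{c}}\Theta_{i,\mc{N}}$ by Proposition \ref{indep_of_f}; hence $[\mf{n}]^*({}_{\mf{c}}\Theta_{j,\mc{N}})$ is represented by $\gamma \mapsto \mc{R}_{\mc{N}}(\eta_2)\,\mc{R}_{\mc{N}}(\mf{a}_{i_2,j_2})^{-1}\bigl({}_{\mf{c}}\Theta_{i,\mc{N}}(\gamma)\bigr)$. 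Since $\eta_2$ generates $\mf{a}_{i_2,j_2}\mf{n}$, we have $\mc{R}_{\mc{N}}(\eta_2)\,\mc{R}_{\mc{N}}(\mf{a}_{i_2,j_2})^{-1} = \mc{R}_{\mc{N}}(\mf{n})$, and the proposition follows.

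The one substantive point — and the reason this is not an immediate consequence of Lemma \ref{key_lemma_Galois_push} — is the removal of the coprimality hypothesis $\gcd(\mf{n},\mf{c})=1$ under which both Lemma \ref{push_fixed} and Lemma \ref{key_lemma_Galois_push} are established; this is precisely what the ray-class invariance of $[\mf{n}]^*$ in the first step supplies. One could instead combine Lemma \ref{compare_classes} with the coprime case just proved to restate the claim as the distribution relation ${}_{\mf{c}\mf{n}}\Theta_{i,\mc{N}} - N\mf{c}^2\,{}_{\mf{n}}\Theta_{i,\mc{N}} = \mc{R}_{\mc{N}}(\mf{n})\circ{}_{\mf{c}}\Theta_{i,\mc{N}}$, but extending that relation to $\mf{n}$ not prime to $\mf{c}$ appears to require the same reduction, so this detour gains nothing.
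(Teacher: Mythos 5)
Your proof is correct and follows essentially the same route as the paper's: reduce to $\mf{n}$ prime to $\mf{c}$, apply Lemma \ref{push_fixed} to realize ${}_{\mf{c}}\Theta_j$ as a pushforward of ${}_{\mf{c}}\Theta_i$, cancel the two conjugations by the diagonal matrix, and finish with Lemma \ref{key_lemma_Galois_push} and Proposition \ref{specialize_matrix_act}. The only organizational difference is that the paper keeps the original $\mf{n}$ in the pullback and inserts an auxiliary $\mf{n}'$ (in the same ideal class, coprime to $\mc{N}\mf{c}$) only in the pushforward, arranging $\rho'\rho^{-1}$ to be scalar, whereas you replace $\mf{n}$ by a ray-class-equivalent $\mf{n}'$ throughout via the observation that $[\mf{n}]^*$ on the specialized cohomology depends only on the class of $\mf{n}$ in $\Cl_{\mc{N}}(F)$ --- an equivalent piece of bookkeeping.
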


\begin{proof}
We verify this in $H^1(\Gamma_0(\mc{N})_i,K_2(F(\mc{N}))) \otimes_{\Z} \Z'[\frac{1}{f}]$ for $f = |(\mc{O}/\mf{f})^{\times}|$ by working with the tuple $(\lambda_k^* \circ {}_{\mf{c}} \Theta_k)_{k \in I^2}$ for a given $\mf{f}$ prime to $\mc{N}$, as it agrees with $({}_{\mf{c}} \Theta_{k,\mc{N}})_{k \in I^2}$ upon inverting $f$. The result then follows by varying $\mf{f}$.

Choose an integral ideal $\mf{n}'$ coprime to $\mc{N}\mf{c}$ having the same ideal class as $\mf{n}$. Write $\mf{a}_{i_u,j_u}\mf{n}=(\eta_u)$ and $\mf{a}_{i_u,j_u}\mf{n}'=(\eta_u')$ for $u \in \{1,2\}$, chosen such that $\eta'_2\eta_1
= \eta_2\eta'_1$. Set $\rho = \smatrix{\eta_1 & 0 \\ 0 & \eta_2}$ and $\rho' = \smatrix{\eta_1' & 0 \\ 0 & \eta_2'}$.
Recall from Lemma \ref{push_fixed} that $[\mf{n}']_*({}_{\mf{c}} \Theta_i) = {}_{\mf{c}} \Theta_j$ as cohomology classes.
Since $\rho'\rho^{-1}$ is scalar, we also have
	\begin{align*}
		[\mf{n}]^*([\mf{n}']_*({}_{\mf{c}} \Theta_i))(\gamma) 
		&=  \rho^* \circ \rho'_* \circ {}_{\mf{c}}
		\Theta_i(\rho'\rho^{-1}\gamma\rho(\rho')^{-1}) \\
		&= \rho^* \circ \rho'_* \circ {}_{\mf{c}} \Theta_i(\gamma)
	\end{align*}
	for $\gamma \in \Gamma_i$. 
	Combining these identities with Corollary \ref{Hecke_specialization}, Proposition \ref{specialize_matrix_act} and Lemma 
 \ref{key_lemma_Galois_push}, we have
	\begin{align*}
		[\mf{n}]^*({}_{\mf{c}} \Theta_{j,\mc{N}}) &= \lambda_i^* \circ [\mf{n}]^*({}_{\mf{c}} \Theta_j) \\
		&= \lambda_i^* \circ [\mf{n}]^*([\mf{n}']_*({}_{\mf{c}} \Theta_i)) \\
		&= \lambda_i^* \circ \rho^* \circ \rho'_* \circ {}_{\mf{c}}\Theta_i \\
		&= \mc{R}(\mf{n}\mf{a}_{i_2,j_2}) \circ \lambda_j^* \circ \rho'_* \circ {}_{\mf{c}}\Theta_i \\
		&= \mc{R}_{\mc{N}}(\mf{n}) \circ {}_{\mf{c}} \Theta_{i,\mc{N}}.
	\end{align*}
\end{proof}

This allows us to describe the relationship between the classes ${}_{\mf{c}} \Theta_{i,\mc{N}}$ as we vary $\mf{c}$.

\begin{corollary} \label{compare_aux_ideal}
	Let $\mf{c}$ and $\mf{d}$ be ideals of $\mc{O}$ prime to $\mc{N}$. Then
	we have the equality of classes
	$$
		(N\mf{d}^2 - \mc{R}_{\mc{N}}(\mf{d})) {}_{\mf{c}} \Theta_{i,\mc{N}} 
		= (N\mf{c}^2 - \mc{R}_{\mc{N}}(\mf{c})) {}_{\mf{d}} \Theta_{i,\mc{N}}
	$$
	for each $i \in I^2$.
\end{corollary}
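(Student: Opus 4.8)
The plan is to first derive, for an arbitrary nonzero ideal $\mf{n}$ prime to $\mc{N}$, the intermediate identity
\[
	\mc{R}_{\mc{N}}(\mf{n}) \cdot {}_{\mf{c}} \Theta_{i,\mc{N}} = {}_{\mf{cn}} \Theta_{i,\mc{N}} - N\mf{c}^2 \cdot {}_{\mf{n}} \Theta_{i,\mc{N}},
\]
and then to apply it with $(\mf{c},\mf{n}) = (\mf{c},\mf{d})$ and with $(\mf{c},\mf{n}) = (\mf{d},\mf{c})$ and subtract. Since $\mf{c}$, $\mf{d}$, and all of their products are prime to $\mc{N}$, every specialized class that appears is defined via Proposition \ref{indep_of_f}, the requisite lifts of the restrictions to $\ms{K}_{2,\mc{N}}(i)$ existing by Lemma \ref{imageGamma0} and Remark \ref{Eisenstein_restrict}.

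To prove the intermediate identity, fix $j \in I^2$ with $i \sim_{\mf{n}} j$ and start from Lemma \ref{compare_classes}, which gives $[\mf{n}]^*({}_{\mf{c}}\Theta_j) = {}_{\mf{cn}}\Theta_i - N\mf{c}^2 \cdot {}_{\mf{n}}\Theta_i$ in $H^1(\Gamma_i,\ms{K}_2(i)^{(0)})$. I would restrict this equality to $\Gamma_0(\mc{N})_i$ and apply the pullback $\lambda_i^*$. On the right-hand side this yields ${}_{\mf{cn}}\Theta_{i,\mc{N}} - N\mf{c}^2 \cdot {}_{\mf{n}}\Theta_{i,\mc{N}}$ by the construction of the specialized cocycles. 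On the left-hand side, with $g = \diag(\eta_1,\eta_2)$ for $\eta_u$ a generator of $\mf{a}_{i_u,j_u}\mf{n}$ — which lies in $\Delta_0(\mc{N})_{i,j}$ precisely because these ideals are prime to $\mc{N}$, so that the double coset of $g$ is a single left coset for both the $\Gamma$'s and the $\Gamma_0(\mc{N})$'s — Corollary \ref{Hecke_specialization} identifies $\lambda_i^* \circ [\mf{n}]^*({}_{\mf{c}}\Theta_j|_{\Gamma_0(\mc{N})_j})$ with $[\mf{n}]^*({}_{\mf{c}}\Theta_{j,\mc{N}})$, and Proposition \ref{prop_pull_galois} rewrites this as $\mc{R}_{\mc{N}}(\mf{n}) \circ {}_{\mf{c}}\Theta_{i,\mc{N}}$. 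Equating the two sides gives the intermediate identity.

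It then remains to combine. The identity applied with $(\mf{c},\mf{n}) = (\mf{c},\mf{d})$ gives ${}_{\mf{cd}}\Theta_{i,\mc{N}} = \mc{R}_{\mc{N}}(\mf{d}) \cdot {}_{\mf{c}}\Theta_{i,\mc{N}} + N\mf{c}^2 \cdot {}_{\mf{d}}\Theta_{i,\mc{N}}$, while applied with $(\mf{c},\mf{n}) = (\mf{d},\mf{c})$ it gives ${}_{\mf{cd}}\Theta_{i,\mc{N}} = \mc{R}_{\mc{N}}(\mf{c}) \cdot {}_{\mf{d}}\Theta_{i,\mc{N}} + N\mf{d}^2 \cdot {}_{\mf{c}}\Theta_{i,\mc{N}}$, using ${}_{\mf{dc}}\Theta_{i,\mc{N}} = {}_{\mf{cd}}\Theta_{i,\mc{N}}$. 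Setting these equal and rearranging isolates $(N\mf{d}^2 - \mc{R}_{\mc{N}}(\mf{d}))\,{}_{\mf{c}}\Theta_{i,\mc{N}} = (N\mf{c}^2 - \mc{R}_{\mc{N}}(\mf{c}))\,{}_{\mf{d}}\Theta_{i,\mc{N}}$, which is the claim.

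The only genuine obstacle is the first step: correctly transporting the identity of Lemma \ref{compare_classes} from the ``big'' cohomology groups $H^1(\Gamma_i,\ms{K}_2(i)^{(0)})$ down to the specialized setting through $\lambda_i^*$, and keeping track of the Artin automorphism that emerges upon specialization. But this is exactly the content of Corollary \ref{Hecke_specialization} together with Proposition \ref{prop_pull_galois}, so once those are in hand the argument is purely formal and requires no further geometric input.
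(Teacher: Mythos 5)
Your proposal is correct and follows the paper's own argument: the paper likewise combines Lemma \ref{compare_classes} with Proposition \ref{prop_pull_galois} to obtain the identity $\mc{R}_{\mc{N}}(\mf{d})\,{}_{\mf{c}}\Theta_{i,\mc{N}} = {}_{\mf{cd}}\Theta_{i,\mc{N}} - N\mf{c}^2\,{}_{\mf{d}}\Theta_{i,\mc{N}}$ and then swaps $\mf{c}$ and $\mf{d}$ and subtracts. You merely spell out more explicitly (via Corollary \ref{Hecke_specialization}) how the specialization $\lambda_i^*$ transports Lemma \ref{compare_classes} to the specialized classes, which the paper leaves implicit.
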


\begin{proof}
	Proposition \ref{prop_pull_galois} and Lemma \ref{compare_classes} combine 
    to tell us that 
	$$
		\mc{R}_{\mc{N}}(\mf{d}) \circ {}_{\mf{c}} \Theta_{i,\mc{N}}
		 = {}_{\mf{cd}} \Theta_{i,\mc{N}} - N\mf{c}^2 {}_{\mf{d}} \Theta_{i,\mc{N}},
	$$
	and similarly with $\mf{c}$ and $\mf{d}$ reversed, from which the identity follows.
\end{proof}

\subsection{Integrality}

Fix an ideal $\mf{c}$ of $\mc{O}$ prime to $\mc{N}\mf{f}$. Adopting the notation of the proof of Lemma \ref{imageGamma0}, we set
$T = S_1 \cup S'_{\mu_i}$. 
For $\gamma \in \Gamma_i$,
we set $T_{\gamma} = T \cup T \gamma^{-1}$ and
$V_{\gamma} = \mc{E}_i - T_{\gamma}$, and we let $T_{\gamma}^{\circ}$ be the complement in $T_{\gamma}$
of the pairwise intersections of $S_1$, $S_{\gamma}$, $S'_{\mu_i}$, and $S'_{\gamma\mu_i}$.

The elliptic curves $E_r$ for $r \in I$ have good reduction at all primes of the ring of integers $\mc{O}(\mf{f})$ of
$F(\mf{f})$ not dividing $\mf{f}$, as $\mf{f}$ is the conductor of the Hecke character of $E_r$. Let $\mathscr{e}_i$ denote the reduction of $\mc{E}_i$ modulo a prime $\mf{l}$ of $\mc{O}(\mf{f})$ not dividing $\mf{f}$.
Let $R$ be the localization of $\mc{O}(\mf{f})$ at $\mf{l}$, and let $\mc{E}_{i/R}$ be the product
of the Neron models of $E_{i_1}$ and $E_{i_2}$ over $R$. 
We use $v_{\gamma}$ and $t_{\gamma}^{\circ}$ (resp., $V_{\gamma/R}$ and $T_{\gamma/R}^{\circ}$) in place of $V_{\gamma}$ and $T_{\gamma}^{\circ}$, respectively, to denote the subschemes of $\mathscr{e}_i$ (resp., $\mc{E}_{i/R}$) constructed in the same manner as the so-denoted subschemes of $\mc{E}_i$.

We continue to assume that $\mf{f}$ is prime to $\mc{N}$.

\begin{lemma} \label{trace_comm_res}
    We have a commutative square of residue maps
    \begin{equation} \label{res_square}
	\begin{tikzcd}
		H^2(V_{\gamma},2) \arrow{r}{} \arrow{d}{} & H^1(v_{\gamma},1)\arrow{d}{} \\
		H^1(T_{\gamma}^{\circ},1) 
		\arrow{r}{} & H^0(t_{\gamma}^{\circ},0)
	\end{tikzcd}
    \end{equation}
    that commute with trace maps $[\alpha]_*$ for $\alpha \in \mc{O}$ prime to $\mf{l}$. 
\end{lemma}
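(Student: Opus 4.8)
The plan is to realize the four maps of \eqref{res_square} as boundary (residue) homomorphisms in localization long exact sequences for the motivic cohomology of the regular scheme $\mc{E}_{i/R}$ and its closed subschemes, and to deduce commutativity (up to sign) from the single fact that the Gersten--Kato complex of $\mc{E}_{i/R}$ is a complex. The compatibility with the traces $[\alpha]_*$ will then follow from the functoriality of such sequences under proper pushforward.

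The geometric setup is as follows. Since $E_{i_1}$ and $E_{i_2}$ have good reduction at $\mf{l}$, the scheme $\mc{E}_{i/R}$ is a product of abelian schemes over the regular local ring $R$, hence is smooth over $R$ and in particular regular, with special fibre $\mathscr{e}_i$ a smooth prime divisor. Let $\mc{W}\subseteq\mc{E}_{i/R}$ be the complement of the closures of the pairwise intersections of $S_1,S_\gamma,S'_{\mu_i},S'_{\gamma\mu_i}$; then $\mc{W}$ is again regular, the closure $T_{\gamma/R}^\circ=\mc{W}\cap T_{\gamma/R}$ (a union of closures of horizontal divisors) is flat over $R$ with generic fibre $T_\gamma^\circ$ and special fibre $t_\gamma^\circ$, and $\mc{W}-T_{\gamma/R}^\circ-\mathscr{e}_i=V_\gamma$. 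Thus inside the regular scheme $\mc{W}$ we have the ``horizontal'' divisor $D_1:=T_{\gamma/R}^\circ$ and the ``vertical'' divisor $D_2:=\mathscr{e}_i\cap\mc{W}$, meeting in $D_1\cap D_2=t_\gamma^\circ$, with $\mc{W}-D_1-D_2=V_\gamma$.

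Using the Gersten resolution for motivic cohomology over the discrete valuation ring $R$, exactly as for the complexes $\ms{K}$ over a field employed in this section (cf.\ \cite{kato,sv}), an element of $H^2(V_\gamma,2)$ may be regarded as an element of $K_2$ of the function field of $\mc{E}_i$ with trivial residue along every codimension-one point of $\mc{W}$ other than the generic points of (the components of) $D_1$ and of $D_2$. The left-hand map of \eqref{res_square} sends such an element to its residue along $D_1$; because $d^2=0$ and, for a codimension-two point of $D_1$ lying in the generic fibre, the only codimension-one points of $\mc{W}$ through it lie on $D_1$, this residue has trivial order there and so lies in $H^1(T_\gamma^\circ,1)$. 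The top map is likewise the residue along $D_2$, landing in $H^1(v_\gamma,1)$ (equivalently, the connecting map of the localization sequence of $V_{\gamma/R}$ along its special fibre). The bottom map is the connecting map of the localization sequence of $T_{\gamma/R}^\circ$ along $t_\gamma^\circ$, and the right map is that for $D_2\supseteq t_\gamma^\circ$. Hence both composites $H^2(V_\gamma,2)\to H^0(t_\gamma^\circ,0)$ are, at a fixed component $x$ of $t_\gamma^\circ$, iterated residues, one through $D_1$ and one through $D_2$. Their sum is precisely the $x$-component of $d\circ d$ applied to the given element of $H^2(V_\gamma,2)$, hence vanishes --- here one uses that the only codimension-one points of $\mc{W}$ whose closure contains $x$ are $D_2$ and the relevant component of $D_1$, since by flatness over $R$ any horizontal divisor whose special fibre contains $x$ coincides with that component. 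Thus the two composites agree up to the sign fixed by the conventions defining the differentials of the complexes $\ms{K}$; in particular \eqref{res_square} commutes (and in any event the sign plays no role in the applications).

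Finally, for $\alpha\in\mc{O}$ prime to $\mf{l}$ the endomorphism $[\alpha]$ of $\mc{E}_{i/R}$ is finite \'etale over $R$, so the traces $[\alpha]_*$ are defined on all the groups appearing in \eqref{res_square} and are compatible both with the residue components of the Gersten differential and with the reduction maps, this being part of the proper-pushforward formalism on motivic cohomology over Dedekind schemes; hence every arrow in \eqref{res_square} is $[\alpha]_*$-equivariant, which gives the claimed compatibility on trace-fixed parts. I expect the only genuine difficulty to be expository: pinning down precisely which localization sequences and Gersten resolutions are in force over the mixed-characteristic base $R$, and checking that $D_1$, $D_2$ and $t_\gamma^\circ$ are in sufficiently good position for the Gersten-complex formalism to apply; once that is done, both the commutativity and the $[\alpha]_*$-equivariance reduce to standard functoriality.
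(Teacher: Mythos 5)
Your proposal is correct in substance and follows the same basic route as the paper's (much terser) proof: everything is reduced to localization over the discrete valuation ring $R$ and to proper pushforward. The difference is in emphasis. The paper simply invokes Levine's distinguished triangle of cycle complexes $z_q(v_{\gamma},*) \to z_q(V_{\gamma/R},*) \to z_q(V_{\gamma},*)$ (and its analogue for $T_{\gamma}^{\circ}$) to produce the horizontal maps, observes that these triangles commute with the maps induced by multiplication by $\alpha$, and notes that the vertical maps are the usual residues over a field, which are already known to commute with traces; the commutativity of the square is treated as built into the construction. You instead make the commutativity explicit via $d\circ d=0$ for iterated residues in the Gersten--Kato complex of the regular mixed-characteristic scheme containing $V_{\gamma/R}$ and $T^{\circ}_{\gamma/R}$, which is a legitimate and arguably more transparent argument. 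However, the ``expository difficulty'' you defer --- pinning down which localization sequences and coniveau/Gersten structures are available over $R$ --- is precisely the input the paper supplies by citing Levine's Theorem 1.7 (together with Geisser's work over Dedekind rings), so in a complete write-up this must be cited rather than waved at; it is the only nonformal ingredient. Two smaller corrections: first, $[\alpha]$ is finite flat but not in general finite \'etale over $R$, since if the residue characteristic $\ell$ of $\mf{l}$ splits in $\mc{O}$ and $\alpha$ lies in the prime conjugate to $\mf{l}\cap\mc{O}$, then $N\alpha$ is divisible by $\ell$; finite flatness is all that proper pushforward requires, so nothing is lost. Second, at the generic point $x$ of a component of $t_{\gamma}^{\circ}$ there may well be additional horizontal divisors (closures of curves lying in $V_{\gamma}$) whose special fibre contains $x$ --- your flatness remark does not rule these out --- but they contribute nothing to the iterated residue because the class has trivial residue along every divisor of $V_{\gamma}$, so your conclusion stands once the justification is phrased that way.
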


\begin{proof}By \cite[Theorem 1.7]{levine-tech},
	We have a distinguished triangle of cycle complexes computing motivic cohomology:
	\begin{equation} \label{seq_complexes}
		0 \to z_q(v_{\gamma},*) \to z_q(V_{\gamma/R},*) \to z_q(V_{\gamma},*)
	\end{equation}
	and similarly for $T_{\gamma}^{\circ}$. This gives the horizontal
    residue maps in \eqref{res_square}.
    Since the maps in \eqref{seq_complexes} commute with those induced by multiplication by $\alpha \in \mc{O}$ prime to $\mf{l}$, we have
    the commutativity of the horizontal maps with $[\alpha]_*$.
    The vertical maps in \eqref{res_square} are the more usual residue maps
    over a common base field, which we already know commute with these
    trace maps.
\end{proof}

\begin{lemma} \label{triv_res}
	Let $\mf{l}$ be a prime of $\mc{O}(\mf{f})$ not dividing $\mf{f}$. 
	For each $\gamma \in \Gamma_i$, the value ${}_{\mf{c}} \Theta_i(\gamma)$ lies in the kernel of the residue map
	$H^2(V_{\gamma},\Z'(2)) \to H^1(v_{\gamma},\Z'(1))$.
\end{lemma}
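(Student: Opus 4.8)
The plan is to deduce the vanishing of the $\mf{l}$-residue of ${}_{\mf{c}}\Theta_i(\gamma)$ from the commutative square of Lemma \ref{trace_comm_res} together with the integrality of Kato's $\mf{c}$-theta functions. First I would recall that, by the defining property of the representative cocycle, the image of ${}_{\mf{c}}\Theta_i(\gamma)$ under the left vertical residue map $H^2(V_\gamma,2)\to H^1(T_\gamma^\circ,1)$ of \eqref{res_square} is the restriction to $T_\gamma^\circ$ of $\partial({}_{\mf{c}}\Theta_i(\gamma)) = (\gamma^*-1)\mu_i^*({}_{\mf{c}}\vartheta_i)$, which is a sum of exterior products of pullbacks of the theta functions ${}_{\mf{c}}\theta_{i_1}$ and ${}_{\mf{c}}\theta_{i_2}$ along the horizontal and vertical divisors making up $T_\gamma^\circ$.

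The key input is that each ${}_{\mf{c}}\theta_r$ is a genuine invertible function on $\mc{E}_{r/R}-\overline{E_r[\mf{c}]}$, i.e.\ extends to a class in $H^1$ of the good-reduction model away from the closure of the $\mf{c}$-torsion (the integrality of the $\mf{c}$-theta functions, as in \cite[1.10]{kato}; the auxiliary ideal $\mf{c}$ is present precisely to remove the root-of-unity ambiguity and make this hold, even when $\mf{l}\mid\mf{c}$). Since $\gamma\in\GL_2(\mc{O})$ is an automorphism of $\mc{E}_{i/R}$ and the entries of $\mu_i$ lie in $\mc{O}$, the pulled-back building blocks remain invertible on the relevant divisors of the model over $R$, so their reductions modulo $\mf{l}$ are nonzero; hence the $\mf{l}$-residue of $(\gamma^*-1)\mu_i^*({}_{\mf{c}}\vartheta_i)|_{T_\gamma^\circ}$ vanishes in $H^0(t_\gamma^\circ,0)$. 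By the commutativity in Lemma \ref{trace_comm_res}, it follows that $\res_{\mf{l}}({}_{\mf{c}}\Theta_i(\gamma))$ lies in the kernel of $H^1(v_\gamma,1)\to H^0(t_\gamma^\circ,0)$. As $v_\gamma$ is a dense open in the smooth, proper, geometrically connected surface $\mathscr{e}_i$, a product of elliptic curves over the residue field, the localization (Gersten) sequence identifies this kernel with the image of $H^1(\mathscr{e}_i,1)$, the group of constants; so $\res_{\mf{l}}({}_{\mf{c}}\Theta_i(\gamma))$ is the image of a unit $c$ in the residue field tensored with $\Z'$.

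The remaining step, and the main obstacle, is to show this constant $c$ is trivial. I see two routes. The first is to observe that, since the square of Lemma \ref{trace_comm_res} is compatible with the trace maps $[\alpha]_*$ and ${}_{\mf{c}}\Theta_i(\gamma)$ is trace-fixed, $c$ is a trace-fixed constant; pushforward by $[\alpha]$ raises a constant to the power $(N\alpha)^2$, so $c^{(N\alpha)^2-1}$ is trivial for a cofinite family of $\alpha\in\mc{O}$, forcing the order of $c$ to be built only from the inverted primes $2,3,5$ (using the splitting behaviour of suitable primes to pin down the relevant gcd — this is the role of inverting $30$), whence $c$ dies in $\otimes\,\Z'$. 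The second, more robust route is to carry out the construction directly over the DVR $R$: using the integral combination $\mu_i^*({}_{\mf{c}}\vartheta_i)$ over $\mc{E}_{i/R}$ and an integral analogue of the exact sequence of Proposition \ref{Kexact}, one produces a trace-fixed primitive $\widetilde{\Theta}(\gamma)\in H^2(V_{\gamma/R},2)$ with the correct residues along the closures of $S_1$, $S_\gamma$, $S'_{\mu_i}$, $S'_{\gamma\mu_i}$; its restriction to the generic fiber then differs from ${}_{\mf{c}}\Theta_i(\gamma)$ by a trace-fixed class that extends over all of $\mc{E}_i$, which vanishes by $H^2(\mc{E}_i,2)^{(0)}=0$ from \cite[Proposition 6.1.2]{sv}, and since $\widetilde{\Theta}(\gamma)$ has trivial $\mf{l}$-residue by construction, so does ${}_{\mf{c}}\Theta_i(\gamma)$. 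On this route the technical heart is establishing the integral exactness over $R$ and the compatibility of the trace-fixed condition with reduction.
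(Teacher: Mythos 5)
Your first route is essentially the paper's proof: the paper organizes it by first showing the right-hand vertical map of \eqref{res_square} is injective on trace-fixed parts (its kernel consists of constants, which cannot be trace-fixed since $[\alpha]_*$ raises them to a power prime to the residue characteristic of almost all $\mf{l}$), and then reducing, via the commutativity of the square and the Gysin sequence, to the fact that ${}_{\mf{c}}\vartheta_i$ extends to a unit on the integral model because the theta functions do --- which is exactly your argument with the "trace-fixed constant is trivial" step moved to the front. Your second route is an alternative the paper does not pursue and is not needed.
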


\begin{proof}
	In the diagram \eqref{res_square}, 
	the two vertical maps
	are injective upon taking trace-fixed parts.
	To see this for the right-hand vertical map, the relevant coniveau spectral sequence (as employed in \cite[Section 2.2]{sv})
	tells us that the kernel is $H^1(\mathscr{e}_i,1)^{(0)}$, where $\mathscr{e}_i$ is the reduction of $\mc{E}_i$ modulo
	$\mf{l}$. Since 
	$\mathscr{e}_i$ is projective, 
	this group consists only of constant units, but as these are pulled back 
	from the residue field, $[\alpha]_*$ acts on them by raising to the power $N\alpha$. Thus, only the identity in $H^1(\mathscr{e}_i,1)^{(0)}$
    is trace-fixed.
	
	Recall that ${}_{\mf{c}} \Theta_i(\gamma)$ has residue
	 $(\gamma^*-1)\mu_i^* {}_{\mf{c}} \vartheta_i \in H^1(T_{\gamma}^{\circ},1)^{(0)}$
	for $\gamma \in \Gamma_i$. By Lemma \ref{trace_comm_res} and the injectivity of the righthand vertical map on trace-fixed parts,
	we need only show that each $\gamma^* ({}_{\mf{c}} \vartheta_i)$ for $\gamma \in \Gamma_i$ has trivial residue in
	$H^0(t_{\gamma}^{\circ},0)$. By the relevant Gysin sequence as in \cite[Corollary 3.4]{geisser}, it then suffices
	to check that each $\gamma^* ({}_{\mf{c}} \vartheta_i)$ for $\gamma \in \Gamma_i$ is the restriction of a unit 
	on $T_{\gamma/R}^{\circ}$.
	
	It is enough to see that ${}_{\mf{c}} \vartheta_i$ is a unit on $T^{\circ}_{/R}$, where $T^{\circ} = T_1^{\circ}$.	
	Since $\mf{l}$ does not divide the conductor $\mf{f}$ of the Hecke characters
	of $E_{i_1}$ and $E_{i_2}$, these elliptic curves have good reduction at $\mf{l}$.
	Then ${}_{\mf{c}} \vartheta_i$ extends over relevant open subscheme of the 
	fiber product of Neron models of $E_{i_1}$  and $E_{i_2}$ over 
	the localization of $\mc{O}(\mf{f})$ at $\mf{l}$, since the theta functions used in its definition \eqref{vartheta} do,
	as desired. 
\end{proof}

For any multiple $\mc{M}$ of $\mc{N}$, let us write
$$
	\mc{O}'(\mc{M}) = \begin{cases} \mc{O}(\mc{M})[\frac{1}{\mc{N}}] & \text{if $\mc{N}$ is a prime power}, \\
		\mc{O}(\mc{M}) & \text{otherwise}
		\end{cases}
$$
for brevity of notation in what follows.

\begin{lemma} \label{reduce_in_complement}
    Let $\mf{l}$ be a prime of $\mc{O}'(\mc{N}\mf{f})$ not lying over $\mf{f}$.
	For each $\gamma \in \Gamma_0(\mc{N})_i$ and primitive $\mc{N}$-torsion point $P$ of $E_{i_2}$, the $F(\mc{N}\mf{f})$-point $(0,P) \in V_{\gamma}$ reduces to a point of $v_{\gamma}$.
\end{lemma}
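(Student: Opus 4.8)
The plan is to work over the discrete valuation ring $R$ obtained by localizing $\mc{O}'(\mc{N}\mf{f})$ at $\mf{l}$. Since $\mf{l}$ does not lie over $\mf{f}$, the curves $E_{i_1}$ and $E_{i_2}$ still have good reduction at $\mf{l}$, so $\mc{E}_{i/R} = E_{i_1/R} \times_R E_{i_2/R}$ is an abelian scheme, hence proper over $R$. By the valuative criterion the point $(0,P) \in \mc{E}_i(F(\mc{N}\mf{f}))$ extends uniquely to a section $s \in \mc{E}_{i/R}(R)$, whose reduction is the point $(0,\bar P)$ of $\mathscr{e}_i$, where $\bar P$ is the reduction of $P$. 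The four subvarieties $S_1 = E_{i_1} \times E_{i_2}[\mf{c}]$, the graph $S'_{\mu_i}$, and their transforms $S_\gamma = S_1\gamma^{-1}$ and $S'_{\gamma\mu_i}$ all extend to closed subschemes of $\mc{E}_{i/R}$ (using that $\gamma$, as an automorphism of the abelian scheme $\mc{E}_{i/R}$, is defined over $R$, and that $E_{i_2}[\mf{c}]$ extends to its schematic closure, a flat subgroup scheme over $R$), and $t_\gamma$ is the union of the special fibres of these four extensions. Thus it suffices to show that $(0,\bar P)$ lies on none of these four reduced subschemes of $\mathscr{e}_i$.

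The heart of the matter is that $\bar P \neq 0$. The $\mc{O}$-annihilator of $P$ is exactly $\mc{N}$, and since $\mf{l}$ lies over a single prime of $\mc{O}$ it divides at most one prime factor of $\mc{N}$; moreover, if $\mc{N}$ is a prime power then $\mf{l} \nmid \mc{N}$ by the convention that $\mc{N}$ is inverted in $\mc{O}'(\mc{N}\mf{f})$ in that case. Hence there is a prime $\mf{p} \mid \mc{N}$ with $\mf{l} \nmid \mf{p}$, and the $\mf{p}$-primary component $P_{\mf{p}}$ of $P$ is nonzero. By the theory of complex multiplication the formal group of $E_{i_2}$ at $\mf{l}$ is divisible away from $\mf{l} \cap \mc{O}$, so the kernel of reduction on the torsion of $E_{i_2}$ consists only of $(\mf{l}\cap\mc{O})$-primary torsion; in particular reduction is injective on $\mf{p}$-primary torsion, so the $\mf{p}$-primary part of $\bar P$ — which is the reduction of $P_{\mf{p}}$ — is nonzero, whence $\bar P \neq 0$.

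Granting $\bar P \neq 0$, I would rule out each incidence by translating it into a relation $\mf{b}\bar P = 0$ with $\mf{b}$ an ideal of $\mc{O}$ coprime to $\mc{N}$, and then using $\mc{N}\bar P = 0$ to force $\bar P = 0$, a contradiction. Writing $\gamma = \smatrix{a&b\\c&d}$, so that $c \equiv 0$ and $d$ is a unit modulo $\mc{N}$ as $\gamma \in \Gamma_0(\mc{N})_i$, and recalling $x \in \mf{a}_{i_2,i_1}\mf{c}$ is prime to $\mc{N}$, the generic-fibre computation from the proof of Lemma \ref{imageGamma0} carries over verbatim to the special fibre: $(0,\bar P)$ on the reduction of $S_1$ forces $\mf{c}\bar P = 0$; on that of $S_\gamma$ forces $\mf{c}d\bar P = 0$; on that of $S'_{\mu_i}$ forces $x\bar P = 0$, hence $\mf{b}\bar P = 0$ for the ideal $\mf{b}$ with $\ker x = E_{i_2}[\mf{b}]$; and on that of $S'_{\gamma\mu_i}$ forces $(c+xd)\bar P = 0$, hence $\mf{b}'\bar P = 0$ for the ideal $\mf{b}'$ with $\ker(c+xd) = E_{i_2}[\mf{b}']$. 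Here $\mf{c}$ is coprime to $\mc{N}$ by hypothesis, $\mf{c}(d)$ is since $d$ is prime to $\mc{N}$, and $\mf{b}, \mf{b}'$ are since $x$ and $d$ are prime to $\mc{N}$ while $c \equiv 0 \bmod \mc{N}$. In each case, $\bar P$ is killed by an ideal coprime to $\mc{N}$ and by $\mc{N}$, hence by $\mc{O}$, giving $\bar P = 0$ and a contradiction.

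The step I expect to be the main obstacle is the nonvanishing $\bar P \neq 0$ in the second paragraph: a priori the reduction of $(0,P)$ could collapse onto the reduction of $S_1$, which contains the origin of $\mathscr{e}_i$, even though $(0,P)$ avoids $T_\gamma$ on the generic fibre. Excluding this is precisely where the CM structure of $E_{i_2}$ and the inversion of $\mc{N}$ in $\mc{O}'(\mc{N}\mf{f})$ when $\mc{N}$ is a prime power are used, ensuring that $\mf{l}$ cannot absorb the whole $\mc{N}$-torsion into the connected part of the $p$-divisible group.
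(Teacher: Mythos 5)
Your proof is correct and follows essentially the same route as the paper's: the paper likewise reduces the statement to showing that the reduction of a primitive $\mc{N}$-torsion point avoids the reductions of $E_{i_1}\times E_{i_2}[\mf{c}]$ and of $\{0\}\times E_{i_2}$ after applying $\gamma$ and $\mu_i$, using that $1$, $d$, $x$, and $c+dx$ are prime to $\mc{N}$. The only difference is that where you argue directly, via the $\mc{O}$-module structure on the formal group at $\mf{l}$ and the inversion of $\mc{N}$ in the prime-power case, that $\bar{P}\neq 0$ and that $\bar{P}$ avoids the reduction of $E_{i_2}[\mf{c}]$, the paper simply cites Rubin's Lemma 7.3(ii), which packages exactly these facts.
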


\begin{proof}
	It suffices to show that the mod $\mf{l}$ reduction of $(0,P)$ 
    does not lie in the reduction of $T \rho^{-1}$
	for $\rho \in \Gamma_0(\mc{N})_i$, as $T_{\rho} = T \cup T \rho^{-1}$.
	We can consider $\rho = 1$ by replacing $(0,P)$ by $(0,P_{i_2})\rho^{-1} = (0,P')$ for some primitive $\mc{N}$-torsion point $P' \in E_{i_2}$. It further suffices to show that the reduction of 
    $(0,P)$ does not lie in the reduction of $S_1 = E_{i_1} \times E_{i_2}[\mf{c}]$,
	and, translating by $\mu_{\mf{c}}^{-1} = \smatrix{1 & 0 \\ -x & 1}$, that the reduction of $(-x P, P)$ does not lie in the reduction of $S'_1
    = \{0\} \times E_{i_2}$. 
    The former amounts to showing that no point of $P + E_{i_2}[\mf{c}]$ reduces to zero at $\mf{l}$, which follows from \cite[Lemma 7.3(ii)]{rubin}. For the latter, we need only 
    note that $x$ is prime to $\mc{N}$ and $P$ is nonzero modulo $\mf{l}$
    by the same lemma.
\end{proof}

\begin{proposition} \label{integral_image}
	The cocycle ${}_{\mf{c}} \Theta_{i,\mc{N}}$ takes values in $K_2(\mc{O}'(\mc{N}))_{\Z'}$.
\end{proposition}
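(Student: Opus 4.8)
The plan is to reduce the statement, via the localization sequence in algebraic $K$-theory, to the vanishing of tame symbols, and then to verify that vanishing one prime at a time using an integral model of $\mc{E}_i$ and the two preceding lemmas.

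First I would recall that, since $K_2$ of every finite field vanishes, for any finite set $S$ of primes of $F(\mc{N})$ the Quillen localization sequence identifies $K_2$ of the ring of $S$-integers of $F(\mc{N})$, after tensoring with $\Z'$ (flat over $\Z$), with the kernel inside $K_2(F(\mc{N}))_{\Z'}$ of the sum of tame symbols $\partial_{\mf{l}} \colon K_2(F(\mc{N}))_{\Z'} \to k(\mf{l})^{\times}_{\Z'}$ taken over $\mf{l} \notin S$. Taking $S$ to be the set of primes over $\mc{N}$ if $\mc{N}$ is a prime power and $S = \emptyset$ otherwise, this kernel is exactly $K_2(\mc{O}'(\mc{N}))_{\Z'}$, so it suffices to fix $\gamma \in \Gamma_0(\mc{N})_i$ and a prime $\mf{l}$ of $F(\mc{N})$ outside $S$ and prove $\partial_{\mf{l}}({}_{\mf{c}}\Theta_{i,\mc{N}}(\gamma)) = 0$.

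Next, appealing to Proposition \ref{indep_of_f}, I would compute with a choice of $\mf{f}$ prime to $\mc{N}$, to $\mf{c}$, and to the prime of $\mc{O}$ below $\mf{l}$, chosen moreover (as a suitable degree-one prime, by Dirichlet's theorem) so that $f = |(\mc{O}/\mf{f})^{\times}|$ is coprime to the order of the finite group $k(\mf{l})^{\times}_{\Z'}$; then inverting $f$ does not change $k(\mf{l})^{\times}_{\Z'}$. Since ${}_{\mf{c}}\Theta_{i,\mc{N}}$ agrees with $\lambda_i^* \circ {}_{\mf{c}}\Theta_i$ over $F(\mc{N}\mf{f})$ after inverting $f$, and any prime $\mf{L}$ of $F(\mc{N}\mf{f})$ over $\mf{l}$ is unramified over $\mf{l}$ with tame symbols compatible under this base change, it is enough to show that the tame symbol at $\mf{L}$ of $\lambda_i^*({}_{\mf{c}}\Theta_i(\gamma)) \in K_2(F(\mc{N}\mf{f}))_{\Z'}$ vanishes. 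For this I would let $R$ be the localization at $\mf{L}$ of the ring of integers of $F(\mc{N}\mf{f})$, a discrete valuation ring with fraction field $F(\mc{N}\mf{f})$ and residue characteristic prime to $\mf{f}$, so that $E_{i_1}$ and $E_{i_2}$ have good reduction at $\mf{L}$ and the product $\mc{E}_{i/R}$ of their N\'eron models is an abelian scheme; spreading out the divisors $S_1, S'_{\mu_i}, S_{\gamma}, S'_{\gamma\mu_i}$ to closed subschemes of $\mc{E}_{i/R}$ then yields an open $V_{\gamma/R}$ with generic fiber $V_{\gamma}$ (over $F(\mc{N}\mf{f})$) and special fiber $v_{\gamma}$. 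By Lemma \ref{triv_res}, whose residue map is compatible with the flat base change to $R$, the value ${}_{\mf{c}}\Theta_i(\gamma) \in H^2(V_{\gamma}, \Z'(2))$ has trivial residue in $H^1(v_{\gamma}, \Z'(1))$, hence by the localization sequence in motivic cohomology for the closed immersion $v_{\gamma} \hookrightarrow V_{\gamma/R}$ lifts to a class $\widetilde{\Theta} \in H^2(V_{\gamma/R}, \Z'(2))$. By properness the point $(0, P_{i_2})$ extends to an $R$-point of $\mc{E}_{i/R}$ whose special fiber lies in $v_{\gamma}$ by Lemma \ref{reduce_in_complement} and whose generic fiber lies in $V_{\gamma}$ by Lemma \ref{imageGamma0}; as the complement of $V_{\gamma/R}$ is closed, this extension is a section $\tilde{\lambda}_i \colon \Spec R \to V_{\gamma/R}$. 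Then $\tilde{\lambda}_i^* \widetilde{\Theta} \in H^2(\Spec R, \Z'(2))$ restricts on the generic fiber to $\lambda_i^*({}_{\mf{c}}\Theta_i(\gamma))$ in $H^2(\Spec F(\mc{N}\mf{f}), \Z'(2)) = K_2(F(\mc{N}\mf{f}))_{\Z'}$, so the localization sequence for $\Spec k(\mf{L}) \hookrightarrow \Spec R$ — in which the connecting map to $H^1(\Spec k(\mf{L}), \Z'(1)) = k(\mf{L})^{\times}_{\Z'}$ is the tame symbol at $\mf{L}$ — shows this element has trivial tame symbol at $\mf{L}$. Unwinding the reductions made above gives $\partial_{\mf{l}}({}_{\mf{c}}\Theta_{i,\mc{N}}(\gamma)) = 0$, as desired.

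The main difficulty will be setting up the integral model so that everything fits together: one has to extend the cocycle value, the four divisors $S_1, S'_{\mu_i}, S_{\gamma}, S'_{\gamma\mu_i}$, and the specialization point coherently over $R$, and to check that pulling back the extended class $\widetilde{\Theta}$ by the extended section genuinely computes $\lambda_i^*({}_{\mf{c}}\Theta_i(\gamma))$ in $K_2$, and hence that its residue in motivic cohomology is the tame symbol. The two geometric facts that make this possible — triviality of the residue of the cocycle value and the reduction of the specialization point into the complement of the bad locus — are exactly Lemmas \ref{triv_res} and \ref{reduce_in_complement}; the rest is $K$-theoretic formalism together with the bookkeeping involved in descending from $K_2(F(\mc{N}\mf{f}))_{\Z'[\tfrac{1}{f}]}$ to $K_2(F(\mc{N}))_{\Z'}$ by varying $\mf{f}$.
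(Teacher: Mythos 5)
Your proposal is correct and follows essentially the same route as the paper: reduce via the $K$-theoretic localization sequence to the vanishing of residues at primes not dividing $\mf{f}$ (or $\mc{N}$, in the prime-power case), deduce that vanishing from Lemma \ref{triv_res} together with Lemma \ref{reduce_in_complement} and the compatibility of pullback by the ($R$-extended) specialization point with residue maps, and then remove the auxiliary denominator $f$ by varying $\mf{f}$. Your explicit lifting of the cocycle value to $H^2(V_{\gamma/R},\Z'(2))$ is just a spelled-out version of the paper's appeal to ``commutativity of pullbacks and residues,'' and your coprimality choice of $\mf{f}$ is an equally valid variant of the paper's Galois-descent-plus-vary-$\mf{f}$ bookkeeping.
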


\begin{proof}
	Let $\mf{l}$ be a prime of $\mc{O}'(\mc{N}\mf{f})$ not dividing $\mf{f}$.
	By Lemma \ref{reduce_in_complement}, the reduction of $(0,P_{i_2})$ lies in $v_{\gamma}$ for each $\gamma \in \Gamma_0(\mc{N})_i$.
	We apply Lemma \ref{triv_res}, pull back by this reduced point and use the commutativity of pullbacks and residues to see that ${}_{\mf{c}} \Theta_{i,\mc{N}}(\gamma)$
	has trivial residue in the tensor product with $\Z'$ of 
	the first $K$-group of the residue field of $\mc{O}'(\mc{N}\mf{f})$ at $\mf{l}$.
    
	  We thus get that 
	${}_{\mf{c}} \Theta_{i,\mc{N}}(\gamma)$ lies in $K_2(\mc{O}'(\mc{N}\mf{f})[\tfrac{1}{\mf{f}}])_{\Z'}$, 
	but we also know it is fixed by $\Gal(F(\mc{N}\mf{f})/F(\mc{N}))$, so again by Galois descent it takes values in 
	$K_2(\mc{O}'(\mc{N})) \otimes \Z'[\tfrac{1}{f}]$
	for $f = |(\mc{O}/\mf{f})^{\times}|$. Since the value ${}_{\mf{c}} \Theta_{i,\mc{N}}(\gamma)$ is independent of 
	the choice of $\mf{f}$ in the sense of Proposition \ref{indep_of_f}, we therefore have that it lies in 
	$K_2(\mc{O}'(\mc{N}))_{\Z'}$.
\end{proof}

In fact, we claim that the class ${}_{\mf{c}} \Theta_{i,\mc{N}}$ is Eisenstein with values in this smaller group. The key point is the following lemma.

\begin{lemma} \label{inj_Gamma_0}
	For any nonzero ideal $\mf{d}$ of $\mc{O}$, the map 
	$$
		H^1(\Gamma_0(\mc{N})_i,K_2(\mc{O}(\mc{N})[\tfrac{1}{\mf{d}}])) \to H^1(\Gamma_0(\mc{N})_i,K_2(F(\mc{N})))
	$$
	is injective.
\end{lemma}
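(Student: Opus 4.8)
The plan is to strip $\Gamma_0(\mc{N})_i$ down to a finite group by a Hochschild--Serre argument, convert the resulting statement into a surjectivity of residue maps via the localization sequence in $K$-theory, and then exhibit the required lifts, the primes dividing $\mc{N}$ being the delicate case.

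Both coefficient modules carry the $\Gamma_0(\mc{N})_i$-action $\sm{a}{b}{c}{d}\mapsto\mc{R}_{\mc{N}}(d)$. This action is trivial on $\Gamma_1(\mc{N})_i$ (if $d$ is a unit modulo $\mc{N}$ then $(d)\in P_{\mc{N},1}$, so $\mc{R}_{\mc{N}}(d)=1$), and by strong approximation $\Gamma_0(\mc{N})_i\to(\mc{O}/\mc{N})^\times$ is surjective, so it factors through $\mc{G}:=\Gamma_0(\mc{N})_i/\Gamma_1(\mc{N})_i$, which (using the standing hypothesis that $\mc{O}^\times\to(\mc{O}/\mc{N})^\times$ is injective) the Artin map identifies with $\Gal(F(\mc{N})/H)$ for $H$ the Hilbert class field. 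In the comparison of the two five-term Hochschild--Serre sequences for $\Gamma_1(\mc{N})_i\triangleleft\Gamma_0(\mc{N})_i$, the first terms are $H^1(\mc{G},-)$, the next are $H^1(\Gamma_0(\mc{N})_i,-)$, and the third are $\Hom(\Gamma_1(\mc{N})_i^{\ab},-)^{\mc{G}}$; since $K_2(\mc{O}(\mc{N})[\tfrac1{\mf{d}}])\hookrightarrow K_2(F(\mc{N}))$, the map on the third terms is injective (left-exactness of $\Hom$ and of $\mc{G}$-invariants), so a short diagram chase reduces the lemma to the injectivity of $H^1(\mc{G},K_2(\mc{O}(\mc{N})[\tfrac1{\mf{d}}]))\to H^1(\mc{G},K_2(F(\mc{N})))$.

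Now write $S$ for the set of primes of $F(\mc{N})$ above $\mf{d}$, so that $\mc{O}(\mc{N})[\tfrac1{\mf{d}}]=\mc{O}_{F(\mc{N}),S}$. The localization sequence in $K$-theory, together with $K_2$ of a finite field being $0$ and $\mc{O}_{F(\mc{N}),S}^\times\hookrightarrow F(\mc{N})^\times$, yields a $\mc{G}$-equivariant exact sequence
$$
0\to K_2(\mc{O}_{F(\mc{N}),S})\to K_2(F(\mc{N}))\xrightarrow{\ \partial\ }\bigoplus_{\mf{q}\notin S}k(\mf{q})^\times\to0,
$$
so the previous statement is equivalent to the surjectivity of $K_2(F(\mc{N}))^{\mc{G}}\to\big(\bigoplus_{\mf{q}\notin S}k(\mf{q})^\times\big)^{\mc{G}}$. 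Grouping the primes $\mf{q}$ into $\mc{G}$-orbits over the primes $\mf{p}$ of $H$ not below $\mf{d}$, and noting that the decomposition group acts on each residue field through its full (unramified) residue quotient, Shapiro's lemma identifies the target with $\bigoplus_{\mf{p}\nmid\mf{d}}k(\mf{p})^\times$. It thus suffices, for each such $\mf{p}$ and each $\bar c\in k(\mf{p})^\times$, to produce a $\mc{G}$-invariant element of $K_2(F(\mc{N}))$ with residue $\bar c$ at the chosen prime over $\mf{p}$ and trivial residues at all primes outside $S$ and the orbit of $\mf{p}$. When $\mf{p}$ is unramified in $F(\mc{N})/H$ --- in particular whenever $\mf{p}\nmid\mc{N}$ --- one takes an element of $K_2(H)$ with the prescribed residues (available from the localization sequence over the ring of integers of $H$) and restricts it to $F(\mc{N})$: it is $\mc{G}$-invariant, and its tame symbols are unchanged since the ramification index is $1$. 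This already proves the lemma whenever $\mf{d}$ is divisible by every prime above $\mc{N}$.

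The remaining case --- the finitely many primes $\mf{p}\mid\mc{N}$ with $\mf{p}\nmid\mf{d}$, which ramify in $F(\mc{N})/H$ --- is the main obstacle, and I expect it to be the heart of the argument. Here restriction from $K_2(H)$ reaches only the $e_{\mf{p}}$-th powers in $k(\mf{p})^\times$, where $e_{\mf{p}}$ is the ramification index. Locally one can do better: in the completion $F(\mc{N})_{\mf{q}}$ the maximal tame subextension of $H_{\mf{p}}$ is generated by a root of a uniformizer, $H_{\mf{p}}$ contains the Teichm\"uller lifts, and any symbol $\{u,\zeta\}$ with $u$ a unit and $\zeta$ a root of unity of order prime to the residue characteristic vanishes in $K_2$ (its tame symbol is trivial, it is killed by an integer prime to $p$, and $K_2$ of the local valuation ring has no prime-to-$p$ torsion by Moore's theorem); combining these one builds, for any prescribed residue in $k(\mf{p})^\times$, a local class invariant under the decomposition group. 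It remains to globalize --- equivalently, to show that the obstruction to lifting such a local class to a $\mc{G}$-invariant global one, which lives in $H^1(\mc{G},K_2(\mc{O}_{F(\mc{N}),S}))$, vanishes. This should follow from a local--global principle: decomposing the finite group $K_2(\mc{O}_{F(\mc{N}),S})$ into $\ell$-primary parts and identifying each with the \'etale cohomology $H^2_{\et}(\mc{O}_{F(\mc{N}),S}[\tfrac1\ell],\Z_\ell(2))$, the Poitou--Tate exact sequence, combined with Hochschild--Serre over $H$, shows that a class locally trivial at every (necessarily ramified) place is globally trivial. Carrying out this last step carefully is where the real work lies.
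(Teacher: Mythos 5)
Your reduction is essentially the paper's, reached by a slightly longer road: the paper dispenses with Hochschild--Serre and simply takes the long exact sequence of $\Gamma_0(\mc{N})_i$-cohomology of the localization sequence $0 \to K_2(\mc{O}(\mc{N})[\tfrac{1}{\mf{d}}]) \to K_2(F(\mc{N})) \to \bigoplus_{\mf{Q} \nmid \mf{d}} k(\mc{N})_{\mf{Q}}^{\times} \to 0$, noting that $\Gamma_0(\mc{N})_i$ acts through its surjective image $\Gal(F(\mc{N})/H)$, so that everything comes down to the surjectivity of $K_2(F(\mc{N}))^{\Gal(F(\mc{N})/H)} \to \bigl(\bigoplus_{\mf{Q} \nmid \mf{d}} k(\mc{N})_{\mf{Q}}^{\times}\bigr)^{\Gal(F(\mc{N})/H)}$. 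Your identification of the target with $\bigoplus_{\mf{q} \nmid \mf{d}} k_{\mf{q}}^{\times}$ over primes of $H$ (invariants of the induced module are the invariants of the decomposition group, which acts through the residue extension) and your production of the required classes by restriction from $K_2(H)$ is exactly the paper's commutative square. Up to that point the argument is correct and complete for every prime of $H$ that is unramified in $F(\mc{N})/H$.

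The gap is the one you flag yourself and do not close: the primes $\mf{q} \mid \mc{N}$ with $\mf{q} \nmid \mf{d}$, where the compatibility $\partial_{\mf{Q}} \circ \res = (\,\cdot\,)^{e_{\mf{Q}/\mf{q}}} \circ \iota \circ \partial_{\mf{q}}$ means restriction from $K_2(H)$ only reaches $e$-th powers in $k_{\mf{q}}^{\times}$, and $e$ need not be prime to $N\mf{q}-1$ (e.g.\ $F = \Q(i)$, $\mc{N} = \mf{p}$ a split prime over $13$: $e = 3$ divides $N\mf{p}-1 = 12$). Your completed argument therefore proves the lemma only when every prime over $\mc{N}$ divides $\mf{d}$, which covers the application to $\mc{O}(\mc{N})[\tfrac{1}{\mc{N}}]$ but not to $\mc{O}(\mc{N})$ itself. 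The proposed repair is only a sketch: you have not constructed the decomposition-invariant local class in detail, and more seriously the "local--global principle" is not set up --- Poitou--Tate governs finite Galois modules over a number field with restricted ramification, whereas your obstruction lives in $H^1(\mc{G}, K_2(\mc{O}_{F(\mc{N}),S}))$ for the finite quotient $\mc{G} = \Gal(F(\mc{N})/H)$ acting on the $K$-group of the top ring, and no mechanism is given for transporting local triviality at the (finitely many, all ramified) relevant places into global vanishing there. As written, the ramified case is not proved. I will add that you have put your finger on precisely the point where the paper's own proof is most terse: its right-hand vertical arrow is asserted to be ``clearly an isomorphism,'' but the arrow that makes the square commute with tame symbols carries exactly the exponents $e_{\mf{Q}/\mf{q}}$ you are worried about, so this is a place where the published argument also deserves more care rather than a sign that you are on the wrong track.
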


\begin{proof}
	Recall that $H$ denotes the Hilbert class field of $F$. Consider the commutative square
	$$
		\begin{tikzcd}
		K_2(H) \arrow[two heads]{r}{} \arrow{d}{} & \bigoplus_{\mf{q} \nmid \mf{d}} k_{\mf{q}}^{\times} \arrow{d}{\wr} \\
		K_2(F(\mc{N}))^{\Gal(F(\mc{N)}/H)}  \arrow{r}{} & (\bigoplus_{\mf{Q} \nmid \mf{d}} k(\mc{N})_{\mf{Q}}^{\times})^{\Gal(F(\mc{N})/H)},
		\end{tikzcd}
	$$
	where $k_{\mf{q}}$ (resp., $k(\mc{N})_{\mf{Q}}$) denotes the residue field of a prime $\mf{q}$ of $H$ (resp., $\mf{Q}$ of
	$F(\mc{N})$). The upper horizontal arrow is the surjection in the
	standard localization sequence in $K$-theory, and the right-hand vertical arrow is clearly an isomorphism.
	Thus, the lower horizontal map is surjective as well, which yields the desired injectivity, as $\Gamma_0(\mc{N})_i$ acts on the groups in
    question through its surjective image in $\Gal(F(\mc{N})/H)$.
\end{proof}

This yields the Eisenstein property of the integral cocycles ${}_{\mf{c}} \Theta_{i,\mc{N}}$ as a corollary.

\begin{corollary} \label{int_cocyc_Eis}
	The collection ${}_{\mf{c}} \Theta_{\mc{N}} \in \bigoplus_{i \in I^2} 
	H^1(\Gamma_0(\mc{N})_i,K_2(\mc{O}'(\mc{N}))_{\Z'})$
	is Eisenstein away from $\mc{N}$. That is, it is
	annihilated by $T_{\mf{p}} - (N\mf{p} + \mc{R}_{\mc{N}}(\mf{p}))$ for all primes $\mf{p}$ of $\mc{O}$
	not dividing $\mc{N}$.
\end{corollary}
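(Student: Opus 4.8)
The plan is to push the Eisenstein relation of Corollary~\ref{spec_cocyc_Eis}, which lives in the cohomology with $K_2(F(\mc{N}))_{\Z'}$-coefficients, down to $K_2(\mc{O}'(\mc{N}))_{\Z'}$-coefficients using the injectivity supplied by Lemma~\ref{inj_Gamma_0}.

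First I would rewrite the term $[\mf{p}]^*{}_{\mf{c}}\Theta_{\mc{N}}$ appearing in Corollary~\ref{spec_cocyc_Eis}. For each $i \in I^2$ there is a unique $j \in I^2$ with $i \sim_{\mf{p}} j$, and Proposition~\ref{prop_pull_galois} (with $\mf{n}=\mf{p}$, which is prime to $\mc{N}$) gives $[\mf{p}]^*({}_{\mf{c}}\Theta_{j,\mc{N}}) = \mc{R}_{\mc{N}}(\mf{p}) \circ {}_{\mf{c}}\Theta_{i,\mc{N}}$. Hence $[\mf{p}]^*{}_{\mf{c}}\Theta_{\mc{N}}$ is the tuple obtained from ${}_{\mf{c}}\Theta_{\mc{N}}$ by letting $\mc{R}_{\mc{N}}(\mf{p})$ act on the coefficients of each summand, and Corollary~\ref{spec_cocyc_Eis} becomes the statement that
$$
(T_{\mf{p}} - N\mf{p} - \mc{R}_{\mc{N}}(\mf{p})) \cdot {}_{\mf{c}}\Theta_{\mc{N}} = 0
$$
in $\bigoplus_{i \in I^2} H^1(\Gamma_0(\mc{N})_i,K_2(F(\mc{N}))_{\Z'})$.

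Next I would note that this relation is the image of a relation with $K_2(\mc{O}'(\mc{N}))_{\Z'}$-coefficients. Indeed, by Proposition~\ref{integral_image} the tuple ${}_{\mf{c}}\Theta_{\mc{N}}$ is represented by cocycles valued in $K_2(\mc{O}'(\mc{N}))_{\Z'}$, and $\mc{O}'(\mc{N})$ is stable under $\Gal(F(\mc{N})/F)$, so $\mc{R}_{\mc{N}}(\mf{p}) \cdot {}_{\mf{c}}\Theta_{\mc{N}}$ and $N\mf{p}\cdot {}_{\mf{c}}\Theta_{\mc{N}}$ lie in $\bigoplus_i H^1(\Gamma_0(\mc{N})_i,K_2(\mc{O}'(\mc{N}))_{\Z'})$. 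The same holds for $T_{\mf{p}}({}_{\mf{c}}\Theta_{\mc{N}})$: by Corollary~\ref{Hecke_specialization} together with Corollary~\ref{act_special_cocycle}, in any double-coset decomposition the representatives act on the values of the specialized cocycles through $\mc{R}_{\mc{N}}$ of their lower-right entries, so $T_{\mf{p}}$ carries cocycles valued in $K_2(\mc{O}'(\mc{N}))_{\Z'}$ to cocycles of the same kind, again because Galois preserves $\mc{O}'(\mc{N})$. Finally, Lemma~\ref{inj_Gamma_0} --- applied with $\mf{d}$ the prime of $\mc{O}$ under $\mc{N}$ when $\mc{N}$ is a prime power, and with $\mf{d}=\mc{O}$ otherwise, and then tensored with the flat $\Z$-algebra $\Z'$ (using that $H^1$ commutes with the filtered colimit defining $-\otimes\Z'$) --- shows that the map
$$
\bigoplus_{i \in I^2} H^1(\Gamma_0(\mc{N})_i,K_2(\mc{O}'(\mc{N}))_{\Z'}) \longrightarrow \bigoplus_{i \in I^2} H^1(\Gamma_0(\mc{N})_i,K_2(F(\mc{N}))_{\Z'})
$$
is injective. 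Since $(T_{\mf{p}} - N\mf{p} - \mc{R}_{\mc{N}}(\mf{p})){}_{\mf{c}}\Theta_{\mc{N}}$ lies in the source and maps to $0$, it is itself $0$, which is the assertion.

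The only step that is not purely formal is verifying that $T_{\mf{p}}({}_{\mf{c}}\Theta_{\mc{N}})$ already defines a class with $K_2(\mc{O}'(\mc{N}))_{\Z'}$-coefficients, and this follows once one uses that the action of $\Delta_0(\mc{N})$ on the values of the specialized cocycles is a Galois action (Corollary~\ref{act_special_cocycle}); the remaining bookkeeping with $-\otimes\Z'$ and the decomposition over $I^2$ is routine.
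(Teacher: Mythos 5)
Your proposal is correct and follows essentially the same route as the paper, whose proof is exactly: combine Corollary \ref{spec_cocyc_Eis} with Proposition \ref{prop_pull_galois} to get the Eisenstein relation with $K_2(F(\mc{N}))_{\Z'}$-coefficients, then apply Lemma \ref{inj_Gamma_0}. The extra details you supply (that all terms, including $T_{\mf{p}}({}_{\mf{c}}\Theta_{\mc{N}})$, already live in the integral cohomology by Proposition \ref{integral_image} and Galois-stability, and the bookkeeping with $\Z'$) are correct and merely make explicit what the paper leaves implicit.
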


\begin{proof}
	The stated Eisenstein property, but for cohomology with coefficients in $K_2(F(\mc{N}))_{\Z'}$,
	is Corollary \ref{spec_cocyc_Eis} and Proposition \ref{prop_pull_galois}. We then apply Lemma \ref{inj_Gamma_0}.
\end{proof}

\subsection{Unaugmented cocycles} \label{unaugmented}

Let $p \ge 7$ be a prime number. Write $Q = \Gal(F(\mc{N})/F)$ as a product $Q_p \times Q'$ of its Sylow 
$p$-subgroup  $Q_p$ and its maximal prime-to-$p$ order subgroup $Q'$. The maximal ideals of $\zp[Q]$
correspond to $G_{\qp}$-conjugacy classes of $p$-adic characters of $Q'$. 

Fix a $p$-adic character $\chi$ of $Q'$, and let $\mc{O}_{\chi}$ be the $\zp$-algebra generated by its image. The localization of $\zp[Q]$ determined by $\chi$ is isomorphic to $\mc{O}_{\chi}[Q_p]$, with the projection map $\tilde{\chi} \colon \zp[Q] \to \mc{O}_{\chi}[Q_p]$ coming from the $\zp[Q_p]$-linear extension of $\chi$. For a $\zp[Q]$-module $M$, let us define the \emph{$\chi$-component} of $M$ as 
$$
	M^{(\chi)} = M \otimes_{\zp[Q]} \mc{O}_{\chi}[Q_p]
$$ 
where the right tensor product is given by $\tilde{\chi}$. Though defined as a quotient, $M^{(\chi)}$ is also a direct summand of $M$ via the idempotent determined by $\tilde{\chi}$.

In the following, we extend $\chi$ to $Q$ by taking it to be trivial on $Q_p$. We then view it as a character of $\Cl_{\mc{N}}(F)$
via the Artin map $\mc{R}_{\mc{N}}$.

\begin{lemma} 
	Let $\mf{c}$ be an ideal of $\mc{O}$ prime to $\mc{N}$.
	The projection of $N\mf{c}^2 - \mc{R}_{\mc{N}}(\mf{c})$ to $\zp[Q]^{(\chi)}$ is a unit if and only if 
	$\chi(\mf{c}) \not\equiv N\mf{c}^2 \bmod p\mc{O}_{\chi}$. 
\end{lemma}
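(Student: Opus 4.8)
The plan is to identify $\zp[Q]^{(\chi)}$ with the group ring $\mc{O}_{\chi}[Q_p]$ over the complete discrete valuation ring $\mc{O}_{\chi}$ and to exploit that this ring is local. First I would recall that under the projection $\tilde{\chi}$, a group element $q \in Q = Q_p \times Q'$, written $q = q_p q'$ with $q_p \in Q_p$ and $q' \in Q'$, maps to $\chi(q')\, q_p \in \mc{O}_{\chi}[Q_p]$, since $\tilde{\chi}$ is the $\zp[Q_p]$-linear extension of $\chi$. Applying this to $q = \mc{R}_{\mc{N}}(\mf{c})$ shows that the image of $N\mf{c}^2 - \mc{R}_{\mc{N}}(\mf{c})$ in $\zp[Q]^{(\chi)}$ is $N\mf{c}^2 - \chi(\mf{c})\, g$, where $g \in Q_p$ is the $p$-part of $\mc{R}_{\mc{N}}(\mf{c})$ and $\chi(\mf{c})$ is, as in the paragraph preceding the lemma, the value at the class of $\mf{c}$ of $\chi$ extended trivially over $Q_p$.

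Next I would observe that $\mc{O}_{\chi}$ is the ring of integers of the unramified extension $\qp(\chi)$ of $\qp$ --- unramified because $\chi$ takes values in roots of unity of order dividing $|Q'|$, which is prime to $p$ --- hence a complete discrete valuation ring with uniformizer $p$ and residue field a finite field $\F_q$ of characteristic $p$. Since $Q_p$ is a finite abelian $p$-group, $\F_q[Q_p]$ is a local ring whose maximal ideal is its (nilpotent) augmentation ideal $I$. Consequently $\mc{O}_{\chi}[Q_p]$ is local with maximal ideal $\mf{m} = (p) + I\cdot\mc{O}_{\chi}[Q_p]$ and residue field $\F_q$, the quotient map being the composite of the augmentation $\mc{O}_{\chi}[Q_p] \to \mc{O}_{\chi}$, sending every element of $Q_p$ to $1$, with the reduction $\mc{O}_{\chi} \to \F_q = \mc{O}_{\chi}/p\mc{O}_{\chi}$.

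Finally I would use that an element of a local ring is a unit precisely when its residue is nonzero. Applying the quotient map to $N\mf{c}^2 - \chi(\mf{c})\, g$ and using that $g \mapsto 1$, the residue is the class of $N\mf{c}^2 - \chi(\mf{c})$ in $\mc{O}_{\chi}/p\mc{O}_{\chi}$. Therefore $N\mf{c}^2 - \mc{R}_{\mc{N}}(\mf{c})$ projects to a unit of $\zp[Q]^{(\chi)}$ if and only if $\chi(\mf{c}) \not\equiv N\mf{c}^2 \bmod p\mc{O}_{\chi}$, as claimed.

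There is no real obstacle here; the only point that merits explicit justification is the locality of $\mc{O}_{\chi}[Q_p]$ together with the identification of its residue field with $\F_q$, which is the standard fact that the group ring of a finite $p$-group over a complete local ring whose residue field has characteristic $p$ is again local with the same residue field.
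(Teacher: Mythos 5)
Your proof is correct and follows essentially the same route as the paper's: pass to the $Q_p$-coinvariants (augmentation) of $\mc{O}_{\chi}[Q_p]$, use locality of the group ring of a finite $p$-group over the complete local ring $\mc{O}_{\chi}$, and invoke unramifiedness of $\mc{O}_{\chi}/\zp$ to convert "unit" into "nonzero mod $p$." You merely spell out the locality of $\mc{O}_{\chi}[Q_p]$ a bit more explicitly than the paper does.
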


\begin{proof}
	The element $N\mf{c}^2 - \mc{R}_{\mc{N}}(\mf{c})$ projects to a unit in the $\chi$-component of $\zp[Q]$ if and only if it 
	reduces to a unit in the coinvariant group for $Q_p$, which is isomorphic to $\mc{O}_{\chi}$. 
	Equivalently, $N\mf{c}^2 - \mc{R}_{\mc{N}}(\mf{c})$ is a unit if and only if $N\mf{c}^2-\chi(\mf{c})$ is. As $\mc{O}_{\chi}$
	is an unramified extension of $\zp$, we have the first statement. 
\end{proof}

If all primes over $p$ divide $\mc{N}$, which is to say $(p) \mid \mc{N}^2$, then we let $\omega \colon Q' \to 
\zp^{\times}$ be the composition of restriction to $\Q(\mu_p)$ with the canonical injection 
$\Gal(\Q(\mu_p)/\Q) \hookrightarrow \zp^{\times}$ that is the unique lift of the modulo $p$ cyclotomic character.
The character induced by $\omega$ on $\Cl_{\mc{N}}(F)$ agrees modulo $p$ with the reduction of the norm map. 

In what follows, we shall take $\chi \neq \omega^2$ as being automatically satisfied if $(p) \nmid \mc{N}^2$.

\begin{corollary} \label{factor_unit}
	If $\chi \neq \omega^2$, then there exists an ideal $\mf{c}$ of $\mc{O}$ 
    prime to $\mc{N}$ such that 
	the projection of $N\mf{c}^2 - \mc{R}_{\mc{N}}(\mf{c})$ to $\zp[Q]^{(\chi)}$ is a unit. 
\end{corollary}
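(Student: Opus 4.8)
The plan is to derive this at once from the criterion established in the preceding lemma: the projection of $N\mf{c}^2 - \mc{R}_{\mc{N}}(\mf{c})$ to $\zp[Q]^{(\chi)}$ is a unit exactly when $\chi(\mf{c}) \not\equiv N\mf{c}^2 \bmod p\mc{O}_{\chi}$. So it suffices to exhibit a single ideal $\mf{c}$ of $\mc{O}$ prime to $\mc{N}$ satisfying that noncongruence, and I would argue by contradiction, supposing that $\chi(\mf{c}) \equiv N\mf{c}^2 \bmod p\mc{O}_{\chi}$ holds for \emph{every} ideal $\mf{c}$ of $\mc{O}$ prime to $\mc{N}$.

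First I would dispose of the case in which some prime $\mf{p}$ of $\mc{O}$ above $p$ does not divide $\mc{N}$, equivalently $(p) \nmid \mc{N}^2$, in which case the hypothesis $\chi \neq \omega^2$ is vacuous. Taking $\mf{c} = \mf{p}$, the absolute norm $N\mf{p}$ is a positive power of $p$, so $N\mf{p}^2 \equiv 0 \bmod p\mc{O}_{\chi}$; on the other hand $\chi(\mf{p})$ is a root of unity of order prime to $p$, since $\chi$ is a character of the finite group $Q'$ of order prime to $p$, and hence a unit of $\mc{O}_{\chi}$. Thus $\chi(\mf{p}) \not\equiv N\mf{p}^2 \bmod p\mc{O}_{\chi}$, contradicting the supposition.

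It remains to treat the case $(p) \mid \mc{N}^2$, so that $\omega$ is defined and every ideal $\mf{c}$ prime to $\mc{N}$ is automatically prime to $p$. Here I would use the fact, recorded in the text, that the character of $\Cl_{\mc{N}}(F)$ induced by $\omega$ agrees modulo $p$ with the reduction of the absolute norm map; squaring gives $N\mf{c}^2 \equiv \omega^2(\mf{c}) \bmod p\mc{O}_{\chi}$ for all such $\mf{c}$. Combined with the supposition, this yields $\chi(\mf{c})\omega^{-2}(\mf{c}) \equiv 1 \bmod p\mc{O}_{\chi}$ for every $\mf{c}$ prime to $\mc{N}$. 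Now $\chi\omega^{-2}$ is a character of $Q'$ valued in roots of unity of order prime to $p$, and reduction modulo the maximal ideal $p\mc{O}_{\chi}$ of the unramified $\zp$-algebra $\mc{O}_{\chi}$ is injective on such roots of unity, so in fact $\chi(\mf{c}) = \omega^2(\mf{c})$ for every ideal $\mf{c}$ prime to $\mc{N}$. Since every class of $\Cl_{\mc{N}}(F)$ is represented by an integral ideal prime to $\mc{N}$, this gives $\chi = \omega^2$ as characters of $Q'$, contradicting the hypothesis.

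The only mildly delicate bookkeeping is the split into these two cases — ensuring that when $\omega$ is undefined one still produces an explicit $\mf{c}$, which the choice of a prime above $p$ handles — together with the standard observation that reduction modulo $p\mc{O}_{\chi}$ is injective on prime-to-$p$ roots of unity; I do not anticipate any genuine obstacle here.
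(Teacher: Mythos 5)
Your argument is correct and is exactly the intended one: the paper states this corollary without proof, as an immediate consequence of the preceding lemma's criterion together with the remark that $\omega$ agrees modulo $p$ with the norm on $\Cl_{\mc{N}}(F)$, and your two cases (a prime over $p$ not dividing $\mc{N}$ when $(p) \nmid \mc{N}^2$; linear independence of prime-to-$p$-order characters under reduction modulo the maximal ideal of $\mc{O}_{\chi}$ when $(p) \mid \mc{N}^2$) fill in precisely those details.
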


Let 
$$
	{}_{\mf{c}} \Theta_{i,\mc{N}}^{\chi} \colon \Gamma_0(\mc{N})_i \to (K_2(\mc{O}'(\mc{N})) \otimes \zp)^{(\chi)}
$$ 
be the cocycle given by composing ${}_{\mf{c}} \Theta_{i,\mc{N}}$ with projection to the $\chi$-component of the $p$-part
of $K_2(\mc{O}'(\mc{N}))$.
 
\begin{theorem} \label{unmodified_eigensp}
 	For all $\chi \neq \omega^2$ and each $i \in I^2$, there is a unique class
	$$
		\Theta_{i,\mc{N}}^{\chi} 
		\in H^1(\Gamma_0(\mc{N})_i,(K_2(\mc{O}'(\mc{N})) \otimes \zp)^{(\chi)}).
	$$
	such that
	\begin{equation} \label{recover_cocyc}
		(N\mf{c}^2 - \mc{R}_{\mc{N}}(\mf{c}))\Theta_{i,\mc{N}}^{\chi} =  {}_{\mf{c}} \Theta_{i,\mc{N}}^{\chi}
	\end{equation}
	for every ideal $\mf{c}$ of $\mc{O}$ prime to $\mc{N}$. 
    Moreover, the collection $\Theta_{\mc{N}}^{\chi} = (\Theta_{i,\mc{N}}^{\chi})_{i \in I^2}$
	is Eisenstein away from $\mc{N}$.
\end{theorem}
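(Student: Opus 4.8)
The plan is to define $\Theta_{i,\mc{N}}^{\chi}$ by ``dividing'' the specialized class ${}_{\mf{c}_0}\Theta_{i,\mc{N}}^{\chi}$ by the element $N\mf{c}_0^2 - \mc{R}_{\mc{N}}(\mf{c}_0)$ for a well-chosen auxiliary ideal $\mf{c}_0$, and then to transfer the defining relation \eqref{recover_cocyc} and the Eisenstein property from the already-constructed classes ${}_{\mf{c}}\Theta_{\mc{N}}$ using the comparison results in hand. I would first record that the $\Gamma_0(\mc{N})_i$-action on $K_2(\mc{O}'(\mc{N}))$ factors through $Q = \Gal(F(\mc{N})/F)$ via $\mc{R}_{\mc{N}}$, so each group $H^1(\Gamma_0(\mc{N})_i,(K_2(\mc{O}'(\mc{N}))\otimes\zp)^{(\chi)})$ is naturally a module over $\zp[Q]^{(\chi)}$ via the Galois action on coefficients; in particular $N\mf{c}^2 - \mc{R}_{\mc{N}}(\mf{c})$ operates on it for every ideal $\mf{c}$ of $\mc{O}$ prime to $\mc{N}$. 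By Corollary \ref{factor_unit}, as $\chi\neq\omega^2$, I can fix an ideal $\mf{c}_0$ prime to $\mc{N}$ with $u_0 := N\mf{c}_0^2 - \mc{R}_{\mc{N}}(\mf{c}_0)$ projecting to a unit of $\zp[Q]^{(\chi)}$, and then set
$$
	\Theta_{i,\mc{N}}^{\chi} := u_0^{-1}\cdot {}_{\mf{c}_0}\Theta_{i,\mc{N}}^{\chi} \in H^1(\Gamma_0(\mc{N})_i,(K_2(\mc{O}'(\mc{N}))\otimes\zp)^{(\chi)}).
$$

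To verify \eqref{recover_cocyc} for an arbitrary ideal $\mf{c}$ prime to $\mc{N}$, I would apply the projection to the $\chi$-component of the $p$-part to the identity of Corollary \ref{compare_aux_ideal} for the pair $(\mf{c},\mf{c}_0)$, obtaining
$$
	u_0\cdot {}_{\mf{c}}\Theta_{i,\mc{N}}^{\chi} = (N\mf{c}^2 - \mc{R}_{\mc{N}}(\mf{c}))\cdot {}_{\mf{c}_0}\Theta_{i,\mc{N}}^{\chi};
$$
multiplying by the central element $u_0^{-1}$ gives $(N\mf{c}^2 - \mc{R}_{\mc{N}}(\mf{c}))\Theta_{i,\mc{N}}^{\chi} = {}_{\mf{c}}\Theta_{i,\mc{N}}^{\chi}$. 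Uniqueness, and a fortiori independence of the choice of $\mf{c}_0$, then follows at once: any class satisfying \eqref{recover_cocyc} for $\mf{c}=\mf{c}_0$ differs from $\Theta_{i,\mc{N}}^{\chi}$ by an element annihilated by the unit $u_0$.

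For the Eisenstein property of the collection $\Theta_{\mc{N}}^{\chi} = (\Theta_{i,\mc{N}}^{\chi})_{i\in I^2}$, I would first observe that $T_{\mf{p}}$ commutes with the $Q$-action on cohomology: by Corollary \ref{act_special_cocycle} the transition maps of the $\Delta_0(\mc{N})$-module system of specialized coefficients are multiplication by $\mc{R}_{\mc{N}}$ of lower-right entries, so the $Q$-action on this constant coefficient module is a homomorphism of $\Delta_0(\mc{N})$-module systems and hence commutes with every $T(g)$; in particular $T_{\mf{p}}$ preserves the $\chi$-component and commutes with $u_0^{-1}\in\zp[Q]^{(\chi)}$, while $u_0^{-1}$ of course commutes with $\mc{R}_{\mc{N}}(\mf{p})\in\zp[Q]$ and with scalars. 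Then, starting from Corollary \ref{int_cocyc_Eis}, namely $(T_{\mf{p}} - N\mf{p} - \mc{R}_{\mc{N}}(\mf{p})){}_{\mf{c}_0}\Theta_{\mc{N}} = 0$ for each prime $\mf{p}\nmid\mc{N}$, projecting to the $\chi$-component of the $p$-part and applying $u_0^{-1}$ yields $(T_{\mf{p}} - N\mf{p} - \mc{R}_{\mc{N}}(\mf{p}))\Theta_{\mc{N}}^{\chi} = 0$, as desired. This compatibility of ``division by $u_0$'' with the Hecke action is the only genuinely nonformal point — and it rests exactly on Corollary \ref{act_special_cocycle} — while everything else is bookkeeping with the central element $u_0$.
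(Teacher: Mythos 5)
Your construction is the same as the paper's: choose $\mf{c}_0$ prime to $\mc{N}$ with $u_0 = N\mf{c}_0^2 - \mc{R}_{\mc{N}}(\mf{c}_0)$ a unit in $\zp[Q]^{(\chi)}$ (Corollary \ref{factor_unit}), set $\Theta_{i,\mc{N}}^{\chi} = u_0^{-1}\,{}_{\mf{c}_0}\Theta_{i,\mc{N}}^{\chi}$, deduce \eqref{recover_cocyc} from Corollary \ref{compare_aux_ideal}, and get the Eisenstein property from Corollary \ref{int_cocyc_Eis}. Your uniqueness argument and your (correct, if slightly informal) observation that the Hecke operators commute with the $Q$-action on the constant coefficients are fine.

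There is, however, one step you elide that the paper has to make explicit. Corollary \ref{compare_aux_ideal} is proved via Proposition \ref{prop_pull_galois}, which is an identity in $H^1(\Gamma_0(\mc{N})_i, K_2(F(\mc{N}))_{\Z'})$; indeed the corollary appears before the integrality subsection, so a priori it is only an equality of classes with coefficients in $K_2(F(\mc{N}))_{\Z'}$. The relation \eqref{recover_cocyc} you must verify is an identity in $H^1(\Gamma_0(\mc{N})_i,(K_2(\mc{O}'(\mc{N}))\otimes\zp)^{(\chi)})$. Passing from the field-coefficient identity to the integral-coefficient one requires the injectivity of
$$
H^1(\Gamma_0(\mc{N})_i, K_2(\mc{O}'(\mc{N}))) \to H^1(\Gamma_0(\mc{N})_i, K_2(F(\mc{N}))),
$$
which is precisely Lemma \ref{inj_Gamma_0}, and is why the paper cites that lemma alongside Corollary \ref{compare_aux_ideal} in its proof of this theorem. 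With that lemma invoked at this point, your argument is complete and coincides with the paper's.
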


\begin{proof}
	We define 
	$$
		\Theta_{i,\mc{N}}^{\chi} = (N\mf{d}^2 - \mc{R}_{\mc{N}}(\mf{d}))^{-1} {}_{\mf{d}} \Theta_{i,\mc{N}}^{\chi}
	$$
	in $H^1(\Gamma_0(\mc{N})_i,(K_2(\mc{O}'(\mc{N})) \otimes \zp)^{(\chi)})$
	for $\mf{d}$ such that $N\mf{d}^2 - \chi(\mf{d})$ is a $p$-adic unit, which exists by Corollary \ref{factor_unit}. The property \eqref{recover_cocyc}
    of $\Theta_{i,\mc{N}}^{\chi}$ follows from Corollary \ref{compare_aux_ideal} and Lemma \ref{inj_Gamma_0}
    and clearly implies uniqueness. 
	The tuple $\Theta_{\mc{N}}^{\chi}$ of classes is Eisenstein away from $\mc{N}$ by Corollary \ref{int_cocyc_Eis}.
\end{proof}

\begin{remark}
	Even for $\chi = \omega^2$, we can make sense of $p(N\mf{c}^2-\mc{R}_{\mc{N}}(\mf{c}))^{-1} {}_{\mf{c}} 
	\Theta_{i,\mc{N}}^{\omega^2}$ for a good choice of $\mf{c}$. That is, what we might denote
	$p\Theta_{i,\mc{N}}^{\omega^2}$ is well-defined, even if it is not clear that $\Theta_{i,\mc{N}}^{\omega^2}$ is.
\end{remark}
 
If we can construct such $\Theta_{i,\mc{N}}^{\chi}$ for all $\chi$ (including $\omega^2$ if $(p) \mid \mc{N}^2$), then
we can sum them to obtain an unaugmented class $\Theta_{i,\mc{N}}$ on $p$-parts, as in the following theorem.
Recall that $h$ is the class number of $F$.

\begin{theorem} \label{unmodified_assumpt}
	Suppose that either $(p) \nmid \mc{N}^2$ or $p \nmid h$.
	For $i \in I^2$, there exists a unique class $\Theta_{i,\mc{N}} \in H^1(\Gamma_0(\mc{N})_i,K_2(\mc{O}'(\mc{N})) \otimes \zp)$ 
	such that
	$$
		(N\mf{c}^2 - \mc{R}_{\mc{N}}(\mf{c})) \Theta_{i,\mc{N}} = {}_\mf{c} \Theta_{i,\mc{N}}
	$$
	for all ideals $\mf{c}$ of $\mc{O}$ prime to $\mc{N}$. The collection $\Theta_{\mc{N}} = (\Theta_{i,\mc{N}})_{i \in I^2}$
	is Eisenstein away from $\mc{N}$.
\end{theorem}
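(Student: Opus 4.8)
The plan is to decompose everything into $\chi$-components. Since $K_2(\mc{O}'(\mc{N}))$ is finite and $p \ge 7$ is invertible on $\Z[\tfrac{1}{30}]$, the $p$-part of ${}_{\mf{c}}\Theta_{i,\mc{N}}$ is a cocycle valued in $K_2(\mc{O}'(\mc{N})) \otimes \zp = \bigoplus_{\chi} (K_2(\mc{O}'(\mc{N})) \otimes \zp)^{(\chi)}$, the sum over $G_{\qp}$-conjugacy classes of $p$-adic characters $\chi$ of $Q'$. As this decomposition is induced by idempotents of $\zp[Q]$ and $\Gamma_0(\mc{N})_i$ acts on $K_2(\mc{O}'(\mc{N}))$ through the Galois action of $\Gal(F(\mc{N})/F)$, it is $\Gamma_0(\mc{N})_i$-equivariant, so $H^1(\Gamma_0(\mc{N})_i, K_2(\mc{O}'(\mc{N})) \otimes \zp) = \bigoplus_{\chi} H^1(\Gamma_0(\mc{N})_i, (K_2(\mc{O}'(\mc{N})) \otimes \zp)^{(\chi)})$ and ${}_{\mf{c}}\Theta_{i,\mc{N}} = \sum_{\chi} {}_{\mf{c}}\Theta_{i,\mc{N}}^{\chi}$ on $p$-parts. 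For every $\chi \ne \omega^2$, Theorem \ref{unmodified_eigensp} already produces a class $\Theta_{i,\mc{N}}^{\chi}$ with $(N\mf{c}^2 - \mc{R}_{\mc{N}}(\mf{c}))\Theta_{i,\mc{N}}^{\chi} = {}_{\mf{c}}\Theta_{i,\mc{N}}^{\chi}$ for all $\mf{c}$ prime to $\mc{N}$, with $\Theta_{\mc{N}}^{\chi} = (\Theta_{i,\mc{N}}^{\chi})_{i}$ Eisenstein away from $\mc{N}$; moreover Corollary \ref{factor_unit} supplies, for each such $\chi$, an ideal $\mf{c}$ prime to $\mc{N}$ for which $N\mf{c}^2 - \mc{R}_{\mc{N}}(\mf{c})$ is a unit in $\zp[Q]^{(\chi)}$, hence acts invertibly on the $\chi$-part of the $H^1$. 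Thus it remains only to produce the $\chi = \omega^2$ term, after which we set $\Theta_{i,\mc{N}} = \sum_{\chi} \Theta_{i,\mc{N}}^{\chi}$.

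The $\chi = \omega^2$ component arises only when $(p) \mid \mc{N}^2$; when $(p) \nmid \mc{N}^2$ there is nothing left to do and the hypothesis $p \nmid h$ plays no role. Assume now $(p) \mid \mc{N}^2$ and $p \nmid h$. The key claim is the vanishing
$$
	H^1\!\left(\Gamma_0(\mc{N})_i, (K_2(\mc{O}'(\mc{N})) \otimes \zp)^{(\omega^2)}\right) = 0,
$$
for then we take $\Theta_{i,\mc{N}}^{\omega^2} = 0$ and observe that ${}_{\mf{c}}\Theta_{i,\mc{N}}^{\omega^2} = 0$ for all $\mf{c}$, so the defining identity and its uniqueness are trivial on this component. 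The weaker statement of the remark following Theorem \ref{unmodified_eigensp} (that $p\Theta_{i,\mc{N}}^{\omega^2}$ makes sense) does not suffice: since $\omega$ agrees with the norm modulo $p$, the projection of $N\mf{c}^2 - \mc{R}_{\mc{N}}(\mf{c})$ to $\zp[Q]^{(\omega^2)}$ always lies in the maximal ideal, hence is never injective on a nonzero finite $p$-group, so a nonzero $\omega^2$-component of the $H^1$ would destroy uniqueness of $\Theta_{i,\mc{N}}$.

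Proving this vanishing is, I expect, the main obstacle. The route I would take: by Nakayama it is enough to prove it modulo $p$; via inflation--restriction for the surjection $\Gamma_0(\mc{N})_i \twoheadrightarrow \Gal(F(\mc{N})/H)$ (the quotient through which the coefficients are acted on, as in the proof of Lemma \ref{inj_Gamma_0}), using that $p \nmid h$ forces the Sylow $p$-subgroup $Q_p$ of $\Gal(F(\mc{N})/F)$ into $\Gal(F(\mc{N})/H)$ and that a nontrivial prime-to-$p$ character kills the higher cohomology of a $p$-group on which it acts, one reduces to the vanishing of the $\omega^2$-eigenspace of $K_2(\mc{O}'(\mc{N})) \otimes \F_p$ for the $\Gal(F(\mc{N})/F)$-action; finally, via the Merkurjev--Suslin identification $K_2/p \cong H^2_{\et}(-,\mu_p^{\otimes 2})$ and untwisting the $\mu_p^{\otimes 2}$, this eigenspace embeds into a trivial-eigenspace piece of $H^2_{\et}(\mc{O}_{F(\mc{N})}[\tfrac{1}{p\mc{N}}], \F_p)$, whose $\Gal(F(\mc{N})/F)$-invariants a norm argument bounds by $\Cl(F) \otimes \F_p = 0$ together with a $\gm$-contribution controlled by the same class group and the primes over $p$ and $\mc{N}$. (Without $p \nmid h$ there is in general no way to carry this out, since $N\mf{c}^2 - \mc{R}_{\mc{N}}(\mf{c})$ cannot be inverted on the $\omega^2$-component, so one genuinely needs this $H^1$ to vanish.)

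With the $\omega^2$-term produced, $\Theta_{i,\mc{N}} = \sum_{\chi} \Theta_{i,\mc{N}}^{\chi}$ satisfies $(N\mf{c}^2 - \mc{R}_{\mc{N}}(\mf{c}))\Theta_{i,\mc{N}} = {}_{\mf{c}}\Theta_{i,\mc{N}}$ for every $\mf{c}$ prime to $\mc{N}$, as the identity holds on each $\chi$-component. Uniqueness follows: if $\Theta$ and $\Theta'$ both satisfy the identity for all $\mf{c}$, then on each $\chi \ne \omega^2$ component we invert $N\mf{c}^2 - \mc{R}_{\mc{N}}(\mf{c})$ for the $\mf{c}$ of Corollary \ref{factor_unit} to force the components to agree, while on the $\omega^2$ component the $H^1$ vanishes; hence $\Theta = \Theta'$. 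Finally, $\Theta_{\mc{N}} = (\Theta_{i,\mc{N}})_{i \in I^2}$ is Eisenstein away from $\mc{N}$ because each $\Theta_{\mc{N}}^{\chi}$ is — for $\chi \ne \omega^2$ by Theorem \ref{unmodified_eigensp}, and trivially for $\chi = \omega^2$ — and the operators $T_{\mf{p}}$ and $\mc{R}_{\mc{N}}(\mf{p})$ for $\mf{p} \nmid \mc{N}$ respect the $\chi$-decomposition.
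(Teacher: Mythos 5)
Your top-level architecture coincides with the paper's: for $\chi \neq \omega^2$ everything is already supplied by Theorem \ref{unmodified_eigensp}, and the entire content of the case $(p) \mid \mc{N}^2$, $p \nmid h$ is the vanishing of the $\omega^2$-part (the paper proves the stronger statement that $(K_2(\mc{O}'(\mc{N})) \otimes \zp)^{(\omega^2)} = 0$, which makes your inflation--restriction and Nakayama reductions on $H^1$ unnecessary, though harmless). Your observation that mere non-invertibility of $N\mf{c}^2 - \mc{R}_{\mc{N}}(\mf{c})$ on a nonzero finite module over the local ring $\zp[Q]^{(\omega^2)}$ would already ruin uniqueness is correct and is a good reason why the weaker remark after Theorem \ref{unmodified_eigensp} cannot substitute for the vanishing.

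The gap is in your proof of that vanishing. Your plan is to untwist $\mu_p^{\otimes 2}$ over $F(\mc{N})$ and bound the $\Gal(F(\mc{N})/F)$-invariants of the resulting piece of $H^2_{\et}(\mc{O}_{F(\mc{N})}[\tfrac{1}{p\mc{N}}],\F_p)$ by $\Cl(F) \otimes \F_p$ via ``a norm argument.'' This step does not work: the Galois invariants of $\Cl(F(\mc{N})) \otimes \F_p$ in a ramified $p$-extension are not controlled by $\Cl(F) \otimes \F_p$ (the ambiguous class number formula produces invariant classes from ramified primes), and more fundamentally the eigenspace you need to kill is not the one a norm or invariants argument can reach. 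The paper's route is different in exactly the two places your sketch is vague: first, it uses Tate's theorem and the fact that the Galois group of the maximal unramified-outside-$p$ extension has $p$-cohomological dimension $2$, so that corestriction identifies the $Q_p$-coinvariants of $H^2_{\et}(\mc{O}(\mc{N})[\tfrac{1}{p}],\zp(2))^{(\omega^2)} \otimes \F_p$ with $H^2_{\et}(\mc{O}[\mu_p,\tfrac{1}{p}],\mu_p)^{(\omega)}$ --- descending all the way to $F(\mu_p)$ rather than working over $F(\mc{N})$; second, after the Kummer-theoretic identification of this group with the $\omega_{\Q}\chi_F$-component of $\Cl'(F(\mu_p)) \otimes \F_p$, the essential arithmetic input is Leopoldt's Spiegelungssatz, which reflects the $\omega_{\Q}\chi_F$-eigenspace onto the $\chi_F$-eigenspace, i.e., onto $\Cl(F) \otimes \F_p = 0$. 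No direct norm argument replaces this reflection step, since the two eigenspaces involved are genuinely different. Your proposal therefore correctly isolates where the hypothesis $p \nmid h$ must enter but does not supply a valid mechanism for using it.
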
 

\begin{proof}
	If $(p) \nmid \mc{N}^2$, then the result follows from Theorem \ref{unmodified_eigensp}, since the condition $\chi \neq \omega^2$
	is automatically satisfied. We therefore suppose that $(p) \mid \mc{N}^2$ and $p \nmid h$, in which case it similarly suffices to show that the $\omega^2$-component of $K_2(\mc{O}'(\mc{N})) \otimes \Z_p$ is trivial.
	
	Note that either $\mc{N}$ is not a prime power or $p$ is non-split in $F$ and $\mc{N}$ is a power of the prime over $p$.
	In the former case, $\mc{O}'(\mc{N}) = \mc{O}(\mc{N})$, and in the latter case
	$K_2(\mc{O}'(\mc{N})) \otimes \Z_p \cong K_2(\mc{O}(\mc{N})) \otimes \Z_p$. 
	Now, by a classical result of Tate \cite[Theorem 5.4]{tate}, we have
	$$
		K_2(\mc{O}(\mc{N})) \otimes \Z_p \cong H^2_{\et}(\mc{O}(\mc{N})[\tfrac{1}{p}],\Z_p(2)).
	$$ 
	
	Let us show that the $\omega^2$-component of the latter cohomology group
	is trivial if $p \nmid h$ by showing that its quotient by the action of the maximal ideal of $\Z_p[Q_p]$ is.
	Since the Galois group of the maximal unramified outside $p$-extension of a number field has $p$-cohomological dimension $2$ (cf.~\cite[Lemma 3.3.11]{nsw}), 
    corestriction gives the first isomorphism (cf.~\cite[Theorem 10.2.3]{nsw}) in 
	$$
		H^2_{\et}(\mc{O}(\mc{N})[\tfrac{1}{p}],\zp(2))^{(\omega^2)} \otimes_{\zp[Q_p]} \F_p
		\cong H^2_{\et}(\mc{O}[\mu_p,\tfrac{1}{p}],\mu_p^{\otimes 2})^{(\omega^2)}
		\cong H^2_{\et}(\mc{O}[\mu_p,\tfrac{1}{p}],\mu_p)^{(\omega)}.
	$$
	Recall that Kummer theory provides an exact sequence
	$$
		0 \to \Cl'(F(\mu_p)) \otimes \F_p\to H^2_{\et}(\mc{O}[\mu_p,\tfrac{1}{p}],\mu_p)
		\to \bigoplus_{v \mid p} \F_p \to \F_p \to 0
	$$
    (cf.~\cite[Proposition 8.3.11]{nsw}), where here $\Cl'(F(\mu_p))$ denotes
    the quotient of $\Cl(F(\mu_p))$ by the classes of primes over $p$.
	As a $\zp[\Gal(F(\mu_p)/\Q)]$-module, the $\omega$-component of this cohomology group breaks up as a sum
	of two components, that for the composition $\omega_{\Q} \colon \Gal(F(\mu_p)/\Q) \to \zp^{\times}$
	of the mod $p$ cyclotomic character with the lift of the reduction mod $p$ map, and that for the product of $\omega_{\Q}$
	and the $p$-adic character $\chi_F$ of the imaginary quadratic field $F$. It follows from a quick
	examination of these groups that
	$H^2_{\et}(\mc{O}[\mu_p,\tfrac{1}{p}],\mu_p)^{(\omega)}$ is just the $\omega_{\Q}\chi_F$-component of 
	$\Cl'(F(\mu_p)) \otimes \F_p$ for the action of $\Gal(F(\mu_p)/\Q)$. By Leopoldt's  Spiegelungssatz, this component is trivial as $\Cl(F) \otimes \F_p$ is.
\end{proof}

Quite frequently, then, we have that the conditions of the following corollary are satisfied. In fact, the above
proof shows that the weaker condition of the triviality of the $\omega_{\Q}\chi_F$-component of 
$\Cl'(F(\mu_p)) \otimes \F_p$ can be used to replace the condition of $p$ dividing $h$.

\begin{corollary}
	Suppose that there are no primes greater than $5$ 
    dividing both $\mc{N}^2$ and $h$. For $i \in I^2$, there exists
	a unique class $\Theta_{i,\mc{N}} \in H^1(\Gamma_0(\mc{N})_i,K_2(\mc{O}'(\mc{N}))_{\Z'})$ such that
	$$
		(N\mf{c}^2 - \mc{R}_{\mc{N}}(\mf{c})) \Theta_{i,\mc{N}} = {}_\mf{c} \Theta_{i,\mc{N}}
	$$
	for all ideals $\mf{c}$ of $\mc{O}$ prime to $\mc{N}$. The collection $\Theta_{\mc{N}} = (\Theta_{i,\mc{N}})_{i \in I^2}$
	is Eisenstein away from $\mc{N}$.
\end{corollary}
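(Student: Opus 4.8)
The plan is to assemble the corollary from Theorem \ref{unmodified_assumpt} by decomposing the coefficient group into $p$-primary parts. First I would recall that $K_2(\mc{O}'(\mc{N}))$ is a finite abelian group: when $\mc{O}'(\mc{N}) = \mc{O}(\mc{N})$ this is the classical finiteness of $K_2$ of the ring of integers of a number field, and when $\mc{O}'(\mc{N}) = \mc{O}(\mc{N})[\tfrac{1}{\mc{N}}]$ it follows from that case via the localization sequence in $K$-theory, whose relevant boundary term $\bigoplus_{\mf{q} \mid \mc{N}} k_{\mf{q}}^{\times}$ is finite. Since $\Z' = \Z[\tfrac{1}{30}]$ annihilates all $2$-, $3$-, and $5$-torsion, we obtain a finite direct sum decomposition $K_2(\mc{O}'(\mc{N}))_{\Z'} = \bigoplus_{p \ge 7} \bigl( K_2(\mc{O}'(\mc{N})) \otimes \zp \bigr)$ into $p$-primary parts, and, as $\Gamma_0(\mc{N})_i$-cohomology is additive, a corresponding decomposition of $H^1(\Gamma_0(\mc{N})_i, K_2(\mc{O}'(\mc{N}))_{\Z'})$ under which ${}_{\mf{c}} \Theta_{i,\mc{N}}$ is carried to the tuple of its $p$-adic projections.

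Next I would observe that for every prime $p \ge 7$ the hypotheses of Theorem \ref{unmodified_assumpt} are met: if $(p) \nmid \mc{N}^2$ the first alternative holds, while if $p$ divides $\mc{N}^2$ then the assumption that no prime exceeding $5$ divides both $\mc{N}^2$ and $h$ forces $p \nmid h$, so the second alternative holds. Thus for each such $p$ Theorem \ref{unmodified_assumpt} yields a unique class $\Theta_{i,\mc{N}}^{(p)} \in H^1(\Gamma_0(\mc{N})_i, K_2(\mc{O}'(\mc{N})) \otimes \zp)$ with $(N\mf{c}^2 - \mc{R}_{\mc{N}}(\mf{c})) \Theta_{i,\mc{N}}^{(p)}$ equal to the $p$-adic projection of ${}_{\mf{c}} \Theta_{i,\mc{N}}$ for every ideal $\mf{c}$ prime to $\mc{N}$, and the tuple $(\Theta_{i,\mc{N}}^{(p)})_{i \in I^2}$ is Eisenstein away from $\mc{N}$.

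Finally I would set $\Theta_{i,\mc{N}} = \sum_{p \ge 7} \Theta_{i,\mc{N}}^{(p)}$, a finite sum by finiteness of $K_2(\mc{O}'(\mc{N}))$. Summing the defining relations over $p$ and reading them through the direct sum decomposition gives $(N\mf{c}^2 - \mc{R}_{\mc{N}}(\mf{c})) \Theta_{i,\mc{N}} = {}_{\mf{c}} \Theta_{i,\mc{N}}$ for all $\mf{c}$; uniqueness is immediate, since any class obeying this relation has $p$-primary components obeying the corresponding relations and hence coinciding with $\Theta_{i,\mc{N}}^{(p)}$ by the uniqueness clause of Theorem \ref{unmodified_assumpt}; and the Eisenstein property of $\Theta_{\mc{N}} = (\Theta_{i,\mc{N}})_{i \in I^2}$ follows because $T_{\mf{p}} - (N\mf{p} + \mc{R}_{\mc{N}}(\mf{p}))$ respects the $p$-primary decomposition and annihilates each summand. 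Since the argument is a purely formal reassembly of earlier results, there is no substantial obstacle; the only point requiring any care is checking that the hypothesis on $\mc{N}^2$ and $h$ is exactly what is needed to place every relevant prime $p$ in the scope of Theorem \ref{unmodified_assumpt}, which as noted is immediate.
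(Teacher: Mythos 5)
Your proposal is correct and is precisely the argument the paper intends (it leaves the corollary without an explicit proof, treating it as the formal reassembly of Theorem \ref{unmodified_assumpt} over the primes $p \ge 7$ via the $p$-primary decomposition of the finite group $K_2(\mc{O}'(\mc{N}))_{\Z'}$). Your verification that the hypothesis on $\mc{N}^2$ and $h$ places each such $p$ within the scope of Theorem \ref{unmodified_assumpt}, and your deduction of uniqueness and the Eisenstein property componentwise, match the intended reasoning.
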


\section{Eisenstein maps on homology}

\subsection{Cohomology of Bianchi spaces} \label{cohom_Bianchi}

Let us quickly review the discussion of Section \ref{Hecke_op_corr} in our setting of interest.
Let $\mb{H} = \mb{H}_{2,F} = \C \times \R_{>0}$ denote the complex upper half-space,
with the usual action of $\GL_2(F)$. 
We view $I$ as the subset of $I^2$ consisting of pairs $(r,1)$ for $r \in I$.
For $r \in I$, we then have $\Gamma_1(\mc{N})_r = \Gamma_1(\mc{N})_{(r,1)}$ and similarly for other subscripts.
The Bianchi space for $F$ of level $\Gamma_1(\mc{N})$ is the disjoint union $Y_1(\mc{N}) = \coprod_{r \in I} Y_1(\mc{N})_r$,
where $Y_1(\mc{N})_r = \Gamma_1(\mc{N})_r \backslash \mb{H}$. 
Then $Y_1(\mc{N})$ also has the usual adelic description
$$
	Y_1(\mc{N}) = \GL_2(F) \backslash (\GL_2(\mb{A}_F^f) \times \mb{H}) / U_1(\mc{N}).
$$

Any element of finite order in $\Gamma_1(\mc{N})_r$ has order dividing $120$, in that its eigenvalues are roots of unity with sum and product in $F$, which are therefore contained in a number field of degree $4$.
As in \eqref{isom_cohom}, for any $\Delta_0(\mc{N})$-module system $A = (A_r)_{r \in I}$ indexed by $I$ such that $5!$ acts invertibly and scalar elements act trivially on each $A_r$, we have a canonical isomorphism
\begin{equation} \label{Bianchi_cohom}
	H^1(Y_1(\mc{N}),A) \cong \bigoplus_{r \in I} H^1(\Gamma_1(\mc{N})_r,A_r).
\end{equation}
This isomorphism is Hecke equivariant for the operators defined in and following 
Definition \ref{more_gen_Hecke} by Proposition \ref{Hecke_gp_top}. On the right, 
if $\mf{a}_{r,s}\mf{n}$ is principal, then we have $T_{\mf{n}} = T_{r,s}(\mf{n},1)$ by Lemma \ref{different_Hecke}(b). 
If $\mf{a}_{r,s}\mf{n}^2$ is principal, then we set $S_{\mf{n}} = T_{r,s}(\mf{n},\mf{n})$. 
Let us write $\langle \mf{n} \rangle$ for $(\langle \mf{n} \rangle^*)^{-1}$.
By Lemma \ref{different_Hecke}(a),
we have $S_{\mf{n}} = \langle \mf{n} \rangle [\mf{n}]^* = [\mf{n}]^* \langle \mf{n} \rangle$.

If the action of $\Gamma_1(\mc{N})_r$ on $A_r$ is trivial, then we have a canonical isomorphism
$$
	\phi_r \colon H^1(\Gamma_1(\mc{N})_r,A_r) \xrightarrow{\sim} \Hom(H_1(\Gamma_1(\mc{N})_r,\Z'),A_r).
$$
Suppose that the $A_r$ are all equal to a fixed $A$ so that the maps $\phi_r$ assemble into an isomorphism
$$
	\phi \colon H^1(Y_1(\mc{N}),A) \xrightarrow{\sim} \Hom(H_1(Y_1(\mc{N}),\Z'),A).
$$
Suppose also that
each $g = \smatrix{a&b\\c&d} \in \Delta_0(\mc{N})_{r,s}$ provides a map $g \colon A \to A$ depending only on the image of $d$ in $(\mc{O}/\mc{N})^{\times}$, independent of $r,s \in I$.

Given $g \in \Delta_0(\mc{N})_{r,s}$, and writing $\Gamma_1(\mc{N})_r g \Gamma_1(\mc{N})_s = \coprod_{t=1}^v g_t\Gamma_1(\mc{N})_s$, we define $T(g)$
on $x \in H_1(Y_1(\mc{N})_r,\Z')$ by 
$$
	T(g)x = \sum_{t=1}^v g_t^{\dagger}x \in H_1(Y_1(\mc{N})_s,\Z'),
$$ 
where $g_t^{\dagger}$ denotes the adjoint matrix to $g_t$. We then have
the following identity, as the reader may verify by a similar argument to that given in the proof of \cite[Theorem 4.3.7]{sv}.

\begin{lemma} \label{convert_cohom_hom}
    For $\xi \in H^1(Y_1(\mc{N})_s,A)$ and $x \in H_1(Y_1(\mc{N})_r,\Z')$, we have
    $$
	   \phi(T(g)\xi)(x) = g \cdot \phi(\xi)(T(g)x).
    $$
\end{lemma}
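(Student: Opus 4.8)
The plan is to verify the identity directly on explicit (co)cycles, in the spirit of \cite[Theorem 4.3.7]{sv}. By \eqref{Bianchi_cohom}, and since $\Gamma_1(\mc{N})_s$ acts trivially on $A$, the class $\xi$ is represented by a homomorphism $f\colon\Gamma_1(\mc{N})_s\to A$; and since $H_1(Y_1(\mc{N})_r,\Z')\cong\Gamma_1(\mc{N})_r^{\ab}\otimes\Z'$ — an isomorphism valid over $\Z'$ because every torsion element of $\Gamma_1(\mc{N})_r$ has order dividing $120$ — both sides of the claimed identity are $\Z'$-linear in $x$, so it suffices to check it for $x$ the class $[\gamma]$ of a single $\gamma\in\Gamma_1(\mc{N})_r$. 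Both sides are independent of the choice of coset representatives $g_t$, by Proposition \ref{Hecke_action} on the left and its homological counterpart on the right, so I would work with a convenient choice and a generic base point $z_0\in\mb{H}$.

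First I would unwind the left-hand side. Writing $\gamma g_t=g_{\sigma_\gamma(t)}\tau_t(\gamma)$ with $\tau_t(\gamma)\in\Gamma_1(\mc{N})_s$ and $\sigma_\gamma\in S_v$ as in the proof of Proposition \ref{Hecke_action}, the inhomogeneous formula for the cohomological Hecke operator gives $\phi(T(g)\xi)([\gamma])=(T(g)f)(\gamma)=\sum_{t=1}^v g_{\sigma_\gamma(t)}\cdot f(\tau_t(\gamma))$. Here the coefficient action of a matrix of $\Delta_0(\mc{N})_{r,s}$ on $A$ depends only on its lower-right entry modulo $\mc{N}$, hence is multiplicative; combined with $g_{\sigma_\gamma(t)}=\gamma\,g_t\,\tau_t(\gamma)^{-1}$ and the triviality of the $\Gamma_1(\mc{N})_r$- and $\Gamma_1(\mc{N})_s$-actions on $A$, this shows $g_{\sigma_\gamma(t)}\cdot=g\cdot$ for every $t$. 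Therefore the left-hand side equals $g\cdot\sum_t f(\tau_t(\gamma))=g\cdot\phi(\xi)\!\left(\sum_t[\tau_t(\gamma)]\right)$, and the lemma is reduced to the homological identity $T(g)[\gamma]=\sum_{t=1}^v[\tau_t(\gamma)]$ in $H_1(Y_1(\mc{N})_s,\Z')$.

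Proving this identity is, I expect, the crux. I would represent $[\gamma]$ by the image in $Y_1(\mc{N})_r$ of the geodesic $1$-simplex $\{z_0,\gamma z_0\}$ of $\mb{H}$, so that $T(g)[\gamma]$ is represented by $\sum_t\{g_t^\dagger z_0,\,g_t^\dagger\gamma z_0\}$ in $Y_1(\mc{N})_s$, the $g_t^\dagger$ acting on $\mb{H}$ by fractional linear transformations. Taking adjoints of $\gamma g_t=g_{\sigma_\gamma(t)}\tau_t(\gamma)$ and using that the scalar matrices $\det(\gamma)$ and $\det(\tau_t(\gamma))$ act trivially on $\mb{H}$, one obtains $g_t^\dagger(\gamma z_0)=\tau_{\sigma_\gamma^{-1}(t)}(\gamma)\cdot g_{\sigma_\gamma^{-1}(t)}^\dagger z_0$ in $\mb{H}$; since $\sigma_\gamma$ is a permutation and the $\tau$'s lie in $\Gamma_1(\mc{N})_s$, this already exhibits $\sum_t\{g_t^\dagger z_0,g_t^\dagger\gamma z_0\}$ as a $1$-cycle. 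I would then observe, using the boundary of the geodesic $2$-simplex on $g_t^\dagger z_0$, $g_{\sigma_\gamma^{-1}(t)}^\dagger z_0$, $g_t^\dagger\gamma z_0$, that
\[
  \{g_t^\dagger z_0,\ g_t^\dagger\gamma z_0\}\ \sim\ \{g_t^\dagger z_0,\ g_{\sigma_\gamma^{-1}(t)}^\dagger z_0\}\ +\ \{g_{\sigma_\gamma^{-1}(t)}^\dagger z_0,\ \tau_{\sigma_\gamma^{-1}(t)}(\gamma)\,g_{\sigma_\gamma^{-1}(t)}^\dagger z_0\}
\]
in $Y_1(\mc{N})_s$. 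The second summand is a loop of the form $\{w,\tau w\}$ with $\tau=\tau_{\sigma_\gamma^{-1}(t)}(\gamma)\in\Gamma_1(\mc{N})_s$, so under $H_1(Y_1(\mc{N})_s,\Z')\cong\Gamma_1(\mc{N})_s^{\ab}\otimes\Z'$ it represents $[\tau_{\sigma_\gamma^{-1}(t)}(\gamma)]$, and summing over $t$ contributes $\sum_t[\tau_t(\gamma)]$; the first summands sum to the boundary of the $2$-chain $\sum_t\{w^*,\ g_t^\dagger z_0,\ g_{\sigma_\gamma^{-1}(t)}^\dagger z_0\}$ for any auxiliary $w^*\in\mb{H}$, their other faces cancelling because $\sigma_\gamma$ is a permutation, hence vanish in homology. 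This yields $T(g)[\gamma]=\sum_t[\tau_t(\gamma)]$ and completes the proof. The only genuine care needed is in keeping separate the two roles of $\{g_t\}$ — as coset representatives, producing the coefficient twist $g\cdot$ on the left, and as Möbius transformations moving simplices on the right — and in applying the reindexing by $\sigma_\gamma$ consistently; the $\Z'$-coefficients cause no trouble since all torsion orders divide $120$.
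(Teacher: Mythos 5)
Your argument is correct, and it is exactly the verification the paper intends: the text gives no proof of Lemma \ref{convert_cohom_hom}, deferring to the computation in \cite[Theorem 4.3.7]{sv}, and your reduction to $x=[\gamma]$, the identity $(T(g)f)(\gamma)=\sum_t g_{\sigma_\gamma(t)}f(\tau_t(\gamma))$, and the chain-level proof that $T(g)[\gamma]=\sum_t[\tau_t(\gamma)]$ via the adjoint relation $g_t^{\dagger}(\gamma z_0)=\tau_{\sigma_\gamma^{-1}(t)}(\gamma)\,g_{\sigma_\gamma^{-1}(t)}^{\dagger}z_0$ and the telescoping $2$-chain are precisely that computation. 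The only micro-step to make explicit is that $g_{\sigma_\gamma(t)}=\gamma g_t\tau_t(\gamma)^{-1}$ gives $g_{\sigma_\gamma(t)}\cdot=g_t\cdot$, and one then uses $g_t\in\Gamma_1(\mc{N})_r\,g\,\Gamma_1(\mc{N})_s$ together with the multiplicativity of the $(2,2)$-entry modulo $\mc{N}$ to conclude $g_t\cdot=g\cdot$.
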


When $g$ is such that $T(g) \colon H^1(Y_1(\mc{N})_s,A) \to H^1(Y_1(\mc{N})_r,A)$ is the restriction of $T_{\mf{n}}$, $S_{\mf{n}}$, or $\langle \mf{n} \rangle$
for some ideal $\mf{n}$ of $\mc{O}$ prime to $\mc{N}$, we write $T_{\mf{n}}$, $S_{\mf{n}}$, or $\langle \mf{n} \rangle$,
respectively, for the operator $H_1(Y_1(\mc{N})_r,\Z') \to H_1(Y_1(\mc{N})_s,\Z')$ given by $T(g)$ and also for the corresponding operator on $H_1(Y_1(\mc{N}),\Z')$. 

Let us consider the restriction of ${}_{\mf{c}} \Theta_{r,\mc{N}}^{\chi}$ to $\Gamma_1(\mc{N})_r$. Note that the above conditions on $A = K_2(\mc{O}'(\mc{N}))_{\Z'}$ are satisfied, and
$g = \smatrix{a&b\\c&d} \in \Delta_0(\mc{N})_{r,s}$ acts as $\sigma_d$ on $K_2(\mc{O}'(\mc{N}))$.
The restriction in question yields a homomorphism
$$
	{}_{\mf{c}} \Pi_{r,\mc{N}} \colon H_1(Y_1(\mc{N})_r,\Z') \to K_2(\mc{O}'(\mc{N}))_{\Z'}.
$$
We can then take the sum
$$
	{}_{\mf{c}} \Pi_{\mc{N}} \colon H_1(Y_1(\mc{N}),\Z') \to K_2(\mc{O}'(\mc{N}))_{\Z'}
$$
of these maps over $r \in I$. 

For a prime $p \ge 7$ and character $\chi \neq \omega^2$ as in Section \ref{unaugmented}, we also have maps
\begin{equation} \label{Pi_on_homology}
	\Pi_{r,\mc{N}}^{\chi} \colon H_1(Y_1(\mc{N})_r,\zp) \to (K_2(\mc{O}'(\mc{N})) \otimes \zp)^{(\chi)}
\end{equation}
and $\Pi_{\mc{N}}^{\chi}$, and the analogues of the statements which follow also hold for these.

\begin{proposition} \label{equivar_maps}
	For $r,s \in I$ and a nonzero ideal $\mf{n}$ of $\mc{O}$ prime to $\mc{N}$ such that 
	$\mf{a}_{r,s}\mf{n}^2$ is principal, we have 
	$$
		{}_{\mf{c}} \Pi_{s,\mc{N}} \circ S_{\mf{n}} = \mc{R}_{\mc{N}}(\mf{n}) \circ {}_{\mf{c}} \Pi_{r,\mc{N}}.
	$$
	In particular, for $d \in (\mc{O}/\mc{N})^{\times}$, we have
	$$
		{}_{\mf{c}} \Pi_{r,\mc{N}} \circ \langle d \rangle = \mc{R}_{\mc{N}}(d) \circ {}_{\mf{c}} \Pi_{r,\mc{N}}.
	$$
\end{proposition}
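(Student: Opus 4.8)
The plan is to deduce the equivariance statement from the relation in Proposition \ref{prop_pull_galois} at the level of cohomology, translating it into a statement about the induced maps on homology via Lemma \ref{convert_cohom_hom}. First I would unwind the definitions: the map ${}_{\mf{c}} \Pi_{r,\mc{N}}$ is, under the isomorphism $\phi_r$, precisely the homomorphism $\phi_r({}_{\mf{c}} \Theta_{r,\mc{N}}|_{\Gamma_1(\mc{N})_r})$, where ${}_{\mf{c}} \Theta_{r,\mc{N}}$ here denotes the class in $H^1(\Gamma_0(\mc{N})_r, K_2(\mc{O}'(\mc{N}))_{\Z'})$ restricted to $\Gamma_1(\mc{N})_r$. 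The operator $S_{\mf{n}}$ on homology is $T(g)$ for a suitable $g \in \Delta_0(\mc{N})_{s,r}$ whose double coset realizes $T_{s,r}(\mf{n},\mf{n})$; by Lemma \ref{different_Hecke}(a) one has $S_{\mf{n}} = \langle \mf{n} \rangle [\mf{n}]^* = [\mf{n}]^*\langle\mf{n}\rangle$ on cohomology, and correspondingly on homology.

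The key computation is then the following chain. For $x \in H_1(Y_1(\mc{N})_s, \Z')$ (I write the indices so that $S_{\mf{n}}$ pushes homology from component $s$ to component $r$, matching the direction of $[\mf{n}]^*$ in Proposition \ref{prop_pull_galois}, where $i \sim_{\mf{n}} j$; one has to be careful with the direction of arrows and the swap between $T(g)$ on cohomology and on homology given by the adjoint), apply Lemma \ref{convert_cohom_hom} to $\xi = {}_{\mf{c}} \Theta_{i,\mc{N}}|_{\Gamma_1(\mc{N})_i}$: we get $\phi(T(g)\xi)(x) = g \cdot \phi(\xi)(T(g)x)$, and here $g$ acts on $A = K_2(\mc{O}'(\mc{N}))_{\Z'}$ through $\sigma_d$ for $d$ the lower-right entry, which for the matrix realizing $S_{\mf{n}}$ is $\mc{R}_{\mc{N}}(\mf{n})$ up to the precise normalization. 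On the other hand, $T(g)\xi$ is $[\mf{n}]^*({}_{\mf{c}} \Theta_{j,\mc{N}})|_{\Gamma_1(\mc{N})_j}$, which by Proposition \ref{prop_pull_galois} equals $\mc{R}_{\mc{N}}(\mf{n}) \circ {}_{\mf{c}} \Theta_{i,\mc{N}}$. Putting these together and reading off the definition of ${}_{\mf{c}} \Pi_{r,\mc{N}}$ and ${}_{\mf{c}} \Pi_{s,\mc{N}}$ yields ${}_{\mf{c}} \Pi_{s,\mc{N}} \circ S_{\mf{n}} = \mc{R}_{\mc{N}}(\mf{n}) \circ {}_{\mf{c}} \Pi_{r,\mc{N}}$ after cancelling the $\mc{R}_{\mc{N}}(\mf{n})$ coming from the $g$-action against the one naturally appearing; I would double-check the bookkeeping here, since both the $S_{\mf{n}}$-normalization ($S_{\mf{n}} = \langle\mf{n}\rangle[\mf{n}]^*$) and the diamond-operator conventions contribute Galois factors, and the statement of the proposition has exactly one $\mc{R}_{\mc{N}}(\mf{n})$ surviving.

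For the second assertion, I take $\mf{n}$ to be an ideal prime to $\mc{N}$ lying in the ideal class such that $\mf{a}_{r,s}\mf{n}^2$ is principal with $\mf{n} \equiv d \bmod \mc{N}$ (possible by strong approximation / the Chinese remainder theorem, choosing $\mf{n}$ in the right ray class) — or, more cleanly, reduce to the case $r = s$ and use that $S_{\mf{n}}$ with $\mf{n}$ principal generated by a unit mod $\mc{N}$ congruent to $d$ is exactly $\langle d\rangle$ up to $[\mf{n}]^*$, while $[\mf{n}]^*$ for such $\mf{n}$ acts trivially in the relevant sense; comparing with the first identity and using $\mc{R}_{\mc{N}}(\mf{n}) = \mc{R}_{\mc{N}}(d)$ gives ${}_{\mf{c}} \Pi_{r,\mc{N}} \circ \langle d\rangle = \mc{R}_{\mc{N}}(d)\circ {}_{\mf{c}} \Pi_{r,\mc{N}}$. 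I expect the main obstacle to be precisely the direction-of-maps and normalization bookkeeping — matching the adjoint-matrix convention defining $T(g)$ on homology against the pullback convention on cohomology, tracking which component $r$ or $s$ is the source, and ensuring the $\langle\mf{n}\rangle$ versus $[\mf{n}]^*$ split in $S_{\mf{n}}$ is accounted for so that exactly one Artin symbol survives — rather than any conceptual difficulty, since all the substantive inputs (Proposition \ref{prop_pull_galois}, Lemma \ref{convert_cohom_hom}, Lemma \ref{different_Hecke}) are already in hand.
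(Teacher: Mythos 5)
Your overall skeleton matches the paper's proof: combine Lemma \ref{different_Hecke}(a), Proposition \ref{prop_pull_galois}, and Lemma \ref{convert_cohom_hom}, and for the diamond-operator statement use that pullback by a scalar matrix is trivial on homology. However, there are two concrete problems in the bookkeeping you flag as needing a double-check, and one of them is genuinely wrong as written. First, the factor $g \cdot$ in Lemma \ref{convert_cohom_hom} does \emph{not} contribute $\mc{R}_{\mc{N}}(\mf{n})$: the matrix realizing $S_{\mf{n}} = T_{r,s}(\mf{n},\mf{n})$ is, by Definition \ref{more_gen_Hecke} and Remark \ref{compare_Hecke}, diagonal with entries in $F_{\mf{n}}^{\times}$ and hence congruent to the identity modulo $\mc{N}$ (the paper only needs that its $(2,2)$-entry is a unit modulo $\mc{N}$), so it acts trivially on $K_2(\mc{O}'(\mc{N}))$. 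There is no cancellation of two Artin symbols; if both the $g$-action and $T(g)\xi$ each contributed an $\mc{R}_{\mc{N}}(\mf{n})$, you would end up with $\mc{R}_{\mc{N}}(\mf{n})^2$ or with no Artin symbol at all, neither of which is the stated identity. The single surviving $\mc{R}_{\mc{N}}(\mf{n})$ comes entirely from the cohomological computation of $S_{\mf{n}}({}_{\mf{c}}\Theta_{s,\mc{N}})$.

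Second, in that cohomological computation you pass directly from $S_{\mf{n}}\xi$ to $[\mf{n}]^*({}_{\mf{c}}\Theta_{j,\mc{N}})$ and then invoke Proposition \ref{prop_pull_galois}. Since $S_{\mf{n}} = [\mf{n}]^*\langle\mf{n}\rangle$, you must first identify $\langle\mf{n}\rangle({}_{\mf{c}}\Theta_{s,\mc{N}})$ with the class ${}_{\mf{c}}\Theta_{(q,u),\mc{N}}$ at the index $(q,u)$ satisfying $(r,1) \sim_{\mf{n}} (q,u)$ (with $\mf{a}_{r,q}\mf{n}$ and $\mf{a}_u\mf{n}$ principal); this is exactly Corollary \ref{cocyc_fixed_by_diamond}, which your argument never invokes. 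Only after that step does Proposition \ref{prop_pull_galois} produce $[\mf{n}]^*({}_{\mf{c}}\Theta_{(q,u),\mc{N}}) = \mc{R}_{\mc{N}}(\mf{n}) \circ {}_{\mf{c}}\Theta_{r,\mc{N}}$. With these two corrections your argument coincides with the paper's; the treatment of the second assertion via a principal $\mf{n} = (d')$ with $d' \equiv d \bmod \mc{N}$ and the triviality of $[d]^*$ on $H_1(Y_1(\mc{N})_r,\Z')$ is exactly what the paper does.
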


\begin{proof}
	Let $q \in I$ be such that $\mf{a}_{r,q}\mf{n}$ and $\mf{a}_{s,q}\mf{n}^{-1}$ are principal.
	Let $u \in I$ be such that $\mf{a}_u\mf{n}$ is principal.
	Lemma \ref{different_Hecke}(a), Corollary \ref{cocyc_fixed_by_diamond}, and Lemma \ref{prop_pull_galois} tell us that
	$$
		S_{\mf{n}}({}_{\mf{c}} \Theta_{s,\mc{N}}) = [\mf{n}]^* \langle \mf{n} \rangle ({}_{\mf{c}} \Theta_{s,\mc{N}})
		= [\mf{n}]^* ({}_{\mf{c}} \Theta_{(q,u),\mc{N}}) = \mc{R}_{\mc{N}}(\mf{n}) \circ \Theta_{r,\mc{N}}.
	$$
	On the other hand, note that the operator $S_{\mf{n}} = [\mf{n}]^* \langle \mf{n} \rangle$ on cohomology is given by pullback
	by a matrix in $\Delta_0(\mc{N})_{r,s}$ with lower right-hand entry congruent to a unit modulo $\mc{N}$.
	We then need only apply Lemma \ref{convert_cohom_hom} for $\xi$ corresponding to ${}_{\mf{c}} \Theta_{s,\mc{N}}$ and $T(g) = S_{\mf{n}}$ to obtain
	the first statement. The last statement follows by observing that $[d]^*$, the pullback by a scalar matrix, 
	acts trivially on $H_1(Y_1(\mc{N})_r,\Z')$.
\end{proof}

The Eisenstein property of the maps ${}_{\mf{c}} \Pi_{\mc{N}}$ is a consequence of Corollary \ref{spec_cocyc_Eis}: that is,
\begin{equation} \label{Eis_prop}
	{}_{\mf{c}} \Pi_{\mc{N}} \circ (T_{\mf{p}} - N\mf{p} - S_{\mf{p}}) = 0
\end{equation}
for every prime ideal $\mf{p}$ of $\mc{O}$ not dividing $\mc{N}$. 
Together with Proposition \ref{equivar_maps}, we may rephrase this as the following.

\begin{proposition} \label{Eis_Galois}
	For every prime ideal $\mf{p}$ of $\mc{O}$ not dividing $\mc{N}$, we have
	$$
		{}_{\mf{c}} \Pi_{\mc{N}} \circ T_{\mf{p}} = (N\mf{p} + \mc{R}_{\mc{N}}(\mf{p})) \circ {}_{\mf{c}} \Pi_{\mc{N}}.
	$$
	More precisely, 
	$$
		{}_{\mf{c}} \Pi_{s,\mc{N}} \circ T_{\mf{p}} = (N\mf{p} + \mc{R}_{\mc{N}}(\mf{p})) \circ {}_{\mf{c}} \Pi_{r,\mc{N}}
	$$
	for $r, s \in I$ such that $\mf{a}_{r,s}\mf{p}$ is principal.
\end{proposition}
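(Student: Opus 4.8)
The plan is to obtain this as a direct consequence of the Eisenstein property \eqref{Eis_prop} of the maps ${}_{\mf{c}} \Pi_{\mc{N}}$, combined with the $S_{\mf{p}}$-equivariance recorded in Proposition \ref{equivar_maps}; the statement is essentially a rephrasing, so the only real content is keeping track of which connected components of $Y_1(\mc{N})$ the operators $T_{\mf{p}}$, $S_{\mf{p}}$ and the scalar $N\mf{p}$ relate.

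First I would rewrite \eqref{Eis_prop} as the identity ${}_{\mf{c}} \Pi_{\mc{N}} \circ T_{\mf{p}} = N\mf{p} \cdot {}_{\mf{c}} \Pi_{\mc{N}} + {}_{\mf{c}} \Pi_{\mc{N}} \circ S_{\mf{p}}$ of maps $H_1(Y_1(\mc{N}),\Z') \to K_2(\mc{O}'(\mc{N}))_{\Z'}$, using that $N\mf{p}$ is a scalar. Here $T_{\mf{p}}$ and $S_{\mf{p}}$ are the operators on the full homology $H_1(Y_1(\mc{N}),\Z') = \bigoplus_{r \in I} H_1(Y_1(\mc{N})_r,\Z')$ defined just above: on the $r$-summand, $T_{\mf{p}}$ has image in the $s$-summand with $\mf{a}_{r,s}\mf{p}$ principal, whereas $S_{\mf{p}}$ has image in the $s'$-summand with $\mf{a}_{r,s'}\mf{p}^2$ principal, while multiplication by $N\mf{p}$ preserves each summand. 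Next I would apply Proposition \ref{equivar_maps}: on the $r$-summand it gives ${}_{\mf{c}} \Pi_{s',\mc{N}} \circ S_{\mf{p}} = \mc{R}_{\mc{N}}(\mf{p}) \circ {}_{\mf{c}} \Pi_{r,\mc{N}}$, and since ${}_{\mf{c}} \Pi_{\mc{N}}$ restricts to ${}_{\mf{c}} \Pi_{s',\mc{N}}$ on the $s'$-summand, summing over $r$ yields ${}_{\mf{c}} \Pi_{\mc{N}} \circ S_{\mf{p}} = \mc{R}_{\mc{N}}(\mf{p}) \circ {}_{\mf{c}} \Pi_{\mc{N}}$. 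Substituting into the previous display gives the first assertion ${}_{\mf{c}} \Pi_{\mc{N}} \circ T_{\mf{p}} = (N\mf{p} + \mc{R}_{\mc{N}}(\mf{p})) \circ {}_{\mf{c}} \Pi_{\mc{N}}$. For the more precise statement, I would restrict this identity to the summand $H_1(Y_1(\mc{N})_r,\Z')$ for fixed $r$ and let $s$ be the unique index with $\mf{a}_{r,s}\mf{p}$ principal, so that $T_{\mf{p}}$ restricted there is exactly the operator $H_1(Y_1(\mc{N})_r,\Z') \to H_1(Y_1(\mc{N})_s,\Z')$ attached to a matrix of $\Delta_0(\mc{N})_{r,s}$; since ${}_{\mf{c}} \Pi_{\mc{N}}$ restricts to ${}_{\mf{c}} \Pi_{s,\mc{N}}$ on the $s$-summand and to ${}_{\mf{c}} \Pi_{r,\mc{N}}$ on the $r$-summand, the restricted identity reads ${}_{\mf{c}} \Pi_{s,\mc{N}} \circ T_{\mf{p}} = (N\mf{p} + \mc{R}_{\mc{N}}(\mf{p})) \circ {}_{\mf{c}} \Pi_{r,\mc{N}}$.

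The point requiring the most care — more a bookkeeping subtlety than a genuine obstacle — is that $T_{\mf{p}}$, $S_{\mf{p}}$ and multiplication by $N\mf{p}$ send the $r$-summand of $H_1(Y_1(\mc{N}),\Z')$ to three a priori distinct summands, so one must remember that \eqref{Eis_prop} is an identity on all of $H_1(Y_1(\mc{N}),\Z')$ and invoke Proposition \ref{equivar_maps} with the index pair $(r,s')$ determined by $\mf{p}^2$ rather than by $\mf{p}$. One should also note that $S_{\mf{p}}$ on homology is realized, as in the discussion preceding Proposition \ref{equivar_maps}, by a matrix whose lower-right entry is a unit modulo $\mc{N}$ — which is precisely what makes Lemma \ref{convert_cohom_hom}, hence Proposition \ref{equivar_maps}, applicable — but this is already contained in the cited statements, so no further argument is needed.
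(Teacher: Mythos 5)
Your proposal is correct and is exactly the argument the paper intends: Proposition \ref{Eis_Galois} is stated there as an immediate rephrasing of \eqref{Eis_prop} via Proposition \ref{equivar_maps}, with no further proof given, and your component-by-component bookkeeping (with $T_{\mf{p}}$, $S_{\mf{p}}$, and $N\mf{p}$ landing in the $s$-, $s'$-, and $r$-summands respectively) correctly fills in the details left implicit.
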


We expect that the maps on the homology of $Y_1(\mc{N})$ factor through the homology of the compactification $X_1(\mc{N})$ given by adjoining cusps. We prove this for certain $\chi$ in the next subsection.

\subsection{Borel-Serre boundary}

The Borel-Serre compactification $X_1^{\mr{BS}}(\mc{N})$ of $Y_1(\mc{N})$ can be written as 
$$
	X_1^{\mr{BS}}(\mc{N}) = \GL_2(F) \backslash (\GL_2(\mb{A}_F^f) \times \overline{\mb{H}}) / U_1(\mc{N})
$$
where $\overline{\mb{H}} = \mb{H} \cup \partial\overline{\mb{H}}$ with
$\partial\overline{\mb{H}}$ a disjoint union of components indexed by $\alpha \in \mb{P}^1(F)$ that can be thought of as placing
$(0,1] \times (\mb{P}^1(\C)-\{\alpha\})$ at $\alpha$. Here, an element of $\GL_2(F)$ acts on $(\alpha,t,x)$ for $t \in (0,1)$ and 
$x \in \mb{P}^1(\C)$ as a M\"obius transformation on the first and last coordinates. The $r$th component is $X_1^{\mr{BS}}(\mc{N})_r = \Gamma_1(\mc{N})_r \backslash \overline{\mb{H}}$. The embedding of $Y_1(\mc{N})$ in 
$X_1^{\mr{BS}}(\mc{N})$ is a homotopy equivalence.

For $B$ the upper-triangular Borel subgroup of $G = \GL_2(F)$,
we have $G/B \cong \mb{P}^1(F)$ via the $G$-equivariant map sending the identity to $\infty$. Given $x \in \mb{P}^1(F)$ and any $\tau_x \in G$ mapping to $x$, the group $B_{x,r} = \Gamma_1(\mc{N})_r \cap \tau_x B \tau_x^{-1}$ is the stabilizer of $x$ in $\Gamma_1(\mc{N})_r$. Letting $C_1(\mc{N})_r = \Gamma_1(\mc{N})_r \backslash \mb{P}^1(F)$, we also use $B_{x,r}$ for $x \in C_1(\mc{N})_r$ to denote $B_{\tilde{x},r}$ for a choice of $\tilde{x} \in \mb{P}^1(F)$ lifting the cusp $x$.
As in the discussion of \cite[p.~48--49]{harder-per1} (see also \cite[p.~20]{berger}), the space
\begin{equation} \label{BShomotopy}
	\coprod_{ x \in C_1(\mc{N})_r } B_{x,r} \backslash \mb{H}
\end{equation}
is homotopy equivalent to $\partial X_1^{\BS}(\mc{N})_r$.
This yields the following description 
of the singular first cohomology of 
the Borel-Serre boundary:
\begin{equation} \label{cohom_bound}
    H^1(\partial X_1^{\mr{BS}}(\mc{N}),\Z')
    \cong \bigoplus_{r \in I} \bigoplus_{x \in C_1(\mc{N})_r} H^1(B_{x,r},\Z').
\end{equation}
In particular, $H^1(\partial X_1^{\mr{BS}}(\mc{N}),\Z')$ is torsion-free.

For a $\Delta$-module system $(A_r)_{r \in I}$, we can define the sheaf $A$ on $X_1^{\mr{BS}}(\mc{N})$ 
as we did on $Y_1(\mc{N})$, and this agrees with the pushforward sheaf from the latter space. We then have Hecke actions on the cohomology of $X_1^{\mr{BS}}(\mc{N})$ and $\partial X_1^{\mr{BS}}(\mc{N})$ with $A$-coefficients, and the maps on cohomology induced by pullback of the embedding of $Y_1(\mc{N})$ into
$X_1^{\mr{BS}}(\mc{N})$ are equivariant for the actions of Hecke operators. 

The first interior cohomology group $H^1_!(Y_1(\mc{N}),A)$
of $Y_1(\mc{N})$ with $A$-coefficients is the image of the second map,
or kernel of the third, in the exact sequence
\begin{equation} \label{ex_seq_cpt_supp}
	H^0(\partial X_1^{\BS}(\mc{N}),A) \to H^1_c(Y_1(\mc{N}),A) \to H^1(Y_1(\mc{N}),A) \to H^1(\partial X_1^{\mr{BS}}(\mc{N}), A).
\end{equation}
By \eqref{cohom_bound} and the fact that $\Gamma_1(\mc{N})_r$ has no elements of prime order at least $7$, this interior cohomology group can be identified with the sum over $r \in I$ of the parabolic cohomology groups
$$
	H^1_P(\Gamma_1(\mc{N})_r,A) = \ker\Bigl(H^1(\Gamma_1(\mc{N})_r,A) \to \bigoplus_P H^1(P,A)\Bigr),
$$
where $P$ runs over (representatives of $\Gamma_1(\mc{N})_r$-conjugacy classes of)  parabolic subgroups of $\Gamma_1(\mc{N})_r$, or equivalently, stabilizers $B_{x,r}$ of chosen lifts of cusps $x \in C_1(\mc{N})_r$ on the $r$th component $X_1(\mc{N})_r$ of the Satake compactification $X_1(\mc{N})$ of $Y_1(\mc{N})$. In fact, we have the following.

\begin{lemma}
    We have canonical isomorphisms
    $$
        \Hom(H_1(X_1(\mc{N}),\Z'),A) \cong H^1_!(Y_1(\mc{N}),A)
        \cong \bigoplus_{r \in I} H^1_P(\Gamma_1(\mc{N})_r,A),
    $$
    compatible with Hecke actions.
\end{lemma}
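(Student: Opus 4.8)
The plan is to assemble the claimed isomorphisms from three ingredients, all of which are available in the excerpt. First, for the right-hand isomorphism, note that the isomorphism $H^1(Y_1(\mc{N}),A) \cong \bigoplus_{r\in I} H^1(\Gamma_1(\mc{N})_r,A_r)$ of \eqref{Bianchi_cohom} identifies the kernel of restriction to the Borel-Serre boundary cohomology \eqref{cohom_bound} with the direct sum of the parabolic cohomology groups $H^1_P(\Gamma_1(\mc{N})_r,A)$; this is precisely the passage through \eqref{ex_seq_cpt_supp} described in the paragraph preceding the lemma, using that $\Gamma_1(\mc{N})_r$ has no elements of prime order $\ge 7$ (so that $H^1$ of a parabolic $B_{x,r}$ computes the continuous cohomology of the corresponding boundary torus correctly) together with the homotopy equivalence \eqref{BShomotopy}. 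I would make this identification component-by-component in $r$, noting that each $H^1(Y_1(\mc{N})_r,A) \to \bigoplus_{x\in C_1(\mc{N})_r} H^1(B_{x,r},A)$ is induced by the inclusions of parabolic subgroups, and invoke the standard fact that parabolic (= interior) cohomology is the image of compactly-supported cohomology, hence equals $H^1_!$.

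For the left-hand isomorphism, I would argue as follows. The Satake compactification $X_1(\mc{N})$ is obtained from $Y_1(\mc{N})$ by collapsing each boundary component of $X_1^{\mr{BS}}(\mc{N})_r$ (which is homotopy equivalent to a disjoint union of $2$-tori $B_{x,r}\backslash \mb{H}$ over cusps $x$) to a point. Thus $X_1(\mc{N})$ is a (rational) homology manifold away from the cusps, and since each torus is $2$-dimensional, collapsing it does not affect $H_0$ or $H_1$ except through the cusps' contribution; more precisely, the long exact sequence of the pair $(X_1(\mc{N}), X_1^{\mr{BS}}(\mc{N}))$ — or equivalently a Mayer-Vietoris / cofiber argument — shows that $H_1(X_1(\mc{N}),\Z')$ is the quotient of $H_1(Y_1(\mc{N}),\Z')$ by the images of $H_1$ of the boundary components. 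Dualizing with $\Hom(-,A)$ and using that $A$ is a $\Z'$-module (so $\mathrm{Ext}^1_{\Z'}$-terms involving torsion are harmless after inverting $30$, and in fact $H_1(\partial X_1^{\mr{BS}}(\mc{N}),\Z')$ is torsion-free by \eqref{cohom_bound}), one gets $\Hom(H_1(X_1(\mc{N}),\Z'),A)$ identified with the subgroup of $\Hom(H_1(Y_1(\mc{N}),\Z'),A)$ of homomorphisms killing the boundary, which is exactly $H^1_!(Y_1(\mc{N}),A)$ under the universal-coefficients identification $H^1(Y_1(\mc{N}),A)\cong\Hom(H_1(Y_1(\mc{N}),\Z'),A)$ (valid because $A$ is divisible enough — the hypotheses on torsion orders dividing $5!$ and scalars acting trivially make the relevant cohomology computations torsion-free over $\Z'$).

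Finally, Hecke compatibility: the operators $T_{\mf{n}}$, $S_{\mf{n}}$, $\langle \mf{n}\rangle$ on $H^1(Y_1(\mc{N}),A)$ and on $H_1(Y_1(\mc{N}),\Z')$ were constructed in Section \ref{cohom_Bianchi} via correspondences (double cosets), and Lemma \ref{convert_cohom_hom} already records their compatibility under the universal-coefficients pairing $\phi$. Since these correspondences extend to the Borel-Serre and Satake compactifications (the relevant maps of locally symmetric spaces are finite and proper, so extend to the boundary), the identifications above are Hecke-equivariant; concretely, one just checks that the restriction map to boundary cohomology and the quotient map on $H_1$ both commute with the double-coset operators, which follows from Proposition \ref{Hecke_gp_top} and the functoriality of the coset decompositions \eqref{cosetreps} under passage to boundary subgroups. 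The main obstacle — and the step requiring the most care — is the left-hand isomorphism: one must verify that collapsing the boundary tori affects $H_1$ only through a clean quotient with no unexpected $H_2$-contributions, which ultimately rests on each Borel-Serre boundary component of $X_1^{\mr{BS}}(\mc{N})_r$ being (rationally) a $2$-torus bundle whose collapse introduces no new $1$-cycles; this is where the homotopy description \eqref{BShomotopy} and the torsion-freeness in \eqref{cohom_bound} do the real work.
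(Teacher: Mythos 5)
Your proposal is correct, and for the second (right-hand) isomorphism it coincides with what the paper does: that identification is taken as already established in the paragraph preceding the lemma, via \eqref{ex_seq_cpt_supp} and \eqref{cohom_bound}. For the first isomorphism, however, you take a genuinely different route. The paper dualizes the \emph{first} map of \eqref{ex_seq_cpt_supp}: it uses duality and universal coefficients to write $H^1_c(Y_1(\mc{N}),A) \cong \Hom(H_1(X_1^{\BS}(\mc{N}),\partial X_1^{\BS}(\mc{N}),\Z'),A)$, identifies the map from $\Hom(H_0(\partial X_1^{\BS}(\mc{N}),\Z'),A)$ with the corresponding map for the pair $(X_1(\mc{N}),C_1(\mc{N}))$, and reads off $\Hom(H_1(X_1(\mc{N}),\Z'),A)$ as the cokernel, i.e.\ as the image of $H^1_c \to H^1$. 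You instead work with the \emph{kernel} description of $H^1_!$ (kernel of restriction to boundary cohomology), show that the collapse map $X_1^{\BS}(\mc{N}) \to X_1(\mc{N})$ exhibits $H_1(X_1(\mc{N}),\Z')$ as $H_1(Y_1(\mc{N}),\Z')/\mathrm{im}\, H_1(\partial X_1^{\BS}(\mc{N}),\Z')$, and dualize using left-exactness of $\Hom$ and the (naturality of the) universal coefficient isomorphisms, whose $\Ext$-terms vanish because the relevant $H_0$'s are free. This avoids Poincar\'e duality entirely and is arguably more elementary; the paper's version keeps the compactly supported group in view, which is closer in spirit to how $H^1_!$ is defined. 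Two small points of hygiene: the relevant pair is $(X_1^{\BS}(\mc{N}),\partial X_1^{\BS}(\mc{N}))$ compared with $(X_1(\mc{N}),C_1(\mc{N}))$, not ``$(X_1(\mc{N}),X_1^{\BS}(\mc{N}))$''; and your closing worry about ``unexpected $H_2$-contributions'' is a non-issue, since $H_1(C_1(\mc{N}),\Z')=0$ already forces $H_1(X_1(\mc{N}),\Z')$ to inject into $H_1(X_1(\mc{N}),C_1(\mc{N}),\Z') \cong H_1(X_1^{\BS}(\mc{N}),\partial X_1^{\BS}(\mc{N}),\Z')$ with image exactly that of $H_1(X_1^{\BS}(\mc{N}),\Z')$ — no hypothesis on the boundary components being tori or on $X_1(\mc{N})$ being a homology manifold is needed.
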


\begin{proof}
    Let $C_1(\mc{N}) = \coprod_{r \in I} C_1(\mc{N})_r$ denote the zero-dimensional
    space of cusps on $X_1(\mc{N})$. Poincar\'e duality and the universal
    coefficient theorem (again using that $30$ is invertible in $\Z'$) yield canonical isomorphisms
    $$
    \Hom(H_1(X_1^{\BS}(\mc{N}),\partial X_1^{\BS}(\mc{N}),\Z'),A) \cong \Hom(H^2(Y_1(\mc{N}),\Z'),A) \cong H^1_c(Y_1(\mc{N}),A).
    $$
    This allows us to identify the first map in the exact sequence
    \eqref{ex_seq_cpt_supp} with the upper horizontal map in the commutative square
    $$
    \begin{tikzcd}
        \Hom(H_0(\partial X_1^{\BS}(\mc{N}),\Z'),A) \arrow{r} \arrow{d}{\wr} &
        \Hom(H_1(X_1^{\BS}(\mc{N}),\partial X_1^{\BS}(\mc{N}),\Z'),A) \arrow{d}{\wr} \\
        \Hom(H_0(C_1(\mc{N}),\Z'),A) \arrow{r} & \Hom(H_1(X_1(\mc{N}),C_1(\mc{N}),\Z'),A).
    \end{tikzcd}
    $$
    The cokernel of the lower horizontal map is
    $\Hom(H_1(X_1(\mc{N}),\Z'),A)$,
    which gives the first isomorphism, the second already having been explained.
    The maps in question are Hecke-equivariant by construction and our earlier
    discussions.
\end{proof}

Let $I_{\mc{N}}$ be the prime-to-$\mc{N}$ ideal group of $F$. We use the convention that a Hecke character of conductor $\mf{m}$
dividing $\mc{N}$ and infinity type $(j,k)$ with $j,k \in \Z$ is a homomorphism $\phi \colon I_{\mc{N}} \to \C^{\times}$ such that for $x \in \mc{O}$ with $x \equiv 1 \bmod \mf{m}$, we have $\phi((x)) = x^j \bar{x}^k$, and $\mf{m}$ is the largest ideal with this property.
Such a $\phi$ gives rise to a map $\tilde{\phi} \colon \mb{A}_F^{\times}/F^{\times} \to \C^{\times}$ such that the restriction 
of $\tilde{\phi}$ to the finite ideles induces $\phi$ and which at the infinite place is just
$\C^{\times} \to \C^{\times}$ by $z \mapsto z^{-j} \bar{z}^{-k}$.

Let $Z$ (resp. $Z'$) denote the set of pairs $\mu = (\mu_1,\mu_2)$
of Hecke characters $I_{\mc{N}} \to \C^{\times}$ of respective infinity types
$(-1,0)$ and $(1,0)$ (resp. $(0,-1)$ and $(0, 1)$)
such that the product of the conductors $\mf{f}_1$ and $\mf{f}_2$ 
of $\mu_1$ and $\mu_2$ divides $\mc{N}$. 
We view $\mu \in Z\cup Z'$ as a map $B(F) \backslash B(\mb{A}_F)\to \C^{\times}$ that sends the class of a matrix $\smatrix{a&b\\0&d}$ to
$\mu_1(a)\mu_2(d)$.

For a commutative ring $C$, let $\mf{H}_C(\mc{N})$ denote the Hecke $C$-algebra of endomorphisms of $H^1(Y_1(\mc{N}),C) \oplus H^1(\partial X_1^{\mr{BS}}(N),C)$ generated by the operators $S_{\mf{p}}$ and $T_{\mf{p}}$
for primes $\mf{p}$ of $\mc{O}$ not dividing $\mc{N}$. 
For $\mu \in Z$ and an $\mf{H}_{\C}(\mc{N})$-module $M$, let $M_{\mu}$ denote the maximal submodule of $A$ upon which each $S_{\mf{p}}$ acts as $\mu_1\mu_2(\mf{p})$ and each $T_{\mf{p}}$ acts as $\mu_1(\mf{p})N\mf{p} + \mu_2(\mf{p})$.

\begin{lemma} \label{dir_sum_boundary_cohom_C}
	We have a direct sum decomposition
	$$
		H^1(\partial X_1^{\mr{BS}}(\mc{N}),\C) \cong \bigoplus_{\mu \in Z} H^1(\partial X_1^{\mr{BS}}(\mc{N}),\C)_{\mu}
	$$ 
	of $\mf{H}_{\C}(\mc{N})$-modules.
\end{lemma}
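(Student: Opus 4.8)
The plan is to transport the statement to the cohomology of the cusp stabilizers, realize that cohomology as being built out of Hecke characters of the Borel torus, and then match the resulting systems of eigenvalues against the definition of $M_\mu$.

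First I would base-change \eqref{cohom_bound} to $\C$, giving
$$
	H^1(\partial X_1^{\mr{BS}}(\mc{N}),\C) \cong \bigoplus_{r \in I}\bigoplus_{x \in C_1(\mc{N})_r} H^1(B_{x,r},\C).
$$
By \eqref{BShomotopy} this is the degree-one cohomology of a disjoint union of the quotients $B_{x,r}\backslash\mb{H}$, each of which is (homotopy equivalent to) an orbifold $2$-torus built from the unipotent radical of a conjugate of the Borel $B = TU$ together with a finite group of units inside the Levi $T$. Reorganizing the cusp index set $\coprod_r C_1(\mc{N})_r$ adelically by means of the $x_r$ and strong approximation, one identifies the whole right-hand side with the first cohomology of the locally symmetric space for $B$ of level $U_1(\mc{N})\cap B(\mb{A}_F^f)$ (cf.\ \cite[p.~48--49]{harder-per1}, \cite[p.~20]{berger}). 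This is the standard first step toward describing Eisenstein/boundary cohomology.

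Next I would decompose this $B$-cohomology spectrally. Since $B$ is solvable with abelian unipotent radical $U$, on which the Levi torus $T$ acts through the positive root, the degree-one cohomology carries a commuting family of semisimple operators coming from the residual action of $T(\mb{A}_F^f)$, and it breaks up into simultaneous eigenspaces indexed by characters of $T(F)\backslash T(\mb{A}_F)$ of conductor dividing $\mc{N}$. A computation of infinity types — using that the coefficient sheaf is trivial, and tracking the two classes $dz,\ d\bar z$ on a boundary $2$-torus against the action of $B(F\otimes_{\Q}\R)$ on $\mb{H}$ in cohomological degree one — pins the characters that actually occur down to those attached to the pairs $\mu = (\mu_1,\mu_2)\in Z$. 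This yields a decomposition $H^1(\partial X_1^{\mr{BS}}(\mc{N}),\C) = \bigoplus_{\mu\in Z}V_\mu$ as $\C$-vector spaces, where $V_\mu$ is the $\mu$-isotypic part for the $T(\mb{A}_F^f)$-action.

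Finally — and this is where the real work lies — I would identify each $V_\mu$ with $M_\mu = H^1(\partial X_1^{\mr{BS}}(\mc{N}),\C)_\mu$ by showing that $S_{\mf{p}}$ and $T_{\mf{p}}$ act on $V_\mu$ as the scalars $\mu_1\mu_2(\mf{p})$ and $\mu_1(\mf{p})N\mf{p}+\mu_2(\mf{p})$ for every prime $\mf{p}\nmid\mc{N}$. This is the usual constant-term (Satake) computation: restrict the Hecke operators to the boundary, decompose their defining $\GL_2$-double cosets (that of $\smatrix{\pi&0\\0&1}$ for $T_{\mf p}$, and that of the scalar $\smatrix{\pi&0\\0&\pi}$ up to units for $S_{\mf p}$, with $\pi$ a uniformizer at $\mf p$) into $U_1(\mc{N})$-cosets, and read off the torus characters by which the $N\mf{p}+1$ (resp.\ single) cosets act on $H^1$ of the boundary tori; the extra factor $N\mf{p}$ attaches to $\mu_1$ because the unipotent directions collapse on $H^1$. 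One must be careful here to account for the adjoint-matrix convention defining the Hecke action in Section \ref{cohom_Bianchi} and for the normalization built into the infinity types. Since this exhibits the generators $S_{\mf{p}}, T_{\mf{p}}$ of $\mf{H}_{\C}(\mc{N})$ as acting by scalars on each $V_\mu$, the decomposition $\bigoplus_{\mu\in Z}V_\mu$ is automatically one of $\mf{H}_{\C}(\mc{N})$-modules; and because $\mu$ is recovered from its eigensystem (from $\{\mu_1\mu_2(\mf{p})\}_{\mf p}$, $\{\mu_1(\mf{p})N\mf{p}+\mu_2(\mf{p})\}_{\mf p}$, and the prescribed infinity types), distinct $\mu\in Z$ give distinct eigensystems and $V_\mu = M_\mu$. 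The main obstacle is precisely this last normalization bookkeeping; the rest is standard Eisenstein-cohomology input.
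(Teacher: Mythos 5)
Your proposal is correct and follows essentially the same route as the paper: the paper simply cites Harder's theorem to identify $H^1(\partial X_1^{\mr{BS}}(\mc{N}),\C)$ with a sum of induced-from-Borel modules $V(\mu)\oplus V(\mu')$, and then performs exactly the spherical-vector/double-coset computation you describe to get the eigenvalues $\mu_1(\mf{p})N\mf{p}+\mu_2(\mf{p})$ and $\mu_1\mu_2(\mf{p})$. The one bookkeeping point to make explicit is that the torus-isotypic decomposition produces characters of both infinity types, i.e.\ pairs in $Z'$ as well as in $Z$ (your two classes $dz$, $d\bar z$); the paper pairs each $\mu\in Z$ with $\mu'=(\mu_2 N^{-1},\mu_1 N)\in Z'$ and checks that $V(\mu')$ carries the same Hecke eigensystem as $V(\mu)$, so both summands land in the same $M_\mu$ and the indexing by $Z$ alone is legitimate.
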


\begin{proof}
	For $\mu \in Z\cup Z'$, let $V(\mu)$ denote the $\mf{H}_{\C}(N)$-module of right $U_1(N)$-invariant maps 
	$\psi \colon \GL_2(\mb{A}_F^f) \to \C$ 
	such that $\psi(bg) = \mu(b)\psi(g)$ for all $b \in B(\mb{A}_F^f)$ and $g \in \mc{G}$.
	By \cite[Theorem 1]{harder} (see also the discussion of \cite[Section~2.10.1]{berger}), the isomorphism of \eqref{cohom_bound} gives rise to
	an isomorphism of $\mf{H}_{\C}(N)$-modules
	$$
		H^1(\partial X_1^{\mr{BS}}(\mc{N}),\C) \cong \bigoplus_{\mu \in Z} (V(\mu) \oplus V(\mu')),
	$$
	where $\mu' = (\mu_2 N^{-1},\mu_1 N) \in Z'$. Here, $N$ is the absolute norm,
	which is a Hecke character of type $(1,1)$ having trivial conductor.
	Now, $V(\mu)$ decomposes as a restricted tensor product
	$V(\mu) = \bigotimes'_{\mf{p}} V_{\mf{p}}(\mu)$ over the primes $\mf{p}$ of $\mc{O}$, and
	the action of $T_{\mf{p}}$ is trivial on all components but $V_{\mf{p}}(\mu)$.
	For $\mf{p} \nmid \mc{N}$,
	the latter representation is one-dimensional (being the right $\GL_2(\mc{O}_{\mf{p}})$-invariants
	of an unramified principal series). 
	For the unique function $\psi_{\mf{p}} \colon \GL_2(F_{\mf{p}}) \to \C$
	in $V_{\mf{p}}(\mu)$ sending the identity to $1$ (i.e., the spherical vector) and 
	for $\pi$ a uniformizer of the valuation ring of $F_{\mf{p}}$, we compute 
	$$
		(T_{\mf{p}} \psi_{\mf{p}})(1) = \psi_{\mf{p}}\left(\smatrix{1 & 0 \\ 0 & \pi}\right)  + \sum_{\bar{b} \in \mc{O}/\mf{p}} \psi_{\mf{p}}
		\left(\smatrix{\pi & b \\ 0 & 1}\right)
		= N\mf{p}\mu_1(\mf{p}) + \mu_2(\mf{p}),
	$$
	so $T_{\mf{p}}$ acts as $N\mf{p}\mu_1(\mf{p}) + \mu_2(\mf{p})$ on $V(\mu)$.
	Similarly, $S_{\mf{p}}$ acts as $\mu_1(\mf{p})\mu_2(\mf{p})$. The complex conjugate of $\mu' = (\mu_1',\mu_2')$ is in $Z$, 
	and $V(\mu')$ has the same Hecke action as $V(\mu)$.
\end{proof}

Let $p$ be an odd prime. Fix a prime $\mc{P}$ over $p$ of the integer ring $\mc{R}$ of the number field generated over $F$ by the images of all $\mu_1$ and $\mu_2$ for $(\mu_1,\mu_2) \in Z$. Let $\mf{p}= \mc{P} \cap \mc{O}$. The restrictions to $I_{\mc{N}\mf{p}}$ of $\mu_1$ and $\mu_2$ take image in the units at $\mc{P}$, so we may speak of their reductions modulo $\mc{P}$. 
 
Let $Z_p$ denote the set of pairs $\nu = (\nu_1,\nu_2)$ of Hecke characters $\nu_1, \nu_2 \colon I_{\mc{N}\mf{p}} \to \F_q^{\times}$ for 
some $p$-power $q$ that are the respective reductions of $\mu_1$ and $\mu_2$ modulo $\mc{P}$ for some $\mu = (\mu_1,\mu_2) \in Z$. Note that $\nu_1$ and $\nu_2$ factor through the quotient $\Cl_{\mc{N} \cap \mf{p}}(F)$, and in fact through its prime-to-$p$ part. The conductor of $\nu_k$ for $k \in \{1,2\}$ divides $\mf{f}_k \cap \mf{p}$ for all choices of $\mu \in Z$ reducing to $\nu$.

\begin{remark}
For any prime ideal $\mf{q}$ of $\mc{O}$, the group $(\mc{O}/\mf{q})^{\times}$ 
maps canonically to $\Cl_{\mc{N} \cap \mf{p} \cap \mf{q}}(F)$, which in turn
has $\Cl_{\mc{N} \cap \mf{p}}(F)$ as a canonical quotient. Thus, we can make sense of the restriction of $\nu_k$ to $(\mc{O}/\mf{q})^{\times}$.
\end{remark}

Let $W$ denote the Witt vectors of $\overline{\F}_p$. We can 
view each $\nu_k$ for $(\nu_1,\nu_2) \in Z_p$ as a character valued in $W^{\times}$ by taking its unique lift.
With this convention, for an $\mf{H}_W(\mc{N})$-module $A$, let $A_{\nu}$ denote its localization
at the maximal ideal containing $S_{\mf{q}} - \nu_1\nu_2(\mf{q})$ and $T_{\mf{q}} - (N\mf{q}\nu_1(\mf{q}) +  \nu_2(\mf{q}))$
for each prime $\mf{q}$ of $\mc{O}$ not dividing $\mc{N}p$.

\begin{proposition} \label{boundary_witt_vecs}
	There exists an isomorphism
	$$
		H^1(\partial X_1^{\mr{BS}}(\mc{N}),W) \cong 
		\bigoplus_{\nu \in Z_p} H^1(\partial X_1^{\mr{BS}}(\mc{N}),W)_{\nu},
	$$
	and similarly with $W$ replaced with $\overline{\F}_p$.
\end{proposition}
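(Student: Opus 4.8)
The plan is to deduce the statement from Lemma~\ref{dir_sum_boundary_cohom_C} by a base change and a descent of idempotents, the key input being that the cohomology of the Borel--Serre boundary is torsion-free.

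First I would record that $M := H^1(\partial X_1^{\mr{BS}}(\mc{N}),\Z')$ is a finitely generated $\Z'$-module ($\partial X_1^{\mr{BS}}(\mc{N})$ being compact) that is torsion-free by \eqref{cohom_bound} (or by the general fact that first cohomology with coefficients in a PID is torsion-free), hence free of finite rank over the principal ideal domain $\Z' = \Z[\tfrac{1}{30}]$. Since $p \ge 7$, the ring $W$ (as well as $\overline{\F}_p$ and $\C$) is a $\Z'$-algebra, and the universal coefficient theorem gives Hecke-equivariant identifications $H^1(\partial X_1^{\mr{BS}}(\mc{N}),W) \cong M \otimes_{\Z'} W$, and similarly over $\overline{\F}_p$ and over $\C$. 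It therefore suffices to analyze the commutative $\Z'$-subalgebra $\mathbb{T} \subseteq \End_{\Z'}(M)$ generated by the operators $T_{\mf{q}}$ and $S_{\mf{q}}$ for primes $\mf{q}$ of $\mc{O}$ not dividing $\mc{N}p$ (these are exactly the operators entering the definition of $A_{\nu}$); it is finite free over $\Z'$.

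By Lemma~\ref{dir_sum_boundary_cohom_C}, $M \otimes_{\Z'} \C$ is a direct sum of subspaces on which $\mathbb{T}$ acts through the characters $\lambda_{\mu} \colon \mathbb{T} \to \C$ indexed by $\mu = (\mu_1,\mu_2) \in Z$ and determined by $T_{\mf{q}} \mapsto N\mf{q}\cdot\mu_1(\mf{q}) + \mu_2(\mf{q})$ and $S_{\mf{q}} \mapsto \mu_1\mu_2(\mf{q})$. Thus $\mathbb{T}$ acts faithfully and semisimply on $M \otimes \C$, so $\mathbb{T} \otimes_{\Z'} \Q$ is étale over $\Q$ (a finite product of number fields) and $\mathbb{T}$ is reduced; since the $\mu_i$ are algebraic Hecke characters, each $\lambda_{\mu}$ takes values in the ring of integers of the number field generated over $F$ by their images, so $\mathbb{T}$ is an order therein. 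Now $W$ is a complete local ring with residue field $\overline{\F}_p$, and $\mathbb{T} \otimes_{\Z'} W$ is finite over it, hence decomposes canonically as a finite product $\prod_{\mathfrak{m}}(\mathbb{T}\otimes W)_{\mathfrak{m}}$ over its maximal ideals, each with residue field $\overline{\F}_p$. The associated idempotents split $M \otimes W = \bigoplus_{\mathfrak{m}} e_{\mathfrak{m}}(M\otimes W)$, and the same holds over $\overline{\F}_p$.

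Finally, I would identify the index set $\{\mathfrak{m}\}$ with $Z_p$. A maximal ideal $\mathfrak{m}$ gives a $\Z'$-algebra homomorphism $\mathbb{T} \to \overline{\F}_p$; since $\mathbb{T}\otimes W$ is finite flat over $W$, every maximal ideal lies on a component dominating $\Spec W$, so this homomorphism is the reduction modulo a prime of $\overline{\Q}_p$ of a homomorphism $\mathbb{T}\to\overline{\Q}_p$, which under an isomorphism $\overline{\Q}_p\cong\C$ is one of the $\lambda_{\mu}$. For $\mf{q}\nmid\mc{N}p$ the value $\mu_i(\mf{q})$ is a $p$-adic unit, so its reduction $\nu_i(\mf{q})\in\overline{\F}_p^{\times}$ defines a character, and $\mathfrak{m}$ sends $T_{\mf{q}}\mapsto N\mf{q}\cdot\nu_1(\mf{q})+\nu_2(\mf{q})$ and $S_{\mf{q}}\mapsto\nu_1\nu_2(\mf{q})$; one checks $(\nu_1,\nu_2)\in Z_p$, using that the set of reductions of pairs in $Z$ (restricted to ideals prime to $\mc{N}p$) does not depend on the chosen prime of $\mc{R}$ over $p$, which in turn uses the stability of $Z$ under $\Gal(\overline{\Q}/F)$-conjugation (preserving infinity types and conductors) together with the involution $\mu\mapsto\overline{\mu'}$ of $Z$ from the proof of Lemma~\ref{dir_sum_boundary_cohom_C} handling the complex-conjugation-type conjugates. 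Conversely every $\nu\in Z_p$ occurs. Grouping the idempotents $e_{\mathfrak{m}}$ by the associated $\nu$ then gives the asserted decomposition over $W$, and likewise over $\overline{\F}_p$. The main obstacle is precisely this last step — matching the maximal ideals of $\mathbb{T}\otimes W$ with the combinatorial set $Z_p$ of mod-$p$ Hecke characters — as the rest of the argument is formal once one knows $H^1(\partial X_1^{\mr{BS}}(\mc{N}),\Z')$ is free.
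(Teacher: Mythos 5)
Your argument is correct and follows essentially the same route as the paper's: both deduce the splitting from the complex decomposition of Lemma \ref{dir_sum_boundary_cohom_C} by descending through the maximal ideals of the finite $W$-algebra of Hecke operators acting on the torsion-free lattice $M$ — yours via the idempotents of $\mathbb{T}\otimes W$, the paper's by observing directly that $\bigoplus_{\nu} M_{\nu}$ is a direct summand of $M$ containing it. The only real difference is that the paper fixes the embedding $\overline{\Q}_p \hookrightarrow \C$ compatibly with $\mc{P}$ at the outset, which makes your closing independence-of-$\mc{P}$ discussion unnecessary.
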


\begin{proof}
	Fix an embedding of $\overline{\Q}_p$ into $\C$.
	For $\nu \in Z_p$, let $M_{\nu} = H^1(\partial X_1^{\mr{BS}}(\mc{N}),W)_{\nu}$. This contains the intersection with 
	$M = H^1(\partial X_1^{\mr{BS}}(\mc{N}),W)$ 
	of the direct sum of the $H^1(\partial X_1^{\mr{BS}}(\mc{N}),\C)_{\mu}$ over $\mu \in Z$ reducing to $\nu$.
	On the other hand, the sum $\bigoplus_{\nu \in Z_p} M_{\nu}$ of $W$-modules is a direct summand of $M$, since
	the maximal ideals giving the localizations are distinct. 
	This sum contains $M$ by Lemma \ref{dir_sum_boundary_cohom_C}, so we are done.
\end{proof}

Let $\xi \colon (\mc{O}/\mf{p})^{\times} \to (\mc{R}/\mc{P})^{\times}$ be the canonical inclusion. If $p$ splits in $\mc{O}$, let $\bar{\xi} \colon (\mc{O}/\bar{\mf{p}})^{\times} \to (\mc{R}/\mc{P})^{\times}$ denote precomposition of $\xi$ with complex conjugation. 

\begin{remark} \label{reducechar}
	If $\mu \in Z$ has reduction $\nu \in Z_p$ modulo $\mc{P}$, then $\nu_1(x) = x^{-1} \bmod \mc{P}$ and 
	$\nu_2(x) = x \bmod \mc{P}$ for $x \equiv 1 \bmod \mc{N}$ with $x$ prime to $\mf{p}$. From this, we obtain the following.
	\begin{enumerate}
		\item[i.] If $\mf{p}$ does not divide $\mf{f}_1$ 
		(resp., $\mf{f}_2$) for some choice of $\mu \in Z$ reducing to $\nu$,
		then $\nu_1|_{(\mc{O}/\mf{p})^{\times}} = \xi^{-1}$ (resp., $\nu_2|_{(\mc{O}/\mf{p})^{\times}} = \xi$).
		\item[ii.] Suppose that $p$ splits in $\mc{O}$. If $\bar{\mf{p}}$ does not divide $\mf{f}_k$ for some $k \in \{1,2\}$ and some 
		(equivalently, all) $\mu$ reducing to $\nu$  then $\nu_k|_{(\mc{O}/\bar{\mf{p}})^{\times}} = 1$.
	\end{enumerate}
    The converse to (i) (resp., (ii)) holds if $\mc{N}$ is divisible by
    at most a single power of $\mf{p}$ (resp., $\bar{\mf{p}}$).
\end{remark}

\begin{lemma} \label{condcond}
	Suppose that $\mu \in Z$ is such that $\mu_1\mu_2$ is primitive at some prime $\mf{q}$ dividing $\mc{N}$.
	Then $\mf{q}$ cannot divide both $\mf{f}_1$ and $\mf{f}_2$.
\end{lemma}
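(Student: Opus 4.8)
The plan is to reduce the assertion to a one-line inequality between local conductor exponents at $\mf{q}$ and then read off the contradiction from the defining condition $\mf{f}_1\mf{f}_2 \mid \mc{N}$ of the set $Z$. The only nontrivial input is the standard subadditivity of conductors under multiplication: I claim $\mr{cond}(\mu_1\mu_2) \mid \mr{lcm}(\mf{f}_1,\mf{f}_2)$. Indeed, if $x \in \mc{O}$ is prime to $\mc{N}$ with $x \equiv 1 \bmod \mr{lcm}(\mf{f}_1,\mf{f}_2)$, then $x \equiv 1$ modulo each of $\mf{f}_1$ and $\mf{f}_2$, so by the infinity types $(-1,0)$ of $\mu_1$ and $(1,0)$ of $\mu_2$ we get $(\mu_1\mu_2)((x)) = \mu_1((x))\mu_2((x)) = x^{-1}\cdot x = 1$ (here it matters that $\mu_1\mu_2$ has trivial infinity type, so that its value on such a principal ideal is genuinely $1$). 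This says exactly that the conductor of $\mu_1\mu_2$ divides $\mr{lcm}(\mf{f}_1,\mf{f}_2)$, and in particular, at every prime $\mf{q}$,
$$
	\ord_{\mf{q}}\bigl(\mr{cond}(\mu_1\mu_2)\bigr) \le \max\bigl(\ord_{\mf{q}}(\mf{f}_1),\, \ord_{\mf{q}}(\mf{f}_2)\bigr).
$$

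Next I would argue by contradiction. Suppose $\mf{q}$ divides both $\mf{f}_1$ and $\mf{f}_2$, and set $a_k = \ord_{\mf{q}}(\mf{f}_k) \ge 1$ for $k \in \{1,2\}$. Since $\min(a_1,a_2) \ge 1$, we have the strict inequality $\max(a_1,a_2) < a_1 + a_2$. Combining this with the displayed bound and with $\mf{f}_1\mf{f}_2 \mid \mc{N}$ gives
$$
	\ord_{\mf{q}}\bigl(\mr{cond}(\mu_1\mu_2)\bigr) \le \max(a_1,a_2) < a_1 + a_2 = \ord_{\mf{q}}(\mf{f}_1\mf{f}_2) \le \ord_{\mf{q}}(\mc{N}).
$$
But $\mu_1\mu_2$ being primitive at $\mf{q}$ means precisely that $\ord_{\mf{q}}(\mr{cond}(\mu_1\mu_2)) = \ord_{\mf{q}}(\mc{N})$, contradicting the strict inequality above. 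Hence $\mf{q}$ cannot divide both $\mf{f}_1$ and $\mf{f}_2$.

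I do not expect any genuine obstacle: the argument is purely formal once one unwinds the definition of ``primitive at $\mf{q}$'' (the $\mf{q}$-part of the conductor of $\mu_1\mu_2$ equals that of $\mc{N}$) and recalls conductor subadditivity. The only bookkeeping point worth emphasizing is that one must exploit the full strength of the membership condition for $Z$, namely $\mf{f}_1\mf{f}_2 \mid \mc{N}$ rather than merely $\mr{lcm}(\mf{f}_1,\mf{f}_2) \mid \mc{N}$: it is exactly the gap between the product $\mf{f}_1\mf{f}_2$ and its lcm at $\mf{q}$ — which is nontrivial precisely when $\mf{q}$ divides both conductors — that produces the contradiction.
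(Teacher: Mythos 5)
Your proof is correct and uses exactly the same two inputs as the paper's: the conductor of $\mu_1\mu_2$ divides $\mr{lcm}(\mf{f}_1,\mf{f}_2) = \mf{f}_1 \cap \mf{f}_2$, and the membership condition $\mf{f}_1\mf{f}_2 \mid \mc{N}$ for $Z$. The paper phrases this as a short case analysis on which of $\mu_1,\mu_2$ is primitive at $\mf{q}$, while you package it as a single inequality of $\mf{q}$-adic valuations, but the argument is essentially identical.
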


\begin{proof}
	The conductor of $\mu_1\mu_2$ divides $\mf{f}_1 \cap \mf{f}_2$, so if neither $\mu_1$ nor $\mu_2$ is primitive at $\mf{q}$,
	then $\mu_1\mu_2$ cannot be either. But if one of $\mu_1$ or $\mu_2$ is primitive at $\mf{q}$, the fact that
	$\mf{f}_1 \mf{f}_2$ divides $\mc{N}$ forces the other to have conductor prime to $\mf{q}$.
\end{proof}

Note that Lemma \ref{condcond} implies that the first homology of the Borel-Serre boundary vanishes in prime level, as there is no Hecke character
of type $(\pm 1,0)$ and trivial conductor. This can also be seen using the fact that, in this case, each Borel subgroup contains
an involution which acts by conjugation as $-1$ on the unipotent subgroup.

\begin{lemma} \label{char_forced}
    Let $\mu \in Z$ reduce to $\nu \in Z_p$ modulo $\mc{P}$.
	\begin{enumerate}
		\item[a.] If $\mu_1\mu_2$ is primitive at $\mf{p}$, then 
		either $\nu_1|_{(\mc{O}/\mf{p})^{\times}} = \xi^{-1}$ or $\nu_2|_{(\mc{O}/\mf{p})^{\times}} = \xi$. 
		\item[b.] 
		Suppose that $p$ is split in $\mc{O}$. If $\mu_1\mu_2$ is primitive at $\bar{\mf{p}}$,
		then either $\nu_1|_{(\mc{O}/\bar{\mf{p}})^{\times}} = 1$ or $\nu_2|_{(\mc{O}/\bar{\mf{p}})^{\times}} = 1$.
	\end{enumerate}
    Moreover, we need not assume the primitivity of $\mu_1\mu_2$ in (a) (resp., (b))
    if $\mc{N}$ is divisible by at most a single power of $\mf{p}$ (resp., $\bar{\mf{p}}$).
\end{lemma}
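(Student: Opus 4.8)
The plan is to deduce the statement directly from Lemma~\ref{condcond} and Remark~\ref{reducechar}. For part (a): since $\mu_1\mu_2$ is primitive at $\mf{p}$, the prime $\mf{p}$ divides the conductor of $\mu_1\mu_2$, which divides $\mf{f}_1\mf{f}_2$ and hence $\mc{N}$, so $\mf{p} \mid \mc{N}$. Lemma~\ref{condcond}, applied with $\mf{q} = \mf{p}$, then shows $\mf{p}$ cannot divide both $\mf{f}_1$ and $\mf{f}_2$. If $\mf{p} \nmid \mf{f}_1$, then Remark~\ref{reducechar}(i), applied to the given $\mu$ reducing to $\nu$, yields $\nu_1|_{(\mc{O}/\mf{p})^{\times}} = \xi^{-1}$; if instead $\mf{p} \nmid \mf{f}_2$, the same remark yields $\nu_2|_{(\mc{O}/\mf{p})^{\times}} = \xi$. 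Either way, (a) holds. Part (b) follows by the identical argument with $\mf{p}$ replaced by $\bar{\mf{p}}$: primitivity of $\mu_1\mu_2$ at $\bar{\mf{p}}$ forces $\bar{\mf{p}} \mid \mc{N}$, Lemma~\ref{condcond} gives that $\bar{\mf{p}}$ divides at most one of $\mf{f}_1, \mf{f}_2$, and Remark~\ref{reducechar}(ii)---which is where the splitting hypothesis on $p$ is used---then gives $\nu_1|_{(\mc{O}/\bar{\mf{p}})^{\times}} = 1$ or $\nu_2|_{(\mc{O}/\bar{\mf{p}})^{\times}} = 1$.

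For the closing ``moreover'' assertion, suppose $\mc{N}$ is divisible by at most a single power of $\mf{p}$. Since $\mf{f}_1\mf{f}_2 \mid \mc{N}$, additivity of exponents gives $\ord_{\mf{p}}(\mf{f}_1) + \ord_{\mf{p}}(\mf{f}_2) = \ord_{\mf{p}}(\mf{f}_1\mf{f}_2) \le \ord_{\mf{p}}(\mc{N}) \le 1$, so at most one of $\mf{f}_1, \mf{f}_2$ is divisible by $\mf{p}$. This is exactly the dichotomy invoked above, and it now holds without any primitivity hypothesis, so the argument of (a) goes through verbatim. The case of $\bar{\mf{p}}$, relevant to (b), is handled identically.

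All the substance resides in the already-established Lemma~\ref{condcond} and Remark~\ref{reducechar}, so this lemma is a short bookkeeping exercise rather than a source of real difficulty; the only point requiring attention is matching each of the two sub-cases ($\mf{p} \nmid \mf{f}_1$ versus $\mf{p} \nmid \mf{f}_2$) to the correct conclusion via the appropriate clause of Remark~\ref{reducechar}.
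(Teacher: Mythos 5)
Your proof is correct and follows essentially the same route as the paper's: both reduce parts (a) and (b) to the fact that $\mf{p}$ (resp.\ $\bar{\mf{p}}$) cannot divide both $\mf{f}_1$ and $\mf{f}_2$ --- via Lemma~\ref{condcond} under the primitivity hypothesis, and via $\mf{f}_1\mf{f}_2 \mid \mc{N}$ for the ``moreover'' clause --- and then invoke Remark~\ref{reducechar}. The only cosmetic difference is that the paper argues by contradiction (assuming both conclusions fail) while you argue directly.
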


\begin{proof}
	If the conclusion of part (a) (resp., (b)) fails for $\nu$, then $\mf{p}$ (resp., $\bar{\mf{p}}$) divides both $\mf{f}_1$ and $\mf{f}_2$
    by Remark \ref{reducechar}. Then Lemma \ref{condcond} tells us that 
	$\mu_1\mu_2$ cannot be primitive at $\mf{p}$ (resp., $\bar{\mf{p}}$).
    If $\mc{N}$ is not divisible by the square of $\mf{p}$ (resp., $\bar{\mf{p}}$),
    the assumed failure already gives contradiction of the final statement by the final statement of Remark \ref{reducechar}.
\end{proof}

Let $I$ denote the Eisenstein ideal of the Hecke algebra $\mf{H} = \mf{H}_{\Z'}(\mc{N})$ generated by the 
elements $T_{\mf{q}} - (N\mf{q} + S_{\mf{q}})$ for primes $\mf{q} \nmid \mc{N}$. For an $\mf{H}$-module $M$, let us write $M_{\mr{Eis}}$ to denote the direct sum of the localizations of $M$ at the maximal ideals of $\mf{H}$ containing $I$.

Let $\chi \colon Q' \to \mc{O}_{\chi}^{\times}$ be a character of the prime-to-$p$ part $Q'$ of
$\Cl_{\mc{N}}(F)$. The conductor of $\chi$, extended to $\Cl_{\mc{N}}(F)$ by the trivial character, then automatically has $p$-part dividing the product of the primes of $\mc{O}$ over $p$.

\begin{proposition} \label{Eis_part_BS}
	Suppose that $\mc{N}$ is divisible by at most one power of each prime over $p$, as well as the following:
	\begin{enumerate}
		\item[i.] If $p$ is split in $\mc{O}$, then $\chi|_{(\mc{O}/\mf{p})^{\times}} \not\equiv \xi \bmod \mc{P}$ and
		$\chi|_{(\mc{O}/\bar{\mf{p}})^{\times}} \not\equiv \bar{\xi} \bmod \mc{P}$.
		\item[ii.] If $p$ is inert in $\mc{O}$, then $\chi|_{(\mc{O}/p)^{\times}} \not\equiv \xi, \xi^p \bmod \mc{P}$.
		\item[iii.] If $p$ is ramified in $\mc{O}$, then $\chi|_{(\mc{O}/\mf{p})^{\times}} \not\equiv \xi \bmod \mc{P}$.
	\end{enumerate}
	Then the group $H^1(\partial X_1^{\mr{BS}}(\mc{N}), \zp)^{(\chi)}_{\mr{Eis}}$ is trivial.
\end{proposition}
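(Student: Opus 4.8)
The plan is to pass to coefficients in the ring $W$ of Witt vectors of $\overline{\F}_p$, apply the eigenspace decomposition of Proposition \ref{boundary_witt_vecs}, and eliminate every summand that could contribute to the Eisenstein $\chi$-part by playing the numerics of the Eisenstein ideal against the character-theoretic constraints of Lemma \ref{char_forced}. First I would reduce to $W$-coefficients: since $H^1(\partial X_1^{\mr{BS}}(\mc{N}),\zp)$ is torsion-free (by \eqref{cohom_bound}, as $\Gamma_1(\mc{N})_r$ has no elements of order $p$), its $\chi$-component is a torsion-free direct summand, and as $\mc{O}_\chi$ is unramified over $\zp$ the formation of $\chi$-components commutes with the flat base change $\zp\to W$; so it suffices to show $H^1(\partial X_1^{\mr{BS}}(\mc{N}),W)^{(\chi)}_{\mr{Eis}}=0$, where $\chi$ is viewed as $W$-valued via its Teichm\"uller lift. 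By Proposition \ref{boundary_witt_vecs} this group is the direct sum of the pieces $H^1(\partial X_1^{\mr{BS}}(\mc{N}),W)_\nu$ for $\nu=(\nu_1,\nu_2)\in Z_p$; on such a piece $S_{\mf{q}}$ and $T_{\mf{q}}$ act as $\nu_1\nu_2(\mf{q})$ and $N\mf{q}\,\nu_1(\mf{q})+\nu_2(\mf{q})$, and the diamond operators act through $\nu_1\nu_2$, as one reads off from Harder's description in the proof of Lemma \ref{dir_sum_boundary_cohom_C} (consistently with $S_{\mf{n}}=\langle\mf{n}\rangle[\mf{n}]^*$). Since $\nu_1\nu_2$ is the reduction mod $\mc{P}$ of a finite-order Hecke character of conductor dividing $\mc{N}$, it takes values of order prime to $p$ and so is trivial on the $p$-Sylow subgroup of $\Cl_{\mc{N}}(F)$; hence the $\chi$-component of the $\nu$-piece is nonzero only when $\nu_1\nu_2=\chi$ as characters of $\Cl_{\mc{N}}(F)$.

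Next fix such a $\nu$ with $\nu_1\nu_2=\chi$ whose piece meets the Eisenstein part nontrivially. Containment of the Eisenstein ideal in the corresponding maximal ideal forces $N\mf{q}\,\nu_1(\mf{q})+\nu_2(\mf{q})\equiv N\mf{q}+\nu_1\nu_2(\mf{q})\pmod{\mc{P}}$, i.e. $(\nu_1(\mf{q})-1)(N\mf{q}-\nu_2(\mf{q}))\equiv 0\pmod{\mc{P}}$, for every prime $\mf{q}\nmid\mc{N}p$. Writing $\bar N$ for the reduction mod $\mc{P}$ of the norm character, this says the Frobenius of each such $\mf{q}$ lies in $\ker\nu_1\cup\ker(\nu_2\bar N^{-1})$; by Chebotarev these Frobenii exhaust the finite group on which $\nu_1$, $\nu_2$, $\bar N$ are all defined, and a finite group is never a union of two proper subgroups, so $\nu_1=1$ or $\nu_2=\bar N$.

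Finally I would derive a contradiction in each case. As $\mc{N}$ is divisible by at most one power of each prime over $p$, Lemma \ref{char_forced} applies to $\nu$ without any primitivity hypothesis: $\nu_1|_{(\mc{O}/\mf{p})^\times}=\xi^{-1}$ or $\nu_2|_{(\mc{O}/\mf{p})^\times}=\xi$, and, when $p$ splits, $\nu_1|_{(\mc{O}/\bar{\mf{p}})^\times}=1$ or $\nu_2|_{(\mc{O}/\bar{\mf{p}})^\times}=1$. A direct computation of $\bar N$ on inertia at $p$ gives $\bar N|_{(\mc{O}/\mf{p})^\times}=\xi$, $\xi^{p+1}$, or $\xi^2$ according as $p$ is split, inert, or ramified, and in the split case $\bar N|_{(\mc{O}/\bar{\mf{p}})^\times}=\bar\xi$; each of these is nontrivial since $p\ge 7$. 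As $\nu_1\nu_2=\chi$, restriction to $(\mc{O}/\mf{p})^\times$ (resp. $(\mc{O}/\bar{\mf{p}})^\times$) identifies $\chi|_{(\mc{O}/\mf{p})^\times}$ with $\nu_1|_{(\mc{O}/\mf{p})^\times}\cdot\nu_2|_{(\mc{O}/\mf{p})^\times}$, the product of these two factors being forced to be trivial when $\mf{p}\nmid\mc{N}$. Thus: if $\nu_1=1$ then $\nu_1|_{(\mc{O}/\mf{p})^\times}=1\ne\xi^{-1}$, so Lemma \ref{char_forced} forces $\nu_2|_{(\mc{O}/\mf{p})^\times}=\xi$ and $\chi|_{(\mc{O}/\mf{p})^\times}=\xi$, contradicting (i), (ii), or (iii); if $\nu_2=\bar N$ and $p$ is inert or ramified, then $\nu_2|_{(\mc{O}/\mf{p})^\times}\ne\xi$, so $\nu_1|_{(\mc{O}/\mf{p})^\times}=\xi^{-1}$ and $\chi|_{(\mc{O}/\mf{p})^\times}=\xi^{-1}\cdot\bar N|_{(\mc{O}/\mf{p})^\times}$ equals $\xi^p$ (inert) or $\xi$ (ramified), contradicting (ii) or (iii); and if $\nu_2=\bar N$ with $p$ split, then $\nu_2|_{(\mc{O}/\bar{\mf{p}})^\times}=\bar\xi\ne 1$, so Lemma \ref{char_forced}(b) forces $\nu_1|_{(\mc{O}/\bar{\mf{p}})^\times}=1$ and $\chi|_{(\mc{O}/\bar{\mf{p}})^\times}=\bar\xi$, contradicting (i). When a prime over $p$ fails to divide $\mc{N}$, the same identities instead give $\nu_1$ and $\nu_2$ incompatible restrictions to inertia there, and no hypothesis is needed.

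The main obstacle, I expect, lies in the bookkeeping of this last step: verifying that the diamond operators act on the $\nu$-piece exactly through $\nu_1\nu_2$ (matching the Hecke conventions of the paper with Harder's), checking that $\chi|_{(\mc{O}/\mf{p})^\times}$ may indeed be identified with the product of the $\mf{p}$-parts of $\nu_1$ and $\nu_2$ (using that $\nu_1\nu_2$ is trivial on the $p$-Sylow and has conductor dividing $\mc{N}$), and confirming that the split case genuinely requires part (b) of Lemma \ref{char_forced} — whence the two conditions in hypothesis (i) — while the inert case forces the extra possibility $\xi^p$, whence both $\xi$ and $\xi^p$ in hypothesis (ii).
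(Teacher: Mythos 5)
Your proposal is correct and follows essentially the same route as the paper's proof: reduce via Proposition \ref{boundary_witt_vecs} to the summands with $\nu_1\nu_2 = \chi$, use the Eisenstein eigenvalue identity (your factorization $(\nu_1(\mf{q})-1)(N\mf{q}-\nu_2(\mf{q}))\equiv 0$ plus Chebotarev in place of the paper's appeal to linear independence of characters) to force $\nu_1 = 1$ or $\nu_2 = \bar{N}$, and then apply the strengthened form of Lemma \ref{char_forced} together with the computation of $\bar{N}$ on local units to contradict hypotheses (i)--(iii) in each splitting case. The extra care you take with the passage from $\zp$ to $W$-coefficients and with why only $\nu$ with $\nu_1\nu_2=\chi$ contribute fills in steps the paper leaves implicit, but the argument is the same.
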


\begin{proof}
    By Proposition \ref{boundary_witt_vecs}, it suffices to show that if
    $\nu \in Z_p$ with $\nu_1\nu_2 =\chi \bmod \mc{P}$, then there exists some $\mf{q}$
    not dividing $\mc{N}p$ such that
    $\omega(\mf{q}) + \nu_1\nu_2(\mf{q})$ and $\omega\nu_1(\mf{q}) + \nu_2(\mf{q})$ differ in $\mc{R}/\mc{P}$. If this fails, then linear independence
	of characters forces $\nu_1 = 1$ or $\nu_2 = \omega$. 
	If $\nu_1 = 1$, then part (a) of Lemma \ref{char_forced} (in its stronger form
    given by the final statement of the lemma) tells us that
	$\nu_2|_{(\mc{O}/\mf{p})^{\times}} = \xi$, so $\chi_2|_{(\mc{O}/\mf{p})^{\times}} \equiv \xi \bmod \mc{P}$.
    
	Now suppose that $\nu_2 = \omega$. Since $\omega(x) = x\bar{x}$ for $x \in \mc{O}$ prime to $\mc{N}\mf{p}$, the reduction of $\omega|_{(\mc{O}/\bar{\mf{p}})^{\times}}$ modulo $\mc{P}$ equals $\bar{\xi}$ if $p$ is split, 
	$\xi^2$ is $p$ is ramified and $\xi^{p+1}$ if $p$ is inert. 
	If $p$ is split, part (b) of Lemma \ref{char_forced} then forces $\nu_1|_{(\mc{O}/\bar{\mf{p}})^{\times}} = 1$, whereas
	if $p$ is inert or ramified, part (a) of said lemma tells us that $\nu_1|_{(\mc{O}/\mf{p})^{\times}} = \xi^{-1}$. Putting this together, 
	we obtain that $\chi|_{(\mc{O}/\bar{\mf{p}})^{\times}}$ modulo $\mc{P}$ is $\bar{\xi}$ if $p$ is split, $\xi$ if $p$ is ramified,
	and $\xi^p$ if $p$ is inert. 
\end{proof}

Identifying $\Cl_{\mc{N}}(F)$ with $\Gal(F(\mc{N})/F)$ via the Artin map $\mc{R}$, we may conclude the following.

\begin{theorem} \label{Eis_map_homol}
	Let $\mc{N}$, $p \ge 7$, and $\chi \neq \omega^2$ also satisfy the conditions of Proposition \ref{Eis_part_BS}. Then we have a 
	homomorphism
	$$
		\Pi_{\mc{N}}^{\chi} \colon H_1(X_1(\mc{N}),\zp)^{(\chi)} \to (K_2(\mc{O}'(\mc{N})) \otimes \zp)^{(\chi)},
	$$
	compatible with the likewise denoted map on $H_1(Y_1(\mc{N}),\zp)^{(\chi)}$ defined after \eqref{Pi_on_homology}, which is Eisenstein 
	away from $\mc{N}$ in the sense of \eqref{Eis_prop} and Proposition \ref{Eis_Galois}.
\end{theorem}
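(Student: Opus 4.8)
The plan is to reinterpret the desired factorization as the statement that the cohomology class corresponding to $\Pi_{\mc{N}}^{\chi}$ lies in interior cohomology, and then to deduce this from the Eisenstein property together with Proposition~\ref{Eis_part_BS}.

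Write $A = (K_2(\mc{O}'(\mc{N})) \otimes \zp)^{(\chi)}$, viewed as a $\Delta_0(\mc{N})$-module system with the trivial $\Gamma_1(\mc{N})_r$-action (on which $5!$ acts invertibly, as $p \ge 7$, and scalars act trivially). Under the duality isomorphism $\phi$ and the identification
$$
\Hom(H_1(X_1(\mc{N}),\zp),A) \cong H^1_!(Y_1(\mc{N}),A) \cong \bigoplus_{r \in I} H^1_P(\Gamma_1(\mc{N})_r,A)
$$
of the lemma preceding Proposition~\ref{Eis_part_BS}, the map $\Pi_{\mc{N}}^{\chi}$ corresponds to the tuple $\Theta_{\mc{N}}^{\chi} = (\Theta_{r,\mc{N}}^{\chi})_{r \in I} \in H^1(Y_1(\mc{N}),A)$ obtained by restricting the cocycles of Theorem~\ref{unmodified_eigensp} to the groups $\Gamma_1(\mc{N})_r$, where the coefficient action is trivial and the restrictions are genuine homomorphisms. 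All of these identifications are perfect and compatible with the Hecke operators $T_{\mf{q}}$, $S_{\mf{q}}$ ($\mf{q}\nmid\mc{N}$) because $30$ is a $\zp$-unit and $\Gamma_1(\mc{N})_r$ has no $p$-torsion. It therefore suffices to prove that $\Theta_{\mc{N}}^{\chi}$ lies in $H^1_!(Y_1(\mc{N}),A)$, i.e., that the restriction of each $\Theta_{r,\mc{N}}^{\chi}$ to every parabolic $B_{x,r}$ vanishes; granting this, $\Theta_{\mc{N}}^{\chi}$ produces a map out of $H_1(X_1(\mc{N}),\zp)$ which, since $\langle d \rangle$ acts on $\Pi_{\mc{N}}^{\chi}$ through $\mc{R}_{\mc{N}}(d)$ (Proposition~\ref{equivar_maps}), factors through the $\chi$-eigenspace for the diamond action, and whose compatibility with $\Pi_{\mc{N}}^{\chi}$ on $H_1(Y_1(\mc{N}),\zp)^{(\chi)}$ and whose Eisenstein property (\eqref{Eis_prop}, Proposition~\ref{Eis_Galois}) follow from the Hecke-equivariance of $\phi$ and of the inclusion $H^1_!(Y_1(\mc{N}),A) \hookrightarrow H^1(Y_1(\mc{N}),A)$, exactly as for the map on $H_1(Y_1(\mc{N}),\zp)$.

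To show the boundary restriction vanishes, first note that $\res\colon H^1(Y_1(\mc{N}),A) \to H^1(\partial X_1^{\BS}(\mc{N}),A)$ is equivariant for $T_{\mf{q}}$ and $S_{\mf{q}}$. By Corollary~\ref{int_cocyc_Eis} (via Corollary~\ref{spec_cocyc_Eis} and Proposition~\ref{prop_pull_galois}), $\Theta_{\mc{N}}^{\chi}$ is annihilated by $T_{\mf{q}} - (N\mf{q} + S_{\mf{q}})$ for all $\mf{q}\nmid\mc{N}$, so $\res(\Theta_{\mc{N}}^{\chi})$ is killed by the Eisenstein ideal $I$, hence lies in $H^1(\partial X_1^{\BS}(\mc{N}),A)_{\mr{Eis}}$. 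Since $\Gamma_1(\mc{N})_r$ acts trivially on $A$ and each $B_{x,r}$ has finite part of order prime to $p$, the universal coefficient theorem gives a Hecke- and diamond-equivariant isomorphism $H^1(\partial X_1^{\BS}(\mc{N}),A) \cong H^1(\partial X_1^{\BS}(\mc{N}),\zp) \otimes_{\zp} A$, with every Hecke operator and every diamond acting through the first factor. Because $\langle d \rangle$ acts on $\Theta_{\mc{N}}^{\chi}$ by $\chi(d)$ (again Proposition~\ref{equivar_maps}), $\res(\Theta_{\mc{N}}^{\chi})$ moreover lies in the $\chi$-isotypic part; combining, it lies in $H^1(\partial X_1^{\BS}(\mc{N}),\zp)^{(\chi)}_{\mr{Eis}} \otimes_{\zp} A$. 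By Proposition~\ref{Eis_part_BS}, the hypotheses on $\mc{N}$, $p$, and $\chi$ force $H^1(\partial X_1^{\BS}(\mc{N}),\zp)^{(\chi)}_{\mr{Eis}} = 0$, so $\res(\Theta_{\mc{N}}^{\chi}) = 0$, completing the argument.

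The main obstacle is the bookkeeping in the middle step: one must verify carefully that the Eisenstein relation satisfied by $\Theta_{\mc{N}}^{\chi}$ matches the generators $T_{\mf{q}} - (N\mf{q} + S_{\mf{q}})$ of the Eisenstein ideal used in Proposition~\ref{Eis_part_BS}, reconciling the Galois action $\mc{R}_{\mc{N}}(\mf{q})$ on $K_2(\mc{O}'(\mc{N}))$ with the correspondence operator $S_{\mf{q}}$ via Proposition~\ref{equivar_maps} and Lemma~\ref{convert_cohom_hom}, and that after passing to the factor $H^1(\partial X_1^{\BS}(\mc{N}),\zp)$ of $H^1(\partial X_1^{\BS}(\mc{N}),\zp)\otimes_{\zp}A$ the diamond action indeed selects the $\chi$-isotypic piece appearing in Proposition~\ref{Eis_part_BS}. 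Once these compatibilities are in place, the theorem follows formally.
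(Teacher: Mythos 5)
Your proposal is correct and follows exactly the route the paper intends: the paper gives no separate proof of Theorem \ref{Eis_map_homol}, deducing it directly from the lemma identifying $\Hom(H_1(X_1(\mc{N}),\Z'),A)$ with interior cohomology, the Eisenstein property of $\Theta_{\mc{N}}^{\chi}$, and the vanishing of $H^1(\partial X_1^{\mr{BS}}(\mc{N}),\zp)^{(\chi)}_{\mr{Eis}}$ from Proposition \ref{Eis_part_BS}. Your filling-in of the reduction to $\zp$-coefficients on the boundary and of the matching of $T_{\mf{q}}-(N\mf{q}+S_{\mf{q}})$ with the relation $T_{\mf{q}}\Theta=(N\mf{q}+\mc{R}_{\mc{N}}(\mf{q}))\Theta$ via $S_{\mf{q}}=\langle\mf{q}\rangle[\mf{q}]^*$ is precisely the bookkeeping the paper leaves implicit.
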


\begin{remark}
	We expect that $\Pi_{\mc{N}}^{\chi}$ is Eisenstein at primes dividing the level as well. Here, this would mean that $\Pi_{\mc{N}}^{\chi}
	\circ U_{\mf{q}}^* = \Pi_{\mc{N}}^{\chi}$ for each $\mf{q}$ dividing $\mc{N}$, where $U_{\mf{q}}^*$ is a dual Hecke operator attached
	to $\mf{q}$. This should allow one to remove the condition that 
	$\mc{N}$ is divisible by at most one power of $\mf{p}$ in Theorem \ref{Eis_map_homol}. 
	That is, though we have not written down details, the argument of Proposition \ref{Eis_part_BS}
	for the Eisenstein localization of boundary homology including each $U_{\mf{p}}^*-1$ for $\mf{p}$ dividing $p$ 
	should go through with the same list of characters, as $U_{\mf{p}}^*$ should act as $0$ on 
	$H^1(\partial X_1^{\mr{BS}}(\mc{N}),\C)_{\mu}$ in the case that $\mu_1$ and $\mu_2$ are both ramified at $\mf{p}$.
\end{remark}

\vspace{0ex}

\small
\begin{multicols}{2}
\noindent
	Emmanuel Lecouturier\\
	Institute for Theoretical Sciences\\
	Westlake University\\
    No.~600 Dunyu Road, Sandun town, Xihu district \\
    310030 Hangzhou, Zhejiang, China \\
    elecoutu@westlake.edu.cn\\
	Romyar Sharifi\\
	Department of Mathematics\\ 
	University of California, Los Angeles\\ 
	520 Portola Plaza\\ 
	Los Angeles, CA 90095, USA\\
	sharifi@math.ucla.edu
\end{multicols}


\begin{multicols}{2}
\noindent
    Sheng-Chi Shih\\
    Faculty of Mathematics \\
    University of Vienna \\
    Oskar-Morgenstern-Platz 1 \\
    A-1090 Wien, Austria \\
    shengchishih@gmail.com\\ 
    Jun Wang\\
    Institute for Advanced Study in Mathematics \\
    Harbin Institute of Technology \\
    No. 92 West Da Zhi Street, Nan Gang District\\
    150001 Harbin, Heilongjiang, China\\
    junwangmath@hit.edu.cn 
\end{multicols}


\begin{thebibliography}{WaWa}
    \bibitem[Ber]{berger} T. Berger, An Eisenstein ideal for imaginary quadratic fields, Ph.D. thesis, Univ. of Michigan, 2005.
    \bibitem[BCG]{bcg} N. Bergeron, P. Charollois, L. G\'arcia, 
    Eisenstein cohomology classes for $\GL_N$ over imaginary quadratic fields,
    \textit{J. Reine Angew. Math.} \textbf{797} (2023), 1--40.
    \bibitem[Bus]{busuioc} C. Busuioc, The Steinberg symbol and special values of $L$-functions, \textit{Trans. Amer. Math. Soc.} \textbf{360} (2008), 5999--6015.
    \bibitem[CaVe]{cv} F. Calegari, A. Venkatesh, A torsion Jacquet-Langlands 
    correspondence, \textit{Ast\'erisque} \textbf{409} (2019).
    \bibitem[Cre]{cremona} J. Cremona, Hyperbolic tessellations, modular symbols, and elliptic curves over complex quadratic fields, \textit{Compos. Math.} \textbf{51} (1984), 275--324.
    \bibitem[deS]{deshalit} E. de Shalit, Iwasawa theory of elliptic curves with complex multiplication, $p$-adic $L$-functions, \textit{Perspect. Math.} \textbf{3}, Academic Press, Boston, 1987.
    \bibitem[FKW]{fkw} J. Fl\'orez, C. Karabulut, T. A. Wong, 
    Eisenstein cocycles over imaginary quadratic fields
    and special values of $L$-functions, \textit{J. Number Theory} \textbf{204} (2019), 497--531.
	\bibitem[FuKa]{fk} T. Fukaya, K. Kato, On conjectures of Sharifi, \textit{Kyoto J. Math.} \textbf{64} (2024), 277--420.
	\bibitem[FKS]{fks-survey} T. Fukaya, K. Kato, R. Sharifi, Modular symbols in Iwasawa theory. Iwasawa Theory 2012, 
	\textit{Contrib. Math. Comput. Sci.} \textbf{7}, Springer, Heidelberg, 2014, 177--219.
	\bibitem[Gei]{geisser} T. Geisser, Motivic cohomology over Dedekind rings, \textit{Math. Z.} \textbf{248} (2004), 773--794.
	\bibitem[Gon1]{gon-double} The double logarithm and Manin's complex for modular curves, \textit{Math. Res. Lett.}
	\textbf{4} (1997), 617--636.
	\bibitem[Gon2]{gon-Euler} A. Goncharov, Euler complexes and the geometry of modular varieties, \textit{Geom. Funct. Anal.}
	\textbf{17} (2008), 1872--1914.
    \bibitem[Har1]{harder-per1} G. Harder, Period integrals of cohomology classes which are represented by Eisenstein series, \textit{Tata Inst. Fundam. Res. Stud. Math.} \textbf{10},
    Springer-Verlag, Berlin-New York, 1981, 41--115.
	\bibitem[Har2]{harder} G. Harder, Period integrals of Eisenstein cohomology classes and special values of
	some $L$-functions. 
	Number theory related to Fermat's last theorem (Cambridge, Mass., 1981),
	\textit{Progr. Math.} \textbf{26}, Birkh\"auser, Boston, MA, 1982, 103--142.
	\bibitem[Hid]{hida} H. Hida, On $p$-adic Hecke algebras for $\mr{GL}_2$ over totally real fields, \textit{Ann. of Math.} \textbf{128} (1988),
	295--384.
	\bibitem[Kat]{kato} K. Kato, $p$-adic Hodge theory and values of zeta functions of modular forms.
	Cohomologies $p$-Adiques et Applications Arithmetiques III, \textit{Ast\'erisque} \textbf{295} (2004), 117--290.
	\bibitem[KiSp]{ks} G. Kings, J. Sprang, Eisenstein-Kronecker classes, integrality of critical values of Hecke $L$-functions 
	and $p$-adic interpolation, to appear in
    \textit{Ann. of Math.}, arXiv:1912.03657.
	\bibitem[LeWa]{lw} E. Lecouturier, J. Wang, Level compatibility in Sharifi's conjecture, \textit{Canad. Math. Bull.} \textbf{64} (2023), 1194--1212.
	\bibitem[Lev]{levine-tech} M. Levine, Techniques of localization in the theory of algebraic cycles, \textit{J. Alg. Geom.} \textbf{10}
	(1999), 299--363.
	\bibitem[Ma]{manin} Y. Manin, Parabolic points and zeta-functions of modular curves, \textit{Math. USSR Izvsetija} \textbf{6} (1972), 19--64.
    \bibitem[NSW]{nsw} J. Neurkich, A. Schmidt, K. Wingberg, Cohomology of number fields. Second edition, \emph{Grundlehren Math. Wiss.} \textbf{323}, Springer-Verlag, Berlin, 2008. 
	\bibitem[Pow]{powell} K. Powell, Modular symbols modulo Eisenstein ideals for Bianchi spaces, Ph.D. thesis, University of Arizona,
	2015.
	\bibitem[RhWh]{rw} Y. Rhie, G. Whaples, Hecke operators in the cohomology of groups \textit{J. Math. Soc. Japan} \textbf{22} (1970),
	431--442.
	\bibitem[Ru]{rubin} K. Rubin, Elliptic curves with complex multiplication and the conjecture of Birch and Swinnerton-Dyer.
	Arithmetic theory of elliptic curves (Cetraro, 1997), \textit{Lecture Notes in Math.} \textbf{1716}, Springer, Berlin, 1999,
	167--234.
	\bibitem[Sha]{sharifi} R. Sharifi, A reciprocity map and the two-variable $p$-adic $L$-function, \textit{Ann. of Math.} \textbf{173}
	(2011), 251--300.
	\bibitem[ShVe]{sv} R. Sharifi, A. Venkatesh, Eisenstein cocycles in motivic cohomology, \textit{Compos. Math.} \textbf{160} (2024), 2407--2479.
	 \bibitem[Sil]{silverman} J. Silverman, Advanced topics in the arithmetic of elliptic curves, \textit{Grad. Texts. in 
	 Math.} \textbf{151}, Springer, New York, 1994.
	\bibitem[Shi]{shimura} G. Shimura, Introduction to the arithmetic theory of automorphic functions, 
	Princeton University Press, Princeton, NJ, 1994. 
	\bibitem[Tat]{tate} J. Tate, Relations between $K_2$ and Galois cohomology, \textit{Invent. Math.} \textbf{36} (1976), 257--274.
\end{thebibliography}
\end{document}